\documentclass{amsart}
\usepackage[utf8]{inputenc}
\usepackage[english]{babel}

\usepackage[dvips]{graphics,epsfig}
\usepackage{times}
\usepackage{amsmath,amsthm,amssymb}
\usepackage{array}
\usepackage{bbm}
\usepackage{xcolor}
\usepackage{verbatim}
\usepackage{upgreek}
\usepackage[foot]{amsaddr}
\usepackage{caption}
\usepackage{circuitikz}

\usepackage[most]{tcolorbox}
\usepackage{lipsum}

\usepackage{tikz-cd}

\usepackage{geometry}

\newtheorem{theorem}{Theorem}[section]

\newtheorem{lemma}[theorem]{Lemma}
\newtheorem{question}[theorem]{Question}
\newtheorem{cor}[theorem]{Corollary}
\newtheorem{prop}[theorem]{Proposition}
\newtheorem{observation}[theorem]{Observation}
\newtheorem{fact}[theorem]{Fact}
\newtheorem{claim}[theorem]{Claim}

\theoremstyle{definition}
\newtheorem{hypothesis}[theorem]{Hypothesis}
\newtheorem{definition}[theorem]{Definition}

\theoremstyle{remark}
\newtheorem{remark}[theorem]{Remark}

\def\Ind{\setbox0=\hbox{$x$}\kern\wd0\hbox to 0pt{\hss$\mid$\hss} \lower.9\ht0\hbox to 0pt{\hss$\smile$\hss}\kern\wd0}
\def\Notind{\setbox0=\hbox{$x$}\kern\wd0\hbox to 0pt{\mathchardef \nn=12854\hss$\nn$\kern1.4\wd0\hss}\hbox to 0pt{\hss$\mid$\hss}\lower.9\ht0 \hbox to 0pt{\hss$\smile$\hss}\kern\wd0}
\def\ind{\mathop{\mathpalette\Ind{}}}

\numberwithin{equation}{section}

\title{Some applications of the real strict order property hierarchy}
\author{Scott Mutchnik}

\thanks{This project was supported by the NSF under Grant No. DMS-2303034.}

\begin{document}

\maketitle

\begin{abstract}

This will be part two of a two-part paper, beginning with \cite{ApproxOrder}.

We give external motivation for the properties $\mathrm{NSOP}_{r}$ for non-integer values of $r$, which we introduced and gave internal motivation for in \cite{ApproxOrder}. Specifically, we demonstrate applications of these real-valued properties to the main questions on the properties $\mathrm{NSOP}_{n}$ for integer values of $n$.

We show that the properties $\mathrm{NSOP}_{r}$, previously defined for real values $r \geq 3$, are even well-defined for real values $r \geq 2$, showing that $\mathrm{NSOP}_{2} \subseteq \mathrm{NSOP}_{r}$ for our original definition of $\mathrm{NSOP}_{r}$ even when $2 < r < 3$. Our proof uses the theorem, proven in \cite{NSOP2}, that $\mathrm{NSOP}_{2} = \mathrm{NSOP}_{1}$. For $2 < r < 3$, it will then be the case that $\mathrm{NSOP}_{2} \subseteq \mathrm{NSOP}_{r} \subseteq \mathrm{NSOP}_{3}$, so newness of all of the well-defined properties $\mathrm{NSOP}_{r}$ for non-integer $r$ would negatively resolve the problem of whether $\mathrm{NSOP}_{2}$ is equal to $\mathrm{NSOP}_{3}$.

We then prove an approximate alternative between two possibilities: (1) that in extending Shelah's original $\mathrm{NSOP}_{n}$ hierarchy for integers $n \geq 3$ to the $\mathrm{NSOP}_{r}$ hierarchy for reals $r > 2$, we really did introduce new classification-theoretic properties, and (2) that $\mathrm{NSOP}_{n+1} \cap \mathrm{NTP}_{2} = \mathrm{NSOP}_{n} \cap \mathrm{NTP}_{2}$ for integers $n \geq 3$, which would resolve a central open problem in classification theory. More precisely, we give a rigorous sense in which (1) can fail on particularly general grounds, and then show that if (1) fails for these general reasons, (2) must be true. 

In the longest part of this paper, we apply cycle-removal techniques from the theory of the properties $\mathrm{NSOP}_{r}$ for real-values of $r$ to make progress on the question of whether $\mathrm{NSOP}_{2}$ is equal to $\mathrm{NSOP}_{3}$. We (a) show that if $\mathcal{H}$ is a hereditary class of structures defined by finitely many forbidden weakly embedded substructures, if every theory whose models have age $\mathcal{H}$ has $\mathrm{SOP}_{2}$, then every theory whose models have age $\mathcal{H}$ has $\mathrm{SOP}_{3}$, and (b) observe that this is not the case replacing ``$\mathrm{SOP}_{2}$" with ``the tree property." By recent work of Bodor, Bodirsky and Marimon in \cite{BBM25}, this general result reduces to what initially superficially appeared to be a mere verification for a specific family of structures.

In sidebars, we give two additional applications of our techniques arising from the real-valued $\mathrm{NSOP}_{r}$ hierarchy, one to graphs definable in $\mathrm{NTP}_{2}$ theories, and another to deriving approximate implications between properties from classification theory.

\end{abstract}

\setcounter{tocdepth}{2}
\tableofcontents

\section{Introduction}

One of the main problems of model theory is whether $\mathrm{NSOP}_{2}$ is equal to $\mathrm{NSOP}_{3}$. A related problem is whether the larger $\mathrm{NSOP}_{n}$ hierarchy for integers $n \geq 2$ is strict within $\mathrm{NTP}_{2}$, or equivalently whether $\mathrm{NSOP}_{n} \cap \mathrm{NTP}_{2}$ is equal to $\mathrm{NSOP}_{n+1} \cap \mathrm{NTP}_{2}$ for integers $n \geq 2$.

Both of these problems, which we will describe in more detail later in this introduction and paper, belong to the model-theoretic field of \textit{classification theory}, specifically the classification of unstable theories. Classification theory has its ultimate origins\footnote{Deeper within classification theory's intellectual ancestry, one might look to various classification programs within mathematics in general, including the categorization of rings according to different algebraic invariants such as being a Noetherian, Artinian or Dedekind ring, or the classification of compact surfaces beginning with work of Euler. More boldly, we might add to this other forms of classification such as the Linnean taxonomy into species, genus, family, and so on, the Aristotelian categories, etc. A comprehensive intellectual history of the concept of classification would be beyond the scope of this introduction. We will instead start with the more direct motivating intuition for classification theory as a field.} in observations as simple as the fact that the complex numbers $\mathbb{C}$ form the only algebraically closed field, up to isomorphism of fields, of cardinality that of the continuum. More proximately, classification theory begins with Morley's results in \cite{M65} generalizing this observation. According to Morley's categoricity theorem, if a first-order theory has exactly one model up to isomorphism of size $\kappa$ for some uncountable cardinal $\kappa$, that theory must have exactly one model up to isomorphism of size $\kappa'$ for \textit{every} uncountable cardinal $\kappa'$.

\: Shelah develops Morley's initial work into a far more extensive model-counting program, in his monograph establishing classification theory as one of the main areas of research in mathematical logic (\cite{Sh90}). There, he gives a strikingly rich classification of first-order theories according to the spectrum $I(T, \kappa)$ of the numbers of non-isomorphic models of $T$ of cardinality $\kappa$, a classification that is completed for uncountable $\kappa$ in \cite{HHL00}. The classification of first-order theories in terms of cardinality spectra $I(T, \kappa)$ of uncountable models takes place entirely within the class of \textit{stable theories}, which can be defined either in terms of type-counting or in terms of the \textit{order property} for formulas. If $T$ is unstable, then for uncountable values of $\kappa$, one just obtains the maximum number of models $I(T, \kappa)=2^{\kappa}$, so unstable theories are less amenable to Shelah's model-counting program. Thus, Shelah's original classification program prompts the question of how to classify unstable theories, which the model-counting, type-counting and order property conditions suggest must be more complex than stable theories.

Shelah initiates this project of classifying unstable theories by introducing what, together with additional properties added by Džamonja and Shelah, is now considered to be the classical hierarchy of unstable theories.  This classical hierarchy grounds a system of ``dividing lines" for measuring the logical complexity of a first-order theory. As with stability, all of the main properties of the classical hierarchy can be defined in terms of combinatorial configurations in individual formulas. However, some of the properties in this hierarchy can be described in surprising ways, via analogues of the original techniques used to count the number of models $I(T, \kappa)$ in stable theories. (The most important of these techniques is to develop a theory of independence. See, e.g., \cite{Kim98} and \cite{KR17} for some of the foundational work on independence in unstable theories.) As suggested by the question of whether $\mathrm{NSOP}_{2}$ is equal to $\mathrm{NSOP}_{3}$, as well as the question of whether $\mathrm{NSOP}_{n} \cap \mathrm{NTP}_{2}$ is equal to $\mathrm{NSOP}_{n+1} \cap \mathrm{NTP}_{2}$ for integers $n \geq 2$, some of the most conspicuous gaps in our understanding of the unstable hierarchy involve the family of properties $\mathrm{NSOP}_{n}$ for larger values of $n$. By this, we mean the negation of the \textit{$n$-strict order property}, particularly when $n$ is greater than $1$, or greater than $2$.

An especially intuitive property motivating the properties $\mathrm{NSOP}_{n}$ is Shelah's property $\mathrm{NSOP}$: the negation of the \textit{strict order property}, full stop. A theory has the strict order property if it interprets a partially ordered set with an infinite (ascending or descending) chain for the order relation. For $<$ this order relation, the infinite chain gives a sequence $\{a_i\}_{i < \omega}$ with $\models a_{i}  < a_{j}$ when $i < j$. Moreover, the fact that $<$ is an order implies that the relation $<$ has no directed $n$-cycles for any $n$. These facts about orders motivate the following properties approximating the strict order property, which Shelah introduces in \cite{She95}:

\begin{definition}\label{integer-valued hierarchy}(Shelah, \cite{She95}.)
Let $n \geq 3$ be an integer. A theory $T$ is $\mathrm{NSOP}_{n}$ (that is, does not have the \emph{n-strict order property}) if there is no definable relation $R(x, y)$ such that

\begin{itemize}
    \item there exists a sequence $\{a_{i}\}_{i < \omega}$ such that $\models R(a_{i}, a_{j})$ for $i <j$, but

    \item there does not exist a directed $n$-cycle for $R(x, y)$; i.e., there are no $\{a_{i}\}^{n-1}_{i = 0}$ such that $\models R(a_{i}, a_{(i+1)\: \mathrm{mod}\: n}) $ for $0 \leq i \leq n-1$.
\end{itemize}
 Otherwise, it has $\mathrm{SOP}_{n}$.
\end{definition}

Džamonja and Shelah add to this family by introducing the properties $\mathrm{NSOP}_{1}$ and $\mathrm{NSOP}_{2}$.  (Džamonja and Shelah define these properties in \cite{DS04}; Shelah states this definition earlier, in \cite{Sh99}, referencing then-ongoing joint work where these properties were defined.) These have different definitions--informally speaking, they are both defined in terms of trees, with $\mathrm{SOP}_{2}$ representing an especially symmetrical tree and $\mathrm{SOP}_{1}$ representing a more asymmetrical tree--and we will discuss them in greater detail later on. (They will be defined in Definition \ref{nsop1 definition and nsop2 definition} below.) A longstanding open problem originally posed by Džamonja and Shelah (\cite{Sh99}, \cite{DS04}) asked whether $\mathrm{NSOP}_{1}$ is equal to $\mathrm{NSOP}_{2}$. This was resolved by the author, who showed in \cite{NSOP2} that $\mathrm{NSOP}_{1}$ is equal to $\mathrm{NSOP}_{2}$. However, this still leaves open the additional longstanding problem mentioned at the beginning of the introduction:

\begin{question}\label{is nsop2 equal to nsop3}(\cite{DS04}, \cite{Sh99})

Is $\mathrm{NSOP}_{2}$ equal to $\mathrm{NSOP}_{3}$?

\end{question}

Even questions such as this one exist within a restricted understanding of $\mathrm{NSOP}_{n}$, under which $n$ is assumed to be a (positive) integer. But the legacy integer-valued view does not capture the full extent of the challenges surrounding the higher $\mathrm{NSOP}_{n}$ hierarchy. Until recently, the properties $\mathrm{NSOP}_{n}$ were only defined for integer values of $n$, and it is not completely transparent from Shelah's original definition (for $n\geq 3$) that it can be extended to non-integer values of $n$. This began to change with some initial interest in extensions to non-integer values by Hanson (\cite{Hanson2020}), who defined a real-valued family of properties which he showed coincided with the original integer-valued $\mathrm{NSOP}_n$ hierarchy.\footnote{See the appendix for a discussion of this historical precedent for interest in the possibility of extending this hierarchy to non-integer values of $n$. In this thesis appendix of Hanson (\cite{Hanson2020}), a family of properties parametrized by real values $r$ is introduced, which Hanson describes as ``roughly equivalent to an instance of `$\mathrm{SOP}_{\frac{1}{r}}$.'" These properties are defined in terms of quasimetrics. In contrast to the apparent difficulty of the problem of whether the real-valued $\mathrm{NSOP}_{r}$ hierarchy is distinct from the integer-valued $\mathrm{NSOP}_{n}$ hierarchy (Question \ref{disinctness question}), it is known, per Hanson, that his quasimetric properties coincide with the integer-valued $\mathrm{NSOP}_{n}$ hierarchy. However, this is in discrete logic. In continuous logic, the possibility is left open that the family of real-valued quasimetric properties differs from the family of properties $\mathrm{SOP}_{n}$ from $n$ an integer, but only infinitesimally. Because we are restricting our attention to discrete first-order logic with its classical two-valued semantics (and in continuous logic, the newness of Hanson's properties would imply that they do not in an exact sense give an extension of the integer-valued $\mathrm{SOP}_n$ hierarchy) we will not take Hanson's properties as the definition of $\mathrm{SOP}_{r}$ for $r$ real, but will give an exposition of Hanson's equivalence argument in the appendix. The question of whether the quasimetric properties do differ infinitesimally from Shelah's hierarchy in continuous logic remains of interest for further investigation.} Then in \cite{ApproxOrder}, the author observed that Shelah's definition can be restated to make sense for non-integer values of $n$, so as to yield a hierarchy \textit{not} known to coincide with the hierarchy for integer values of $n$. From this observation, we obtain the following properties $\mathrm{NSOP}_{r}$ for real values of $r$. Note that when $r = n$ is an integer, these coincide with Shelah's original properties $\mathrm{NSOP}_{n}$ for $n \geq 3$.

\begin{definition}\label{real-valued hierarchy}(\cite{ApproxOrder})

Let $r \geq 3$ be a real number. A theory $T$ is $\mathrm{NSOP}_{r}$ if there is no definable relation $R(x, y)$ such that:

\begin{itemize}
    \item there exists a sequence $\{a_{i}\}_{i < \omega}$ such that $\models R(a_{i}, a_{j})$ for $i <j$, but

    \item there does not exist a set $\{a_{\theta}\}_{\theta \in S^{1}}$, indexed by the unit circle $S^{1}$, such that $\models R(a_{\theta}, a_{\psi})$ for all $\theta, \psi \in S^{1}$ with $\psi$ lying at most $\frac{2\pi}{r}$ radians counterclockwise from $\theta$.
\end{itemize}

\end{definition}

We will return soon to the assumption that $r \geq 3$. Having successfully extended the $\mathrm{NSOP}_{n}$ hierarchy for $n$ an integer to the family of properties $\mathrm{NSOP}_{r}$ for $r$ a real number, we are then left with the open problem of whether we actually introduced new classification-theoretic properties:

\begin{question}\label{disinctness question}(\cite{ApproxOrder})

Is the $\mathrm{NSOP}_{r}$ hierarchy for $r \geq 3$ a real number distinct from the $\mathrm{NSOP}_{n}$ hierarchy for $n \geq 3$ an integer? That is, is there some real number $r \geq 3$ such that the class of $\mathrm{NSOP}_{r}$ theories is distinct from the class of $\mathrm{NSOP}_{n}$ theories for all $n \geq 3$?

\end{question}

If this answer is yes, we will have demonstrated a troubling sense in which the classical classification-theoretic hierarchy is incomplete.

The status of this question appears intricate even at the purely combinatorial level. We can straightforwardly translate the properties $\mathrm{SOP}_{r}$ for $r$ real into the langauge of hereditary classes of structures, or classes of structures closed under induced substructures. This will give us a new real-valued combinatorial quantity, $\mathfrak{o}(\mathcal{H})$, associated with any hereditary class $\mathcal{H}$. In \cite{ApproxOrder}, the author proves an integrality theorem for $\mathfrak{o}(\mathcal{H})$ that is of independent interest from the perspective of combinatorics. Specifically, the author shows that the real-valued quantity $\mathfrak{o}(\mathcal{H})$ is in fact an integer, when $\mathcal{H}$ is a hereditary class defined by finitely many forbidden weakly embedded substructures. On the other hand, if $\mathfrak{o}(\mathcal{H})$ can be shown not to be an integer in general, this would establish one of the main preconditions for the real-valued $\mathrm{NSOP}_{r}$ hierarchy for first-order theories to be distinct from the integer-valued $\mathrm{NSOP}_{n}$ hierarchy. Informally, the author shows in \cite{ApproxOrder} that this claim that $\mathfrak{o}(\mathcal{H})$ is not an integer
in general cannot be refuted using \textit{interval helix maps}. These are a class of helix-shaped covering maps powerful enough to develop much of the theory of the real-valued $\mathrm{NSOP}_{r}$ hierarchy.

We then see that there is already enough to motivate the properties $\mathrm{NSOP}_{r}$ for non-integer values of $r$ from an \textit{intrinsic} point of view: the subtle restatement of Shelah's original definition, the disturbing potential for incompleteness within the original classification-theoretic hierarchy, and the behavior of the real-valued $\mathrm{NSOP}_{r}$ as properties in combinatorics. By contrast, this paper will concentrate on \textit{applications} of the properties $\mathrm{NSOP}_{r}$ for non-integer values of $r$. These will include applications of the question of whether the real-valued properties $\mathrm{NSOP}_{r}$ are distinct from the integer-valued properties $\mathrm{NSOP}_{n}$, and the techniques arising out of investigating this distinctness question. We will be especially interested in applying the properties $\mathrm{NSOP}_{r}$ for \textit{non-integer} values of $r$ to questions about the properties $\mathrm{NSOP}_{n}$ for \textit{integer} values of $n$. These will include the question of whether $\mathrm{NSOP}_{2}$ is equal to $\mathrm{NSOP}_{3}$, as well as the question of whether $\mathrm{NSOP}_{n} \cap \mathrm{NTP}_{2}$ is equal to $\mathrm{NSOP}_{n+1} \cap \mathrm{NTP}_{2}$ for integers $n \geq 3$. By generalizing the proof of one of the main results on the real-valued distinctness problem, we will make progress on the question of whether $\mathrm{NSOP}_{2}$ is equal to $\mathrm{NSOP}_{3}$, obtaining the most difficult leap yet since the author proved that $\mathrm{NSOP}_{1}$ is equal to $\mathrm{NSOP}_{2}$. 

\:

\textbf{The $2$-strict order property, $3$-strict order property, and $r$-strict order property for $2 < r < 3$:} Our first application of the properties $\mathrm{NSOP}_{r}$ for non-integer values of $r$, to the problem of whether $\mathrm{NSOP}_{2}$ is equal to $\mathrm{NSOP}_{3}$, will be to develop the intermediate fine structure in between $\mathrm{NSOP}_{2}$ and $\mathrm{NSOP}_{3}$. Because $\mathrm{NSOP}_{2} \subseteq \mathrm{NSOP}_{3}$, the question of whether $\mathrm{NSOP}_{2}$ is equal to $\mathrm{NSOP}_{3}$ is the same as the question of whether this inclusion is strict. This observation motivates the informal question of whether the $\mathrm{NSOP}_{r}$ hierarchy, for real values of $r$, yields a family of intermediate properties $\mathrm{NSOP}_{2} \subseteq \mathrm{NSOP}_{r} \subseteq \mathrm{NSOP}_{3}$ for real values $r$ with $2 < r < 3$.

However, we initially introduced the properties $\mathrm{NSOP}_{r}$ only for real values $r$ with $r \geq 3$. The special difficulty with extending to $2 < r < 3$ is not in formulating the definition of the property $\mathrm{NSOP}_{r}$ for all reals $r > 2$. And the special difficulty for $2 < r < 3$ \textit{ought} not to be in formulating the definitions. Our schema for the properties $\mathrm{NSOP}_{r}$, for real values $r \geq 3$ (in Definition \ref{real-valued hierarchy}), at least makes sense at face value for real values $r > 2$. So if $\mathrm{NSOP}_{r}$ is well-defined for $2 < r < 3$ in any informally reasonable sense, it ought to be defined by that same schema.

Instead, as we noted in \cite{ApproxOrder}, the potential difficulty in defining $\mathrm{NSOP}_{r}$ for $2 < r < 3$ is that the original, integer-valued $\mathrm{NSOP}_{n}$ hierarchy forms an ascending chain beginning $\mathrm{NSOP}_{1} \subseteq \mathrm{NSOP}_{2} \subseteq \mathrm{NSOP}_{3} \subseteq \ldots$. Any extension of the original integer-valued $\mathrm{NSOP}_{n}$ hierarchy, to include properties $\mathrm{NSOP}_{r}$ for some non-integer values of $r$, ought to preserve the fact that the hierarchy forms an ascending chain. For the hierarchy to form an ascending chain will require that $\mathrm{NSOP}_{2} \subseteq \mathrm{NSOP}_{r}$ whenever $\mathrm{NSOP}_{r}$ is well-defined for $r > 2$. And, if we were to define $\mathrm{NSOP}_{r}$ for $2 < r < 3$ using the same schema as $\mathrm{NSOP}_{r}$ for $r \geq 3$, it is not obvious from what is already known that $\mathrm{NSOP}_{2} \subseteq \mathrm{NSOP}_{r}$.

The main result of Section \ref{between nsop2 and nsop3} will be to remove this potential obstruction to $\mathrm{NSOP}_{r}$ being well-defined for $2 < r < 3$. We will show:
\begin{theorem}\label{main theorem 1}
    Let $\mathrm{NSOP}_{r}$ for real values $r > 2$ be defined just as $\mathrm{NSOP}_{r}$ for $r \geq 3$. Then for real values $r > 2$, $\mathrm{NSOP}_{2} \subseteq \mathrm{NSOP}_{r}$.
\end{theorem}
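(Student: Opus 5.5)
Since $\mathrm{NSOP}_{2}=\mathrm{NSOP}_{1}$ by \cite{NSOP2}, it suffices to show that every theory with $\mathrm{SOP}_{r}$ for some $2<r<3$ has $\mathrm{SOP}_{1}$. Fix a theory $T$, a real $2<r<3$ and a formula $R(x,y)$ witnessing $\mathrm{SOP}_{r}$. There is then an indiscernible sequence $\{a_i\}_{i\in\mathbb{Q}}$ with $\models R(a_i,a_j)$ for $i<j$. Moreover, there is no circle configuration $\{b_\theta\}_{\theta\in S^1}$ with $\models R(b_\theta,b_\psi)$ whenever $\psi$ lies within $2\pi/r$ counterclockwise of $\theta$. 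Since $2\pi/r>2\pi/3$, any such configuration in particular contains directed $3$-cycles; the extra information beyond $\mathrm{SOP}_{3}$ is that arcs longer than a third of the circle must be ``covered''. By compactness, the failure of the circle configuration is witnessed by a finite set of angles, so it is equivalent to the failure of a finite configuration $\{b_k\}_{k<N}$ on $N$ equally spaced points with $R(b_k,b_l)$ whenever $0<(l-k)\bmod N\le m$, for some fixed $m$ with $m/N$ slightly above $1/r>1/3$. Since $r>2$ we still have $m/N<1/2$ for suitable $N$, so this is a genuine ``partial tournament'' condition, not a full one.

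The strategy is to derive $\mathrm{SOP}_{1}$ through the Kim--Kaplan--Ramsey criterion: $T$ has $\mathrm{SOP}_{1}$ if there are a formula $\varphi(x;y)$ and an array $(c_{i,0},c_{i,1})_{i<\omega}$ with $c_{i,0}\equiv_{c_{<i}}c_{i,1}$, $\{\varphi(x;c_{i,0})\}_i$ consistent, and $\{\varphi(x;c_{i,1})\}_i$ $2$-inconsistent. Equivalently, we may show that Kim-independence fails symmetry or weak independence over models. Concretely, I would take $\varphi(x;y_1y_2)=R(y_1,x)\wedge R(x,y_2)$, so that $x$ ``sits between'' $y_1$ and $y_2$. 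The parameters $c_{i,0}$ will be intervals $(a_{s_i},a_{t_i})$ drawn from the indiscernible sequence and nested, so the instances are jointly consistent by indiscernibility. The parameters $c_{i,1}$ will be chosen so that two of them realize a ``wrap-around'' pattern, so that a common solution would close up a forbidden circle configuration.

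The heart of the proof is arranging that the two branches have the same type over earlier parameters while the second branch is pairwise inconsistent. The point is that $r<3$ lets a configuration of two intervals, together with the point between them, cover more than a third of the circle. I expect to iterate a ``cycle-removal'' argument, similar in spirit to the $\mathrm{NSOP}_{1}=\mathrm{NSOP}_{2}$ proof. I would first assume towards contradiction that $T$ is $\mathrm{NSOP}_{1}$, so that Kim-independence over models is symmetric and satisfies the independence theorem (stated in \cite{NSOP2} and the Kaplan--Ramsey work). Working over a model $M$, I would take an $M$-indiscernible sequence $(a_i)$ in the $R$-chain that is Kim-independent (a Morley sequence in a global $M$-invariant type). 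Then, by repeatedly applying the independence theorem, I would amalgamate the types $R(a_0,x)$, $R(x,a_1)$, etc., to glue copies of the chain around the circle. Each amalgamation step uses that two $R$-related elements of a Morley sequence are Kim-independent, by symmetry and by Kim's lemma. Gluing finitely many arcs, each of length below $1/2$ of the circle (which is where $r>2$ is used), yields the finite configuration $\{b_k\}_{k<N}$. Compactness then gives the circle configuration, contradicting $\mathrm{SOP}_{r}$.

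The main obstacle is the gluing step. The independence theorem amalgamates only over a common base with pairwise-independent parameters, but the circle configuration requires each new point to be $R$-related to the $m$ preceding points simultaneously, and the final point must also close up with the first $m$ points. I would handle this by induction on $k$. At each step, the new point $b_k$ must realize $R(b_j,x)$ for $k-m\le j<k$ and $R(x,b_j)$ for the relevant wrap-around $j$. I would realize these as amalgamations of two types, each consistent by the indiscernible chain: a ``forward'' type over the preceding block and a ``backward'' type over the initial block. The condition $m/N<1/2$, that is $r>2$, guarantees that these two blocks are disjoint arcs and thus can be taken Kim-independent over $M$. Making this independence bookkeeping precise, by choosing the blocks as Morley sequences and possibly using tree Morley sequences and the chain condition, is the step I expect to be hardest.
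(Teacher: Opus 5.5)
Your overall frame matches the paper's: pass from $\mathrm{NSOP}_{2}$ to $\mathrm{NSOP}_{1}$, put the chain into a Morley sequence over a model $M$ so that $a_0 \ind^K_M a_1$ and $a_1 \ind^K_M a_0$, and then build the circle configuration with symmetry and the independence theorem for $\ind^K$. The explicit $\mathrm{SOP}_{1}$-array strategy in your first half is never carried out and can be dropped. The real problem is that the one substantive step, the gluing, is left open, and the mechanism you propose for it does not work as stated.

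You want to amalgamate a ``forward'' type over the preceding block $b_{k-m},\dots,b_{k-1}$ with a ``backward'' type over the initial wrap-around block. This has two gaps.
\begin{itemize}
\item The indiscernible chain only shows the forward type is consistent, and realizable Kim-independently, when the block has the same type over $M$ as an initial segment of the Morley sequence. Nothing in your construction preserves that, since the $b_j$ are themselves products of earlier amalgamations.
\item The independence theorem needs the two blocks to be Kim-independent over $M$, and being ``disjoint arcs'' gives no such thing. For $k$ near $N$ the configuration itself forces $R$-edges between the blocks: with $N=7$, $m=3$, $k=6$ you need $R(b_5,b_0)$, $R(b_4,b_0)$ and $R(b_5,b_1)$. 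An independence such as $b_{k-m}\dots b_{k-1}\ind^K_M b_0\dots b_{k+m-N}$ would require a transitivity or chain property of $\ind^K$ that you have not established.
\end{itemize}

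The missing idea is to amalgamate one edge at a time. Then only the single-vertex types $p_{\rightarrow}(x,y)=\mathrm{tp}(a_0a_1/M)$ and $p_{\leftarrow}(x,y)=\mathrm{tp}(a_1a_0/M)$ are used. These depend only on $\mathrm{tp}(c/M)=p$, and a realization of either one over $c$ is automatically Kim-independent from $c$.

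Build $c_1,\dots,c_N\models p$ keeping the invariant $c_i\ind^K_M c_1\dots c_{i-1}$. To add $c_{n+1}$, construct by induction $b_k$ ($k\le n$) with $b_k\ind^K_M c_1\dots c_k$ and the required edges to $c_1,\dots,c_k$, as follows:
\begin{itemize}
\item take $b'_{k+1}$ realizing $p_{\leftarrow}(x,c_{k+1})$ or $p_{\rightarrow}(x,c_{k+1})$, as the edge direction dictates;
\item note that $b'_{k+1}\equiv_M b_k$, and that $c_1\dots c_k\ind^K_M c_{k+1}$ by the invariant and symmetry;
\item apply the independence theorem to get $b_{k+1}\ind^K_M c_1\dots c_{k+1}$ with $b_{k+1}\equiv_{Mc_1\dots c_k}b_k$ and $b_{k+1}\equiv_{Mc_{k+1}}b'_{k+1}$.
\end{itemize}
Setting $c_{n+1}:=b_n$ completes the step.

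This weakly embeds every finite tournament, hence by compactness every directed graph, into the relation defined by $R$. The hypothesis $r>2$ is needed only so that the circle graph is antisymmetric and irreflexive, i.e.\ a genuine directed graph. No finite approximation on $N$ points and no bookkeeping about arcs is required.
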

Therefore $\mathrm{NSOP}_{r}$, which we previously only saw to be well-defined for reals $r \geq 3$, is actually well-defined even for $2 < r < 3$ (using the same schema). We conclude that the question of whether the properties $\mathrm{NSOP}_{r}$ for non-integer $r$ actually give us new classification-theoretic properties is directly connected to the question of whether $\mathrm{NSOP}_{2}$ is equal to $\mathrm{NSOP}_{3}$. Specifically, since 

$$\mathrm{NSOP}_{2} \subseteq \mathrm{NSOP}_{r} \subseteq \mathrm{NSOP}_{3} \mathrm{\: \: for \: \:} 2< r< 3,$$ 

if every well-defined property $\mathrm{NSOP}_{r}$ for non-integer $r$ is a new property distinct from all of the integer-values properties $\mathrm{NSOP}_{n}$, it must follow that $\mathrm{NSOP}_{2}$ is not equal to $\mathrm{NSOP}_{3}$.

Our proof of Theorem \ref{main theorem 1} appears to require the theorem, proven in \cite{NSOP2}, that $\mathrm{NSOP}_{1} = \mathrm{NSOP}_{2}$. After applying this theorem, we must use analogues of the original stability-theoretic tools used by Shelah to analyze the model-counting function $I(T, \kappa)$. Namely, we apply the theory of \textit{Kim-independence} in $\mathrm{NSOP}_{1}$ theories, introduced by Kaplan and Ramsey in \cite{KR17}. This allows us to prove a property that implies $\mathrm{NSOP}_{r}$ for real values $r > 2$: if $R(x, y)$ is a definable relation with a sequence $\{a_{i}\}_{i< \omega}$ such that $\models R(a_{i}, a_{j})$ for $i < j$, then the relation defined by $R(x, y)$ embeds every directed graph (possibly with additional edges). This property of obtaining every directed graph from an infinite chain, which we call \textit{o-maximality}, will offer one supplementary motivation for the next section.

\:

\textbf{General non-distinctness of real-valued $\mathrm{NSOP}_{r}$ and integer-valued $\mathrm{NSOP}_{n}$:}  In our next application of the real-valued $\mathrm{NSOP}_{r}$ distinctness question, we will investigate most cases of the second open problem from the beginning of this introduction. This question will involve \textit{$\mathrm{NTP}_{2}$ theories}, or theories with the negation of the \textit{$2$-tree property} defined in Definition \ref{ntp_2 definition} below.

\begin{question}\label{ntp_2 vs nsop_n}

Let $n \geq 3$ be an integer. Is $\mathrm{NSOP}_{n} \cap \mathrm{NTP}_{2}$ equal to $\mathrm{NSOP}_{n+1} \cap \mathrm{NTP}_{2}$?

\end{question}

This question was first explicitly asked in \cite{Che14}, but it originates as early as Shelah's work in \cite{Sh90}, where he gives a claimed counterexample to the statement that every $\mathrm{NTP}_{2}$ theory without the strict order property is simple.

Our goal in Section \ref{the approximate alternative} is to demonstrate a connection between two possibilities with very different motivations:

(1) The real-valued $\mathrm{NSOP}_{r}$ hierarchy is distinct from the integer-valued $\mathrm{NSOP}_{n}$ hierarchy.

(2) $\mathrm{NSOP}_{n} \cap \mathrm{NTP}_{2}$ is equal to $\mathrm{NSOP}_{n+1} \cap \mathrm{NTP}_{2}$ for $n \geq 3$.

If (1) is the case, that will demonstrate that extending the original $\mathrm{NSOP}_{n}$ hierarchy to our real-valued $\mathrm{NSOP}_{r}$ hierarchy really does introduce new classification-theoretic properties, and therefore that the original hierarchy is incomplete in the sense we described. On the other hand, if (2) is the case, that will resolve the established open question we just mentioned, Question \ref{ntp_2 vs nsop_n} above. These two disparate motivations collectively motivate any result making progress toward the claim that either (1) \textit{or} (2) must be true.

Informally speaking, in our main result of Section \ref{the approximate alternative}, we will show that it is \textit{approximately} the case that either (1) is true or (2) is true (even though we will not prove that this is \textit{exactly} the case.) Specifically, we will give a rigorous sense in which (1) might fail on sufficiently general grounds. And we will show that if (1) fails on sufficiently general grounds in the sense we describe, (2) must be true.

The specific sense in which the real-valued $\mathrm{NSOP}_{r}$ hierarchy and the integer-valued $\mathrm{NSOP}_{n}$ hierarchy can be hypothesized to be non-distinct on sufficiently general grounds will be Hypothesis \ref{nondistinctness on sufficiently general grounds} below. Explicitly put, the original open claim that the real-valued $\mathrm{NSOP}_{r}$ hierarchy is non-distinct from integer-valued $\mathrm{NSOP}_{n}$ hierarchy says: \textit{for any integer $n \geq 3$, if a definable relation in an $\mathrm{NSOP}_{n}$ theory has an infinite chain, it must have the unit circle configuration for $n-1< r< n$.} Now informally, let us consider the possibility that this is the case, but for reasons that only involve the bare minimum of the specific features of the unit circle configurations. That is, perhaps the reason why we can get the unit circle configurations from an infinite chain is just that the unit circle configurations for $n-1 < r < n$ have no (directed) $n-1$-cycles. This gives us the more general statement that \textit{for any integer $n \geq 3$, if a definable relation in an $\mathrm{NSOP}_{n}$ theory has an infinite chain, it must have every configuration given by an $n-1$-cycle-free directed graph.} This level of generality will suffice for our main result, but we will go one step farther.

That is to say, we may even make this more general assertion at the purely combinatorial level. This gives us the even more general statement that \textit{for any integer $n \geq 3$, in any directed graph which is $\mathrm{NSOP}_{n}$ in the quantifier-free graph sense, if the edge relation has an infinite chain, it must have a copy of every $n - 1$-cycle-free graph.} This entirely combinatorial statement, to which we have no known counterexample, will be what we mean for the real-valued $\mathrm{NSOP}_{r}$ and integer-valued $\mathrm{NSOP}_{n}$ hierarchies to be non-distinct \textit{on sufficiently general grounds.}

Our motivation for this general non-distinctness hypothesis will have some additional robustness. First, one of our main existing conclusions about non-distinctess of the real and integer hierarchies, for hereditary classes defined by finitely many forbidden weak substructures, actually yields non-distinctness on sufficiently general grounds (Remark \ref{comparison with existing results}). Second, any $\mathrm{NSOP}_{2}$ theory will satisfy the conclusion of our general non-distinctness hypothesis, which is how we show that any $\mathrm{NSOP}_{2}$ theory satisfies $\mathrm{NSOP}_{r}$ for reals $r > 2$ (Corollary \ref{nsop2 implies o-maximality}, shown within the proof of the above Theorem \ref{main theorem 1}). Applying our general non-distinctness hypothesis, we show:

\begin{theorem}\label{main theorem 2}

Suppose the real-valued $\mathrm{NSOP}_{r}$ and integer-valued $\mathrm{NSOP}_{n}$ hierarchies are non-distinct on sufficiently general grounds. Then for integers $n \geq 3$, $\mathrm{NSOP}_{n} \cap \mathrm{NTP}_{2}$ is equal to $\mathrm{NSOP}_{n+1} \cap \mathrm{NTP}_{2}$.

\end{theorem}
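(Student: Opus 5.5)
We argue by contraposition. Fix an integer $n \geq 3$ and suppose $T$ is $\mathrm{NTP}_{2}$ and $\mathrm{NSOP}_{n+1}$ but has $\mathrm{SOP}_{n}$; we will produce an instance of $\mathrm{TP}_{2}$, a contradiction. By $\mathrm{SOP}_{n}$ there is a definable $R(x,y)$ with an infinite chain $\{a_i\}_{i<\omega}$ and no directed $n$-cycle. By the standard Shelah-style manipulation (replacing $R$ by a formula on tuples, e.g. $R'(\bar x,\bar y)$ on consecutive blocks), we may moreover assume that $R$ has no directed cycles of any length $\leq n$ while keeping an infinite chain. Since $T$ is $\mathrm{NSOP}_{n+1}$, the directed graph $(\mathbb{M},R)$ is $\mathrm{NSOP}_{n+1}$ in the quantifier-free graph sense. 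The non-distinctness hypothesis, applied at $n+1$, then says that $R$ contains a copy (possibly with extra edges) of every $n$-cycle-free directed graph. So the plan is to exhibit, for each finite size, one $n$-cycle-free directed graph $G$ whose embedding into $R$, together with the fact that $R$ itself has no short cycles, produces a finite $\mathrm{TP}_{2}$ pattern; compactness then gives $\mathrm{TP}_{2}$.

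The candidate formula is $\varphi(x; y z) := R(y,x) \wedge R(x,z)$, i.e. ``$x$ lies on an $R$-path from $y$ to $z$.'' For an array of size $k \times m$ we let $G$ have vertices $c_{i,j}, d_{i,j}$ ($i<k$, $j<m$) and $a_f$ for every $f : k \to m$, with edges $c_{i,f(i)} \to a_f \to d_{i,f(i)}$ (giving consistency of paths, witnessed by $a_f$), together with edges $d_{i,j} \to c_{i,j'}$ for $j < j'$ in the same row. Then row inconsistency is automatic inside \emph{any} copy in $R$: if $x$ satisfied $\varphi(x; c_{i,j} d_{i,j})$ and $\varphi(x; c_{i,j'} d_{i,j'})$ with $j<j'$, then $x \to d_{i,j} \to c_{i,j'} \to x$ would be a directed $3$-cycle in $R$, which is excluded since $3 \leq n$. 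Thus rows are $2$-inconsistent and paths consistent, which is $\mathrm{TP}_{2}$ for $\varphi$.

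The main obstacle is verifying that $G$ is genuinely $n$-cycle-free, since the hypothesis can only be applied to such graphs. The $3$-cycles are avoided in $G$ itself (there $a_f$ only meets one vertex per row), but longer cycles alternating between rows and paths can appear, e.g. the $6$-cycle $a_f \to d_{0,0} \to c_{0,1} \to a_g \to d_{1,0} \to c_{1,1} \to a_f$ for $f=(0,1)$, $g=(1,0)$. My first attempt at fixing this is to lengthen the gadget: replace the row edges $d_{i,j}\to c_{i,j'}$ by directed paths of length $n-2$ through fresh vertices (and correspondingly let $\varphi$ quantify over a path $x \to d \leadsto c \to x$ of length exactly $n$ being forbidden), so that the only cycles forced inside $R$ by row-inconsistency have length $\leq n$, while every cycle of $G$ must traverse at least two row-gadgets and two path-vertices and hence has length $> n$. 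If the cycle count still fails, I would instead use compactness plus a high-girth (Erdős-type) selection of which paths $f$ receive witnesses, showing that a suitably indiscernible sparse family of paths still suffices to extract $\mathrm{TP}_{2}$ via the standard array-indiscernibility arguments, while all cycles in $G$ have length exceeding $n$.
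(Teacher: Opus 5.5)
Your lengthened gadget (fresh directed paths of length $n-2$ from $d_{i,j}$ to $c_{i,j'}$ for $j<j'$, edges $c_{i,f(i)}\to a_f\to d_{i,f(i)}$, and $\varphi(x;yz)=R(y,x)\wedge R(x,z)$ left unchanged) is exactly the paper's construction, and it does finish the proof. The row gadgets are acyclic. Every $a$-to-$a$ segment $a_f\to d_{i,f(i)}\leadsto c_{i,j'}\to a_g$ stays in one row, has length exactly $n$, and cannot close at $a_f$, since that would need $f(i)=j'>f(i)$; so all cycles have length at least $2n$. Meanwhile a common solution $x$ of $\varphi(x;c_{i,j}d_{i,j})$ and $\varphi(x;c_{i,j'}d_{i,j'})$ gives the $R$-cycle $x\to d_{i,j}\leadsto c_{i,j'}\to x$ of length exactly $n$.

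Hence the high-girth fallback is not needed. The reduction to $R$ having no cycles of length $\le n$ is harmless but unnecessary beyond making $R$ irreflexive and antisymmetric.
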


So to prove that the real-valued $\mathrm{NSOP}_{r}$ and integer-valued $\mathrm{NSOP}_{n}$ hierarchies are non-distinct on sufficiently general grounds would give us a new, combinatorial method of resolving Question \ref{ntp_2 vs nsop_n}.

The motivation for our proof of Theorem \ref{main theorem 2} comes from a specific example: the standard proof that the theory of the generic $\leq n$-cycle-free directed graph, introduced by Shelah in \cite{She95}, has $\mathrm{TP}_{2}$. We will observe that this proof gives us more general information, assuming our general non-distinctness hypothesis. 

\:

\textbf{Combinatorial analysis of the $\mathrm{NSOP}_{2}$-$\mathrm{NSOP}_{3}$ problem:} Since the author's result in \cite{NSOP2} that $\mathrm{NSOP}_{1}$ is equal to $\mathrm{NSOP}_{2}$, there has been interest in extending this to answer the question of whether $\mathrm{NSOP}_{2}$ is equal to $\mathrm{NSOP}_{3}$. Up to now, the main results have required assumptions about structural properties of models of an $\mathrm{NSOP}_{3}$ theory. These structural assumptions involved the theory of independence generalizing Shelah's methods in \cite{Sh90}, and Kaplan and Ramsey's methods in \cite{KR17}. (See \cite{GFA}, \cite{INDNSOP3} for the main applications of the theory of independence to the question of whether $\mathrm{NSOP}_{2}$ is equal to $\mathrm{NSOP}_{3}$.)

In section \ref{nsop2 vs nsop3}, we will make progress of a much different character on the question of whether $\mathrm{NSOP}_{2}$ is equal to $\mathrm{NSOP}_{3}$. Instead of requiring structural assumptions about models, our final result can be explicitly stated in terms of the axioms within a first-order theory. Specifically, we will reveal a fundamental combinatorial tension between the properties $\mathrm{SOP}_{2}$ and $\mathrm{NSOP}_{3}$. Our background assumption will be a basic condition of interest within hereditary class theory. To demonstrate this tension between $\mathrm{SOP}_2$ and $\mathrm{NSOP}_3$, we will apply the techniques and setting of our results on the real-valued $\mathrm{NSOP}_{r}$ hierarchy, though we will have to be careful in adapting the techniques.

Our results in \cite{ApproxOrder} about the integrality of $\mathfrak{o}(\mathcal{H})$--$\mathfrak{o}(\mathcal{H})$ being the supremum of those real values $r$ for which $\mathcal{H}$ has $\mathrm{SOP}_{r}$-- involved a hereditary class $\mathcal{H}$ defined by finitely many weakly embedded substructures. A weak embedding is just an embedding that preserves all positive instances of relation symbols: for example, an embedding of graphs that preserves all edges, but not necessarily all non-edges. So we are interested in classes $\mathcal{H}$ that are defined, for some finite family $\mathcal{F}$, as the class of finite structures that do not weakly embed any member of $\mathcal{F}$: for example, the class of directed graphs without directed triangles. Classes of structures defined by forbidden substructures are a rich subject of interest in combinatorics; see, e.g., \cite{LZ17} for a discussion of the analogous property with induced substructures.

In our main result of Section \ref{nsop2 vs nsop3}, we will be interested in the collection of all finite substructures of the models of a complete theory. For a  theory $T$, the \textit{age} of a model $M \models T$ will be the set of all finite substructures of $M$. Often, a hereditary class $\mathcal{H}$ will have the property that all theories whose models have age $\mathcal{H}$ will have a certain classification-theoretic configuration. For example, if $\mathcal{H}$ is the hereditary class of directed graphs without directed triangles, every theory whose models have age $\mathcal{H}$ will have $\mathrm{SOP}_{3}$ (but not necessarily $\mathrm{SOP}_{4}$). If $\mathcal{H}$ is the hereditary class of bipartite graphs without the complete bipartite graph $K_{2, 2}$, then every theory whose models have age $\mathcal{H}$ will be non-simple (but will not necessarily have $\mathrm{SOP}_{1}$; see \cite{CoK19}). Generalizing these examples, we will prove the following dichotomy between $\mathrm{NSOP}_{2}$ and $\mathrm{SOP}_{3}$:

\begin{theorem}\label{main theorem 3}

Let $\mathcal{H}$ be a hereditary class defined by a finite family of forbidden weakly embedded substructures. Then if every theory whose models have age $\mathcal{H}$ has $\mathrm{SOP}_{2}$, every theory whose models have age $\mathcal{H}$ has $\mathrm{SOP}_{3}$.

\end{theorem}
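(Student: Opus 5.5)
We argue by contraposition. Suppose some theory $T_0$ whose models have age $\mathcal{H}$ is $\mathrm{NSOP}_{3}$. From $T_0$ we will build a (possibly different) theory $T$ whose models have age $\mathcal{H}$ and which is $\mathrm{NSOP}_{2}$, hence $\mathrm{NSOP}_{1}$ by \cite{NSOP2}. The natural candidate is a generic, or free amalgamation, theory for $\mathcal{H}$. The difficulty is that $\mathcal{H}$, being defined by forbidden \emph{weakly} embedded substructures, is closed under removing positive instances of relations and under disjoint unions. However, it need not have free amalgamation: a forbidden structure $F \in \mathcal{F}$ can be split across two factors of an amalgam and so appear only in the free amalgam.

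The plan has three steps.

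\textbf{Step 1: extract the hypothesis.} Using Ramsey and compactness, we take an indiscernible configuration in a model of $T_0$ and read $\mathrm{NSOP}_{3}$ as a combinatorial constraint on $\mathcal{H}$. This is exactly the quantifier-free statement behind the integrality theorem of \cite{ApproxOrder}. There, forbidden weak substructures were handled by cycle-removal: helix-shaped covering maps unwind a structure so that any weak copy of some $F \in \mathcal{F}$ in the cover projects to a weak copy in the base. We want the analogous statement here. If $\mathcal{H}$ failed to admit a sufficiently free amalgamation along indiscernible chains, then $T_0$ would realize an infinite chain for some quantifier-free-definable $R$ with no directed $3$-cycle, giving $\mathrm{SOP}_{3}$.

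\textbf{Step 2: define the free theory.} Concretely, we aim to show that under $\mathrm{NSOP}_{3}$ of $T_0$, the \emph{sparse} or free amalgam of any two structures in $\mathcal{H}$ over a common substructure, with no positive relations across, lies in $\mathcal{H}$. The obstruction is a copy of some $F$ straddling the amalgam. We would remove it by iterating amalgamation along a cyclic, helix-like arrangement. If copies of $F$ are forced in every such arrangement, their positive relations define a cycle-forbidding relation on an indiscernible sequence, producing $\mathrm{SOP}_{3}$. With free amalgamation over $\mathcal{H}$ in hand, we form the Fraïssé limit, or a model companion after adding Skolem-free expansions if the language is not relational. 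This gives a theory whose models have age $\mathcal{H}$.

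\textbf{Step 3: verify $\mathrm{NSOP}_{1}$.} Free-amalgamation theories in the sense of Conant are $\mathrm{NSOP}_{4}$, and they are simple or $\mathrm{NSOP}_{1}$ exactly when the relevant amalgamation extends over models with independence satisfying the independence theorem. We check Kim's lemma and the independence theorem for free independence $a \ind_M b$, meaning no positive relations between $a$ and $b$ outside $M$, using free amalgamation of three-amalgams. This three-amalgam step is again justified by the weak-substructure condition. This yields $\mathrm{NSOP}_{1} = \mathrm{NSOP}_{2}$, a contradiction. The cited reduction of \cite{BBM25} suggests that after these steps the remaining work is to verify the claim for a specific family of structures, such as structures built from directed-cycle-free configurations. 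We would handle that family by direct computation.

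\textbf{Main obstacle.} The hardest step is Step 2: converting the failure of free amalgamation into $\mathrm{SOP}_{3}$ for $T_0$. It is unclear whether straddling copies of $F$ can always be arranged into a $3$-cycle-free chain. We would first try the interval helix covering argument from \cite{ApproxOrder}, adapted to chains of length three. Failing that, we would fall back on the \cite{BBM25} classification and check the resulting family case by case.
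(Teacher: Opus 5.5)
\textbf{The proposal has a genuine gap: Step~2 sets out to prove a false statement.} $\mathrm{NSOP}_{3}$, or even $\mathrm{NSOP}_{1}$, of a theory with age $\mathcal{H}$ does not force $\mathcal{H}$ to have free amalgamation. The paper's own example from Observation~\ref{main observation, restated} shows this. Let $\mathcal{H}$ be the class of graphs of binary partial functions (ternary $R$, forbidding two values at one argument pair). The structures $\{a,b,c_{1}\}$ with $R(a,b,c_{1})$ and $\{a,b,c_{2}\}$ with $R(a,b,c_{2})$ both lie in $\mathcal{H}$, but their free amalgam over $\{a,b\}$ does not. Yet $\mathcal{H}$ is the age of the generic binary function $T^{f}$, which is $\mathrm{NSOP}_{1}$. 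So no helix-type argument can turn a failure of free amalgamation into $\mathrm{SOP}_{3}$, and the heuristic in your Step~1 fails for the same reason.

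The same example undermines Step~3. The relevant generic theories can be non-simple $\mathrm{NSOP}_{1}$ with nontrivial algebraic closure, so \emph{no positive relations across $M$} is not the right independence notion. In the other direction, free amalgamation plus the weak-substructure condition does not give the independence theorem. Directed graphs without directed triangles form a free amalgamation class defined by finitely many forbidden weak substructures, yet every theory with that age has $\mathrm{SOP}_{3}$. You never say where the $\mathrm{NSOP}_{3}$ hypothesis enters Step~3, and it would have to.

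\textbf{You also misread the role of \cite{BBM25}.} It is not a classification to be checked case by case. It is a preservation theorem: simplicity, $\mathrm{NSOP}_{2}$ and $\mathrm{NSOP}_{3}$ pass to model companions. Cherlin--Shelah--Shi show that $T_{0}^{\mathcal{H}}$ has a model companion $T^{\mathcal{H}}$ with no amalgamation hypothesis at all, and $T^{\mathcal{H}}$ is the model companion of every complete theory with age $\mathcal{H}$. Combining these reduces the theorem to showing that no $T^{\mathcal{H}}$ is $\mathrm{NSOP}_{3}$ with $\mathrm{SOP}_{2}$. That is where all the work lies. It must use $\mathrm{SOP}_{2}$ rather than just the tree property, because the analogous statement for $\mathrm{TP}$ is false (Observation~\ref{main observation, restated}).

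The paper argues by contradiction, in three stages:
\begin{enumerate}
\item From $\mathrm{SOP}_{2}$ it builds a $\mathrm{TP}$ tree with consistency witnesses whose relevant algebraic closures are pairwise disjoint. This uses the Kaplan--Ramsey coheir witnesses for $\mathrm{SOP}_{1}$ together with normal coheirs.
\item It uses this tree to encode $\mathrm{TP}$ as a hereditary class $\mathcal{H}^{\mathrm{std}}$ of sound $\mathcal{L}_{\mathrm{std}}$-structures containing $\mathbb{TP}$. This class is still defined by finitely many forbidden weak substructures and is still the age of an $\mathrm{NSOP}_{3}$ structure.
\item It runs cycle removal with $3$-helix maps of cyclic $3$-jughandle decompositions, handling split cycles, consanguineous pairs and potentially pinched alternating cycles in turn. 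This yields a presidium without alternating cycles lying outside $\mathcal{H}^{\mathrm{std}}$. Such a presidium embeds into $\mathbb{TP}$, giving the contradiction.
\end{enumerate}
None of these ingredients appears in your proposal, in particular not the step that separates $\mathrm{SOP}_{2}$ from $\mathrm{TP}$.
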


Crucially, the assumptions of this theorem are not too powerful: this dichotomy between $\mathrm{NSOP}_{2}$ and $\mathrm{SOP}_{3}$ is sharp. More precisely, the dichotomy does not extend to a false dichotomy between \textit{simplicity} and $\mathrm{SOP}_{3}$.

\begin{observation}\label{main observation}

Let $\mathcal{H}$ be a hereditary class defined by a finite family of forbidden weak substructures. Then it is \emph{not} the case that if every theory whose models have age $\mathcal{H}$ is non-simple, every theory whose models have age $\mathcal{H}$ has $\mathrm{SOP}_{3}$.

\end{observation}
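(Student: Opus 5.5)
This observation is proved by a single counterexample: a hereditary class $\mathcal{H}$, defined by finitely many forbidden weak substructures, such that every theory whose models have age $\mathcal{H}$ is non-simple, but some such theory is $\mathrm{NSOP}_{3}$. The introduction already names the natural candidate. Let $\mathcal{H}$ be the class of finite bipartite graphs omitting $K_{2,2}$. I will present it in a language with unary predicates $P, Q$ for the two sides and a binary edge relation $E$. Being bipartite with respect to $P, Q$ and omitting $K_{2,2}$ are then expressed by finitely many forbidden weakly embedded configurations:
\begin{itemize}
\item a point satisfying both $P$ and $Q$;
\item an edge with both endpoints in $P$, or both in $Q$;
\item a weak copy of $K_{2,2}$ across the parts.
\end{itemize}
Since weak embeddings only preserve positive relations, any structure forbidding these is exactly a $K_{2,2}$-free bipartite graph. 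One small point to check is that points in neither $P$ nor $Q$ are harmless. I will either allow them as isolated points or forbid them by an extra condition; forbidding them is not a positive condition, so allowing them is simpler.

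\textbf{Every theory with age $\mathcal{H}$ is non-simple.} I will exhibit $\mathrm{TP}$ (indeed dividing of a formula along an indiscernible sequence with $k$-inconsistency) using $\varphi(x; y_1 y_2) := E(x, y_1) \wedge E(x, y_2)$ with $y_1 \neq y_2$ in $Q$ and $x \in P$. Since $K_{2,2}$ is omitted, two distinct pairs sharing no point have at most one common neighbor, so $\{\varphi(x; b_i b_i')\}$ is $2$-inconsistent along pairwise-disjoint pairs. To get the tree property I will use the standard argument. Because every finite $K_{2,2}$-free bipartite graph lies in the age of any model $M$ of such a theory, $M$ contains finite witnesses for every finite portion of the configuration: a tree of pairs whose branches have common neighbors while siblings are pairwise disjoint. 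Concretely, I will embed a finite graph whose $Q$-side vertices are indexed by the nodes of $\omega^{<n}$ (height $n$, width $k$). Each node gets a pair of vertices, and $P$-vertices realize branches. Each branch vertex is joined to exactly one vertex of each pair along the branch. To keep the graph $K_{2,2}$-free, I will instead use the formula $E(x,y)$ indexed by single $Q$-vertices, combined with the $2$-inconsistency condition $\exists^{\le 1} x\,(E(x,y)\wedge E(x,y'))$, which holds for distinct $y,y'$. The main check, and the only real work, is that this finite tree graph is $K_{2,2}$-free. Two distinct branch vertices share a common neighbor only on their common initial segment, so I need each $P$-vertex to be adjacent to distinct $Q$-vertices along its branch. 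Then two branch vertices share at most the nodes of their meet, and I must ensure they share at most one. I will achieve this by replacing the plain tree with the usual trick of $\mathrm{TP}_{2}$-style arrays, or equivalently by citing the known non-simplicity of this class from \cite{CoK19}. The cleanest route is to quote \cite{CoK19}, which treats precisely this class, for both non-simplicity and the existence of an $\mathrm{NSOP}_{1}$ model theory. This reduces the work to verifying that their class matches my finitely-forbidden presentation.

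\textbf{Some theory with age $\mathcal{H}$ is $\mathrm{NSOP}_{3}$.} By \cite{CoK19}, the generic (model companion) theory $T$ of $K_{2,2}$-free bipartite graphs exists and is $\mathrm{NSOP}_{1}$, hence $\mathrm{NSOP}_{2}$. By the standard inclusion $\mathrm{NSOP}_{2}\subseteq \mathrm{NSOP}_{3}$, $T$ is $\mathrm{NSOP}_{3}$. Its age is all of $\mathcal{H}$, because the generic theory embeds every finite member of the class. The main obstacle I anticipate is exactly this existence and classification of the generic theory. If I had to avoid citing it, I would prove $\mathrm{NSOP}_{3}$ directly via an independence relation, namely free amalgamation, which preserves $K_{2,2}$-freeness. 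I would then apply the criterion of \cite{INDNSOP3} or \cite{GFA} that a theory with a well-behaved free amalgamation is $\mathrm{NSOP}_{3}$ (in fact $\mathrm{NSOP}_{1}$).
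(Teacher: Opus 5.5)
\textbf{The gap is in the non-simplicity half.} The observation requires \emph{every} complete theory whose models have age $\mathcal{H}$ to be non-simple, and you never establish this.

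Your direct attempt fails as written. The family $\{E(x,b_i)\wedge E(x,b_i')\}_i$ is not $2$-inconsistent along disjoint pairs: one $P$-vertex adjacent to $b_i,b_i',b_j,b_j'$ is a $K_{2,2}$-free star. The condition $\exists^{\le 1}x\,(E(x,y)\wedge E(x,y'))$ only bounds the number of common solutions, so it does not make siblings inconsistent.

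Your fallback, quoting \cite{CoK19}, gives non-simplicity only of the single generic theory $T^{\mathcal{H}}$. The remark that every finite member of $\mathcal{H}$ embeds into every model does not transfer this. The $\mathrm{TP}$ witness in $T^{\mathcal{H}}$ need not be quantifier-free. Its sibling inconsistency is a universal condition on the parameters, and the copies of the configuration you find inside a model of an arbitrary $T$ with the same age need not satisfy it.

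\textbf{The missing step is the paper's transfer argument.} Any complete $T$ with age $\mathcal{H}$ has $T^{\mathcal{H}}$ as its model companion: models of the two theories embed into each other by compactness, and $T^{\mathcal{H}}$ is model complete. Simplicity is preserved under model companions (Fact \ref{Bodirsky, Bodor and Marimon theorem}, from \cite{BBM25}). That fact is the rigorous form of your embedding intuition:
\begin{itemize}
\item By model completeness, $\varphi(x,y)$ and $\neg\exists x\,(\varphi(x,y_1)\wedge\varphi(x,y_2))$ are equivalent in $T^{\mathcal{H}}$ to existential formulas $\exists z\,\varphi_0$ and $\exists w\,\psi_0$.
\item The quantifier-free type of the whole configuration, together with all these witnesses, is realized in models of $T$, because $T$ and $T^{\mathcal{H}}$ have the same universal theory.
\item $\exists w\,\psi_0(b,c,w)$ still forces $\{\varphi(x,b),\varphi(x,c)\}$ to be inconsistent in $T$. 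Otherwise $\varphi_0(x,b,z)\wedge\varphi_0(x,c,z')\wedge\psi_0(b,c,w)$ would be realized in $T^{\mathcal{H}}$.
\end{itemize}
With this added, your example works. Your $\mathrm{NSOP}_{3}$ half via \cite{CoK19} is fine. The paper itself mentions this $K_{n,m}$-free example as an alternative to its main example, the generic binary function of \cite{KR17}.

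\textbf{One presentational fix:} use a two-sorted language, as the paper does for this example. With unary predicates $P,Q$, points in neither predicate can carry arbitrary $E$-structure that no positive forbidden configuration excludes. Loops and edge orientation also need handling. So your $\mathcal{H}$ is not the class of \cite{CoK19}, and its generic theory would have to be analyzed afresh.
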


While it remains open whether $\mathrm{NSOP}_{2}$ is equal to $\mathrm{NSOP}_{3}$, it is well-established that simplicity is not equal to $\mathrm{NSOP}_{3}$. So this observation is necessary to ensure that our main theorem of Section \ref{nsop2 vs nsop3} really does constitute progress on this open problem, and does not just extend to a dichotomy that we known to be false in general.

As suggested above, we can even avoid talking about models at all in the statement  of our theorem. Instead, we can state it completely syntactically, in terms of the axioms within a theory:

\:

\textit{Alternative statement of Theorem \ref{main theorem 3}, Observation \ref{main observation}}: For $T$ a complete theory in a language $\mathcal{L}$, let $\mathcal{H}_{T} =: \{A: ``\exists X \: \mathrm{Diag}_{A}(X)" \in T\}$, where $A$ ranges over all finite $\mathcal{L}$-structures, and the formula $\mathrm{Diag}_{A}(X)$ asserts that $X$ satisfies the diagram of $A$. Then for $\mathcal{H}$  a hereditary class defined by a finite family of forbidden weakly embedded substructures, if every theory $T$ with $\mathcal{H}_{T}=\mathcal{H}$ has $\mathrm{SOP}_{2}$, every theory $T$ with $\mathcal{H}_{T}=\mathcal{H}$ has $\mathrm{SOP}_{3}$. But it is \textit{not} the case that, for $\mathcal{H}$  a hereditary class defined by a finite family of forbidden weak substructures, if every theory $T$ with $\mathcal{H}_{T}=\mathcal{H}$ is non-simple, every theory $T$ with $\mathcal{H}_{T}=\mathcal{H}$ has $\mathrm{SOP}_{3}$.

\:

Theorem \ref{main theorem 3} will in some sense be hiding in plain sight. Suppose one were attempting to find a counterexample to $\mathrm{NSOP}_{2}$ and $\mathrm{NSOP}_{3}$ being equivalent. One natural place to look would be within the family of theories of generic structures defined by finitely many forbidden weak substructures. (These are the structures defined by Cherlin, Shelah and Shi in \cite{CSS99}). If one found such a counterexample, this would resolve the open problem of whether $\mathrm{NSOP}_{2}$ is equal to $\mathrm{NSOP}_{3}$. However, if one showed that none of these theories of generic structures are counterexamples, that would \textit{at first glance} appear uninteresting, because one would only have proven a concrete verification for a specific family of theories.

This all changed with very recent work of Bodirsky, Bodor and Marimon in \cite{BBM25}. There, they show that the properties $\mathrm{NSOP}_{n}$ are preserved under taking model companions. We will observe that preservation of $\mathrm{NSOP}_{n}$ under model companions has the following implicit consequence: Theorem \ref{main theorem 3} reduces to verifying that none of the above theories of generic structures are strictly $\mathrm{NSOP}_{3}$. (We will explain how our observation fits into a general schema of Bodirsky, Bodor and Marimon in Remark \ref{Bodirsky, Bodor and Marimon schema} below.) So what initially superficially appeared to be a mere concrete verification is just the disguise for a more general dichotomy between $\mathrm{NSOP}_{2}$ and $\mathrm{SOP}_{3}$.

Then, we must show that none of the theories of generic structures defined by finitely many forbidden weak substructures are strictly $\mathrm{NSOP}_{3}$. The main idea of our proof will be the same used to prove integrality of $\mathfrak{o}(\mathcal{H})$ for $\mathcal{H}$ defined by finitely many forbidden weak substructures, within the setting of the real-valued $\mathrm{NSOP}_{r}$ hierarchy. As implicitly shown in \cite{Mal10b}, the properties $\mathrm{NSOP}_{n}$ can be characterized in terms of helix-shaped covering maps, or helix maps; our integrality argument within the real-valued $\mathrm{NSOP}_{r}$ hierarchy involved repeatedly applying the abstract cycle-removal properties of these maps. This cycle-removal technique is especially suited to the real-valued $\mathrm{NSOP}_{r}$ setting for two reasons. First, it allows us to prove entirely combinatorial results about hereditary classes. And second, in the real-valued $\mathrm{NSOP}_{r}$ setting, the cycle-removal argument is performed with respect to directed cycles, which are especially well-behaved within the category of directed graphs with graph homomorphisms. We will apply this cycle-removal technique in the proof of Theorem \ref{main theorem 3}, giving a much more intricate version of the same argument.

\:

The organization of this paper is as follows. Section \ref{preliminaries} will provide basic definitions and background, including definitions from classification theory ($\mathrm{NSOP}_{1}$ and $\mathrm{NSOP}_{2}$, Definition \ref{nsop1 definition and nsop2 definition}; and $\mathrm{NTP}_{2}$ and simplicity, Definition \ref{ntp_2 definition} and \ref{simplicity definition}) and hereditary class theory (being defined by a finite family of forbidden weakly embedded substructures, Definition \ref{defined by finitely many forbidden weakly embedded substructures}). In Section \ref{between nsop2 and nsop3}, we will show that the properties $\mathrm{NSOP}_{r}$ for real values $r \geq 3$ extend to the case $2 < r < 3$, proving that the ascending chain is preserved with $\mathrm{NSOP}_{2} \subseteq \mathrm{NSOP}_{r}$ (Theorem \ref{main theorem 1, restated}). This will give us our intermediate family $\mathrm{NSOP}_{2} \subseteq \mathrm{NSOP}_{r} \subseteq \mathrm{NSOP}_{3}$ for $2 < r <3$, connecting the real-valued $\mathrm{NSOP}_{r}$ distinctness problem to the problem of whether $\mathrm{NSOP}_{2}$ is equal to $\mathrm{NSOP}_{3}$. In Section \ref{the approximate alternative}, we will introduce and motivate the hypothesis of the real-valued $\mathrm{NSOP}_{r}$ hierarchy and integer-valued $\mathrm{NSOP}_{n}$ hierarchy being non-distinct on sufficiently general grounds (Hypothesis \ref{nondistinctness on sufficiently general grounds}). We will then show that this hypothesis implies that $\mathrm{NSOP}_{n+1} \cap \mathrm{NTP}_{2}$ is equal to $\mathrm{NSOP}_{n} \cap \mathrm{NTP}_{2}$ for integers $n \geq 3$ (Theorem \ref{main theorem 2, restated}), which would resolve Question \ref{ntp_2 vs nsop_n}. Finally, in the longest section of this paper, Section \ref{nsop2 vs nsop3}, we will apply techniques from the real-valued $\mathrm{NSOP}_{r}$ hierarchy to prove our sharp dichotomy between $\mathrm{SOP}_{2}$ and $\mathrm{NSOP}_{3}$, under elementary assumptions from hereditary class theory. Before proving the main dichotomy (Theorem \ref{main theorem 3, restated}), we will use an example from Kruckman and Ramsey (\cite{KR17}) to demonstrate that this dichotomy must be sharp (Observation \ref{main observation, restated}). To begin the argument for the main dichotomy, we will review the constructions from Cherlin, Shelah and Shi (\cite{CSS99}) of theories of generic structures defined by finitely many forbidden weak substructures (Fact \ref{Cherlin Shelah and Shi theorem}), as well as Bodirsky, Bodor and Marimon's results (\cite{BBM25}) on preservation of simplicity or $\mathrm{NSOP}_{n}$ under model companions. These will allow us to reduce the main results of Section \ref{nsop2 vs nsop3} to a verification for a concrete family of theories (Proposition \ref{reduction to generic structures}). We will then enter into the intricacies of the proof of this verification: we will first translate it into more tractable combinatorial language (Lemma \ref{standard hereditary class}). Then, applying a carefully chosen series of variations of the cycle-removal argument from the theory of the real-valued $\mathrm{NSOP}_{r}$ hierarchy, we will complete the proof of the main dichotomy.

In optional sidebars to Sections \ref{the approximate alternative} and \ref{nsop2 vs nsop3}, we will give further applications of independent interest.  In Sidebar 1 to Section \ref{the approximate alternative}, we will relativize the main construction of that section, based on the configuration exhibiting $\mathrm{TP}_{2}$ within the generic triangle-free directed graph. This will allow us to generalize the cycle-removal argument arising from the real-valued $\mathrm{NSOP}_{r}$ hierarchy to any $\mathrm{NTP}_{2}$ theory. As a consequence, we will learn something new about $\mathrm{NTP}_{2}$ theories themselves: any directed graph definable in an $\mathrm{NTP}_{2}$ theory omitting weakly embedded copies of \textit{some} graph must, for arbitrarily large $N$, omit weakly embedded copies of some graph without $\leq N$-cycles (Theorem \ref{ntp2 graph theory}). In Sidebar 2 to Section \ref{nsop2 vs nsop3}, we will extract general information from part of the proof (or a potential simplified proof, see Remark \ref{potential simplified proof}) of Theorem \ref{main theorem 3, restated}, our new hereditary-class dichotomy between $\mathrm{NSOP}_{2}$ and $\mathrm{SOP}_{3}$. We will introduce a sense in which a classification-theoretic property $\mathrm{NP}$, the negation of the configuration $\mathrm{P}$, can \textit{approximately} imply the classification-theoretic property $\mathrm{NQ}$, the negation of $\mathrm{Q}$. Roughly, $\mathrm{NP} \leadsto \mathrm{NQ}$ if any formula $\varphi(x, y)$ in any theory with $\mathrm{NP}$ satisfies a constraint that looks arbitrarily close up like an obstruction to $\varphi(x, y)$ exhibiting $\mathrm{Q}$. Our arguments will result in the approximate implications $\mathrm{NSOP}_{3} \leadsto \mathrm{NSOP}_{2}$ and $\mathrm{NSOP}_{3} \leadsto \mathrm{NTP}_{2}$ (Theorem \ref{the approximate implications}); in particular, we believe the approximate implication $\mathrm{NSOP}_{1} \leadsto \mathrm{NTP}_{2}$ to be a new, nontrivial result on $\mathrm{NSOP}_{1}$ theories.

Finally, in an appendix, we will give a note demonstrating the historical precedent for interest in extending the integer-valued $\mathrm{NSOP}_{n}$ hierarchy to non-integer values of $n$. These come from the quasimetric properties introduced in a thesis appendix of Hanson (\cite{Hanson2020}), which Hanson describes as ``roughly equivalent to an instance of `$\mathrm{SOP}_{\frac{1}{r}}$'" for real $r$; however, in classical logic (as opposed to continuous logic), these are known not to be new properties. We will give an exposition of Hanson's proof that these coincide with the original integer-valued $\mathrm{NSOP}_{n}$ hierarchy, adapted to classical logic.

\section{Preliminaries}\label{preliminaries}

We use standard model-theoretic notation: all sets are assumed to be small subsets of a sufficiently saturated ambient model $\mathbb{M} \models T$ of a theory $T$, which will be left implicit when clear from context, and all models small elementary submodels $M \prec \mathbb{M}$. Unless otherwise stated, by a \textit{theory} we will mean a complete theory. Later in this paper, we will refer to \textit{coheirs} or \textit{finitely satisfiable types} over a model $M$, which will be a special class of \textit{invariant types} over a model $M$: a coheir over $M$ will be a complete global type over $M$ every formula of which is satisfied in $M$, while an invariant type over $M$ will be a complete global type $p(x)$ over $M$ for which $\varphi(x, b) \in p(x)$ $b\equiv_{M} b'$ implies $\varphi(x, b') \in p(x)$. (A non-global type over a set containing $M$ will be said to be finitely satisfiable, or a coheir, under the same conditions. But unless otherwise clear from context, by a coheir we will mean a global coheir.) The most important feature of invariant types for our purposes will be that, if $\mathrm{tp}(a/Mb)$ extends to an invariant type, $\mathrm{acl}(aM) \cap \mathrm{acl}(Mb) = M$. Given a coheir $p(x)$ over $M$, a  \textit{coheir Morley sequence over $M$} in $p(x)$ will be a sequence $\{a_{i}\}_{i \in I}$, say for $I$ a linearly ordered set with no upper bound (such as $I =\omega$), such that for $i \in I $, $a_{i} \models p(x)|_{M \{a_{j}\}_{j < i}}$. An \textit{invariant Morley sequence} will be similar, for an invariant type $p(x)$. For  $\{a_{i}\}_{i \in I}$ a coheir Morley sequence over $M$ and sets $S_{1}, S_{2} \subset I$ with every element of $S_{1}$ below every element of $S_{2}$, $\mathrm{tp}(\{a_{i}\}_{i \in S_{1}}/M\{a_{i}\}_{i \in S_{2}})$ will extend to a coheir over $M$. We will also use the following terminology for binary relations: a binary relation $R$ on a set $S$ will be \textit{antisymmetric} if there are no distinct $a, b \in S$ with $R(a, b)$ and $R(b,a)$, and \textit{irreflexive} if there is no $a \in S$ with $R(a, a)$.

We introduced Shelah's original properties $\mathrm{NSOP}_{n}$ for integers $n \geq 3$, as well as our newer properties $\mathrm{NSOP}_{r}$ for reals $r \geq 3$, in the introduction (Definitions \ref{integer-valued hierarchy} and \ref{real-valued hierarchy}). In fact, most of the main classification-theoretic properties will have reason to appear throughout this paper in at least some fashion. Because our main applications of the real-valued $\mathrm{NSOP}_{r}$ hierarchy will be to Questions \ref{is nsop2 equal to nsop3} and \ref{ntp_2 vs nsop_n} from the introduction--i.e., to whether $\mathrm{NSOP}_{2}$ is equal to $\mathrm{NSOP}_{3}$ and to whether $\mathrm{NSOP}_{n+1} \cap \mathrm{NTP}_{2} $ is equal to $\mathrm{NSOP}_{n} \cap \mathrm{NTP}_{2} $ for integers $n \geq 3$--we draw special attention to the properties $\mathrm{NSOP}_{1}, \mathrm{NSOP}_{2}$ and $\mathrm{NTP}_{2}$, defined below.

\textbf{The $\mathrm{NSOP}_{n}$ hierarchy, $n \leq 2$:} The properties $\mathrm{NSOP}_{n}$ for $n > 2$ are defined according to a common schema. By contrast, $\mathrm{NSOP}_{1}$ and $\mathrm{NSOP}_{2}$ are defined differently:

\begin{definition}\label{nsop1 definition and nsop2 definition}(Džamonja and Shelah, (\cite{Sh99},  \cite{DS04}))

\begin{itemize}
    \item A theory is $\mathrm{NSOP}_{1}$ if there is no formula $\varphi(x, y)$ and tuples $\{b_{\eta}\}_{\eta \in 2^{<\omega}}$ such that $\{\varphi(x, b_{\sigma \upharpoonright n})\}_{n < \omega}$ is consistent for any $\sigma \in 2^{\omega}$, but for any $\eta_{2} \unrhd \eta_{1} \smallfrown \langle 0\rangle$, $\{\varphi(x, b_{\eta_{2}}), \varphi(x, b_{\eta_{1} \smallfrown \langle 1\rangle})\}$ is inconsistent. Otherwise it has $\mathrm{SOP}_{1}$.

    \item A theory is $\mathrm{NSOP}_{2}$ if there is no formula $\varphi(x, y)$ and tuples $\{b_{\eta}\}_{\eta \in 2^{<\omega}}$ so that $\{\varphi(x, b_{\sigma \upharpoonright n})\}_{n < \omega}$ is consistent for any $\sigma \in 2^{\omega}$, but for incomparable $\eta_{1}$ and $\eta_{2}$, $\{\varphi(x, b_{\eta_{1}}), \varphi(x, b_{\eta_{2}})\}$ is inconsistent. Otherwise it has $\mathrm{SOP}_{2}$.
\end{itemize}

\end{definition}

We will introduce some of the more powerful results on $\mathrm{NSOP}_{1}$ and $\mathrm{NSOP}_{2}$, including on Kim-independence (Fact \ref{symmetry and independence theorem}) and the equality of $\mathrm{NSOP}_{1}$ and $\mathrm{NSOP}_{2}$ (Fact \ref{Application of nsop1 = nsop2}) later in this paper. One of the more basic building blocks of the theory of Kim-independence in $\mathrm{NSOP}_{1}$ theories is a characterization of $\mathrm{NSOP}_{1}$ theories based on Kim's lemma (\cite{KR17}). This gives us the following presentation of $\mathrm{SOP}_{1}$ in terms of coheir Morley sequences, which will follow from the proof of Proposition 3.14 of \cite{KR17}. 

\begin{fact}\label{sop1 coheirs}(\cite{KR17})

Let $T$ have $\mathrm{SOP}_{1}$. Then there is a formula $\varphi(x, b)$, and coheir Morley sequences $\{b_{i}\}_{i< \omega}$, $\{b^{i}\}_{i< \omega}$ over a model $M$ 
with $b_{0} = b^{0}= b$, such that $\{\varphi(x, b_i)\}_{i < \omega}$ is consistent, but $\{\varphi(x, b^i)\}_{i < \omega}$ is $2$-inconsistent (i.e., any two distinct formulas in $\{\varphi(x, b^i)\}_{i < \omega}$ are inconsistent.)

\end{fact}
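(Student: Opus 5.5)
The plan is to pass from the tree witnessing $\mathrm{SOP}_{1}$ to an array whose two rows behave like Morley sequences. We then manufacture a model $M$ out of a Skolemized indiscernible sequence so that the rows become coheir sequences over $M$, following the proof of Proposition 3.14 of \cite{KR17}.

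First, using the standard array reformulation of $\mathrm{SOP}_{1}$, obtained from a tree as in Definition \ref{nsop1 definition and nsop2 definition} by compactness and (strong) tree indiscernibility, we obtain a formula $\varphi(x, y)$ and pairs $(c_{i,0}, c_{i,1})_{i < \omega}$ such that:
\begin{itemize}
\item $\{\varphi(x, c_{i,0})\}_{i<\omega}$ is consistent;
\item $\{\varphi(x, c_{i,1})\}_{i<\omega}$ is $2$-inconsistent;
\item $c_{i,0} \equiv_{\{c_{j,0}c_{j,1}\}_{j<i}} c_{i,1}$ for each $i$.
\end{itemize}
Concretely, we walk down the tree, taking $c_{i,1}$ to be $b_{\eta \smallfrown \langle 1 \rangle}$ and $c_{i,0}$ to be a node above $\eta\smallfrown\langle 0\rangle$, with the inconsistency pattern of $\mathrm{SOP}_{1}$ giving $2$-inconsistency of the second row. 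The type-equality clause is where tree indiscernibility is used. I will take this array form as the starting point.

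Next, we Skolemize $T$ and, by Ramsey and compactness, stretch the sequence of pairs $e_{i} = (c_{i,0}, c_{i,1})$ to an indiscernible sequence indexed by $\omega^{*} + \mathbb{Q}$, or some similar order. Here $\omega^{*}$ is an initial block $(e_{i})$ read in reverse, and $\mathbb{Q}$ is a dense block placed \emph{after} all of them. Let $M$ be the Skolem hull of the $\mathbb{Q}$-block; it is a model of $T$ after forgetting the Skolem functions. The standard argument then applies: any formula true of $e_{i}$ together with parameters from $M$ and from elements $e_{j}$ with $j$ on one fixed side of $i$ is, by indiscernibility, also satisfied by elements of the $\mathbb{Q}$-block lying in $M$. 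Hence each $\mathrm{tp}(e_{i}/M\{e_{j}\}_{j \text{ earlier in the Morley order}})$ is finitely satisfiable in $M$.

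Reindexing the $\omega^{*}$-block in the appropriate direction, both rows $(c_{i,0})_{i}$ and $(c_{i,1})_{i}$ become coheir sequences over $M$. Consistency and $2$-inconsistency persist, since they concern only finitely many indices and indiscernibility preserves the pattern. To get genuine coheir Morley sequences in a single global coheir, we extend by compactness. Each row is $M$-indiscernible and coheir in the above sense, so the limit type of a row, computed along an ultrafilter on $M$, is a global coheir $p$, and the row (after stretching) is a Morley sequence in $p$. The two rows may use different coheirs $p_{0}, p_{1}$ extending $\mathrm{tp}(c_{0,0}/M) = \mathrm{tp}(c_{0,1}/M)$; the statement of Fact \ref{sop1 coheirs} permits this.

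Finally, set $b = c_{0,0}$. Since $c_{0,1} \equiv_{M} c_{0,0}$, apply an automorphism over $M$ sending $c_{0,1}$ to $b$. Its image of the second row is a coheir Morley sequence $\{b^{i}\}$ over $M$ with $b^{0} = b$, and $\{\varphi(x, b^{i})\}$ is still $2$-inconsistent. Meanwhile $\{b_{i}\} = \{c_{i,0}\}$ gives the consistent one.

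The main obstacle is the middle step: arranging that the model $M$ makes \emph{both} rows coheir sequences simultaneously and in the correct direction. This requires careful placement of the dense block relative to the array (coheirs "point backward" toward $M$) and checking that the equality $c_{i,0} \equiv c_{i,1}$ over earlier elements survives, upgraded to equality over $M$. If direct placement fails, the fallback is to index the array by a tree and use the strongly indiscernible tree technology of \cite{KR17}, whose modeling property delivers the required finite satisfiability.
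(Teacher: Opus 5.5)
Your route is the one behind the paper's citation (the proof of Proposition 3.14 of \cite{KR17}): array, Skolemized indiscernible stretching, a model generated by a block of the sequence, then coheirs. The gap is the step you flag as the main obstacle and then use anyway. You never establish $c_{0,0}\equiv_M c_{0,1}$ (equivalently $p_0|_M=p_1|_M$), and with the placement you describe it fails.

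Condition (1) gives equality of types only over elements \emph{earlier} in the order in which (1) holds, and over later ones it is always false. If $t<q$, then (1) at index $q$ together with consistency of the first row gives $\models\exists x\,(\varphi(x,c_{t,0})\wedge\varphi(x,c_{q,1}))$. On the other hand, $2$-inconsistency of the second row gives $\models\neg\exists x\,(\varphi(x,c_{t,1})\wedge\varphi(x,c_{q,1}))$. Now suppose you realize the EM-type of $(e_i)_{i<\omega}$ along $\omega^{*}+\mathbb{Q}$ and let $M$ be the Skolem hull of the $\mathbb{Q}$-block on top. Then $M$ contains such $c_{q,1}$, so $c_{t,0}\not\equiv_M c_{t,1}$ for every $t$ in the array. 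Hence $p_0|_M\neq p_1|_M$, and no Morley sequences in $p_0,p_1$ can share a first term.

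Your phrase ``$(e_i)$ read in reverse'' admits another reading, in which the $\mathbb{Q}$-block sits before $e_0$ in the original order. That configuration is the right one. But then your Morley order is $\ldots,e_2,e_1,e_0$, which has no first element, so ``reindexing'' does not literally give an $\omega$-indexed Morley sequence starting with $c_{0,\epsilon}$. There is a further problem: you extract the array in $T$ before Skolemizing. Then (1) holds only for $\mathcal{L}$-formulas, and that does not give equality of types over Skolem terms of the $e_j$, i.e. over $M$.

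The fix is to commit to the other side.
\begin{itemize}
\item Skolemize first and extract the array inside $T^{\mathrm{Sk}}$; the same tree still witnesses $\mathrm{SOP}_1$ there.
\item Stretch to an $\mathcal{L}^{\mathrm{Sk}}$-indiscernible $(e_i)_{i\in\mathbb{Q}_{<0}+\omega}$ and let $M$ be the Skolem hull of $\{e_i\}_{i<0}$. Then (1) gives $c_{i,0}\equiv_M c_{i,1}$ for $i\geq 0$.
\item The block has no last element, so indiscernibility makes $\mathrm{tp}(e_i/Me_{>i})$ finitely satisfiable in $M$. Finite satisfiability now points away from $M$, and the rows are coheir sequences read in decreasing order.
\item For $\epsilon=0,1$ let $p_\epsilon$ be a global $M$-coheir extending $\mathrm{tp}(c_{0,\epsilon}/Me_{>0})$. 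By indiscernibility $c_{i,\epsilon}\models p_\epsilon|_{Me_{>i}}$ for all $i$. By $M$-invariance, any Morley sequence $d_0,\ldots,d_n$ in $p_\epsilon$ satisfies $d_0\ldots d_n\equiv_M c_{n,\epsilon}\ldots c_{0,\epsilon}$.
\item Consistency and $2$-inconsistency do not depend on the order, so they transfer.
\item Finally $p_0|_M=\mathrm{tp}(c_{0,0}/M)=\mathrm{tp}(c_{0,1}/M)=p_1|_M$. Pick any $b\models p_0|_M$ and build both Morley sequences from $b$; no automorphism and no tree technology are needed.
\end{itemize}
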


\textbf{Non-strict order classification-theoretic properties:}

The property $\mathrm{NTP}_{2}$, the negation of the \textit{$2$-tree property}, will be involved in the open problem on the intersections $\mathrm{NTP}_{2} \cap \mathrm{NSOP}_{n}$ for integers $n \geq 3$ that we will discuss in Section \ref{the approximate alternative} below.

\begin{definition}\label{ntp_2 definition}(Shelah, \cite{Sh90})

A theory is $\mathrm{NTP}_{2}$ if there is no formula $\varphi(x, y)$ and $\{b_{ij}\}_{i, j < \omega}$ such that for all $i< \omega$ $\{\varphi(x, b_{ij}))\}_{j < \omega}$ is $2$-inconsistent, but for $\sigma: \omega \to \omega$, $\{\varphi(x, b_{i\sigma(i)})\}_{i < \omega}$ is consistent.
    
\end{definition}

We will also refer to \textit{simplicity}, the negation of the \textit{tree property} $\mathrm{TP}$. The tree property will play an important formal role in the proof of our dichotomy between $\mathrm{NSOP}_{2}$ and $\mathrm{SOP}_{3}$ in Theorem \ref{main theorem 3, restated} below. Additionally, to justify this dichotomy, we will have to ensure that it is not really a dichotomy between simplicity and $\mathrm{SOP}_{3}$ (Observation \ref{main observation, restated}).

\begin{definition}\label{simplicity definition} (Shelah, \cite{Sh90})

A theory is \textit{simple} if there is no formula $\varphi(x, y)$ and $\{b_{\eta}\}_{\eta \in \omega^{< \omega}}$ such that for each $\sigma \in\omega^{\omega}$, the set of formulas $\{\varphi(x,b_{\sigma|_n})\}_{n < \omega}$ is consistent, but for each $\eta\in\omega^{<\omega}$, $\{\varphi(x,b_{\eta^\smallfrown \langle i \rangle})\}_{i < \omega}$ is $2$-inconsistent. A non-simple theory is said to have the \textit{tree property}, or $\mathrm{TP}$.
    
\end{definition}

In passing, we will also mention the most basic of the classification-theoretic properties, \textit{stability}. (This will essentially be the degenerate case of the $\mathrm{NSOP}_{n}$ hierarchy, as in Remark \ref{lower limit} below.) A theory is \textit{stable} if there is no formula $\varphi(x, y)$ and sequence $\{a_{i}\}_{i < \omega}$ such that $\models \varphi(a_{i}, a_{j})$ if and only if $i < j$.

\:

\textbf{The $r$-strict order property for real-valued $r$:} We defined the properties $\mathrm{NSOP}_{r}$ for real values $r \geq 3$ in the introduction, in Definition \ref{real-valued hierarchy}. A more finitary statement of this definition may sometimes be useful. We restate the more detailed definition of $\mathrm{NSOP}_{r}$ from \cite{ApproxOrder}, Definition 2.6 there, below:

\begin{definition}\label{real-valued hierarchy, detailed definition}
    Let $r\geq 3$ be a real number. A theory $T$ is $\mathrm{NSOP}_{r}$ if the following equivalent conditions hold: 

    (1) For every definable relation $R(x_{1}, x_{2})$ with a sequence $\{a_{i}\}_{i < \omega}$ such that $\models R(a_{i}, a_{j})$ for all $i < j$, for all integers $N < \omega$ there are $b_{0}, \ldots, b_{N -1} $ such that $\models R(b_{i}, b_{i+j \mathrm{\: mod \:} N})$ for $0 \leq i <N$, $1 \leq j \leq \frac{N}{r}$.

    (1$'$) For every definable relation $R(x_{1}, x_{2})$ with a sequence $\{a_{i}\}_{i < \omega}$ such that $\models R(a_{i}, a_{j})$ for all $i < j$, for all \textit{sufficiently large} integers $N < \omega$ there are $b_{0}, \ldots, b_{N -1} $ such that $\models R(b_{i}, b_{i+j \mathrm{\: mod \:} N})$ for $0 \leq i <N$, $1 \leq j \leq \frac{N}{r}$.

    (2) For every definable relation $R(x_{1}, x_{2})$ with a sequence $\{a_{i}\}_{i < \omega}$ such that $\models R(a_{i}, a_{j})$ for all $i < j$, there exists a set $\{a_{\theta}\}_{\theta \in S^{1}}$, indexed by the unit circle $S^{1}$, such that $\models R(a_{\theta}, a_{\psi})$ for all $\theta, \psi \in S^{1}$ with $\psi$ lying at most $\frac{2\pi}{r}$ radians counterclockwise from $\theta$.

 Otherwise it has $\mathrm{SOP}_{r}$.
\end{definition}

Relatedly, we will sometimes use the following informal terminology: for a binary relation $R$ on a set $S$ (such as a definable relation between tuples in a model, or the edge relation of a graph), a sequence $\{a_{i}\}_{i < \omega}$ with $a_{i} \in S$ such that $ R(a_{i}, a_{j})$ for $i < j$ will be said to be an \textit{infinite chain} for $R$.

\textbf{Hereditary class theory:} Recall that a \textit{hereditary class} of structures in a finite relational language $\mathcal{L}$ is a class of finite $\mathcal{L}$-structures $\mathcal{H}$ that is closed under induced substructures: if $A \in \mathcal{H}$, and $B \subset A$ is an induced substructure of $\mathcal{H}$, then $B \in \mathcal{H}$. When discussing hereditary classes, we apply standard model-theoretic notation to $\mathcal{L}$-structures and their subsets: see the discussion after Definition 2.10 in \cite{ApproxOrder} for a more explicit statement of what this means. Most importantly, we use the following notation when referring to abstract $\mathcal{L}$-structures (as opposed to subsets of an ambient model of a theory): $A \equiv_{C} B$, where $AC$, $BC$ are enumerated $\mathcal{L}$-structures with $C$ as a common induced substructure, means that the ordered tuples $AC$ and $BC$ have the same quantifier-free $\mathcal{L}$-type.

As in \cite{ApproxOrder} we use the following abuse of notation: if $A$ is an \textit{infinite} $\mathcal{L}$-structure, $A \in \mathcal{H}$ if $A_{0} \in \mathcal{H}$ for each finite $A_{0} \subset A$.

Our key definition from hereditary class theory, particularly within Theorem \ref{main theorem 3, restated} and Observation \ref{main observation, restated}, will be that of a hereditary class \textit{defined by a finite family of forbidden weakly embedded substructures}.

\begin{definition}\label{defined by finitely many forbidden weakly embedded substructures}

   Let $\mathcal{L}$ be a finite relational language.

\begin{itemize}
    \item A one-to-one map $\iota: A \hookrightarrow B$ between $\mathcal{L}$-structures is a \emph{weak embedding} if, for $R$ any relation symbol of $\mathcal{L}$,

    $$(a_{1}, \ldots, a_{n}) \in R(A) \Rightarrow (\iota(a_{1}), \ldots, \iota(a_{n})) \in R(B)$$.

    \item For $\mathcal{F}$ a family of finite $\mathcal{L}$-structures, $\mathcal{H}(\mathcal{F})$ is the hereditary class of $\mathcal{L}$-structures $A$ such that there is no $B \in \mathcal{F}$ with a weak embedding $\iota: B \hookrightarrow A$.

    \item A hereditary class $\mathcal{H}$ of $\mathcal{L}$-structures is \textit{defined by a finite family of forbidden weakly embedded substructures}\footnote{This phrasing is based on the phrasing from \cite{ApproxOrder}; in more informal contexts, we may sometimes using the phrasing \textit{defined by finitely many forbidden weakly embedded substructures} or \textit{defined by finitely many forbidden weak substructures}.} if $\mathcal{H} = \mathcal{H}(\mathcal{F})$ for some finite family $\mathcal{F}$ of finite $\mathcal{L}$-structures.
\end{itemize}
    
\end{definition}

\section{Between $\mathrm{NSOP}_{2}$ and $\mathrm{NSOP}_{3}$}\label{between nsop2 and nsop3}

As described in the introduction, in extending the original family of properties $\mathrm{NSOP}_{n}$ for positive integer values of $n$ to the family $\mathrm{NSOP}_{r}$ for real values of $r$, we imposed the constraint that $r \geq 3$. However, there is strong motivation to extend this family to also include properties  $\mathrm{NSOP}_{r}$ for real values $r$ with $2 < r < 3$. This motivation comes from the question of whether $\mathrm{NSOP}_{2}$ is equal to $\mathrm{NSOP}_{3}$. 

Our proposed extension will be the most parsimonious one possible, relative to our existing $\mathrm{NSOP}_{r}$ hierarchy for real values $r \geq 3$. We simply copy our original definition from \cite{ApproxOrder}, Definition \ref{real-valued hierarchy} above, but replace the condition $r \geq 3$ on the real value $r$ with the condition $r > 2$.

\begin{definition}\label{real-valued hierarchy, r > 2}

\textit{At this point, this is a tentative definition, but it will be fully justified in the main result of this section.}

Let $r > 2$ be a real number. A theory $T$ is $\mathrm{NSOP}_{r}$ if there is no definable relation $R(x, y)$ such that:

\begin{itemize}
    \item there exists a sequence $\{a_{i}\}_{i < \omega}$ such that $\models R(a_{i}, a_{j})$ for $i <j$, but

    \item there does not exist a set $\{a_{\theta}\}_{\theta \in S^{1}}$, indexed by the unit circle $S^{1}$, such that $\models R(a_{\theta}, a_{\psi})$ for all $\theta, \psi \in S^{1}$ with $\psi$ lying at most $\frac{2\pi}{r}$ radians counterclockwise from $\theta$.
\end{itemize}

 Otherwise it has $\mathrm{SOP}_{r}$.
    
\end{definition}

\begin{remark}\label{lower limit}
Before giving nontrivial justification for lowering our inclusive lower limit of $3$, for $r$ in the above definition, to a strict lower limit of $2$, we quickly justify terminating our investigation at this lower limit, rather than looking even lower.\footnote{This is all essentially an observation of Nick Ramsey, who pointed out the cases $r=1$ and $r = 2$ in a personal communication with the author.} 

The above condition, where $1 < r \leq 2$, is equivalent to stability. If $T$ is stable, and there exists a sequence $\{a_{i}\}_{i < \omega}$ such that $\models R(a_{i}, a_{j})$ for $i <j$, then we may assume $\{a_{i}\}_{i < \omega}$ is indiscernible. So by stability, $\models R(a_{i}, a_{j})$ for $i \neq j$. But we can then easily find $\{a_{\theta}\}_{\theta \in S^{1}}$ as in the condition. Conversely, suppose $T$ satisfies the condition for $1 < r \leq 2$, but $T$ is unstable. Then there exists $\{a_{i}\}_{i < \omega}$ such that $\models R(a_{i}, a_{j})$ for $i <j$, but $\models \neg R(a_{i}, a_{j})$ for $j < i$. Define $R'(x, y) : = R(x, y) \wedge \neg R(y, x)$. Then $\models R'(a_{i}, a_{j})$ for $i <j$. So there exists $\{a_{\theta}\}_{\theta \in S^{1}}$ for $R'$. But then, because $r \leq 2$, $\models R'(a_{0\mathrm{\: rad}}, a_{\pi\mathrm{\: rad}}) \wedge R'(a_{\pi\mathrm{\: rad}}, a_{0\mathrm{\: rad}})$. This contradicts the definition of $R'(x, y)$.

The above condition, where $0 < r \leq 1$, is equivalent to every model of $T$ being finite. If every model of $T$ is finite, then $T$ is stable, and the proof of the above condition is as in the previous paragraph. Conversely, suppose the above condition is satisfied, but $T$ has an infinite model. Then for $R(x, y):= x\neq y $, there is  $\{a_{i}\}_{i < \omega}$ such that $\models R(a_{i}, a_{j})$. Then $\{a_{\theta}\}_{\theta \in S^{1}}$ exists for $R$. But then, because $r \leq 1$, $\models R(a_{0 \mathrm{\: rad}}, a_{0 \mathrm{\: rad}})$. This contradicts the definition of $R(x, y)$.

\end{remark}

Following Remark 2.8 of \cite{ApproxOrder}, we say why our extension of the definition of $\mathrm{NSOP}_{r}$ to real values $r > 2$ requires further justification. One of the salient features of the original $\mathrm{NSOP}_{n}$ hierarchy, for integers $n \geq 1$, is that it forms an ascending chain:

$$\mathrm{NSOP}_{1} = \mathrm{NSOP}_{2} \subseteq \mathrm{NSOP}_{3} \subsetneq \ldots \subsetneq \mathrm{NSOP}_{n} \subsetneq \mathrm{NSOP}_{n+1} \subsetneq \ldots$$

\:

In extending this hierarchy to obtain properties $\mathrm{NSOP}_{r}$ for non-integer values of $r$, we must preserve this condition, and introducing the properties $\mathrm{NSOP}_{r}$ for $r \geq 3$ poses no problems preserving the condition. As stated in Remark 2.8 of \cite{ApproxOrder}, for real values $r, s$ with $3 \leq r \leq s$, it is immediate from the definitions that $\mathrm{NSOP}_{r} \subseteq \mathrm{NSOP}_{s}$. Moreover, Shelah and Usvyatsov observe the inclusion $\mathrm{NSOP}_{2} \subseteq \mathrm{NSOP}_{3}$ in \cite{SU08}. (This work was referred to earlier in \cite{Sh99} and follows from reformulations of the definition of $\mathrm{NSOP}_{3}$ in terms of consistency and inconsistency, as found in, say, Fact 1.3 of \cite{SU08}.)

The test for whether our tentative definition of $\mathrm{NSOP}_{r}$ for real values $r > 2$ as in Definition \ref{real-valued hierarchy, r > 2} can be justified as an actual definition will then be: does this still preserve the condition that the properties form an ascending chain? Since the inclusion $\mathrm{NSOP}_{r} \subseteq \mathrm{NSOP}_{s}$ for real values $r, s$ with $3 \leq r \leq s$ was just an immediate consequence of the definitions, there will be no problem extending this inclusion to real values $r, s$ with $2 < r \leq s$. So there remains one potential obstruction to showing the properties remain an ascending chain, and thus justifying the introduction of $\mathrm{NSOP}_{r}$ for $2 < r < 3$: whether $\mathrm{NSOP}_{2} \subseteq \mathrm{NSOP}_{r}$ for $2 < r < 3$.

Our main result will be to prove this inclusion, thereby turning the tentative definition of $\mathrm{NSOP}_{r}$ for reals $r > 2$ in Definition \ref{real-valued hierarchy, r > 2} into an actual definition.

To our knowledge, proving that $\mathrm{NSOP}_{2} \subseteq \mathrm{NSOP}_{r}$ for $2 < r < 3$ will require concepts from the theory of Kim-independence in $\mathrm{NSOP}_{1}$ theories, introduced in \cite{KR17}. We review the definition of Kim-independence, as well as the main theorems from \cite{KR17} on its behavior in $\mathrm{NSOP}_{1}$ theories.

\begin{definition}\label{kim-independence}(\cite{KR17})
A formula $\varphi(x, b)$ \emph{Kim-divides} over a model $M$ if there is an invariant Morley sequence $\{b_{i}\}_{i < \omega}$ over $M$ starting with $b$ (said to \emph{witness} the Kim-dividing) such that $\{\varphi(x, b_{i})\}_{i < \omega}$ is inconsistent. A formula  $\varphi(x, b)$ \emph{Kim-forks} over $M$ if it implies a (finite) disjunction of formulas Kim-dividing over $M$. We write $a \ind^{K}_{M} b$, and say that $a$ is \emph{Kim-independent} from $b$ over $M$, if $\mathrm{tp}(a/Mb)$ does not include any formulas Kim-forking over $M$.
\end{definition}

So if $\mathrm{tp}(a/Mb)$ is finitely satisfiable, then $a \ind^{K}_{M} b$. We next state the results on symmetry and the independence theorem for $\ind^{K}$ in $\mathrm{NSOP}_{1}$ theories:

\begin{fact}\label{symmetry and independence theorem}(\cite{KR17})

    Let $T$ be $\mathrm{NSOP}_{1}$. Then:

    \begin{itemize}
        \item Kim-independence is symmetric: if $a \ind^{K}_{M} b$, then $b \ind^{K}_{M} a$.

        \item Kim-independence satisfies the independence theorem: if $a_{1} \ind^{K}_{M} b_{1}$, $a_{2} \ind^{K}_{M} b_{2}$, $b_{1} \ind^{K}_{M} b_{2}$, and $a_{1} \equiv_{M} a_{2}$, there is some $a \ind^{K}_{M} b_{1}b_{2}$ with $a \equiv_{Mb_{i}}a_{i}$ for $i=1, 2$.
    \end{itemize}

\end{fact}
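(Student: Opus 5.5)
The plan is to follow the route of Kaplan and Ramsey in \cite{KR17}. Everything rests on \emph{Kim's lemma for Kim-dividing}: in an $\mathrm{NSOP}_{1}$ theory, if $\varphi(x, b)$ Kim-divides over $M$ witnessed by some invariant Morley sequence, then it is inconsistent along \emph{every} invariant Morley sequence over $M$ starting with $b$.

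\textbf{Kim's lemma.} Suppose $\varphi(x,b)$ is consistent along a Morley sequence in an invariant type $q$ but $k$-inconsistent along one in another invariant type $q'$. We then build a tree $\{b_\eta\}_{\eta \in 2^{<\omega}}$ by alternately realizing $q$ and $q'$ over the previously constructed parts, in the style of Fact~\ref{sop1 coheirs}. Paths correspond to $q$-Morley sequences, so they are consistent. Pairs of the form $(\eta_1{}^\frown\langle 1\rangle,\ \eta_2 \unrhd \eta_1{}^\frown\langle 0\rangle)$ sit inside a $q'$-Morley sequence. After compactness and reduction from $k$ to $2$ (replacing $\varphi$ by a conjunction), this yields $\mathrm{SOP}_{1}$. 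Consequences: Kim-forking equals Kim-dividing, and $a \ind^{K}_{M} b$ iff for some (equivalently every) invariant Morley sequence $I$ starting with $b$ there is an $Mb$-indiscernible conjugate of $I$ over $Ma$.

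\textbf{Symmetry.} We introduce \emph{tree Morley sequences}: trees indexed by $\omega^{<\omega}$ (or larger well-founded trees), built by iterating invariant extensions, which are spread out and $s$-indiscernible over $M$. Using Kim's lemma we show that if $\varphi(x,b)$ Kim-divides, it is inconsistent along any tree Morley sequence starting with $b$. Conversely, if $a \ind^{K}_M b$, then $b$ can be stretched into a tree Morley sequence over $Ma$ (via a ``chain condition''-style extension argument) whose paths are Morley sequences over $M$ with respect to an invariant type. Given $a\ind^K_M b$ and a formula $\psi(y,a)\in \mathrm{tp}(b/Ma)$, we build an $Ma$-indiscernible tree Morley sequence in $\mathrm{tp}(a/M)$ that is consistent with $\psi$ along the relevant copies. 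By the tree version of Kim's lemma, $\psi(y,a)$ does not Kim-divide, which gives $b \ind^K_M a$.

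\textbf{Independence theorem.} First we prove a weak version: if $a_1\ind^K_M b_1$, $a_2\ind^K_M b_2$, $a_1\equiv_M a_2$, and $b_2 \ind^{i}_M b_1$ (invariant independence), then the amalgam exists. The method is to take a Morley sequence of $b_1$-copies and use Kim's lemma together with the $Mb$-indiscernible criterion to glue $\mathrm{tp}(a_1/Mb_1)$ and $\mathrm{tp}(a_2/Mb_2)$. We then upgrade this using symmetry plus an \emph{extension} property for $\ind^K$ (from symmetry, Kim-forking equals Kim-dividing, and compactness). Concretely, we find a model $N \supseteq M$ and copies arranged so that the hypothesis $b_1\ind^K_M b_2$ can be replaced by coheir independence over a larger configuration, apply the weak version, and finally check that the amalgam $a$ satisfies $a\ind^K_M b_1b_2$ by again applying Kim's lemma along a Morley sequence in $\mathrm{tp}(b_1b_2/M)$.

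The main obstacle will be symmetry, specifically constructing tree Morley sequences with the required indiscernibility over $Ma$ and proving the tree form of Kim's lemma. The upgrade from the weak independence theorem to the full one is the second delicate point, since it uses symmetry essentially.
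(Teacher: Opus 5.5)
The paper gives no proof of this Fact; it imports it from \cite{KR17}. Your outline follows the architecture of \cite{KR17}: Kim's lemma via an $\mathrm{SOP}_{1}$ tree, then Kim-forking $=$ Kim-dividing with the indiscernible-sequence criterion, then tree Morley sequences for symmetry, then a weak independence theorem and an upgrade.

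However, the step you flag as the main obstacle is left unresolved, and what you write there points the wrong way. To apply Kim's lemma for tree Morley sequences to $\psi(y;a)\in\mathrm{tp}(b/Ma)$, you need a tree Morley sequence in $\mathrm{tp}(a/M)$ starting with $a$ along which the instances of $\psi$ have a common realization. For example, one that is $Mb$-indiscernible; your ``$Ma$-indiscernible'' is incoherent for a sequence in $\mathrm{tp}(a/M)$ starting with $a$.

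Stretching $b$ into an $Ma$-indiscernible tree Morley sequence only gives ``one $a$ over many $b$'s''. By the tree Kim's lemma, that merely re-proves $a\ind^{K}_{M}b$. The naive way to grow an $a$-tree $\bar a$ over $b$ is to take an invariant Morley sequence of copies of $\bar a$ and make it $Mb$-indiscernible. But this requires $\mathrm{tp}(b/M\bar a)$ not to Kim-divide, which at the first step is exactly $b\ind^{K}_{M}a$, so it is circular.

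Relatedly, the paths of a Morley tree are not in general Morley sequences in an invariant type. If they were, ordinary Kim's lemma would already apply and no tree version would be needed. Kim's lemma for tree Morley sequences needs its own $\mathrm{SOP}_{1}$ extraction from the spread-out, $s$-indiscernible structure.

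The missing idea is a quantifier flip produced by the nesting of the tree. Build spread-out trees over $M$ of $a$'s with $b$'s attached at the leaves, such that $a_{\eta}b_{\nu}\models\mathrm{tp}(ab/M)$ whenever $\nu$ is a leaf with $\nu\unrhd\eta$. Along with each tree, carry a fresh $a^{\sharp}$ that is tied in this way to every leaf $b$ and satisfies $a^{\sharp}\ind^{K}_{M}\bar a$, where $\bar a$ is the whole current tree.

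To go up one level:
\begin{itemize}
\item take an invariant Morley sequence of copies of $\bar a$ that is $Ma^{\sharp}$-indiscernible, using the indiscernible-sequence criterion;
\item put $a^{\sharp}$ at the new root, where it is now tied to every leaf;
\item use the chain condition to get $a^{\sharp}$ Kim-independent from all the copies;
\item get the next fresh element by extending the non-Kim-forking type of $a^{\sharp}$ over $M$ and the leaf $b$'s to the whole new tree.
\end{itemize}
The last extension is automatic, because Kim-forking formulas form an $M$-invariant ideal. The base case is extending $\mathrm{tp}(a/Mb)$ to a non-Kim-forking type over $Mab$. Every step uses only independence with $a$ on the left, which is exactly what $a\ind^{K}_{M}b$ supplies.

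After extracting a Morley tree, which preserves these ties, the $b$ at the bottom of the spine is tied to every $a_{\zeta_{\beta}}$ on the spine. So $\{\psi(y;a_{\zeta_{\beta}})\}_{\beta}$ is consistent along a tree Morley sequence, and the tree Kim's lemma gives $b\ind^{K}_{M}a$.

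Separately, your upgrade from the weak to the full independence theorem is only a placeholder. You do not say how $b_{1}\ind^{K}_{M}b_{2}$, which fixes $\mathrm{tp}(b_{1}b_{2}/M)$, is traded for coheir independence without changing that type. Nor do you say how Kim-independence over $M$ is recovered after passing to a larger model $N$, given that $\ind^{K}$ lacks base monotonicity.
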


We are now ready to prove our main theorem justifying the definition of $\mathrm{NSOP}_{r}$ for $2 < r < 3$. (A related argument is found in Appendix C of \cite{Patterning}, in joint work of the author with Gabriel Day.)

\begin{theorem}\label{main theorem 1, restated}
    Let $\mathrm{NSOP}_{r}$ for real values $r > 2$ be defined as in Definition \ref{real-valued hierarchy, r > 2}. Then for real values $r > 2$, $\mathrm{NSOP}_{2} \subseteq \mathrm{NSOP}_{r}$.
\end{theorem}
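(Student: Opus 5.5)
We reduce the statement to an \emph{o-maximality} property and derive that from Kim-independence. Let $T$ be $\mathrm{NSOP}_{2}$. By the theorem of \cite{NSOP2}, $T$ is $\mathrm{NSOP}_{1}$, so Fact \ref{symmetry and independence theorem} applies. Fix a definable $R(x,y)$ with an infinite chain $\{a_{i}\}_{i<\omega}$.

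The target is: for every finite directed graph $G=(V,E)$ (loopless, no multiple edges) there are $\{b_{v}\}_{v\in V}$ with $\models R(b_{v},b_{w})$ whenever $(v,w)\in E$. Extra edges are allowed. By compactness and the finitary form (1) of Definition \ref{real-valued hierarchy, detailed definition}, which adapts verbatim to $r>2$, this suffices. Indeed, for $r>2$ and any $N$, the required relations $R(b_{i},b_{i+j \bmod N})$ with $1\leq j\leq N/r<N/2$ form a finite loopless directed graph on $\mathbb{Z}/N$. So the theorem follows once this graph embeds.

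\textbf{Setting up the chain.} By Ramsey and compactness, extend the chain to an indiscernible sequence $\{a_{i}\}_{i\in\mathbb{Q}}$. Find a model $M$ such that $\{a_{i}\}$ is a coheir Morley sequence over $M$. The standard trick is to take $M$ containing an initial segment indexed by a long reversed copy; alternatively, take $M$ to contain an indiscernible $\{a_{i}\}_{i<0}$ prepended to the chain, and let $p$ be the limit type. So we may assume there is a global $M$-finitely satisfiable type $p(x)$ with $a_{1}\models p|_{Ma_{0}}$, $a_{0}\models p|_{M}$, and $\models R(a_{0},a_{1})$.

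Set $q(x)=\mathrm{tp}(a_{0}/M)$. The key consequence is a pair $(c,d)$ with $c,d\models q$, $d\ind^{K}_{M}c$ (since $\mathrm{tp}(d/Mc)$ is finitely satisfiable), and $\models R(c,d)$. By symmetry we also have $c\ind^{K}_{M}d$. Moreover, any two $M$-Kim-independent realizations of $q$ can be arranged in either orientation.

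\textbf{Induction.} We embed $G$ by induction on $|V|$, maintaining that the $b_{v}$ realize $q$ and are \emph{Kim-independent as a family}: each $b_{v}\ind^{K}_{M}\{b_{w}\}_{w\neq v}$, obtained by building them successively. To add a vertex $v$ to $b_{W}$ (with $W$ realizing $G\restriction W$ independently), we need $b_{v}\models q$ with $b_{v}\ind^{K}_{M}b_{W}$, $R(b_{v},b_{w})$ for out-edges and $R(b_{w},b_{v})$ for in-edges.

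For each single $w$, the pair $(c,d)$ supplies some $e_{w}\models q$ with $e_{w}\ind^{K}_{M}b_{w}$ and the correct $R$-relation to $b_{w}$. Here we use automorphisms over $M$ together with $c\equiv_{M}d$. We then amalgamate the types $\mathrm{tp}(e_{w}/Mb_{w})$ over the independent family $\{b_{w}\}$ by iterating the independence theorem. At step $k$ we amalgamate a solution over $b_{w_{1}}\cdots b_{w_{k-1}}$, which is Kim-independent from that tuple, with $e_{w_{k}}$ over $b_{w_{k}}$. This requires $b_{w_{1}}\ldots b_{w_{k-1}}\ind^{K}_{M}b_{w_{k}}$, which the construction order guarantees. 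The result is $b_{v}\ind^{K}_{M}b_{W}$ with all required relations, keeping the invariant.

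\textbf{Main obstacle.} The delicate step is the bookkeeping of Kim-independence across the induction. The independence theorem needs the parameter sets to be Kim-independent \emph{as tuples}, not merely pairwise. So we must verify that the newly added $b_{v}$, independent from all of $b_{W}$, lets the next stage apply the theorem to $b_{W}b_{v}$ against a new singleton. Symmetry handles this: $b_{v}\ind^{K}_{M}b_{W}$ gives $b_{W}\ind^{K}_{M}b_{v}$, and for later vertices we always amalgamate a growing tuple against a single new $b_{w}$. Finally, the edges in $\mathbb{Z}/N$ may point both ways between vertices in the induction order. This is exactly why both orientations from the symmetric pair $(c,d)$ are needed, and why $r>2$ (no $2$-cycles, no loops) is essential.
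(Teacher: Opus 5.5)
Your proposal is correct and is essentially the paper's argument: reduce to weakly embedding every finite loopless graph with no $2$-cycles (the paper takes tournaments), pass to $\mathrm{NSOP}_{1}$ via \cite{NSOP2}, extract an $R$-related Kim-independent pair of realizations of one type from a coheir Morley sequence using symmetry, and add vertices one at a time by iterating the independence theorem of a growing tuple against a single new vertex. Tidy up your stated invariant: you maintain, and only use, the successive condition $b_{w_k}\ind^{K}_{M} b_{w_1}\cdots b_{w_{k-1}}$, not $b_v\ind^{K}_{M}\{b_w\}_{w\neq v}$ for every $v$, which would need an unjustified transitivity property of $\ind^{K}$.
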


\begin{proof}
    Suppose that $r > 2$ is a real value, and that $T$ is $\mathrm{NSOP}_{2}$. We show that $T$ is $\mathrm{NSOP}_{r}$.

    The following is proven in \cite{NSOP2}:

    \begin{fact}\label{Application of nsop1 = nsop2}
        $\mathrm{NSOP}_{1}=\mathrm{NSOP}_{2}.$
    \end{fact}

    So $T$ is $\mathrm{NSOP}_{1}$.

    We now show that $T$ is $\mathrm{NSOP}_{r}$. Let $R(x, y)$ be a definable relation with a sequence $\{a_{i}\}_{i < \omega}$ such that $\models R(a_{i}, a_{j})$ for $i <j$. It suffices to show that there is a set $\{a_{\theta}\}_{\theta \in S^{1}}$, indexed by the unit circle $S^{1}$, such that $\models R(a_{\theta}, a_{\psi})$ for all $\theta, \psi \in S^{1}$ with $\psi$ lying at most $\frac{2\pi}{r}$ radians counterclockwise from $\theta$, because then we will know that $R(x, y)$ cannot exhibit $\mathrm{SOP}_{r}$. For this, it suffices to show that the relation defined by $R(x, y)$ on $\mathbb{M}^{|x|}$ weakly embeds every finite directed graph, and therefore, by compactness, every infinite directed graph. (Then the directed graph consisting of distinct vertices $\{v_{\theta}\}_{\theta \in S^{1}}$, indexed by the unit circle $S^{1}$, whose edges consist of $v_{\theta}Ev_{\psi}$ for all $\theta, \psi \in S^{1}$ with $\psi$ lying at most $\frac{2\pi}{r}$ radians counterclockwise from $\theta$, will weakly embed in the relation defined by $R(x, y)$, as desired. Note that $r > 2$ is required for $\{v_{\theta}\}_{\theta \in S^{1}}$ to be a directed graph: if $0 < r \leq 2 $, there will be vertices $v, w$ with $vEw$ and $w E v$, so this will not be a directed graph.)

    We may assume $\{a_{i}\}_{i < \omega}$ to be indiscernible. Using the standard argument in, say, \cite{KR17}, we observe that we can assume $\{a_{i}\}_{i < \omega}$ to be a finitely satisfiable Morley sequence over a model $M$.  We may extend $\{a_{i}\}_{i < \omega}$ to an indiscernible sequence $\{a_{i}\}_{i \in \mathbb{Q}}$, and even assume $\{a_{i}\}_{i \in \mathbb{Q}}$ is indiscernible in a Skolemization $T^{\mathrm{Sk}}$. Let $M$ be the Skolem hull of $\{a_{i}\}_{i< 0, i \in \mathbb{Q}}$. Then in the original theory $T$, $\{a_{i}\}_{i < \omega}$ (i.e., where $i$ ranges over the positive integers as a subset of $\mathbb{Q}$) will be a finitely satisfiable Morley sequence over $M \prec \mathbb{M} \models T$ such that $\models R(a_{i}, a_{j})$ for $i <j$, as desired.

    We can conclude that $\mathrm{tp}(a_{0}/Ma_{1})$ is $M$-finitely satisfiable. So $a_0 \ind_{M}^{K} a_{1}$. By the symmetry in Fact \ref{symmetry and independence theorem}, $a_1 \ind_{M}^{K} a_{0}$.\footnote{In fact, invoking symmetry of $\ind^{K}$ is only a convenience. We could even have assumed, by a standard argument, that $\{a_{i}\}_{i < \omega}$ extends to a coheir Morley sequence  $\{a_{i}\}_{i \in \mathbb{Z}}$ over $M$ \textit{when read in either direction}, giving $a_0 \ind_{M}^{K} a_{1}$ and $a_1 \ind_{M}^{K} a_{0}$ without using symmetry of $\ind^{K}$.} So there are $a, b $ with $a  \equiv_{M} b$, $\models R(a, b)$, $a \ind_{M}^{K} b$, $b \ind_{M}^{K} a$. Let $p(x) =: \mathrm{tp}(a/M) = \mathrm{tp}(b/M) $, $p_{\rightarrow}(x, y) = \mathrm{tp}(ab/M) $, $p_{\leftarrow}(x, y) = \mathrm{tp}(ba/M) $.

    Let $G = \{v_i\}^{N}_{i = 1}$ be a finite directed graph with the vertices $v_{i}$ distinct; we show that $G$ weakly embeds into the relation defined by $R(x, y)$. We may assume that $G = \{v_i\}^{N}_{i = 1}$  has an edge in one of the two possible directions between any two distinct vertices (i.e., is a tournament).  We show something even stronger: there are $\{c_{i}\}^{N}_{i = 1}$ such that, for each $i \in \{1 \ldots N \}$, $c_{i} \models p(x)$ and $c_{i} \ind_{M}^{K} c_{1} \ldots c_{i-1}$, and for distinct $i, j \in \{1, \ldots, N\}$, $v_{i} E v_{j} $ implies $\models R(c_{i}, c_{j})$. We prove by induction on $n \leq N$ that there are $\{c_{i}\}^{n}_{i = 1}$ such that, for each $i \in \{1 \ldots n \}$, $c_{i} \models p(x)$ and $c_{i} \ind_{M}^{K} c_{1} \ldots c_{i-1}$, and for distinct $i, j \in \{1, \ldots, n\}$, $v_{i} E v_{j} $ implies $\models R(c_{i}, c_{j})$; in the case $n = N$ we will be done. In the base case just find any $c_{1} \models p(x)$. For the inductive step, suppose that we have $\{c_{i}\}^{n}_{i = 1}$ as desired for $n < N$; we find $c_{n+1}$ such that $\{c_{i}\}^{n+1}_{i = 1}$ will be as desired.
    
    To do this, we perform another induction: for $k \in \{1, \ldots n\}$, we show that there is $b_{k} \models p(x)$ such that $b_{k} \ind_{M}^{K} c_{1}\ldots c_{k}$, and such that for each $i \in \{1, \ldots k\}$, $\models R(c_{i}, b_{k})$ if $v_{i} E v_{n+1}$ and $\models R(b_{k}, c_{i})$ if $v_{n+1} E v_{i}$. Then $c_{n+1} =: b_{n}$ will suffice for us. For the base case, just find $b_{1} \models p_{\leftarrow}(x, c_{1})$ (so $\models R(c_{1}, b_{1})$) if $v_{1}Ev_{n+1}$ or $b_{1} \models p_{\rightarrow}(x, c_{1})$ (so $\models R(b_{1}, c_{1})$) if $v_{n+1}Ev_{1}$; either way, since $b \ind^{K}_{M} a$ and $a \ind^{K}_{M} b$, $b_{1} \ind_{M}^{K} c_{1}$. Now suppose $k < n$, and we have found $b_{k}$; let us find $b_{k+1}$. We first have $b_{k} \ind_{M}^{K} c_{1}\ldots c_{k}$, and $c_{k+1} \ind_{M}^{K} c_{1} \ldots c_{k}$. Now find $b'_{k+1} \models p_{\leftarrow}(x, c_{k+1})$  (so $\models R(c_{k+1},  b'_{k+1})$) if $v_{k+1}Ev_{n+1}$ or $b'_{k+1} \models p_{\rightarrow}(x, c_{k+1})$ (so $\models R(  b'_{k+1}, c_{k+1})$) if $v_{n+1}Ev_{k+1}$. Then $b'_{k+1} \ind_{M}^{K}c_{k+1}$, and because $b'_{k+1}, b_{k} \models p(x)$, $b'_{k+1} \equiv_{M} b_{k}$. So applying the independence theorem in Fact \ref{symmetry and independence theorem}, we may find $b_{k+1}$ with $b_{k+1} \ind_{M}^{K} c_{1}\ldots c_{k+1}$, $b_{k+1} \equiv_{Mc_{1}\ldots c_{k}} b_{k}$, $b_{k+1} \equiv_{Mc_{k+1}} b'_{k+1}$. We have successfully chosen our desired $b_{k+1}$.

\end{proof}

This justifies extending the original definition of the properties $\mathrm{NSOP}_{r}$ for real values $r \geq 3$ to the additional real values $r$ with $2 < r < 3$. (So we can finally declare Definition \ref{real-valued hierarchy, r > 2} to be the actual definition of these properties $\mathrm{NSOP}_{r}$, and not just a tentative definition.)

We have demonstrated our first connection between the properties $\mathrm{NSOP}_{r}$ for non-integer values of $r$, and the open question of whether $\mathrm{NSOP}_{2}$ is equal to $\mathrm{NSOP}_{3}$ within the integer-valued $\mathrm{NSOP}_{n}$ hierarchy. Namely, we begin in \cite{ApproxOrder} by observing that $\mathrm{NSOP}_{r}$ is well-defined for real values $r \geq 3$. We have given a nontrivial argument that the properties $\mathrm{NSOP}_{r}$ for $2 < r < 3$ are also well-defined in the following rigorous sense: if we define $\mathrm{NSOP}_{r}$ for  $2 < r < 3$ just by copying the definition where $r \geq 3$, it is still the case that $\mathrm{NSOP}_{1}$, $\mathrm{NSOP}_{2}$, and $\mathrm{NSOP}_{r}$ for real values $r > 2$ form an ascending chain. And now, having obtained well-defined properties $\mathrm{NSOP}_{r}$ with $\mathrm{NSOP}_{2} \subseteq \mathrm{NSOP}_{r} \subseteq \mathrm{NSOP}_{3}$, if it is the case that every well-defined property $\mathrm{NSOP}_{r}$ for non-integer values of $r$ is a new property, distinct from all of the properties $\mathrm{NSOP}_{n}$ for integers $n \geq 1$, then the answer to the open question of whether $\mathrm{NSOP}_{2}$ is equal to $\mathrm{NSOP}_{3}$ is no.

\begin{remark}
    The properties $\mathrm{NSOP}_{1}$ and $\mathrm{NSOP}_{2}$ were defined by Džamonja and Shelah in \cite{DS04} (as referenced earlier in \cite{Sh99}), after Shelah defined the properties $\mathrm{NSOP}_{n}$ for integers $n \geq 3$ in \cite{She95}. However, the properties $\mathrm{NSOP}_{1}$ and $\mathrm{NSOP}_{2}$ are defined much differently from $\mathrm{NSOP}_{n}$ for $n \geq 3$, prompting the question of why they are called $\mathrm{NSOP}_{1}$ and $\mathrm{NSOP}_{2}$.\footnote{According to personal communications with Mirna Džamonja and Alexander Usvyatsov, these may have been called ``tree properties," rather than ``order properties," had knowledge of $\mathrm{TP}_{2}$ been more widespread at the time these properties were defined.} One plausible etymology comes from the fact, known at the time that $\mathrm{NSOP}_{1}$ and $\mathrm{NSOP}_{2}$ were originally defined, that $\mathrm{simple} \subsetneq \mathrm{NSOP}_{1} \subseteq \mathrm{NSOP}_{2} \subseteq \mathrm{NSOP}_{3}$ (as in \cite{DS04} and \cite{SU08}, referenced earlier in \cite{Sh99}). Thus $\mathrm{NSOP}_{1}$ and $\mathrm{NSOP}_{2}$ were known to extend the ascending chain $\mathrm{simple} \subsetneq \mathrm{NSOP}_{3} \subsetneq \ldots \subsetneq\mathrm{NSOP}_n \subsetneq  \mathrm{NSOP}_{n+1 }\subsetneq \ldots $ defined earlier, by adding exactly two additional classes right below $\mathrm{NSOP}_{3}$.

    The main result of this section gives a nontrivial additional justification for the name $\mathrm{NSOP}_{2}$, as derived from the names of the properties $\mathrm{NSOP}_{n}$ for integers $n \geq 3$. Such a justification will go: we now know that Shelah's original definition of the family $\mathrm{NSOP}_{n}$ for integers $n \geq 3$ can be restated so as to make sense for non-integer values of $n$, and even applying this restated definition to $n = r$ for $2 < r < 3$, we still have the chain of properties  $\mathrm{simple} \subsetneq \mathrm{NSOP}_{1} \subseteq \mathrm{NSOP}_{2} \subseteq \mathrm{NSOP}_{r}$. It is curious that this new, stronger justification is now known well after the property $\mathrm{NSOP}_{2}$ was originally defined.
\end{remark}

\section{An approximate alternative: $\mathrm{NSOP}_{\mathbb{R
}_{\geq 2}} \not\subset \mathrm{NSOP}_{\mathbb{N}}$ vs. $\mathrm{NSOP}_{n+1} \cap \mathrm{NTP}_{2} = \mathrm{NSOP}_{n} \cap \mathrm{NTP}_{2}$}\label{the approximate alternative}

In our previous work in \cite{ApproxOrder}, we concentrated on the question of \textit{whether we actually introduced new classification-theoretic properties} by extending the original integer-valued $\mathrm{NSOP}_{n}$ hierarchy to the real-valued $\mathrm{NSOP}_{r}$ hierarchy. We begin this section by motivating a general way in which this could fail.

We start with the question of whether the $\mathrm{NSOP}_{r}$ hierarchy is distinct from the original integer-valued $\mathrm{NSOP}_{n}$ hierarchy. This question is open not just at the level of full first-order logic, but even at the quantifier-free level. This is the same as asking whether $\mathfrak{o}(\mathcal{H})$\footnote{As in Definition 2.12 of \cite{ApproxOrder}, by the results of the previous section we can also define $\mathrm{NSOP}_{r}$ for hereditary classes for real values $r > 2$, not just real values $r > 3$. In the definition of the quantity $\mathfrak{o}(\mathcal{H})$ for a hereditary class $\mathcal{H}$, the supremum of those real values $r$ for which $\mathcal{H}$ has $\mathrm{SOP}_{r}$, we assumed $r \geq 3$. We could also define the supremum of those real values $r > 2$ for which $\mathcal{H}$ has $\mathrm{SOP}_{r}$, and introduce a new notation for that, say $\mathfrak{O}(\mathcal{H})$. Then the claim that the real-valued $\mathrm{NSOP}_{r}$ hierarchy defined with $r > 2$ is non-distinct from the original integer-valued $\mathrm{NSOP}_{n}$ hierarchy at the quantifier-free level is equivalent to integrality of $\mathfrak{O}(\mathcal{H})$. The proof of Theorem 3.1 of \cite{ApproxOrder} also shows that $\mathfrak{O}(\mathcal{H})$ is an integer when $\mathcal{H}$ is defined by finitely many forbidden weak substructures. On the other hand, to show that $\mathfrak{O}(\mathcal{H})$ can take non-integer values is potentially easier than to show that $\mathfrak{o}(\mathcal{H})$ can take non-integer values, though both problems remain open.} is an integer for every hereditary class $\mathcal{H}$ (Remark 2.13 of \cite{ApproxOrder}). When we speak of the quantifier-free version of $\mathrm{NSOP}_{r}$ for $r > 2$ real (including $\mathrm{NSOP}_{n}$ for $n$ an integer), we mean that no quantifier-free formula exhibits $\mathrm{SOP}_{r}$ (or $\mathrm{SOP}_{n}$). So the question at the quantifier-free level asks: is there some real value $r > 2$ such that, for every integer $n \geq 1$, the quantifier-free version of $\mathrm{NSOP}_{r}$ is not equivalent to the quantifier-free version of $\mathrm{NSOP}_{n}$?

Suppose that, for some fixed integer $n \geq 3$, there is no such $r$ between $n - 1$ and $n$. This means that for every $r$ with $n -1 < r < n$, the quantifier-free version of $\mathrm{NSOP}_{r}$ is equivalent to the quantifier-free version of $\mathrm{NSOP}_{n}$. An elementary way of rephrasing this hypothesis is as follows. For $a, b$ positive integers, define $\mathsf{G}_{a, b}$ to be the directed graph consisting of distinct $\{b_{i}\}^{a-1}_{i = 0}$ such that $b_{i} E b_{j}$ exactly when, for some $k \in \{1, \ldots, b\}$, $i \equiv j+k \mathrm{\:mod\:} a$. So as in \cite{ApproxOrder}, $\mathsf{G}_{a, b}$ consists of $b$ many vertices arranged in a circle, so that each vertex has an edge to the next $a$ many vertices counterclockwise. Then non-distinctness of the quantifier-free version of $\mathrm{NSOP}_{r}$ from the quantifier-free version of $\mathrm{NSOP}_{n}$ for every $r$ with $n -1 < r < n$ is equivalent to the following statement:

\textit{Restatement of non-distinctness for $n -1 < r < n$:} For $G$ any (directed) graph with an infinite chain of vertices $\{v_{i}\}_{i < \omega}$ with $v_{i} E v_{j}$ for $i < j$, if no quantifier-free formula in the graph language in $G$ exhibits $\mathrm{SOP}_{n}$, then $G$ has a (weakly embedded) copy of $\mathsf{G}_{a, b}$ for all positive integers $a, b$ with $n -1 < \frac{a}{b} < n$.\footnote{More literally, this is an elementary way of rephrasing the non-distinctness condition when the quantifier-free version of $\mathrm{NSOP}_{r}$ is stated in the following way: for every quantifier-free definable relation $R(x_{1}, x_{2})$ with a sequence $\{a_{i}\}_{i < \omega}$ such that $R(a_{i}, a_{j})$ for all $i < j$, for all integers $N$ there are \textit{distinct} $b_{0}, ... b_{N-1}$ such that $\models R(b_{i}, b_{i+j \mathrm{\:mod\:} N })$ for $0 \leq i < N$, $1 \leq j \leq \frac{N}{r}$. We need distinctness to get a weak embedding, rather than just a graph homomorphism. Of course, this is an equivalent statement of the quantifier-free version of $\mathrm{NSOP}_{r}$. To get distinctness of the $b_{0}, ... b_{N-1}$, extend $\{a_{i}\}_{i< \omega}$ to $\{a_{i}\}_{i< \omega \times \omega}$. Then apply the original quantifier-free $\mathrm{NSOP}_{r}$ without distinctness to $\mathrm{qftp}(\overline{a}_{0}\overline{a}_{1})$ in $ \{\overline{a}_{n}\}_{n < \omega} =: \{a_{n \omega}a_{n\omega + 1}  \ldots a_{n \omega + i} \ldots \}_{n < \omega}$, and select distinct $b_{0}, ... b_{N-1}$ using the pigeonhole principle. This is exactly as in the proof of Claim 2.19 of \cite{ApproxOrder}.

Generalizing this argument, in Hypothesis \ref{nondistinctness on sufficiently general grounds} and Definition \ref{o-maximality} below, replacing weak embeddings with graph homomorphisms will make no difference. }

Notice that the graphs $\mathsf{G}_{a, b}$ for  positive integers $a, b$ with $n -1 < \frac{a}{b} < n$ are all free of (directed) $n-1$-cycles. (This is in Claim 3.14 of \cite{ApproxOrder}.) So the question of whether the quantifier-free version of the real-valued $\mathrm{NSOP}_{r}$ hierarchy for $n-1 < r < n$ is distinct from the quantifier-free version of $\mathrm{NSOP}_{n}$ is just: for every $\mathsf{G}$ belonging to the specific family of $\leq n-1$-cycle free graphs  $\{\mathsf{G}_{a, b}: a, b \in \mathbb{N}, n -1 < \frac{a}{b} < n\}$, does every graph with an infinite chain for the graph relation and no quantifier-free formulas exhibiting $\mathrm{SOP}_{n}$ have a copy of $\mathsf{G}$?

However, we can even ask this question at a greater level of generality. It is not only open whether in this case we can always obtain a copy of each of the $\leq n-1$-cycle-free graphs $\mathsf{G}$ \textit{from this specific family} $\{\mathsf{G}_{a, b}\}$. In fact, there is not even a known counterexample to being able to obtain a copy of \textit{any} $\leq n-1$-cycle-free graph $\mathsf{G}$ in a graph with an infinite chain and no quantifier-free formulas exhibiting $\mathrm{SOP}_{n}$.

This observation gives us a rigorous sense in which the real-valued $\mathrm{NSOP}_{r}$ hierarchy may fail to be distinct from the integer-valued $\mathrm{NSOP}_{n}$ hierarchy on especially general grounds.

\begin{hypothesis}\label{nondistinctness on sufficiently general grounds}

The real-valued $\mathrm{NSOP}_{r}$ hierarchy is non-distinct from the integer-valued $\mathrm{NSOP}_{n}$ hierarchy \textit{on sufficiently general grounds} if, for all integers $n \geq 3$ and finite (directed) graphs $\mathsf{G}$ without (directed) $\leq n-1$-cycles, for every (infinite, directed) graph $G$ with some sequence of vertices $\{v_{i}\}_{i < \omega}$ with $v_{i} E v_{j}$ for $i < j$ and with no quantifier-free formula in the graph language exhibiting $\mathrm{SOP}_{n}$, there is a (weak) embedding $\mathsf{G} \hookrightarrow G$.
\end{hypothesis}

For $N \geq 3$ a fixed integer, we also say that the real-valued $\mathrm{NSOP}_{r}$ hierarchy is non-distinct from the integer-valued $\mathrm{NSOP}_{n}$ hierarchy  \textit{between $\mathrm{NSOP}_{N-1}$ and $\mathrm{NSOP}_{N}$ on sufficiently general grounds} if this statement holds for this specific value $n= N$, rather than for all $n$. For example, the real-valued $\mathrm{NSOP}_{r}$ hierarchy is non-distinct from the integer-valued $\mathrm{NSOP}_{n}$ hierarchy between $\mathrm{NSOP}_{3}$ and $\mathrm{NSOP}_{4}$ on sufficiently general grounds if every graph $G$ with an infinite chain for the graph relation and no quantifier-free formulas exhibiting $\mathrm{SOP}_{4}$ embeds every graph without directed triangles. This is the first case where the main result of this section will apply.

\begin{remark} \label{no n-1-cycles}

Note that if $\mathsf{G}$ \textit{does} have an $\leq n-1$-cycle, it is not the case that for every graph $G$ with some sequence of vertices $\{v_{i}\}_{i < \omega}$ with $v_{i} E v_{j}$ for $i < j$ and with no quantifier-free formula in the graph language exhibiting $\mathrm{SOP}_{n}$, there is an embedding $\mathsf{G} \hookrightarrow G$. For example, we can let $G$ be an existentially closed member of the class of directed graphs without directed $\leq n-1$-cycles; as shown in \cite{She95}, the theory of $G$ is $\mathrm{NSOP}_{n}$. The hypothesis that the real-valued and integer-valued hierarchies are non-distinct on sufficiently general grounds between $\mathrm{NSOP}_{n-1}$ and $\mathrm{NSOP}_{n}$ says that having an $\leq n-1$-cycle is the only such obstruction.

Thus another way of stating the hypothesis is as follows. Let $\mathcal{G}_{n}$ be the class of graphs $\mathsf{G}$ such that, for every graph $G$ with some sequence of vertices $\{v_{i}\}_{i < \omega}$ with $v_{i} E v_{j}$ for $i < j$ and with no quantifier-free formula in the graph language exhibiting $\mathrm{SOP}_{n}$, there is a weak embedding $\mathsf{G} \hookrightarrow G$. Then the real-valued and integer-valued hierarchies are non-distinct on sufficiently general grounds between $\mathrm{NSOP}_{n-1}$ and $\mathrm{NSOP}_{n}$ if and only if $\mathcal{G}_{n}$ is characterized as exactly those graphs with no $\leq n-1$-cycles.

\end{remark}

    \begin{remark}\label{comparison with existing results}

We remark on how our hypothesis fits into the existing results on the open question of whether the real-valued $\mathrm{NSOP}_{r}$ hierarchy is distinct from the integer-valued $\mathrm{NSOP}_{n}$ hierarchy. First, the content of Theorem 3.1 of \cite{ApproxOrder} for directed graphs is that, if $G$ is a graph with some sequence of vertices $\{v_{i}\}_{i < \omega}$ with $v_{i} E v_{j}$ for $i < j$ and with no quantifier-free formula in the graph language exhibiting $\mathrm{SOP}_{n}$, \textit{and the class of induced subgraphs of $G$ is defined by finitely many forbidden weak subgraphs}, then each $\mathsf{G}_{a, b}$ for $n -1 < \frac{a}{b} < n$ weakly embeds into $G$. However, there is nothing special here about the graphs $\mathsf{G}_{a, b}$, other than the fact that they have no $n-1$-cycles: in this case, the proof shows that \textit{every} $n-1$-cycle-free graph weakly embeds into $G$. This can be taken as additional motivation for replacing the family of graphs $\mathsf{G}_{a, b}$ with the family of \textit{all} $n-1$-cycle-free graphs, the main move of the hypothesis.

The other main theorem of \cite{ApproxOrder}, Theorem 4.6 there, gives an indication of how we might begin to obtain graphs $\mathsf{G} \notin \mathcal{G}_{n}$ that cannot be formed from the infinite chain graph by repeated quantifier-free applications of $\mathrm{NSOP}_{n}$, outside of the obvious case where the graph $\mathsf{G}$ has a $\leq n -1$-cycle. Specifically we see that all \textit{cyclically $n$-decomposable} graphs for integers $n \geq 4$, including the graphs $\mathsf{G}_{a, b}$ for $n -1 < \frac{a}{b} < n$, cannot be formed out of an infinite chain by using the special class of quantifier-free applications of $\mathrm{NSOP}_{n}$ given by \textit{interval helix maps}. On the other hand, in section 4.1 of \cite{ApproxOrder} it is observed that $\mathcal{G}_{n}$ contains not just all cyclically $n$-decomposable graphs but even some cyclically $n$-indecomposable graphs. So  we know that there are some graphs $\mathsf{G} \in \mathcal{G}_{n}$ that are relatively difficult to form from the infinite chain graph by quantifier-free applications of $\mathrm{NSOP}_{n}$, in the sense that we cannot just use interval helix maps to obtain $\mathsf{G}$, but such that we can still form $\mathsf{G}$ anyway using more general quantifier-free applications of $\mathrm{NSOP}_{n}$.

\end{remark}

Having described a general way in which the real-valued $\mathrm{NSOP}_{r}$ hierarchy may fail to introduce new classification-theoretic properties, we are now ready to investigate its consequences. We will show that assuming this hypothesis, we can resolve one of the main questions on the \textit{integer-valued} $\mathrm{NSOP}_{n}$ hierarchy: whether $\mathrm{NSOP}_{n+1} \cap \mathrm{NTP}_{2} = \mathrm{NSOP}_{n} \cap \mathrm{NTP}_{2}$ for integers $n \geq 3$. Informally, we will show that it is \textit{approximately} the case that one of the following two possibilities hold: either (1) the real-valued $\mathrm{NSOP}_{r}$ hierarchy does introduce new properties, or (2) $\mathrm{NSOP}_{n} \cap \mathrm{NTP}_{2} = \mathrm{NSOP}_{n+1} \cap \mathrm{NTP}_{2}$, resolving this open question. Specifically, we will prove Theorem \ref{main theorem 2} from the introduction that if (1) fails on sufficiently general grounds, (2) must be true. 

\begin{theorem}\label{main theorem 2, restated}

Suppose that real-valued $\mathrm{NSOP}_{r}$ hierarchy is non-distinct from the integer-valued $\mathrm{NSOP}_{n}$ hierarchy on sufficiently general grounds. Then for each integer $n \geq 3$, $\mathrm{NSOP}_{n} \cap \mathrm{NTP}_{2} = \mathrm{NSOP}_{n+1} \cap \mathrm{NTP}_{2}$.

More precisely,  for any integer $N \geq 3$, if the real-valued $\mathrm{NSOP}_{r}$ hierarchy is non-distinct from the integer-valued $\mathrm{NSOP}_{n}$ hierarchy between $\mathrm{NSOP}_{N}$ and $\mathrm{NSOP}_{N+1}$ on sufficiently general grounds, then $\mathrm{NSOP}_{N} \cap \mathrm{NTP}_{2} = \mathrm{NSOP}_{N+1} \cap \mathrm{NTP}_{2}$.

\end{theorem}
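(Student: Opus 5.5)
The plan is to prove the contrapositive of the precise version. Fix an integer $N \geq 3$, assume the hypothesis holds between $\mathrm{NSOP}_{N}$ and $\mathrm{NSOP}_{N+1}$ (that is, Hypothesis \ref{nondistinctness on sufficiently general grounds} for $n = N+1$), and let $T$ be $\mathrm{NSOP}_{N+1}$ with $\mathrm{SOP}_{N}$. I will show $T$ has $\mathrm{TP}_{2}$. Since $\mathrm{NSOP}_{N} \subseteq \mathrm{NSOP}_{N+1}$, this gives $\mathrm{NSOP}_{N} \cap \mathrm{NTP}_{2} = \mathrm{NSOP}_{N+1} \cap \mathrm{NTP}_{2}$. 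The construction copies the standard $\mathrm{TP}_{2}$ configuration in Shelah's generic $\leq N$-cycle-free digraph, with the hypothesis supplying the necessary configurations.

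\textbf{Step 1: the graph.} Take a definable $R(x,y)$ witnessing $\mathrm{SOP}_{N}$. It has an infinite chain $\{a_i\}_{i<\omega}$ and no directed $N$-cycle. Let $G$ be the directed graph on $\mathbb{M}^{|x|}$ with edge relation $R$. Every quantifier-free formula of the graph language in $G$ is definable in $T$, so because $T$ is $\mathrm{NSOP}_{N+1}$, no such formula exhibits $\mathrm{SOP}_{N+1}$. By the hypothesis with $n = N+1$, every finite directed graph without directed $\leq N$-cycles weakly embeds into $G$.

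\textbf{Step 2: the combinatorial template.} For finite $k, m$, let $\mathsf{G}_{k,m}$ be the graph with the following vertices and edges.
\begin{itemize}
\item Vertices $a_{ij}, b_{ij}$ for $i<k$, $j<m$.
\item For each row $i$ and each ordered pair $j \neq j'$, a fresh directed path of length $N-2$ from $b_{ij}$ to $a_{ij'}$.
\item For each $\sigma : k \to m$, a vertex $x_\sigma$ with edges $a_{i\sigma(i)} \to x_\sigma$ and $x_\sigma \to b_{i\sigma(i)}$ for all $i$.
\end{itemize}
I then check that $\mathsf{G}_{k,m}$ has no directed $\leq N$-cycle. The edges not involving any $x_\sigma$ all go from $b$'s to $a$'s through paths, so they form an acyclic graph. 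Hence every cycle passes through some $x_\sigma$.

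Suppose a cycle passes through exactly one $x_\sigma$. It then has the form $x_\sigma \to b_{i\sigma(i)} \leadsto a_{ij'} \to x_\sigma$ with $j' \neq \sigma(i)$. But then $a_{ij'} \to x_\sigma$ is not an edge, which is impossible. So every cycle passes through at least two $x$-vertices. Each segment $a \to x \to b \leadsto a'$ has length $N$, so every cycle has length at least $2N > N$.

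\textbf{Step 3: extracting $\mathrm{TP}_{2}$.} Let $\varphi(x; y z \bar w) := R(y,x) \wedge R(x,z)$. The parameter for position $(i,j)$ is the image of $a_{ij}, b_{ij}$, together with the images of the paths leaving $b_{ij}$ and entering $a_{ij}$ inside row $i$. A weak embedding $\mathsf{G}_{k,m} \hookrightarrow G$ makes every path $\{\varphi(x, c_{i\sigma(i)})\}_{i<k}$ consistent, witnessed by the image of $x_\sigma$.

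For row inconsistency, suppose some $x$ realized $\varphi$ at positions $(i,j)$ and $(i,j')$ with $j \neq j'$. Then $R(a_{ij},x)$ and $R(x, b_{ij'})$ hold, and the image of the path $b_{ij'} \leadsto a_{ij}$ closes a directed $R$-cycle of length exactly $N$. This contradicts the choice of $R$. Note that only positive edges are used here, so weak embeddings suffice.

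Since the inconsistency holds for every $k, m$ and the consistency statements are finitary, compactness gives an $\omega \times \omega$ array witnessing $\mathrm{TP}_{2}$. Only the no-$N$-cycle property of $R$ is needed, so Shelah's reduction to no $\leq N$-cycles is unnecessary.

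\textbf{Main obstacle.} The delicate point is Step 2. The template must force a cycle of length exactly $N$ whenever a row is doubly realized, while containing no cycle of length $\leq N$ itself. The single-$x$ case analysis above is what makes this work, and it is where an error would appear. The remaining steps are routine, but I need to confirm two details: that quantifier-free formulas on $G$ over tuples are genuinely covered by the graph-language hypothesis, and that the compactness passage handles all $\sigma$ simultaneously. Including every $x_\sigma$ in one finite graph is designed to handle the latter.
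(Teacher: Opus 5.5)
Your proposal is correct and is essentially the paper's proof. It uses the same template (row-internal $R$-paths of length $N-2$ between distinct columns, plus one vertex per choice function with an edge in from one endpoint and an edge out to the other), the same formula up to swapping its two parameter slots, and the same forced $N$-cycle for row inconsistency; your bidirectional paths and your finite $\mathsf{G}_{k,m}$ with compactness are only cosmetic variants of the paper's paths for $j<j'$ and its single infinite graph.

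Two small points. First, the hypothesis is about directed graphs, which are irreflexive and antisymmetric, and the paper makes the corresponding assumption on $R$; so first replace $R$ by $R(x,y)\wedge\neg R(y,x)$, which still has the chain once the chain is made indiscernible. Second, drop the unused parameters $\bar w$: their length grows with $m$, which would obstruct the compactness step, and the inconsistency only needs the connecting path to exist in $\mathbb{M}$.
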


We will prove this by simulating the standard proof that the model companion of the theory of $\leq N$-cycle-free directed graphs has $\mathrm{TP}_{2}$.

\begin{proof}
    Assume that the real-valued $\mathrm{NSOP}_{r}$ hierarchy is non-distinct from the integer-valued $\mathrm{NSOP}_{n}$ hierarchy between $\mathrm{NSOP}_{N}$ and $\mathrm{NSOP}_{N+1}$ on sufficiently general grounds. Suppose that $T$ has $\mathrm{SOP}_{N}$ but is $\mathrm{NSOP}_{N+1}$; to prove this theorem, our goal will be to show that $T$ has $\mathrm{TP}_{2}$. Let $R(x, y)$ be a definable relation exhibiting $\mathrm{SOP}_{N}$, so there is a sequence $\{a_{i}\}_{i < \omega}$ such that $\models R(a_i, a_j)$ for $i < j$, but $R(x, y)$ has no directed $N$-cycles. (We may assume $R(x, y)$ to be a directed graph relation, so antisymmetric and irreflexive.)

    Consider the directed graph $\mathsf{G}$ consisting of distinct points $\{b_{ij}, c_{ij}\}_{i, j < \omega}$, $\{d^{k}_{ijj'}: i<\omega, j<j' < \omega, k\in \{1, \ldots, N-3\}\}$ (so ignore this for $N = 3$), $\{a_{\sigma}\}_{\sigma \in \omega^{\omega}}$, with exactly the following edges:

    (a) For $j < \omega$, $j < j' < \omega$, edges $b_{ij} E d^{1}_{ijj'}, d^{1}_{ijj'}Ed_{ijj'}^{2}, \ldots, d^{k}_{ijj'}Ed^{k+1}_{ijj'}, \ldots, d^{N-4}_{ijj'}Ed^{N-3}_{ijj'},d^{N-3}_{ijj'} E c_{ij'} $ (so for $N = 3$, $b_{ij} E c_{ij'}$)

    (b) For $\sigma \in \omega^{\omega}$ and $i < \omega$, edges $c_{i \sigma(i)} E a_{\sigma}$ and $a_{\sigma} E b_{i \sigma(i)}$.

We show that $\mathsf{G}$ is $\leq N$-cycle-free. First, within the $b_{ij}, c_{ij}, d^{k}_{ii'j}$ there are no cycles at all: no cycle within $b_{ij}, c_{ij}, d^{k}_{ijj'}$ can contain any $b_{ij}$ or $c_{ij}$, because these are respectively targets of no edges or sources of no edges within $b_{ij}, c_{ij}, d^{k}_{ijj'}$, and the $d^{k}_{ii'j}$ just consist of a disjoint union of paths with no additional edges, so there are no cycles within just the $d^{k}_{ijj'}$. So, if there is an $\leq N$-cycle in $\mathsf{G}$, one of the points must be one of the $a_{\sigma}$; the next point with an edge from $a_{\sigma}$ in the $\leq N$-cycle must then be $b_{i\sigma(i)}$ for some $i < \omega$. Then the next point with an edge from $b_{i\sigma(i)}$ in the $\leq N$-cycle (if $N > 3$) must be $d_{i\sigma(i)j'}^{1}$ for some $j'$ with $\sigma(i)< j' < \omega$; the next point with an edge from $d_{i\sigma(i)j'}^{1}$ in the $\leq N$-cycle must be $d_{i\sigma(i)j'}^{2}$ (if $N > 4$); then, after proceeding in this way for a total number of steps one less than the length of the $\leq N$-cycle, the next point with an edge from one of the $d_{i\sigma(i)j'}^{k}$ or from $c_{ij'}$ in the cycle will be $a_{\sigma}$, where $\sigma(i)< j' < \omega$. Since $\sigma(i) \neq j'$, this contradicts the specification of the edges from $a_{\sigma}$ in (b).

So we may apply this hypothesis to the $\mathrm{NSOP}_{N}$ directed graph defined on $|x|$-tuples by $R(x,y)$, which we assumed has $\{a_{i}\}_{i < \omega}$ such that $\models R(a_i, a_j)$ for $i < j$. By compactness, this shows that $\mathsf{G}$ will embed into the relation defined by $R(x, y)$. We identify $b_{ij}, c_{ij}, d^{k}_{ii'j}, a_{\sigma}$ with their images. We show that $\varphi(x, yz) = : R(x, y) \wedge R(z, x)$ exhibits $\mathrm{TP}_{2}$ with the array $\{b_{ij}c_{ij}\}_{i, j < \omega}$. First, for $\sigma \in \omega^{\omega}$, $\{\varphi(x, b_{i \sigma(i)}c_{i \sigma(i)})\}_{i < \omega}$ is consistent, realized by $a_{\sigma}$. Second, for $i < \omega$, $j < j' < \omega$, $\{\varphi(x, b_{ij}c_{ij}), \varphi(x, b_{ij'}c_{ij'}) \}$ will be inconsistent, and particularly $\{R(x, b_{ij}), R(c_{ij'}, x) \}$ will be inconsistent. Otherwise, for $a$ a realization, $a, b_{ij}, d^{1}_{ijj'}, \ldots, d^{k}_{ijj'}, \ldots, d^{N-3}_{ijj'}, c_{ij'}$ will form an $N$-cycle for $R(x, y)$, contradicting the assumption that $R(x, y)$ exhibits $\mathrm{SOP}_{N}$.

\end{proof}

Therefore, one potential method of answering the open question of whether $\mathrm{NSOP}_{n+1} \cap \mathrm{NTP}_{2}$ is equal to $\mathrm{NSOP}_{n} \cap \mathrm{NTP}_{2}$ for integers $n \geq 3$ is to attempt to give a positive answer to the following open question.\footnote{One of the edge cases of this problem is whether $\mathrm{NSOP}_{3} \cap \mathrm{NTP}_{2}$ is equal to the class of simple theories, equivalently (as in \cite{Sh90}) $\mathrm{NSOP}_{2} \cap \mathrm{NTP}_{2}$. Naïvely, given Theorem \ref{main theorem 3, restated}, one might think this corresponds to the hypothesis that the real-valued $\mathrm{NSOP}_{r}$ hierarchy is non-distinct from the integer-valued $\mathrm{NSOP}_{n}$ hierarchy between $\mathrm{NSOP}_{2}$ and $\mathrm{NSOP}_{3}$ on sufficiently general grounds. However, because we do not have an instance of $\mathrm{SOP}_{n}$ for an integer $n \geq 3$ to start with as in the proof of Theorem \ref{main theorem 2, restated}, it is not clear that the methods of this section apply to that edge case. } Since our hypothesis is stated in quantifier-free terms, this will be a purely combinatorial question:

\begin{question}\label{general non-distinctness question}
    Are the real-valued $\mathrm{NSOP}_{r}$ hierarchy and integer-valued $\mathrm{NSOP}_{n}$ hierarchy (between $\mathrm{NSOP}_{N}$ and $\mathrm{NSOP}_{N+1}$, for an integer $N\geq 3$) non-distinct on sufficiently general grounds?
\end{question}

\begin{remark}\label{intermediate possibility}

We characterized our main result as an approximate alternative between (1) the real-valued $\mathrm{NSOP}_{r}$
 hierarchy being distinct from the integer-valued $\mathrm{NSOP}_{n}$ hierarchy and (2) $\mathrm{NSOP}_{n+1} \cap \mathrm{NTP}_{2}$ being equal to $\mathrm{NSOP}_{n} \cap \mathrm{NTP}_{2}$. Since this is only an approximate alternative, we make explicit what happens in the intermediate case where neither is true.
 
 \: In the proof of the theorem, to apply the hypothesis of non-distinctess on sufficiently general grounds, we only needed that the graph defined by the relation $R(x, y)$ was $\mathrm{NSOP}_{N}$ at the quantifier-free level. Of course, in the setting of this theorem, this graph was $\mathrm{NSOP}_{N}$, in full first-order logic.
 
 \: So if neither (1) nor (2) true, the following must be the case:

 \begin{itemize}
     \item  for $n \geq 3$ an integer, a definable relation $R(x, y)$ in an $\mathrm{NSOP}_{n}$ theory with an infinite chain must have a copy of all of the specific $\leq n-1$-cycle free graphs $\mathsf{G}_{a, b}$, for positive integers $a, b$ with $n-1 < \frac{a}{b} < n$, but 
 \item there is some $n \geq 3$ such that there is a definable relation $R(x, y)$ in an $\mathrm{NSOP}_{n}$ theory with an infinite chain that omits some $\leq n-1$-cycle free graph $\mathsf{G}$ not of the form $\mathsf{G}_{a, b}$. 
 \end{itemize}

 So the real-valued and integer-valued hierarchies will coincide, but for reasons specific to the graphs $\mathsf{G}_{a, b}$ used to define $\mathrm{NSOP}_{r}$ for $r > 2$ a real number, and not just for the more general reason that $\mathsf{G}_{a, b}$ happens to be $\leq n$-cycle-free.
    
\end{remark}

We conclude the main part of this section by extracting the conclusion of our hypothesis as a property of first-order theories. We will remark on two significant edge cases, both the most basic case corresponding to $\mathrm{NSOP}_{3}$, and the asymptotic case of being able to obtain every graph without \textit{small} cycles from an infinite chain.

We isolate the conclusion of our hypothesis as the following family of definitions:

\begin{definition}\label{o-maximality}
    Let $n\geq 3$ be an integer. A theory is \textit{$n$-o-maximal}\footnote{Here the ``o" stands for ``order," as in the $n$-strict order property.} if there is no $\leq n$-cycle-free graph $\mathsf{G}$ and definable relation $R(x, y)$, with $\{a_{i}\}_{i < \omega}$ such that $\models R(a_i, a_j)$ for $i < j$, such that $\mathsf{G}$ does not weakly embed in the relation on $\mathbb{M}^{|x|}$ defined by $R(x, y)$. A theory is \textit{o-maximal} if it is $3$-o-maximal, and is \textit{asymptotically o-maximal} if it is $n$-o-maximal for some integer $n \geq 3$.
\end{definition}

\begin{remark}
    One advantage of this terminology is that a theory cannot be both o-minimal and o-maximal.
\end{remark}

If a theory is $n$-o-maximal, it is $\mathrm{NSOP}_{n}$. However, it is open whether every $\mathrm{NSOP}_{n}$ theory is $n$-o-maximal. The non-distinctness of the real-valued and integer-valued hierarchies between $\mathrm{NSOP}_{n-1}$ and $\mathrm{NSOP}_{n}$ on sufficiently general grounds implies that every $\mathrm{NSOP}_{n}$ theory is $n$-o-maximal, and is equivalent to the claim that every theory which is $\mathrm{NSOP}_{n}$  \textit{at the quantifier-free level} is $n$-o-maximal \textit{at the quantifier-free level}. Moreover, for integers $n \geq 3$, \textit{every $n$-o-maximal theory is $\mathrm{NSOP}_{r}$ for $n -1 < r < n$}. For integers $n \geq 4$, the proof of Theorem \ref{main theorem 2, restated} will imply that \textit{every $n$-o-maximal theory either is $\mathrm{NSOP}_{n-1}$, or has $\mathrm{TP}_{2}$}. 

As suggested by our terminology, the case of $n$-o-maximality for $n=3$ is special. Note that no directed graph has $\leq 2$-cycles (i.e., every directed graph relation is antisymmetric and irreflexive). So a theory is o-maximal exactly when, for every definable relation $R(x, y)$, with $\{a_{i}\}_{i < \omega}$ such that $\models R(a_i, a_j)$ for $i < j$, \textit{every} directed graph weakly embeds into the relation on $\mathbb{M}^{|x|}$ defined by $R(x, y)$. (This is the reason for the ``maximality" in the terminology). It is therefore worth isolating the following question as significant in its own right (even though only the case of $n$-o-maximality for $n \geq 4$ is directly relevant to our main result, Theorem \ref{main theorem 2, restated}).

\begin{question}
    Is every $\mathrm{NSOP}_{3}$ theory o-maximal?
\end{question}

This is a special case of the problem of whether $\mathrm{NSOP}_{2}$ is equal to $\mathrm{NSOP}_{3}$. Specifically, we obtain the following corollary from the proof of Theorem \ref{main theorem 1, restated} from the previous section:

\begin{cor}\label{nsop2 implies o-maximality}
    Every $\mathrm{NSOP}_{2}$ theory is o-maximal.
\end{cor}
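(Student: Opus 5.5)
The plan is to extract the claim directly from the proof of Theorem \ref{main theorem 1, restated}, since that argument already showed, for any $\mathrm{NSOP}_{2}$ theory $T$ and any definable $R(x,y)$ with an infinite chain, that every finite directed graph weakly embeds into the relation defined by $R(x,y)$ on $\mathbb{M}^{|x|}$. By Definition \ref{o-maximality}, o-maximality is $3$-o-maximality: every $\leq 3$-cycle-free graph must weakly embed. Since every directed graph in particular weakly embeds, this will suffice once the embedding statement is isolated.

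The steps, in order, are as follows. First, by Fact \ref{Application of nsop1 = nsop2}, $T$ is $\mathrm{NSOP}_{1}$, so Fact \ref{symmetry and independence theorem} applies. Second, given $R$ with an infinite chain, I will pass to an indiscernible chain, extend it to be indexed by $\mathbb{Q}$ and indiscernible in a Skolemization, and take $M$ to be the Skolem hull of the negative part. This makes $\{a_i\}_{i<\omega}$ a coheir Morley sequence over $M$, giving $a, b$ with $a \equiv_M b$, $\models R(a,b)$, $a \ind^K_M b$ and $b \ind^K_M a$. Third, for a finite graph $\mathsf{G}$, I will first add edges to make it a tournament; this is harmless, since a weak embedding of the larger graph restricts to a weak embedding of $\mathsf{G}$. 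I will then build $c_1,\dots,c_N$ realizing $p=\mathrm{tp}(a/M)$ with $c_{i}\ind^K_M c_{<i}$, by the double induction: each new vertex is amalgamated edge-by-edge using the independence theorem, with $p_{\rightarrow}$ or $p_{\leftarrow}$ chosen according to the edge direction. Distinctness of the $c_i$ is automatic, since $R$ may be taken irreflexive, or else follows from $\models R(c_i,c_j)$ with $R$ antisymmetric. This gives a weak embedding of $\mathsf{G}$, and by compactness also of infinite $\mathsf{G}$ if needed.

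The only real obstacle is bookkeeping. In the inner induction one must check that the hypotheses of the independence theorem hold: $b_k \ind^K_M c_1\ldots c_k$, $b'_{k+1}\ind^K_M c_{k+1}$, $c_{k+1}\ind^K_M c_1\ldots c_k$ (using symmetry to orient this correctly), and $b_k\equiv_M b'_{k+1}$. Each of these is exactly as verified in the proof of Theorem \ref{main theorem 1, restated}, so the corollary follows by quoting that argument verbatim. The only difference is that its conclusion is read as "every directed graph embeds" rather than specializing to the circle graph.
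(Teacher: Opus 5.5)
Your proposal is correct and follows the paper's own route. The paper obtains the corollary by reading it off the proof of Theorem \ref{main theorem 1, restated}, which already shows that every directed graph weakly embeds into any definable relation with an infinite chain; the ingredients are $\mathrm{NSOP}_{1}=\mathrm{NSOP}_{2}$, a two-sided Kim-independent pair $a,b$ obtained from a coheir Morley sequence, and the double induction with the independence theorem on tournaments.

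The one slip is your side remark that antisymmetry forces the $c_i$ to be distinct; it does not, since $R(c,c)$ is compatible with antisymmetry. Injectivity follows instead either by replacing $R$ with $R(x,y)\wedge x\neq y$ (legitimate once the indiscernible chain is nonconstant), or from $c_i \ind^{K}_{M} c_{<i}$ together with $p=\mathrm{tp}(a/M)$ being nonalgebraic.
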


\begin{remark}
    There has been recent interest in updating the terminology for $\mathrm{NSOP}_{1}$ to reflect that it is equal to $\mathrm{NSOP}_{2}$. While we do not have an unconditional proposal for this new terminology, \textit{if it is shown to be the case} in the future that $\mathrm{NSOP}_{2}$ is equal to $\mathrm{NSOP}_{3}$, we propose \textit{o-maximality} as the new standard terminology for $\mathrm{NSOP}_{n}$ for $1 \leq n \leq 3$: in that case, $\mathrm{NSOP}_{1}$, $\mathrm{NSOP}_{2}$ and $\mathrm{NSOP}_{3}$ would all be equivalent to o-maximality.
\end{remark}

At the other edge, we can extract general information about asymptotic o-maximality from the proof of Theorem 3.1 of \cite{ApproxOrder}:

\begin{fact}\label{n-cycle-free to small cycle-free}

For an integer $n \geq 3$, let $G$ be a graph in which no quantifier-free formula exhibits $\mathrm{NSOP}_{n}$. If there is some $\leq n-1$-cycle-free graph that does not weakly embed into $G$, then for arbitrarily large $N$, there is some $\leq N$-cycle-free graph that does not weakly embed into $G$.
    
\end{fact}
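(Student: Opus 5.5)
We prove the contrapositive by a downward induction on the cycle length, removing one cycle length at a time with the quantifier-free $\mathrm{NSOP}_{n}$ assumption, as in the cycle-removal argument of Theorem 3.1 of \cite{ApproxOrder}. Fix $G$ with no quantifier-free formula exhibiting $\mathrm{SOP}_{n}$ (the statement's ``$\mathrm{NSOP}_{n}$'' should read ``$\mathrm{SOP}_{n}$''). Suppose that for some $N$ every $\leq N$-cycle-free graph weakly embeds into $G$. We aim to show that every $\leq n-1$-cycle-free graph weakly embeds into $G$. The key inductive claim is this: \emph{if $m \geq n$ and every $\leq m$-cycle-free finite graph weakly embeds into $G$, then every $\leq m-1$-cycle-free finite graph does.} Iterating the claim from $m = N$ down to $m = n$ gives the result. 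Throughout we work with finite graphs and use compactness freely. Following the footnote on distinctness, we may replace weak embeddings by graph homomorphisms and recover injectivity by the pigeonhole/stretching argument of Claim 2.19 of \cite{ApproxOrder}.

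For the inductive step, fix a finite $\leq m-1$-cycle-free graph $\mathsf{H}$ whose $m$-cycles we must create. We unwind $\mathsf{H}$ along a helix. Take $\mathbb{Z}$-many (or, after compactness, $\omega$-many) copies $\mathsf{H}^{(t)}$ of the vertex set. Replace each edge $uEv$ of $\mathsf{H}$ by edges $u^{(t)} E v^{(t')}$ for a suitable set of shifts $t' - t$, chosen via a ``height'' function so that every directed $m$-cycle of $\mathsf{H}$ lifts to a path that climbs strictly. The resulting helix graph $\widetilde{\mathsf{H}}$ should then be $\leq m$-cycle-free. Moreover, the tuples $\overline{c}_{t} := (\text{the image of } \mathsf{H}^{(t)})$ should form an infinite chain for a quantifier-free relation $\widetilde{R}(\overline{x}, \overline{y})$ on tuples, namely the conjunction of edges that $\widetilde{\mathsf{H}}$ prescribes between a layer and a later layer, with the edges inside a layer recorded in the partial type. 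By the induction hypothesis, $\widetilde{\mathsf{H}}$ (and its long finite truncations) embed into $G$, so $\widetilde{R}$ has an infinite chain. Quantifier-free $\mathrm{NSOP}_{n}$ then yields a directed $n$-cycle $\overline{c}'_{0}, \ldots, \overline{c}'_{n-1}$ for $\widetilde{R}$. Because $\widetilde{R}$ only involves quantifier-free edge data, this wraps the helix around into a finite cyclic quotient of $\widetilde{\mathsf{H}}$ with $n$ layers. The height shifts should be chosen so that this quotient admits a homomorphism from $\mathsf{H}$: each $m$-cycle of $\mathsf{H}$ closes up after going around the $n$-cycle, which is possible precisely because $m \geq n$.

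The main obstacle is the combinatorial design of the helix $\widetilde{\mathsf{H}}$. It has to satisfy two competing requirements: its only short cycles must be eliminated by the lift (so $\widetilde{\mathsf{H}}$ has no cycles of length $\leq m$), yet folding it modulo an $n$-cycle of layers must recover every edge of $\mathsf{H}$. Here I would first try assigning each vertex of $\mathsf{H}$ a layer by a function $h\colon V(\mathsf{H}) \to \mathbb{Z}/n$. Edges $uEv$ would be lifted to go from layer $h(u)$ to the next copy of layer $h(v)$ upward. This should make any cycle of $\widetilde{\mathsf{H}}$ project to a cycle of $\mathsf{H}$ winding nontrivially, hence of length greater than $m$, using that $\mathsf{H}$ has no $\leq m-1$-cycles. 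Possibly several rounds are needed, adding extra intermediate layers to handle the $m$-cycles one at a time. The remaining bookkeeping is routine: compactness to pass from finite truncations to the infinite chain, and distinctness via the Claim 2.19 trick.
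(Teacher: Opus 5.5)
Your reduction is the contrapositive of the paper's route, and it uses the right tool. The paper takes $\mathcal{H}$ to be the class of finite graphs weakly embedding into $G$. It notes, via the proof of Proposition 2.18 of \cite{ApproxOrder}, that $\mathcal{H}$ is $\mathrm{NSOP}_n$ (closed under helix maps), and then applies Claim 3.15 of \cite{ApproxOrder}, which removes cycles one length at a time. You are also right that ``exhibits $\mathrm{NSOP}_n$'' should read ``exhibits $\mathrm{SOP}_n$''.

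\textbf{The gap.} The step you defer as the ``main obstacle'', building $\widetilde{\mathsf{H}}$, is the entire content of the inductive step, and the design you sketch does not work.
\begin{enumerate}
\item The fold cannot recover an arbitrary layering. The diagonal takes layer $k$ from the $k$-th term of the $n$-cycle, so it only recovers edges inside one copy (or to the shared part $D$) and edges between layer $i$ of an earlier copy and layer $i+1 \bmod n$ of a later copy. You therefore need a genuine cyclic $n$-decomposition, with every edge inside some $D\cup A_i\cup A_{i+1 \bmod n}$. An arbitrary $h\colon V(\mathsf{H})\to\mathbb{Z}/n$ with edges lifted ``upward'' along their orientation will not fold back onto $\mathsf{H}$.
\item Your winding heuristic is backwards. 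The helix kills exactly the lifts of cycles whose projection winds monotonically through the layers. Any cycle of $\mathsf{H}$ lying in one layer, or passing through $D$ and one layer, or moving back and forth between layers, does lift to the cover.
\item A cycle winding once can have length exactly $m$, so ``winds nontrivially, hence longer than $m$'' is false. There is no reason one decomposition kills all $m$-cycles of $\mathsf{H}$ at once.
\end{enumerate}

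\textbf{The missing idea.} Remove the $m$-cycles one at a time, as in Proposition 3.11 and Claim 3.15 of \cite{ApproxOrder}, with a decomposition adapted to a single cycle.
\begin{enumerate}
\item Let $\gamma=v_0\to\cdots\to v_{m-1}\to v_0$ be an $m$-cycle of $\mathsf{H}$. It is induced, because $\mathsf{H}$ has no shorter cycles.
\item So, for instance, $A_i=\{v_i\}$ for $i<n-1$, $A_{n-1}=\{v_{n-1},\ldots,v_{m-1}\}$ (nonempty since $m\ge n$) and $D=V(\mathsf{H})\setminus\gamma$ form a cyclic $n$-decomposition. The cycle $\gamma$ has no lift to its helix cover, since the copy index would have to increase strictly all the way around.
\item The cover maps homomorphically onto $\mathsf{H}$. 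A closed walk of length $\le m$ in $\mathsf{H}$ contains a cycle of length at most its own, so it must be an $m$-cycle traversed once. Hence the cover has no cycles of length $<m$, and each of its $m$-cycles maps bijectively onto, i.e.\ is a lift of, an $m$-cycle of $\mathsf{H}$.
\item Iterate over the finitely many $m$-cycles of $\mathsf{H}$. Each time, pull the decomposition for the next cycle back to the current cover along the composite map to $\mathsf{H}$. A cycle with no lift acquires none under a further cover (Lemma 3.13 of \cite{ApproxOrder}). So every finite truncation at the last stage is $\le m$-cycle-free.
\end{enumerate}
In your contrapositive formulation, finish with a backward induction over the stages. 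All last-stage finite truncations weakly embed into $G$ by hypothesis. Each fold, with compactness and the distinctness trick, pushes this down one stage, ending at $\mathsf{H}$.
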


(To prove this, for $\mathcal{H}$ the hereditary class of graphs weakly embedding into $G$, the proof of Proposition 2.18 of \cite{ApproxOrder} will show that $\mathcal{H}$ is $\mathrm{NSOP}_{n}$. But we can then apply Claim 3.15 of \cite{ApproxOrder} to $\mathcal{H}$.)

The previous fact has the following consequence: in our hypothesis that the real-valued and integer-valued hierarchies are non-distinct on sufficiently general grounds between $\mathrm{NSOP}_{n-1}$ and $\mathrm{NSOP}_{n}$, for arbitrarily large $N$ we can let $\mathsf{G}$ range only over $\leq N$-cycle-free graphs, rather than $n$-cycle-free graphs, and get an equivalent statement. At the level of full first-order logic, this means that a $\mathrm{NSOP}_{n}$ theory is $n$-o-maximal if and only if it is asymptotically o-maximal.

So we can now ask a question, stated in absolute terms, which will have implications for the question of whether the $\mathrm{NSOP}_{n}$ hierarchy is strict within $\mathrm{NTP}_{2}$. (In Problem 3.16 of \cite{Che14}, the questions of whether $\mathrm{NSOP}_{n+1} \cap \mathrm{NTP}_{2} = \mathrm{NSOP}_{n} \cap \mathrm{NTP}_{2}$ were grouped together into the question of whether the $\mathrm{NSOP}_{n}$ hierarchy is strict within $\mathrm{NTP}_{2}$.) Recall that a theory is defined to have the \textit{strong order property} (Definition 2.2 of \cite{She95}) if it is $\mathrm{NSOP}_{n}$ for some $n$. So asymptotic o-maximality implies the strong order property. We can then ask:

\begin{question}
    Is the strong order property equivalent to asymptotic o-maximality?
\end{question}

Because an asymptotically o-maximal theory $\mathrm{NSOP}_{n}$ theory is $n$-o-maximal, our conclusion from the proof of Theorem \ref{main theorem 2, restated} implies that, if the answer to this question is yes, the $\mathrm{NSOP}_{n}$ hierarchy is \textit{not} strict within $\mathrm{NTP}_{2}$.

\addcontentsline{toc}{subsection}{\hspace{1.5em}Sidebar 1: $\mathrm{NTP}_{2}$ graph theory}
\begin{tcolorbox}[
    enhanced,
    breakable,
    oversize,                 
    colback=blue!5!white,     
    colframe=blue!75!black,   
    skin first=enhanced,      
    skin middle=enhanced,     
    skin last=enhanced        
]

\textsf{\textbf{Sidebar 1: $\mathrm{NTP}_{2}$ graph theory}}

\:

\: There is a rich theory of forking-independence in $\mathrm{NTP}_{2}$ theories: see, e.g., \cite{CK12}, \cite{BYC14}, \cite{Che14}, \cite{KS17}, \cite{Sim20}. But we can also develop our understanding of $\mathrm{NTP}_{2}$ theories in general from a different, combinatorial point of view. We will apply the main construction of Theorem \ref{main theorem 2, restated} from this section, as well as the cycle-removal techniques from our earlier results on the real-valued $\mathrm{NSOP}_{r}$ hierarchy in \cite{ApproxOrder}.

\: The main result of this sidebar will relate in a strange way to the existing literature: in Sections 3 and 5 of \cite{INDNSOP3} we showed that $\mathrm{NSOP}_{3}$ theories (possibly under the assumption of symmetric Conant-independence) exhibited independence phenomena more traditionally associated with $\mathrm{NTP}_{2}$ theories. In this sidebar we will move in reverse, showing that $\mathrm{NTP}_{2}$ theories exhibit a graph-theoretic phenomenon essentially established earlier for, and more intuitively suggested by the definition of, $\mathrm{NSOP}_{3}$ theories.

\: By Fact \ref{n-cycle-free to small cycle-free}, which we saw was part of the proof of Theorem 3.1 of \cite{ApproxOrder}, $\mathrm{NSOP}_{3}$ theories exhibit the following graph-theoretic phenomenon: if $G$ is a definable (directed) graph in an $\mathrm{NSOP}_{3}$ theory, and $G$ does not contain the random tournament as an induced subgraph (so equivalently, there is some finite directed graph which does not weakly embed into $G$), then for arbitrarily large $N$, there is some $\leq N$-cycle-free graph that does not weakly embed into $G$. Relative to our result in this sidebar, this may be regarded as more intuitive, because the original definition of $\mathrm{NSOP}_{3}$ as well as the other properties $\mathrm{NSOP}_{n}$ for integers $n \geq 3$  were explicitly stated in terms of directed cycles. However, the definition of $\mathrm{NTP}_{2}$ is not stated in terms of cycles, yet the same holds for $\mathrm{NTP}_{2}$:

\begin{theorem}\label{ntp2 graph theory}

 If $G$ is a definable (directed) graph in an $\mathrm{NTP}_{2}$ theory, and $G$ not contain the random tournament as an induced subgraph, then for arbitrarily large $N$, there is some $\leq N$-cycle-free graph that does not weakly embed into $G$.

\end{theorem}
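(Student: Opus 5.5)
The plan is to argue by contraposition and use a cycle-removal induction in which each step is powered by $\mathrm{NTP}_{2}$, through a gadget modeled on the array in the proof of Theorem \ref{main theorem 2, restated}. Let $G$ be defined by $R(x,y)$ in an $\mathrm{NTP}_{2}$ theory, and let $H$ be a finite directed graph that does not weakly embed into $G$. Fix $N$. The goal is to produce a finite $\leq N$-cycle-free graph that does not weakly embed into $G$. Among all finite graphs omitted by $G$, choose $H$ minimizing a complexity measure $\mu(H)$; my first choice is the number of directed cycles of length $\leq N$. Suppose for contradiction that $\mu(H)>0$, and fix a vertex $w$ of $H$ lying on a short cycle.

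\textbf{The gadget.} Write $H^{-}=H\setminus\{w\}$, let $O$ be the out-neighbours of $w$ and $I$ its in-neighbours. Build an infinite graph $\Gamma$ with vertices of three kinds.
\begin{itemize}
\item Copies $O_{ij}$ and $I_{ij}$ of $O$ and $I$, for $i,j<\omega$.
\item For each $i$ and each $j<j'$, a copy $H^{-}_{ijj'}$ of $H^{-}$ whose $O$-part is identified with $O_{ij}$ and whose $I$-part is identified with $I_{ij'}$. All other vertices of $H^{-}_{ijj'}$ are fresh.
\item Vertices $a_{\sigma}$ for $\sigma\in\omega^{\omega}$, with edges from $a_{\sigma}$ to every vertex of $O_{i\sigma(i)}$ and from every vertex of $I_{i\sigma(i)}$ to $a_{\sigma}$.
\end{itemize}
Take $\varphi(x;\bar y\bar z)=\bigwedge_{o} R(x,y_{o})\wedge\bigwedge_{\iota}R(z_{\iota},x)$ with parameters $e_{ij}=O_{ij}I_{ij}$. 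Rows are $2$-inconsistent: a common realization of $\varphi(x;e_{ij})$ and $\varphi(x;e_{ij'})$ for $j<j'$, together with $H^{-}_{ijj'}$, would give a weak copy of $H$, with $x$ playing $w$. Paths are consistent, realized by $a_{\sigma}$. So if $\Gamma$ weakly embedded into $G$, $\varphi$ would have $\mathrm{TP}_{2}$. Hence by compactness some finite $\Gamma_{0}\subseteq\Gamma$ is omitted by $G$.

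\textbf{The measure decrease.} To contradict minimality it suffices to show $\mu(\Gamma_{0})<\mu(H)$. The same bookkeeping as in the proof of Theorem \ref{main theorem 2, restated} should handle cycles through some $a_{\sigma}$. Such a cycle leaves $a_{\sigma}$ into $O_{i\sigma(i)}$ and must return through $I_{i\sigma(i)}$. Inside the $H^{-}$-copies it can only travel from an $O_{ij}$-side to an $I_{ij'}$-side with $j'\neq j$, unless it threads through several gadget copies. So short cycles through $w$ in $H$ do not lift to short cycles through $a_{\sigma}$.

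\textbf{Main obstacle.} I expect the hardest step to be cycles that avoid the $a_{\sigma}$ but run through several copies $H^{-}_{ijj'}$ glued along shared $O_{ij}$ and $I_{ij'}$. Such cycles could create new short cycles, or reintroduce cycles through the gadget of length $\leq N$. My first attempt is to orient the gluing by $j<j'$, so that any path through glued copies must strictly increase $j$. This should forbid closed loops among the copies except those already inside a single $H^{-}$, whose cycles are strictly fewer than $H$'s cycles through $w$.

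If the raw count of short cycles does not strictly drop, I would switch to a lexicographic measure in the spirit of the cycle-removal argument behind Fact \ref{n-cycle-free to small cycle-free} (Claim 3.15 of \cite{ApproxOrder}). One candidate is the sequence of numbers of cycles of each length $\leq N$, which by the ordering argument should stay bounded and strictly decrease. The induction then terminates at an omitted finite graph with no $\leq N$-cycles, as required.
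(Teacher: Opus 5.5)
Your $\mathrm{TP}_2$ gadget is sound. It is essentially the paper's construction, with the blown-up set taken to be the single vertex $w$ and the rest of $H$ duplicated. The termination step, however, is a genuine gap. The inequality $\mu(\Gamma_0)<\mu(H)$ is false in general, and no count of cycles in the omitted graph itself can be made to drop, because the gadget replicates $H$.

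\emph{Replicated cycles.} Every short cycle of $H$ avoiding $w$ lies in $H^{-}$. The finite omitted piece that $\mathrm{NTP}_2$ supplies (rows and columns $<k$ with $k\ge 2$, the $a_\sigma$ for $\sigma\in k^{k}$, and the copies $H^{-}_{ijj'}$) contains $k\binom{k}{2}$ copies of each such cycle. For $H$ two disjoint directed triangles with $w$ on one of them, $\Gamma_0$ therefore has at least as many triangles as $H$. So your lexicographic fallback fails as well.

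\emph{New cycles through several $a_\sigma$.} Your claim that a cycle leaving $a_\sigma$ must return through $I_{i\sigma(i)}$ is wrong. Take $H$ a single triangle $w\to o\to\iota\to w$, and write $o_{ij},\iota_{ij}$ for the vertices of $O_{ij},I_{ij}$; then $\Gamma$ is exactly the graph from the proof of Theorem \ref{main theorem 2, restated} with $N=3$. The walk $a_{\sigma}\to o_{00}\to\iota_{01}\to a_{\tau}\to o_{10}\to\iota_{11}\to a_{\sigma}$ is a $6$-cycle whenever $\sigma(0)=0$, $\sigma(1)=1$, $\tau(0)=1$, $\tau(1)=0$. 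So for $N\ge 6$ your raw count does not drop even in this simplest case.

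\emph{Monotonicity of $j$.} The index $j$ need not increase along glued copies. If $H^{-}$ has a path from $I$ to $O$, there are paths from $I_{ij'}$ back to $O_{ij}$ with $j<j'$.

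What you are missing is the relative bookkeeping the paper imports from Claim 3.15 and Lemma 3.13 of \cite{ApproxOrder}.
\begin{itemize}
\item Induct on the minimal cycle length $k$. Fix an omitted base graph $A$ with no cycles shorter than $k$, and work only with omitted graphs $B$ carrying a homomorphism $g\colon B\to A$.
\item A cycle of length $\le k$ in $B$ maps to a closed walk of length $\le k$ in $A$. Hence $B$ has no shorter cycles, and each $k$-cycle of $B$ maps bijectively onto one of the finitely many (necessarily induced) $k$-cycles $\gamma_1,\ldots,\gamma_m$ of $A$.
\item It therefore suffices to destroy, one $\gamma_t$ at a time, all \emph{lifts} of $\gamma_t$ through $g$. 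Proposition \ref{ntp_2 cycle removal} does this by blowing up the fibres of $g$ over \emph{all} vertices of $\gamma_t$ into the $a_\sigma$, $b_{ij}$, $d^{\ell}_{ijj'}$, $c_{ij}$ copies, while keeping the rest $D$ of $B$ shared. This yields an omitted cover $h\colon\tilde B\to B$ with no lift of $\gamma_t$ through $g\circ h$.
\item Lifts already destroyed cannot reappear under later covers: a lift through $g\circ h$ pushes forward along $h$ to a lift through $g$.
\end{itemize}
The number of cycles in $B$ is never controlled, and in fact it grows. What strictly decreases is the set of $\gamma_t$ that still have lifts.

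Even inside this framework, a single-vertex blow-up would not suffice. The other vertices of the fibre containing $w$ get copied into every $H^{-}_{ijj'}$, which multiplies the surviving lifts instead of removing them.
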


 \begin{proof} (sketch)

     \: This is exactly as in the argument for Fact \ref{n-cycle-free to small cycle-free} within the proof of Theorem 3.1 of \cite{ApproxOrder}, but with a new cycle-removal proposition.  There, we used an abstract cycle-removal property for helix maps between graphs omitted by a $\mathrm{NSOP}_{3}$ hereditary class, the $n=3$ case of Proposition 3.11 of \cite{ApproxOrder}. Here, we will use the same abstract cycle-removal property, but for maps between graphs not weakly embedding into a fixed $\mathrm{NTP}_{2}$ graph:

     \begin{prop}\label{ntp_2 cycle removal}
    
Let $G^{+}$ be a directed graph definable in an $\mathrm{NTP}_{2}$ theory, and let $\mathcal{H}$ be the hereditary class of finite graphs that weakly embed into $G^{+}$. Let $g: H \to G$ be a graph homomorphism between finite graphs with $H, G \notin \mathcal{H}$. Let $\gamma \hookrightarrow G$ be a directed cycle in $G$ of minimal length, and let $k \geq 3 $ be this length. Then there is a graph homomorphism $h: \tilde{H} \twoheadrightarrow H$ for a finite graph $\tilde{H}$, such that $\tilde{H} \notin \mathcal{H}$ and such that there is no $k$-cycle $\gamma' \hookrightarrow \tilde{H}$ with $\gamma = g\circ h\circ \gamma' $.

$$\begin{tikzcd}
                        &  & H \arrow[dd, "g"] &  & \tilde{H} \arrow[ll, two heads, "h", dotted] \arrow[lldd] \\
                        &  &              &  &                                      \\
\gamma \arrow[rr, hook] &  & G            &  &                                     
\end{tikzcd}$$

\end{prop}

\: We prove this proposition, which will be enough to prove the theorem.

\begin{proof}
    As in Claim 3.12 of \cite{ApproxOrder}, the cycle $\gamma$, because it is of minimal length in $G$, is an induced cycle in $G$. Let us write this cycle as $u, v, \omega_{1}, \ldots, \omega_{k-3}, w $, compatibly with the edge relation on the cycle. Let $U=\{u^{i}\}_{i \in I_{U}} =: g^{-1}(u)$, $V=\{v^{i}\}_{i \in I_{V}} =: g^{-1}(v)$, $\Omega_{\ell} = \{\omega_{\ell}^{i}\}_{i \in I_{\Omega_{\ell}}} =: g^{-1}(\omega_{\ell}) $, $W=\{w^{i}\}_{i \in I_{W}} =: g^{-1}(w)$ and $D := H \backslash ( U \sqcup V \sqcup \bigsqcup^{k-3}_{\ell =1} \Omega_{\ell} \sqcup W ) $. Then define $\widetilde{H}$ to consist of the disjoint union of $D$ together with the distinct vertices $\{b_{ij}(v^{t})\}^{t \in I_{V}}_{i, j < \omega}$, $\{c_{ij}(w^{t})\}^{t \in I_{W}}_{i, j < \omega}$, $\{d^{\ell}_{ijj'}(\omega^{t}_{\ell}): i<\omega, j<j' < \omega, \ell\in \{1, \ldots, k-3\}, t \in I_{\Omega_{\ell}}\}$, $\{a_{\sigma}(u^{t})\}^{t \in I_{U}}_{\sigma \in \omega^{\omega}}$, with exactly the following edges:

\:

($\alpha$) The original edges on $D$, along with:
\begin{itemize}
    \item $d E a_{\sigma}(u^{t})$ whenever $d E u^{t}$ and $ a_{\sigma}(u^{t}) E d$ whenever $ u^{t} E d$ for $\sigma \in \omega^{\omega}$, $d \in D$, $t \in I_{U}$
    \item $d E b_{ij}(v^{t})$ whenever $d E v^{t}$ and $ b_{ij}(v^{t}) E d$ whenever $v^{t} E d$ for $i, j < \omega$, $d \in D$, $t \in I_{V}$

    \item $d E d^{\ell}_{ijj'}(\omega_{\ell}^{t})$ whenever $d E \omega_{\ell}^{t}$ and $ d^{\ell}_{ijj'}(\omega_{\ell}^{t}) E d$ whenever $\omega_{\ell}^{t} E d$ for $i < \omega$, $j< j' < \omega$, $d \in D$, $t \in I_{\omega_{\ell}}$, $\ell \in \{1, \ldots, k-3\} $,

    \item $d E c_{ij}(w^{t})$ whenever $d E w^{t}$ and $ c_{ij}(w^{t}) E d$ whenever $w^{t} E d$ for $i, j < \omega$, $d \in D$, $t \in I_{W}$
\end{itemize}

    (a) For $j < \omega$, $j < j' < \omega$, edges $b_{ij}(v^{t}) E d^{1}_{ijj'}(\omega^{t'}_{1})$ for $t \in I_{V}$, $t' \in I_{\Omega_{1}}$ whenever $v^{t} E \omega^{t'}_{1}$, $d^{\ell}_{ijj'}(\omega^{t}_{\ell})Ed_{ijj'}^{\ell+1}(\omega^{t'}_{\ell+1})$ whenever $\omega^{t}_{\ell}E\omega^{t'}_{\ell+1}$ for $ \ell < k -3$,  $t \in I_{\Omega_{\ell}}$, $t' \in I_{\Omega_{\ell+1}}$, $d^{k-3}_{ijj'}(\omega^{t}_{k-3}) E c_{ij'}(w^{t'})$ whenever $\omega^{t}_{k-3}Ew^{t'}$ for $t \in I_{\Omega_{k-3}}$, $t' \in I_{W}$ (so for $N = 3$, $b_{ij}(v^{t}) E c_{ij'}(w^{t'})$ for $t \in I_{V}$, $t' \in I_{W}$ whenever $v^{t}Ew^{t'}$.)

(b) For $\sigma \in \omega^{\omega}$, $i < \omega$, edges $c_{i \sigma(i)} (w^{t}) E a_{\sigma}(u^{t'})$ whenever $w^{t}Eu^{t'}$ for $t \in I_{W}$, $t' \in I_{U}$, and $a_{\sigma}(u^{t}) R b_{i \sigma(i)}(v^{t'})$ whenever $u^{t} E v^{t'}$ for $t \in I_{U}$, $t' \in I_{V}$.

\: Now $\widetilde{H}$ does not weakly embed in $G^{+}$. Otherwise, just as in the proof of Theorem \ref{main theorem 2, restated}, $\widetilde{H}$ weakly embedding in $G^{+}$, combined with $H$ not weakly embedding in $G^{+}$, would imply that any theory in which $G^{+}$ is $\mathrm{NTP}_{2}$. So for some finite  $\tilde{H} \subset \widetilde{H}$, $\tilde{H} \notin \mathcal{H}$. Then define the map $h: \tilde{H} \twoheadrightarrow H$ by $d \mapsto d$ for $d \in D$, $a_{\sigma}(u^{t}) \mapsto u^{t}$, $b_{ij}(v^{t}) \mapsto v^{t}$, $d^{k}_{ijj'}(\omega^{t}_{\ell}) \mapsto \omega^{t}_{\ell}$, $c_{ij}(w^{t}) \mapsto w^{t}$. Then $h$ will be a graph homomorphism. Moreover, any  $k$-cycle $\gamma' \hookrightarrow \tilde{H}$ with $\gamma = g\circ h\circ \gamma' $ would lie entirely within $\tilde{H} \backslash D$. But this is a contradiction because $\tilde{H} \backslash D$ has no $\leq k$-cycle, just as $\mathsf{G}$ is shown to have no $\leq N$-cycle in the proof in Theorem \ref{main theorem 2, restated}.

\end{proof}

 \end{proof}

We conclude this sidebar by remarking on a generalization of this theorem that covers both $\mathrm{NSOP}_{3}$ theories and $\mathrm{NTP}_{2}$ theories.

 \begin{remark}

    In \cite{Bailetti25}, Bailetti defines that class of $\mathrm{NPM}^{(2)}$ theories, which includes all $\mathrm{NTP}_{2}$ theories and $\mathrm{NSOP}_{3}$ theories.  While we will not give the rigorous definition, $\mathrm{NPM}^{(2)}$ is the negation of $\mathrm{PM}^{(2)}$ (``$2$-positive maximality"), meaning that there is a formula $\varphi(x,y)$ exhibiting every pattern of consistency and inconsistency among instances of $\varphi(x,y)$, where the instances of inconsistency are all between pairs of formulas. Thus, $\mathrm{PM}^{(2)}$ implies $\mathrm{TP}_{2}$ and $\mathrm{SOP}_{3}$, which can be defined in terms of such patterns. There is precedent for common phenomena in $\mathrm{NSOP}_{3}$ theories and $\mathrm{NTP}_{2}$ theories actually being explained by the fact that both classes of theories are contained in the class $\mathrm{NPM}^{(2)}$. Particularly, in appendix C of \cite{Patterning} we show with Gabriel Day that the result that all internally $\mathrm{NSOP}_{1}$ types are co-$\mathrm{NSOP}_{1}$, previously shown for $\mathrm{NTP}_{2}$ theories (essentially in Theorem 6.17 of \cite{Che14}) and $\mathrm{NSOP}_{3}$ theories (Theorem 1.1 of \cite{INDNSOP3}), is actually true for all $\mathrm{NPM}^{(2)}$ theories. In fact, by straightforwardly adapting the construction in the proof of the previous theorem from $\mathrm{NTP}_{2}$ theories to $\mathrm{NPM}^{(2)}$ theories, we have another example of a common phenomenon in $\mathrm{NSOP}_{3}$ theories and $\mathrm{NTP}_{2}$ theories that is explained by $\mathrm{NPM}^{(2)}$: every graph definable in $\mathrm{NPM}^{(2)}$ theories, that omits the random tournament, likewise omits some $\leq N$-cycle free graph for arbitrarily large $N$. We forego the details of the proof.
\end{remark}

\end{tcolorbox}

\section{A new $\mathrm{NSOP}_{2}$-$\mathrm{SOP}_{3}$ dichotomy}\label{nsop2 vs nsop3}

We now return to the question of whether $\mathrm{NSOP}_{2}$ is equal to $\mathrm{NSOP}_{3}$, whose interactions with the real-valued $\mathrm{NSOP}_{r}$ hierarchy we previously explored in Section \ref{between nsop2 and nsop3}. We concluded in that section that a potential barrier to $\mathrm{NSOP}_{r}$ being well-defined for $2< r < 3$ vanishes for nontrivial reasons, so if all of the well-defined properties $\mathrm{NSOP}_{r}$ for non-integer $r$ are distinct, $\mathrm{NSOP}_{2}$ must not be equal to $\mathrm{NSOP}_{3}$. Our goal in this section will be to apply techniques from the theory of the real-valued $\mathrm{NSOP}_{r}$ hierarchy to make more unconditional progress on the $\mathrm{NSOP}_{2}$-$\mathrm{NSOP}_{3}$ problem. We will prove a \textit{sharp} dichotomy between $\mathrm{NSOP}_{2}$ and $\mathrm{SOP}_{3}$ under a basic combinatorial hypothesis from hereditary class theory.

The real-valued $\mathrm{NSOP}_{r}$ context, specifically within Theorem 3.1 of \cite{ApproxOrder}, serves as the canonical setting for our cycle-removal arguments: there, the helix maps coming from the properties $\mathrm{NSOP}_{n}$ behave especially seamlessly with respect to the cycles we want to remove. And our ultimate result offers insight into the intrinsic properties of hereditary classes, showing that the purely combinatorial quantity $\mathfrak{o}(\mathcal{H})$ motivated by the real-valued $\mathrm{NSOP}_{r}$ hierarchy, known only to be a real value when $\mathcal{H}$ is any hereditary class, is in fact an integer when $\mathcal{H}$ is defined by finitely many forbidden weakly embedded substructures. However, with additional difficulty, we can tailor these cycle-removal arguments to the question of whether $\mathrm{NSOP}_{2}$ is equal to $\mathrm{NSOP}_{3}$. Our setting for investigating the $\mathrm{NSOP}_{2}$-$\mathrm{NSOP}_{3}$ question will be the one where our earlier investigations of the real-valued $\mathrm{NSOP}_{r}$ hierarchy showed the integers played a special combinatorial role: hereditary classes defined by finitely many forbidden weakly embedded substructures, Definition \ref{defined by finitely many forbidden weakly embedded substructures}.

Unlike Theorem 3.1 of \cite{ApproxOrder}, where we are essentially doing pure combinatorics inspired by classification theory, our progress on whether $\mathrm{NSOP}_{2}$ is equal to $\mathrm{NSOP}_{3}$ will be properly a theorem of logic, rather than a theorem of combinatorics. This is because, while straightforward translation from the language of logic to the language of hereditary classes allows us to speak of a hereditary class being $\mathrm{NSOP}_{n}$ for integers $n \geq 3$, or $\mathrm{NSOP}_{r}$ for reals $r > 2$ (\cite{ApproxOrder}, Definition 2.12), it is not clear what it should mean for a hereditary class to be $\mathrm{NSOP}_{2}$. Yet there is still a way to specialize the question of whether $\mathrm{NSOP}_{2}$ is equal to $\mathrm{NSOP}_{3}$ by adding assumptions on hereditary classes.

This is to say, given the apparent difficulty of proving that every $\mathrm{NSOP}_{3}$ theory is $\mathrm{NSOP}_{2}$, we may next attempt to show that every $\mathrm{NSOP}_{3}$ theory satisfying additional assumptions is $\mathrm{NSOP}_{2}$. For example, we may want to show that every countably categorical $\mathrm{NSOP}_{3}$ theory is $\mathrm{NSOP}_{2}$, or that every countably categorical $n$-ary $\mathrm{NSOP}_{3}$ theory is $\mathrm{NSOP}_{2}$.\footnote{Kaplan, Ramsey and Simon (\cite{KRS23}) prove a non-sharp dichotomy, showing that every binary $\mathrm{NSOP}_{3}$ theory is simple.} But we can also easily restate the question of whether $\mathrm{NSOP}_{3}$ is equal to $\mathrm{NSOP}_{2}$ so that the assumptions that we can add are about hereditary classes, rather than about theories.

\begin{fact}\label{hereditary class restatement of sop2-sop3 problem}

The following are equivalent:

(1) The class $\mathrm{NSOP}_{2}$ is equal to the class $\mathrm{NSOP}_{3}$.

(2) Let $\mathcal{H}$ be a hereditary class such that every theory whose models have age $\mathcal{H}$ has $\mathrm{SOP}_{2}$. Then every theory whose models have age $\mathcal{H}$ has $\mathrm{SOP}_{3}$.

\end{fact}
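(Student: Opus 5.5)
The plan is to prove both directions directly; the content lies almost entirely in the correspondence between theories and their ages.

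For (1) $\Rightarrow$ (2), I fix a hereditary class $\mathcal{H}$ such that every theory whose models have age $\mathcal{H}$ has $\mathrm{SOP}_{2}$, and I take any theory $T$ whose models have age $\mathcal{H}$. By hypothesis $T$ has $\mathrm{SOP}_{2}$. Since $\mathrm{NSOP}_{2} = \mathrm{NSOP}_{3}$ by (1), $T$ is not $\mathrm{NSOP}_{3}$, so $T$ has $\mathrm{SOP}_{3}$. This direction is immediate.

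For (2) $\Rightarrow$ (1), recall that $\mathrm{NSOP}_{2} \subseteq \mathrm{NSOP}_{3}$ (Shelah--Usvyatsov, as cited above). It therefore suffices to show that every $\mathrm{SOP}_{2}$ theory $T$ has $\mathrm{SOP}_{3}$, and I will reduce to a setting where (2) applies.
\begin{itemize}
\item The main step is to replace $T$ by a bi-interpretable, or at least equivalent for these properties, theory $T'$ in which the age determines the theory.
\item The natural choice is Morleyization. Add a relation symbol for every formula, so that $T'$ has quantifier elimination; $\mathrm{SOP}_{2}$ and $\mathrm{SOP}_{3}$ are unchanged, since they depend only on definable sets.
\item Quantifier elimination alone is not enough, because the definition of hereditary class used here assumes a \emph{finite} relational language, and Morleyization produces an infinite one. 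One also needs that any theory with the same age has the property.
\end{itemize}

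My first attempt will be to localize. The $\mathrm{SOP}_{2}$ witness involves a single formula $\varphi(x,y)$, but this does not obviously help, since another theory with the same age as a reduct need not have $\mathrm{SOP}_{2}$. So instead I will choose $\mathcal{H}$ to be the age of a Morleyized model of $T$, relative to a finite sublanguage, and argue as follows.
\begin{enumerate}
\item Let $T''$ be any theory with models of age $\mathcal{H}$.
\item Then $\mathrm{Th}_\forall(T'') = \mathrm{Th}_\forall(T')$.
\item If the Morleyization is arranged so that $T'$ is the model companion of its universal part, via Skolem-style axioms encoded into the relations, then the existential content is still not forced.
\end{enumerate}
The honest fix is to use the restatement given in the paper: set $\mathcal{H}_{T} = \{A : \exists X\, \mathrm{Diag}_A(X) \in T\}$, and observe that only $T$ itself must be covered. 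I will take $\mathcal{H} := \mathcal{H}_{T'}$ and interpret ``every theory whose models have age $\mathcal{H}$'' so that, under Morleyization with quantifier elimination, any completion with the same age agrees on all quantifier-free-defined sets and hence equals $T'$. Then the hypothesis of (2) holds trivially, (2) gives $\mathrm{SOP}_{3}$ for $T'$, and hence $T$ has $\mathrm{SOP}_{3}$.

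I expect the main obstacle to be this uniqueness step: showing that the age of the Morleyized theory pins down the complete theory. The concern is that the $\mathrm{Diag}$ sentences only record existential facts, while Morleyization symbols for universal formulas already encode the needed information once each relation is axiomatized by its defining formula, which is a universal-existential condition. If this fails in a finite language, I would allow a countable relational language in the notion of hereditary class for this Fact, which is presumably what is intended given that the Fact is stated without finiteness.
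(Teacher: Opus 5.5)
Your direction (1)$\Rightarrow$(2) is fine. Direction (2)$\Rightarrow$(1), however, has a genuine gap at exactly the step you flagged: the age does not determine the complete theory, even after Morleyization and even in a countable language.

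For instance, let $T'$ be the Morleyization of $\mathrm{Th}(\mathbb{Q},<)$, and give $\mathbb{Z}\subset\mathbb{Q}$ the induced structure. Since the language is relational, this is a substructure. It has the same age as the Morleyized $\mathbb{Q}$: by quantifier elimination the induced structure on a finite set depends only on its order type, and every finite order type occurs in $\mathbb{Z}$. Yet $R_{\exists z(x<z<y)}(0,1)$ holds in $\mathbb{Z}$ while no $z$ satisfies $R_{<}(0,z)\wedge R_{<}(z,1)$. So the Morleyization axioms $R_\psi\leftrightarrow\psi$ fail there. They are not universal, so the sentences $\exists X\,\mathrm{Diag}_A(X)$ do not record them, and a theory with age $\mathcal{H}$ need not satisfy them. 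Reinterpreting ``every theory whose models have age $\mathcal{H}$'' as ``every completion of the Morleyization axioms'', or enlarging to countable languages, changes the statement rather than proving it.

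The missing idea is the one you discarded when you tried to localize: do not take the reduct to $\varphi$ alone, but add a second symbol for its pairwise inconsistency.
\begin{itemize}
\item Let $\varphi(x,y)$ witness $\mathrm{SOP}_2$ in $T$. Interpret $R(x,y)$ as $\varphi(x,y)$ and $I(y_1,y_2)$ as $\neg\exists x\,(\varphi(x,y_1)\wedge\varphi(x,y_2))$. Let $T_0$ be the reduct to $\{R,I\}$ and $\mathcal{H}$ the age of its models.
\item Let $T''$ be any theory whose models have age $\mathcal{H}$. Every finite piece of the $\mathrm{SOP}_2$ tree with branch witnesses lies in $\mathcal{H}$: this means $R(a_\sigma,b_{\sigma|_n})$, together with $I(b_{\eta_1},b_{\eta_2})$ for incomparable $\eta_1,\eta_2$. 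Hence each such piece embeds into models of $T''$.
\item Meanwhile, no member of $\mathcal{H}$ contains $a,b_1,b_2$ with $R(a,b_1)\wedge R(a,b_2)\wedge I(b_1,b_2)$.
\item By compactness, the quantifier-free formula $R(x,y)$ exhibits $\mathrm{SOP}_2$ in $T''$. No uniqueness of the theory is needed.
\end{itemize}
Now (2) gives $\mathrm{SOP}_3$ for $T_0$, and hence for $T$, since $T_0$-formulas are $T$-definable. This is the paper's argument, which it phrases contrapositively: a strictly $\mathrm{NSOP}_3$ theory $T$ yields $T_0$ and $\mathcal{H}$ violating (2).
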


\begin{proof}(sketch)

It is immediate that (1) implies (2). To see that (2) implies (1), suppose that $T$ were a strictly $\mathrm{NSOP}_{3}$ theory. Let $\varphi(x, y)$ exhibit $\mathrm{SOP}_{2}$ in $T$. Expand $T$ by definable predicates $R(x, y)$ for $\varphi(x, y)$ and $I(y_{1}, y_{2})$ for $\neg \exists x \varphi(x, y_{1}) \wedge \varphi(x, y_{2}) $, and let $T_{0}$ be the reduct to the language consisting of the symbols $R(x, y)$ and $I(y_{1}, y_{2})$. Let $\mathcal{H}$ be the age of models of $T_{0}$. Then every theory whose models have age $\mathcal{H}$ has $\mathrm{SOP}_{2}$ (this is essentially Claim 1 of \cite{Patterning}, related to Fact \ref{Bodirsky, Bodor and Marimon theorem} below.) But not every theory whose models have age $\mathcal{H}$ has $\mathrm{SOP}_{3}$: $T_{0}$ is $\mathrm{NSOP}_{3}$. So, given our assumption that (1) is false, (2) is false.

\end{proof}

Rephrasing the assertion that $\mathrm{NSOP}_{2}$ is equal to $\mathrm{NSOP}_{3}$ to quantify over hereditary classes $\mathcal{H}$, rather than theories $T$, opens up a completely new space of possible assumptions. The main result of this section, Theorem \ref{main theorem 3, restated}, will be to prove the assertion of the equivalence of $\mathrm{NSOP}_{2}$ and $\mathrm{NSOP}_{3}$ in terms of hereditary classes $\mathcal{H}$, under the assumption that $\mathcal{H}$ is defined by finitely many forbidden weakly embedded substructures, and to observe that this equivalence is sharp.

Incidentally, there is a sense in which our result finitely approximates a statement implying that every countably categorical $\mathrm{NSOP}_{3}$ theory is $\mathrm{NSOP}_{2}$. Specifically, as a limiting case of the assumption that a hereditary class $\mathcal{H}$ is defined by a finite family of forbidden weakly embedded substructures, we may consider hereditary classes $\mathcal{H}$ defined by a \textit{potentially infinite} family of forbidden weakly embedded substructures. This is equivalent to $\mathcal{H}$ being closed under weak embeddings. In this limiting case, we conclude the following:

\begin{fact}
    Suppose that, for every hereditary class $\mathcal{H}$ defined by a family of forbidden weakly embedded substructures (i.e., which is closed under weak embeddings),  if every theory whose models have age $\mathcal{H}$ has $\mathrm{SOP}_{2}$, every theory whose models have age $\mathcal{H}$ has $\mathrm{SOP}_{3}$. Then every countably categorical $\mathrm{NSOP}_{3}$ theory is $\mathrm{NSOP}_{2}$.
\end{fact}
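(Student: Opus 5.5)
Given a countably categorical $\mathrm{NSOP}_{3}$ theory $T$ that has $\mathrm{SOP}_{2}$, we will build a hereditary class $\mathcal{H}$ that is closed under weak embeddings and contradicts the hypothesis. This mirrors the proof sketch of Fact \ref{hereditary class restatement of sop2-sop3 problem}. The new ingredient is that countable categoricity lets us pass to a \emph{positive} (weak-embedding-closed) description.

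Suppose $T$ is countably categorical, $\mathrm{NSOP}_{3}$, and has $\mathrm{SOP}_{2}$, witnessed by $\varphi(x,y)$. As in Fact \ref{hereditary class restatement of sop2-sop3 problem}, introduce predicates $R(x,y)$ for $\varphi(x,y)$ and $I(y_{1},y_{2})$ for $\neg\exists x(\varphi(x,y_{1})\wedge\varphi(x,y_{2}))$. Then let $T_{0}$ be the reduct to $\{R, I\}$.

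\textbf{Making the age closed under weak embeddings.} By countable categoricity, every relation definable in $T$ is a finite Boolean combination of finitely many basic $\emptyset$-definable relations. So we expand further by predicates naming the \emph{negations} of all basic relations used, in particular $\neg R$, $\neg I$, and $\neq$, together with the relevant complete types in the variables involved. The goal is an expansion in which every quantifier-free type is determined by its positive part. This positive-closure (Morleyization-with-complements) trick is what makes the age $\mathcal{H}$ of the resulting reduct $T_{1}$ closed under weak embeddings.

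The verification works as follows. Suppose $A \hookrightarrow B$ is a weak embedding with $B \in \mathcal{H}$. Each tuple of $A$ carries a positive atom asserting its full type, and that atom is preserved in $B$. Since distinct complete types are disjoint, the image of $A$ in $B$ realizes exactly the same atoms, so $A$ embeds as an induced substructure of $B$ and hence $A \in \mathcal{H}$. Countable categoricity is needed to keep the language finite: only finitely many types are relevant in each arity, and $\mathcal{H}$ is required to be a class in a finite relational language.

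\textbf{Applying the hypothesis.} Every theory with age $\mathcal{H}$ has $\mathrm{SOP}_{2}$. The argument is the one behind Claim 1 of \cite{Patterning}: the finite configurations of the $\mathrm{SOP}_{2}$ tree for $R$, with $I$ marking the required inconsistencies, lie in $\mathcal{H}$, so any model of age $\mathcal{H}$ contains them, and the $I$-edges force inconsistency of $R$. Yet $T_{1}$ is a reduct of $T$, so it is $\mathrm{NSOP}_{3}$ and does not have $\mathrm{SOP}_{3}$. This contradicts the hypothesis.

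\textbf{Main obstacle.} The delicate step is ensuring that the added predicates genuinely make the age weak-embedding closed while the language stays finite. Complete types in $n$ variables for every $n$ would give infinitely many symbols. The plan is to use only the types in arity at most that of the language, which countable categoricity bounds. Even so, a weakly embedded copy of an $n$-type for larger $n$ still requires care, and I expect this is where the argument needs the most attention.
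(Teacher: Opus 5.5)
There is a genuine gap at the step you call making the age closed under weak embeddings. Closure under weak embeddings is a downward-closure condition. Whenever $B \in \mathcal{H}$ and $A \to B$ is injective and preserves positive instances, $A$ must lie in $\mathcal{H}$. In particular, anything obtained from a member of $\mathcal{H}$ by deleting relation instances must lie in $\mathcal{H}$.

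Your verification assumes that every tuple of $A$ already carries a positive atom naming its complete type. That is only true when $A$ already looks like a member of $\mathcal{H}$. What you have actually shown is that weak embeddings \emph{between members of} $\mathcal{H}$ are embeddings, which is a different property.

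Naming complements and complete types moves you away from weak-embedding closure, not toward it. Take any $B \in \mathcal{H}$ with at least two elements and delete all of its relation instances. The result weakly embeds into $B$ via the identity. But it is not an induced substructure of any model of $T_{1}$, because there every pair of distinct elements satisfies one of your named atoms. So the age of $T_{1}$ is not closed under weak embeddings, and the hypothesis of the Fact cannot be applied to it.

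Your use of countable categoricity is also off target. Ryll-Nardzewski bounds the number of formulas in each fixed number of variables. It does not make every definable relation a quantifier-free Boolean combination of finitely many basic relations. The finiteness worry you flag is an artifact of this construction: the relevant language is just $\{R, I\}$.

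The missing idea is to take $\mathcal{H}$ to be the downward closure itself, that is, the finite $\{R,I\}$-structures that weakly embed into a model of $T_{0}$. This class is closed under weak embeddings for free. Your argument that every theory with age $\mathcal{H}$ has $\mathrm{SOP}_{2}$ does go through for it, since it only uses positive instances of $R$ and $I$. For instance, a configuration $R(a,b), R(a,c), I(b,c)$ cannot weakly embed into a model of $T_{0}$.

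What remains is to supply an $\mathrm{NSOP}_{3}$ theory whose models have age exactly this $\mathcal{H}$. Neither $T_{0}$ nor any reduct of $T$ does this directly, since the age of $T_{0}$ is in general a proper subclass of its downward closure. The paper handles this with a weak-embedding-model companion $T'$ of $T_{0}$. Its existence is where countable categoricity is actually used, via a modification of Saracino's theorem. The paper then shows $T'$ is $\mathrm{NSOP}_{3}$ by adapting the preservation argument of Fact \ref{Bodirsky, Bodor and Marimon theorem}. Neither of these steps, nor any substitute for them, appears in your proposal.
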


\begin{proof}(sketch)

    Suppose that there is a countably categorical $\mathrm{NSOP}_{3}$ theory which has $\mathrm{SOP}_{2}$. Let $T_{0}$ be the reduct to the definable predicates $R(x, y)$ and $I(y_{1}, y_{2})$ as in the sketch of the proof of Fact \ref{hereditary class restatement of sop2-sop3 problem}.

Now define a theory $T'$ to be the \textit{weak embedding-model companion} of an $\mathcal{L}$-theory $T$ if it satisfies the following:

    \begin{itemize}
    \item every $\mathcal{L}$-structure that weakly embeds into a model of $T$ weakly embeds into a model of $T'$

    \item every $\mathcal{L}$-structure that weakly embeds into a model of $T'$ weakly embeds into a model of $T$

    \item the theory $T'$ is model complete: if $M \models T'$, $M' \models T'$ and $M \subset M'$ as an induced substructure, then $M \prec M'$.
\end{itemize}

(This will generalize the model companion of a potentially complete first-order theory (as opposed to the more commonly considered model companion of a universal theory) which we will discuss after the statement of Proposition \ref{reduction to generic structures} below.)

By straightforwardly modifying the proof of  Saracino's theorem (\cite{Saracino73}), we can show that every countably categorical theory has a weak embedding-model companion.\footnote{Bodirsky, Bodor and Marimon (\cite{BBM25}) remark that Saracino's theorem can also be modified to apply to their notion of a \textit{core companion}.} Let $T'$ be the weak embedding-model companion of $T_{0}$. Let $\mathcal{H}$ be the age of models of $T'$; then $\mathcal{H}$ will be the hereditary class consisting of finite structures that weakly embed into a model of $T_{0}$, so $\mathcal{H}$ will be closed under weak embedding. Moreover, note that $T_{0}$ is countably categorical and $\mathrm{NSOP}_{3}$.

We first observe that every theory whose models have age $\mathcal{H}$ has $\mathrm{SOP}_{2}$. Because $\mathcal{H}$ is the hereditary class consisting of finite structures that weakly embed into a model of $T_{0}$, every theory whose models have age $\mathcal{H}$ has models that weakly embed exactly the same finite structures that models of $T_{0}$ weakly embed. Now recall that, as we showed in the sketch of the proof of Fact \ref{hereditary class restatement of sop2-sop3 problem}, every theory whose models have the same age as models of $T_{0}$ has $\mathrm{SOP}_{2}$. Similarly, for every theory $T$ whose models weakly embed exactly the same finite structures that models of $T_{0}$ weakly embed, $T$ has $\mathrm{SOP}_{2}$. We conclude that every theory whose models have age $\mathcal{H}$ has $\mathrm{SOP}_{2}$.

It remains to observe that not every theory whose models have age $\mathcal{H}$ has $\mathrm{SOP}_{3}$. Modifying the proof of Fact \ref{Bodirsky, Bodor and Marimon theorem} below, one can show that $\mathrm{NSOP}_{3}$ is preserved under weak-embedding model companions, just as well as under model companions. But then, since $T_{0}$ is $\mathrm{NSOP}_{3}$, its weak embedding-model companion $T'$ is an $\mathrm{NSOP}_{3}$ theory with age $\mathcal{H}$, as desired.
    
\end{proof}

Some preliminary evidence for this section's main result on the question of whether $\mathrm{NSOP}_{2}$ is equal to $\mathrm{NSOP}_{3}$ comes from the analogous result on the question of whether $\mathrm{NSOP}_{n+1}\cap \mathrm{NTP}_{2} = \mathrm{NSOP}_{n} \cap \mathrm{NTP}_{2}$ for integers $n \geq 3$. The proof of this fact, while nontrivial, is easier than the proof of this section's main result, Theorem \ref{main theorem 3, restated}. While previewing some of the background arguments for Theorem \ref{main theorem 3, restated},  we will use the techniques of the previous section. (Like the proof in \cite{ApproxOrder} that $\mathfrak{o}(\mathcal{H})$ is an integer for $\mathcal{H}$ a hereditary class defined by finitely many forbidden weak substructures, the proof of the fact below takes place in the especially well-behaved category of graphs with graph homomorphisms. Yet unlike the proof of the integrality theorem, this fact requires an additional layer of argument: translating $\mathrm{TP}_{2}$ into a $n$-cycle-free graph within this category, as in the previous section. So the proof of our integrality theorem in the real-valued $\mathrm{NSOP}_{r}$ hierarchy really is still the basic foundation, which the arguments of this section are all built on top of.)

\begin{fact}\label{sop_n vs tp_2 in hereditary classes defined by finitely many forbidden weakly embedded substructures}
    Let $\mathcal{H}$ be a hereditary class defined by a finite family of forbidden weakly embedded substructures. Then for $n \geq 3$, if every theory whose models have age $\mathcal{H}$ has $\mathrm{SOP}_{n}$, then either every theory whose models have age $\mathcal{H}$ has $\mathrm{SOP}_{n+1}$, or every theory whose models have age $\mathcal{H}$ has $\mathrm{TP}_{2}$.
\end{fact}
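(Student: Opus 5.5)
The plan is to mirror the outline announced for Theorem \ref{main theorem 3}, with the proof of Theorem \ref{main theorem 2, restated} supplying the $\mathrm{TP}_{2}$ configuration.

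\textbf{Step 1: reduce to the generic structure.} By Cherlin, Shelah and Shi (\cite{CSS99}), $\mathcal{H} = \mathcal{H}(\mathcal{F})$ with $\mathcal{F}$ finite has a generic structure whose theory $T^{*}$ has age $\mathcal{H}$. By the preservation results of Bodirsky, Bodor and Marimon (\cite{BBM25}), $\mathrm{NSOP}_{n}$ and simplicity pass to model companions. I expect the same argument to give: if some theory with age $\mathcal{H}$ is $\mathrm{NSOP}_{n+1}$ (resp. $\mathrm{NTP}_{2}$), then so is $T^{*}$. It is unclear whether the $\mathrm{NTP}_{2}$ case needs a separate check; if it does, one can instead work directly with the relevant theory $T$. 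So it suffices to show: if $T^{*}$ is $\mathrm{SOP}_{n}$ and $\mathrm{NSOP}_{n+1}$, then $T^{*}$ has $\mathrm{TP}_{2}$. In fact the argument below shows that every theory with age $\mathcal{H}$ that is $\mathrm{NSOP}_{n+1}$ has $\mathrm{TP}_{2}$, which settles the dichotomy directly.

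\textbf{Step 2: quantifier-free $\mathrm{SOP}_{n}$.} Since every theory with age $\mathcal{H}$ has $\mathrm{SOP}_{n}$, and in particular the generic one, translate this via a lemma of the type \ref{standard hereditary class} into a quantifier-free statement. We get a quantifier-free relation $R(x,y)$ on tuples with an infinite chain in every model with age $\mathcal{H}$. For the $n$-cycle-freeness we may assume, by genericity and $\mathrm{NSOP}_{n+1}$, that $R$ has no directed $\leq n$-cycles in the sense of $\mathcal{H}$. That is, the structure obtained by gluing $n$ copies of the chain type cyclically does not lie in $\mathcal{H}$.

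\textbf{Step 3: run the cycle-removal argument.} The goal is to show that a suitable $\leq n$-cycle-free graph of the shape of $\mathsf{G}$ from the proof of Theorem \ref{main theorem 2, restated} (with $N = n$) is realizable by $R$ in every model with age $\mathcal{H}$. Given that, the formula $R(x,y)\wedge R(z,x)$ exhibits $\mathrm{TP}_{2}$ exactly as in that proof: rows are $2$-inconsistent because a common realization would close an $n$-cycle.

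To get the realization, suppose some finite piece of $\mathsf{G}$, amalgamated freely over the chain types, weakly contains a member of $\mathcal{F}$. The integrality argument (Theorem 3.1 and Proposition 3.11 of \cite{ApproxOrder}, with Fact \ref{n-cycle-free to small cycle-free}) then applies: because $\mathcal{F}$ is finite, forbidden configurations have bounded size. Repeatedly unwinding along helix maps removes every short cycle that could be responsible for a forbidden copy. This yields a copy of an arbitrarily small-cycle-free graph containing a forbidden structure, whereas $\mathrm{NSOP}_{n+1}$ of the generic theory gives the chain configuration realizing all such graphs. Combining this with Proposition \ref{ntp_2 cycle removal}-style cycle removal applied to $\mathsf{G}$ gives the contradiction.

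\textbf{Main obstacle.} The hard step is Step 3. Here $R$ is a relation on tuples in an arbitrary finite relational language rather than a graph, and forbidden weak substructures may involve relations of higher arity across several copies. Cycle removal must therefore be done on the quotient graph of $R$-types while checking that weak embeddings of members of $\mathcal{F}$ lift along the homomorphisms. I would first try to handle this by working in the category of $\mathcal{L}$-structures with weak embeddings and homomorphisms, using free amalgamation in $\mathcal{H}(\mathcal{F})$ together with the bound on $\max_{B\in\mathcal{F}}|B|$.
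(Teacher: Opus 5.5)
Your skeleton matches the paper's: pass to $T^{\mathcal{H}}$, use the graph $\mathsf{G}$ from Theorem \ref{main theorem 2, restated}, and finish by cycle removal plus finiteness of $\mathcal{F}$. But Steps 2 and 3 do not go through as written, and the step you flag as the main obstacle is exactly where the paper's key idea is needed.

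In $T^{\mathcal{H}}$, $\mathrm{SOP}_{n}$ is in general witnessed only by an existential formula whose witnesses depend on the pair (in the generic binary function, $f(a_i,a_j)$ is a new point). So there need not be any quantifier-free relation on tuples with an infinite chain and no $n$-cycles. Moreover, ``gluing copies of the chain type'' only makes sense once the pair-witnesses can be taken disjoint. Weak embeddings are injective, and $\mathcal{H}(\mathcal{F})$ is not closed under preimages of non-injective homomorphisms, so without disjointness you cannot even conclude that the freely glued chain lies in $\mathcal{H}$. The paper secures disjointness in a Skolemization. It takes an $\mathrm{acl}^{\mathrm{Sk}}$-closed $M$ and a sequence $\{a_i\}_{i\in\mathbb{Z}}$ of $\mathrm{acl}^{\mathrm{Sk}}$-closed sets over $M$, forming a coheir Morley sequence in both directions, with $\mathrm{tp}(a_0a_1/M)$ free of $n$-cycles. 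Then $M$, the sets $a_i\backslash M$, and the sets $a_{ij}=\mathrm{acl}^{\mathrm{Sk}}(a_ia_j)\backslash(\mathrm{acl}^{\mathrm{Sk}}(a_i)\cup\mathrm{acl}^{\mathrm{Sk}}(a_j))$ are pairwise disjoint, as in Lemma \ref{algebraic closures in normal coheir morely sequence}. Also, the absence of $n$-cycles comes from $\mathrm{SOP}_n$, not from $\mathrm{NSOP}_{n+1}$ as your Step 2 says.

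The missing idea is to stop doing cycle removal in the original language at all. Define $\mathcal{H}'$ as the class of directed graphs $G$ with $\tilde G\in\mathcal{H}$. Here $\tilde G$ consists of $M$, a copy of $a_0\backslash M$ for each vertex, and a copy of $a_{01}$ over its endpoints for each edge, with only the relations these requirements force. One then checks four things:
\begin{itemize}
\item $\mathcal{H}'$ is again defined by finitely many forbidden weak subgraphs, since a copy of a member of $\mathcal{F}$ meets boundedly many blocks;
\item $\mathcal{H}'$ is $\mathrm{NSOP}_{n+1}$ because $T^{\mathcal{H}}$ is;
\item the infinite chain lies in $\mathcal{H}'$, by the disjointness above;
\item $\mathsf{G}\notin\mathcal{H}'$, since otherwise model completeness would realize finite pieces of $\mathsf{G}$ in the $n$-cycle-free type $\mathrm{tp}(a_0a_1/M)$ and give $\mathrm{TP}_2$.
\end{itemize}
The graph-level results of \cite{ApproxOrder} then apply as a black box. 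Fact \ref{n-cycle-free to small cycle-free}, with $n+1$ in place of $n$, yields omitted $\le N$-cycle-free graphs for arbitrarily large $N$. Finiteness of the forbidden family then yields an omitted acyclic graph, and Claim 3.16 of \cite{ApproxOrder} puts the chain outside $\mathcal{H}'$, a contradiction. So the lifting problem you describe never arises.

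Your account of the final contradiction also mixes up the roles. It is $\mathrm{SOP}_n$, through the chain, that realizes every acyclic graph. $\mathrm{NSOP}_{n+1}$ only supplies closure under helix maps for the cycle removal. $\mathrm{NTP}_2$ is used only to exclude $\mathsf{G}$, so no argument in the style of Proposition \ref{ntp_2 cycle removal} is needed. Finally, your worry about the $\mathrm{NTP}_2$ case of Step 1 is unfounded: $\mathrm{TP}_2$ is given by consistency along paths and pairwise inconsistency within rows, so the proof of Fact \ref{Bodirsky, Bodor and Marimon theorem} applies verbatim.
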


\begin{proof} (sketch)

    Otherwise, similarly to Proposition \ref{reduction to generic structures} below, one of the theories of generic structures $T^{\mathcal{H}}$ of Cherlin, Shelah and Shi (\cite{CSS99}), the complete model companion of the theory of all structures all of whose finite substructures belong to $\mathcal{H}$, is $\mathrm{NSOP}_{n+1}$ and $\mathrm{NTP}_{2}$ but has $\mathrm{SOP}_{n}$. Because $T^{\mathcal{H}}$ has $\mathrm{SOP}_{n}$, there is an indiscernible sequence $\{a_{i}\}_{i < \omega}$ where $\mathrm{tp}(a_{0}, a_{1})$ has no $n$-cycle. Within a Skolemization $(T^{\mathcal{H}})^{\mathrm{Sk}}$ of $T^{\mathcal{H}}$, we may in fact obtain an $\mathrm{acl}^{\mathrm{Sk}}$-closed set $M$, as well as an $M$-indiscernible sequence $\{a_{i}\}_{i \in \mathbb{Z}}$ consisting of $\mathrm{acl}^{\mathrm{Sk}}$-closed sets $a_{i}$ containing $M$, such that $\{a_{i}\}_{i \in \mathbb{Z}}$ is a coheir Morley sequence over $M$ \textit{in both directions}, and $\mathrm{tp}(a_{0}, a_{1}/M)$ has no $n$-cycle. Finding this is standard: we extend $\{a_{i}\}_{i < \omega }$ to $\{\alpha_{i}\}_{i \in \mathbb{Z}} \smallfrown \{a_{i}\}_{i \in \mathbb{Z}} \smallfrown \{\beta_{i}\}_{i \in \mathbb{Z}}$, then take $\mathcal{M} := \mathrm{acl}^{\mathrm{Sk}}(\{\alpha_i\}_{i \in \mathbb{Z}}\{\beta_i\}_{i \in \mathbb{Z}})$. Let $a_{ij} := \mathrm{acl}^{\mathrm{Sk}}(a_{i}a_{j})\backslash (\mathrm{acl}^{\mathrm{Sk}}(a_{i}) \cup \mathrm{acl^{\mathrm{Sk}}}(a_{j}))$ for $i < j < \omega$. Then, as in Lemma \ref{algebraic closures in normal coheir morely sequence}, the $M$, $a_{i}\backslash M$, $a_{ij}$ will all be disjoint. (We perform this same standard argument with the both-directions coheir Morley sequence in the proof of Lemma \ref{standard hereditary class}.)

    We form a hereditary class of directed graphs from this data, similarly to how $\mathcal{H}^{\mathrm{std}}$ will be constructed in Lemma \ref{standard hereditary class}. For a directed graph $G$, let $\tilde{G}$ consist of the disjoint union of:

\begin{itemize}
    \item The model $M \models T^{\mathcal{H}}$ as an induced substructure

    \item For each vertex $v \in G$, a set $\tilde{v}$ satisfying $\mathrm{qftp}(a_{0}\backslash M / M)$

    \item For each directed edge $e=(v, w)$ of $G$, a set $\tilde{e}$ satisfying $\mathrm{qftp}(a_{ij} /a_{i}a_{j}M)$ (so in particular, if $(v, w)$ forms an edge, $\tilde{v}\tilde{w}$ satisfy $\mathrm{qftp}(a_{0}a_{1}/M)$)
\end{itemize}

Moreover, let $\tilde{G}$ be such that there are no more relation instances of the symbols than those necessary to satisfy the above requirements. We let $\mathcal{H}'$ be the hereditary class of directed graphs $G$ such that $\tilde{G} \in \mathcal{H}$. 

Now for $H$ the infinite chain graph consisting of $\omega$ with the order relation, $\tilde{H} \in \mathcal{H}$, as exhibited by the disjoint $M$, $a_{i}\backslash M$, $a_{ij}$. So $H \in \mathcal{H}'$. Moreover, as in the proof of Lemma \ref{standard hereditary class}, $\mathcal{H}'$ is also defined by finitely many forbidden weak substructures. Again as in the proof of Lemma \ref{standard hereditary class}, because $T^{\mathcal{H}}$ is $\mathrm{NSOP}_{n+1}$, $\mathcal{H}'$ is an $\mathrm{NSOP}_{n+1}$ hereditary class\footnote{in the sense of Definition 2.12 of \cite{ApproxOrder}} (and using the arguments from that proof, we can go even farther: the theory of generic structures $T^{\mathcal{H}'}$ is $\mathrm{NSOP}_{n+1}$.)

Now in $T^{\mathcal{H}}$, let $p(x, y) =: \mathrm{tp}(a_{0}, a_{1}/M)$. Then $p(x, y)$ is a type with no $n$-cycles. As in the proof of Theorem \ref{main theorem 2, restated} from the previous section, form the $n$-cycle-free graph $G$ used to show that the model companion of the theory of $n$-cycle-free directed graphs is $\mathrm{TP}_{2}$. Then because $T^{\mathcal{H}}$ is $\mathrm{NTP}_{2}$, the type-definable relation over $M$ given by $p(x, y)$ does not weakly embed the graph $G$. (As in the previous section, an instance of $G$ for a relation with no $n$-cycles would give an instance of $\mathrm{TP}_{2}$.) Now if $\tilde{G} \in \mathcal{H}$, by model completeness of $T^{\mathcal{H}}$, we would get a weakly embedded copy of $G$ in the type-definable relation $p(x, y)$. So $\tilde{G} \notin \mathcal{H}$, and $G \notin \mathcal{H}'$. By Fact \ref{n-cycle-free to small cycle-free}, since $G \notin \mathcal{H}'$, $G$ is $n$-cycle free and $\mathcal{H}$ is $\mathrm{NSOP}_{n}$, $\mathcal{H}'$ omits some $N$-cycle-free directed graph for arbitrarily large $N$. Then just as in Claim 3.16 of \cite{ApproxOrder} and the preceding discussion there, $H \notin \mathcal{H}$, a contradiction.

\end{proof}

We get an analogous statement for the problem of whether the real-valued $\mathrm{NSOP}_{r}$ hierarchy is distinct from the integer-valued $\mathrm{NSOP}_{n}$ hierarchy. This statement resembles Theorem 3.1 of \cite{ApproxOrder} on the combinatorial quantity $\mathfrak{o}(\mathcal{H})$, though it is now a model-theoretic result rather than a statement about intrinsic combinatorial properties of hereditary classes.

\begin{fact}
    Let $\mathcal{H}$ be a hereditary class defined by a finite family of forbidden weakly embedded substructures. Then for any non-integer values $r > 2$ and $n = \lceil r \rceil$ the next integer, if every theory whose models have age $\mathcal{H}$ has $\mathrm{SOP}_{r}$, every theory whose models have age $\mathcal{H}$ has $\mathrm{SOP}_{n}$
\end{fact}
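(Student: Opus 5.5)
We argue by contraposition, following the template of the sketch of Fact \ref{sop_n vs tp_2 in hereditary classes defined by finitely many forbidden weakly embedded substructures}. Suppose some theory with age $\mathcal{H}$ is $\mathrm{NSOP}_{n}$. First we reduce to the generic theory: by the reduction promised in Proposition \ref{reduction to generic structures}, using Cherlin--Shelah--Shi (Fact \ref{Cherlin Shelah and Shi theorem}) and the preservation of $\mathrm{NSOP}_{n}$ under model companions of Bodirsky--Bodor--Marimon (Fact \ref{Bodirsky, Bodor and Marimon theorem}), the theory $T^{\mathcal{H}}$ of the generic structure is then also $\mathrm{NSOP}_{n}$. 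Since every theory with age $\mathcal{H}$ has $\mathrm{SOP}_{r}$, so does $T^{\mathcal{H}}$. The goal is therefore to show that the $\mathrm{NSOP}_{n}$ theory $T^{\mathcal{H}}$ is $\mathrm{NSOP}_{r}$, which gives the contradiction.

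Now fix a relation $R(x,y)$ witnessing $\mathrm{SOP}_{r}$ in $T^{\mathcal{H}}$, with an infinite chain. Replace the chain by an $\mathrm{acl}^{\mathrm{Sk}}$-closed two-sided coheir Morley sequence $\{a_{i}\}_{i\in\mathbb{Z}}$ over an $\mathrm{acl}^{\mathrm{Sk}}$-closed $M$, using the same standard argument as in the sketch above, so that $\models R(a_{i},a_{j})$ for $i<j$. Set $a_{ij}:=\mathrm{acl}^{\mathrm{Sk}}(a_{i}a_{j})\setminus(\mathrm{acl}^{\mathrm{Sk}}(a_{i})\cup\mathrm{acl}^{\mathrm{Sk}}(a_{j}))$, so that the pieces $M$, $a_{i}\setminus M$ and $a_{ij}$ are disjoint. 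Using these pieces, define the amalgamated structure $\tilde{G}$ for each directed graph $G$, and let $\mathcal{H}'$ be the hereditary class of directed graphs $G$ with $\tilde{G}\in\mathcal{H}$. As in Lemma \ref{standard hereditary class}, we then verify three properties of $\mathcal{H}'$:
\begin{itemize}
\item[(i)] $\mathcal{H}'$ is defined by finitely many forbidden weak substructures; this holds because $\mathcal{H}$ is, and a weak copy of a forbidden structure in $\tilde{G}$ meets only boundedly many pieces.
\item[(ii)] $\mathcal{H}'$ contains the infinite chain graph, as witnessed by the sequence itself.
\item[(iii)] $\mathcal{H}'$ is $\mathrm{NSOP}_{n}$ as a hereditary class; this comes from model completeness of $T^{\mathcal{H}}$, which realizes any $\tilde{G}\in\mathcal{H}$ compatibly with $\mathrm{tp}(a_{0}a_{1}/M)$.
\end{itemize}

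Then apply the integrality theorem, Theorem 3.1 of \cite{ApproxOrder}, in its $r>2$ form noted in the footnote defining $\mathfrak{O}(\mathcal{H})$. Since $\mathcal{H}'$ is $\mathrm{NSOP}_{n}$, is defined by finitely many forbidden weak substructures, and contains the chain, it weakly embeds every $\mathsf{G}_{a,b}$ with $n-1<a/b<n$; indeed it embeds every $\leq n-1$-cycle-free graph (Remark \ref{comparison with existing results}). In particular, for $N$ large we have $\mathsf{G}_{N,\lfloor N/r\rfloor}\in\mathcal{H}'$. So $\tilde{\mathsf{G}}_{N,\lfloor N/r\rfloor}\in\mathcal{H}$, and by model completeness and genericity of $T^{\mathcal{H}}$ it is realized over $M$ with each edge realizing $\mathrm{tp}(a_{0}a_{1}/M)$. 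Hence $R$ has the configuration in Definition \ref{real-valued hierarchy, detailed definition}(1$'$) for all large $N$, and $R$ does not witness $\mathrm{SOP}_{r}$. This is the desired contradiction.

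I expect the main obstacle to be step (iii), together with the realization step. Both require that amalgamating copies of $\mathrm{qftp}(a_{0}a_{1}a_{01}/M)$ along a graph yields, inside $T^{\mathcal{H}}$, tuples whose pairwise complete types over $M$ equal $\mathrm{tp}(a_{0}a_{1}/M)$, and not merely the correct quantifier-free types. I would try to handle this by using quantifier elimination up to $\mathrm{acl}$-closed diagrams in the Cherlin--Shelah--Shi generic, which is the reason for closing the $a_{i}$ and $a_{ij}$ under $\mathrm{acl}^{\mathrm{Sk}}$. If that is insufficient, I would instead work with the free amalgam and check directly that $\tilde{G}$ is $\mathrm{acl}$-closed.
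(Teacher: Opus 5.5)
Your overall route is the paper's: contrapose, pass to $T^{\mathcal{H}}$ via Proposition \ref{reduction to generic structures}, build the graph class $\mathcal{H}'$ from an $\mathrm{acl}^{\mathrm{Sk}}$-closed two-sided coheir Morley sequence, and contradict the integrality argument of Theorem 3.1 of \cite{ApproxOrder}. The gap is in your justification of (iii), which does not work as stated.

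Model completeness only transfers membership in $\mathcal{H}'$ into configurations of the relation defined by $\mathrm{tp}(a_0a_1/M)$. That is the direction your final realization step uses. $\mathrm{NSOP}_n$ of $\mathcal{H}'$ needs the reverse transfer. Given a chain configuration $G\in\mathcal{H}'$, you realize $\tilde G$ and apply $\mathrm{NSOP}_n$ of $T^{\mathcal{H}}$ to get an $n$-cycle; you must then conclude $\tilde B\in\mathcal{H}$ for the corresponding graph cycle $B$. But $\mathcal{H}$ is closed under weak embeddings, which are injective, and not under homomorphisms. So the cycle must be realized with the base, the vertex blocks and the edge-witness blocks (the analogues of the $a_{ij}$) all pairwise disjoint. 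A cycle produced by $\mathrm{NSOP}_n$ need not be like this: witnesses for different edges of the cycle may overlap, and so may non-adjacent vertex blocks when $n\ge 4$.

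The missing ingredient is the $n$-version of Lemma \ref{non-overlapping pairs}. It obtains disjointness by passing to blocks of the sequence and by the mutual-indiscernibility argument in the expansion by the functions $f_{\mathbf{j}}$. This is what the paper's ``as in the proof of Lemma \ref{standard hereditary class}'' invokes. It is also what gives closure of $\mathcal{H}'$ under helix maps (compare Fact \ref{closure under jughandle helix maps}), which is the form in which the cycle-removal argument actually consumes $\mathrm{NSOP}_n$.

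The obstacle you flag at the end, by contrast, is not a real one. Since $M\cup a_i\cup a_j\cup a_{ij}=\mathrm{acl}^{\mathrm{Sk}}(a_ia_j)$ is a Skolem hull, its $\mathcal{L}$-reduct is a model of $T^{\mathcal{H}}$. By model completeness, any copy of it inside a model of $T^{\mathcal{H}}$ is therefore an elementary substructure. Such a copy has the same complete type as $Ma_0a_1a_{01}$, and you can take it over $M$: extend $\tilde G\supseteq M$ to a model of $T^{\mathcal{H}}$, which is then an elementary extension of $M$. So you need neither quantifier elimination for the Cherlin--Shelah--Shi generic nor a check that $\tilde G$ is $\mathrm{acl}$-closed. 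With (iii) repaired, the rest of your argument goes through as in the paper.
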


\begin{proof}
   (sketch)
    
    The new setup compared to Theorem 3.1 of \cite{ApproxOrder} is analogous to the proof of the previous fact. Let $\{a_{i}\}_{i < \omega}$ be an indiscernible sequence for which $\mathrm{tp}(a_{0}a_{1})$ exhibits $\mathrm{SOP}_{r}$ for $T^{\mathcal{H}}$, where $T^{\mathcal{H}}$ is assumed to be $\mathrm{NSOP}_{n}$ and $\mathcal{H}$ is defined by finitely many forbidden weak substructures. Then we form the hereditary class $\mathcal{H}'$, consisting of directed graphs, just as before. The hereditary class $\mathcal{H}'$ will be defined by finitely many forbidden weak substructures, and will be $\mathrm{NSOP}_{n}$, but the graph relation will exhibit $\mathrm{SOP}_{r}$. Proceeding as in the proof of Theorem 3.1 of \cite{ApproxOrder}, we get a contradiction.
\end{proof}

These two previous facts suggest that we may be able to obtain the corresponding result for the $\mathrm{NSOP}_{2}$-$\mathrm{NSOP}_{3}$ problem. Our goal for the rest of the main part of this section\footnote{besides a final remark, Remark \ref{potential simplified proof}, sketching a hypothetical simplification of the proof of our main result under some conjectural conditions on algebraic closures within $\mathrm{SOP}_{2}$.} will be to give the proof of this dichotomy between $\mathrm{SOP}_{2}$ and $\mathrm{NSOP}_{3}$, which will be harder than the proofs sketched for the two previous facts. This proof will generalize the techniques used to show that $\mathfrak{o}(\mathcal{H})$ is an integer when $\mathcal{H}$ is a hereditary class defined by finitely many forbidden weak substructures, Theorem 3.1 of \cite{ApproxOrder}, within the context of the properties $\mathrm{NSOP}_{r}$ theories for $r$ a real number. There, we used an abstract cycle-removal property of helix maps within the category of directed graphs with graph homomorphisms, Proposition 3.11 of \cite{ApproxOrder}, to produce graphs omitted from $\mathcal{H}$ without any small directed cycles. We then ultimately produced graphs omitted from $\mathcal{H}$ with no directed cycles. Though, informally speaking, the category we will be working in here will be much less well-behaved, we will still be able to obtain more ad hoc versions of these abstract cycle-removal properties (Lemma \ref{split cycle removal}, Lemma \ref{consanguineous pair removal} and Lemma \ref{potentially pinched alternating cycle removal} below). As with our proof that $\mathfrak{o}(\mathcal{H})$ is an integer, this will allow us to obtain graphs omitted from a strategically chosen hereditary class ($\mathcal{H}^{\mathrm{std}}$ of Lemma \ref{standard hereditary class}) without certain kinds of cycles: the \textit{split cycles} of the below Definition \ref{split cycle}, and the \textit{potentially pinched alternating cycles} of the below Definition \ref{potentially pinched alternating cycle, flat}. Proving the third and last main theorem of this paper, Theorem \ref{main theorem 3} from the introduction, we will show:

\begin{theorem}
\label{main theorem 3, restated}   Let $\mathcal{H}$ be a hereditary class defined by a finite family of forbidden weakly embedded substructures. Then if every theory whose models have age $\mathcal{H}$ has $\mathrm{SOP}_{2}$, every theory whose models have age $\mathcal{H}$ has $\mathrm{SOP}_{3}$.
\end{theorem}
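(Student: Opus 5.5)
We argue by contraposition: assume $\mathcal{H} = \mathcal{H}(\mathcal{F})$ with $\mathcal{F}$ finite, and that some theory whose models have age $\mathcal{H}$ is $\mathrm{NSOP}_{3}$. We want a theory with age $\mathcal{H}$ that is $\mathrm{NSOP}_{2}$. The first step is the reduction the introduction attributes to Bodirsky, Bodor and Marimon (\cite{BBM25}). Since $\mathcal{H}(\mathcal{F})$ has free amalgamation, Cherlin, Shelah and Shi (\cite{CSS99}) give the complete model companion $T^{\mathcal{H}}$ of the universal theory of structures with age contained in $\mathcal{H}$. Preservation of $\mathrm{NSOP}_{3}$ under model companions then lets us pass from our $\mathrm{NSOP}_{3}$ theory to $T^{\mathcal{H}}$. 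This reduces the theorem to the concrete claim that $T^{\mathcal{H}}$ is never strictly $\mathrm{NSOP}_{3}$: if it is $\mathrm{NSOP}_{3}$, it is $\mathrm{NSOP}_{2}$. By Fact \ref{Application of nsop1 = nsop2}, it is equivalent to show it is $\mathrm{NSOP}_{1}$.

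Suppose instead that $T^{\mathcal{H}}$ is $\mathrm{NSOP}_{3}$ but has $\mathrm{SOP}_{1}$. By Fact \ref{sop1 coheirs} we get $\varphi(x,b)$ and coheir Morley sequences over $M$ starting at $b$: one along which $\varphi$ is consistent, and one along which it is $2$-inconsistent. We pass to a Skolemization and take $\mathrm{acl}^{\mathrm{Sk}}$-closed sets, with the sequences Morley in both directions, as in the sketch of Fact \ref{sop_n vs tp_2 in hereditary classes defined by finitely many forbidden weakly embedded substructures}. Then $M$, the pieces $b_{i}\setminus M$, and the pairwise pieces $\mathrm{acl}(b_ib_j)\setminus(\mathrm{acl}(b_i)\cup\mathrm{acl}(b_j))$ are disjoint. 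As in Lemma \ref{standard hereditary class}, we encode these quantifier-free types into a \emph{standard hereditary class} $\mathcal{H}^{\mathrm{std}}$ of finite structures in an auxiliary language. Its vertices represent copies of the $b$'s and of the realizations of $\varphi$, and its edge types record the consistent type, the inconsistent type, and membership of a realization. A structure lies in $\mathcal{H}^{\mathrm{std}}$ iff its free blow-up $\tilde{G}$ lies in $\mathcal{H}$.

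Because $\mathcal{F}$ is finite, $\mathcal{H}^{\mathrm{std}}$ is again defined by finitely many forbidden weak substructures. Model completeness of $T^{\mathcal{H}}$ and $\mathrm{NSOP}_{3}$ transfer to $\mathcal{H}^{\mathrm{std}}$, so its pair relation has no directed $3$-cycle patterns of the forbidden kind. On the other hand, the infinite configurations witnessing consistency along one sequence lie in $\mathcal{H}^{\mathrm{std}}$. So does the tree-like configuration giving the $2$-inconsistent branching, which here plays the role of the formal use of $\mathrm{TP}$. Reproducing the $\mathrm{SOP}_{1}$ tree inside $\mathcal{H}^{\mathrm{std}}$ would need a certain finite configuration $G_{0}$, which must be omitted. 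The goal is to contradict this by showing $G_{0}\in\mathcal{H}^{\mathrm{std}}$.

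We do this with cycle removal, imitating Theorem 3.1 of \cite{ApproxOrder} and Fact \ref{n-cycle-free to small cycle-free}. Starting from an omitted configuration, we repeatedly pull back along helix-type covering homomorphisms, each time still omitted, as $\mathrm{NSOP}_{3}$ of $\mathcal{H}^{\mathrm{std}}$ allows. Each step kills one minimal offending cycle: first split cycles (Lemma \ref{split cycle removal}), then consanguineous pairs (Lemma \ref{consanguineous pair removal}), then potentially pinched alternating cycles (Lemma \ref{potentially pinched alternating cycle removal}). The end product is an omitted structure with none of these cycles. Since the forbidden family is finite and each forbidden structure has bounded size, a sufficiently large cycle-free omitted structure must contain a weak copy of some member of $\mathcal{F}$ that factors through a tree-like part. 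Such a copy would already appear in the chain or tree configurations known to be in $\mathcal{H}^{\mathrm{std}}$, which is a contradiction. The main obstacle is this cycle-removal stage. Unlike for plain directed graphs, a pullback can create new cycles of other types or reintroduce pinching at shared algebraic-closure pieces, so the removal order and a well-founded complexity measure must be chosen carefully. I would first try a lexicographic measure on cycle length and type, and verify that each lemma strictly decreases it.
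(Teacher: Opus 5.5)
Your outline follows the paper's architecture: reduction to $T^{\mathcal{H}}$ via Cherlin--Shelah--Shi and Bodirsky--Bodor--Marimon, a class $\mathcal{H}^{\mathrm{std}}$ in the language of $R$ and $I$, three rounds of helix-map cycle removal, and finiteness of the forbidden family. However, the step that carries the whole weight of the hypothesis is asserted, not proved.

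\textbf{The disjointness needed for $\mathbb{TP}\in\mathcal{H}^{\mathrm{std}}$.} You need an instance of the tree property with consistency witnesses $a_{\sigma}$ in which the following sets are pairwise disjoint: $M$, $a_{\sigma}\setminus M$, $c_{\eta}\setminus M$, the pieces $\mathrm{acl}(a_{\sigma}c_{\sigma|_{n}})\setminus(a_{\sigma}\cup c_{\sigma|_{n}})$, and the sibling pieces $\mathrm{acl}(c_{\eta\smallfrown\langle i\rangle}c_{\eta\smallfrown\langle j\rangle})\setminus(c_{\eta\smallfrown\langle i\rangle}\cup c_{\eta\smallfrown\langle j\rangle})$. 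Only then does the blow-up of $\mathbb{TP}$ weakly embed into the monster model.

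Making ``the sequences Morley in both directions'' is the Skolem-hull trick for one indiscernible sequence. It does not apply to the two $\mathrm{SOP}_{1}$ sequences at once, since they must be coheir Morley over a single $M$ and share their first term. Even granted, it says nothing about the pieces attached to the witnesses $a_{\sigma}$, or about overlaps between different nodes of the tree.

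The paper spends Lemmas \ref{pair of normal coheirs}--\ref{noninterference of algebraic closures} on exactly this. It enlarges both coheirs to normal coheirs with a common restriction to $M$, analyses sequences indiscernible over a set, builds the tree from the two sequences, and chooses the $a_{\sigma}$ algebraically independent. This is not a technicality. By Remark \ref{noninterference in tree property} and Observation \ref{main observation, restated}, it is the only point where $\mathrm{SOP}_{2}$, as opposed to mere non-simplicity, is used. If it came for free, the rest of the argument would prove a false simplicity/$\mathrm{SOP}_{3}$ dichotomy.

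\textbf{The closing step is also missing its key ingredient.} What must be shown is that some finite substructure of $\mathbb{TP}$ lies outside $\mathcal{H}^{\mathrm{std}}$. The chain of reasoning is as follows. Soundness supplies an initial omitted structure: an $O$-point $R$-related to two $I$-related $P$-points. The removal lemmas, together with finiteness of the forbidden family for $\mathcal{H}^{\mathrm{std}}$, then yield an omitted presidium with no alternating cycles. Finally, Lemma \ref{embedding into the standard tp-structure} shows that such a presidium weakly embeds into $\mathbb{TP}$. That lemma builds a tree order on $P(A)$ out of common $R$-neighbours (Claims \ref{order construction 1}--\ref{order construction 3}). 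It is a real argument, and ``factors through a tree-like part'' does not replace it.

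\textbf{$\mathrm{NSOP}_{3}$ does not simply transfer to $\mathcal{H}^{\mathrm{std}}$.} Because $R$ and $I$ are blown up into existentially witnessed pieces, the paper proves via Lemma \ref{non-overlapping pairs} that $\mathcal{H}^{\mathrm{std}}$ is the age of an $\mathrm{NSOP}_{3}$ structure. This existential form is also why the removal lemmas need helix maps of cyclic $3$-jughandle decompositions.

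\textbf{A minor error.} $\mathcal{H}(\mathcal{F})$ need not have free amalgamation; forbid a weakly embedded directed path with two edges. The model companion exists anyway by the argument of Fact \ref{Cherlin Shelah and Shi theorem}, and completeness of $T^{\mathcal{H}}$ comes from $\mathcal{H}$ being an age.
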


Because we already know that the class of simple theories does not coincide with the class of $\mathrm{SOP}_{3}$ theories, in order to motivate the dichotomy in Theorem \ref{main theorem 3, restated} by showing that it is a sharp dichotomy, we must also prove Observation \ref{main observation} from the introduction as a companion observation to this theorem. This observation states that the statement of Theorem \ref{main theorem 3, restated} is false when replacing the properties $\mathrm{SOP}_{2}$ and $\mathrm{SOP}_{3}$ with the properties $\mathrm{TP}$ and $\mathrm{SOP}_{3}$, the tree property $\mathrm{TP}$ being a failure of simplicity.

\begin{observation}
 \label{main observation, restated}   It is \emph{not} the case that, for $\mathcal{H}$ a hereditary class defined by a finite family of forbidden weakly embedded substructures, if every theory whose models have age $\mathcal{H}$ is non-simple, every theory whose models have age $\mathcal{H}$ has $\mathrm{SOP}_{3}$.
\end{observation}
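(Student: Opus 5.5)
The plan is to exhibit one explicit hereditary class. It should be defined by finitely many forbidden weakly embedded substructures, every theory with that age should be non-simple, and at least one theory with that age should be $\mathrm{NSOP}_{3}$. The candidate I will try first is the one mentioned in the introduction, \emph{bipartite graphs omitting $K_{2,2}$}, following the Kruckman--Ramsey and Conant--Kruckman analysis. A literal bipartite condition is not expressible by forbidding weak substructures, because it needs negative information. So I will work in a language with a single binary relation $R$, read as directed edges from a ``left'' vertex to a ``right'' vertex. I let $\mathcal{H}$ be the class of finite $R$-structures that do not weakly embed the $4$-point structure $\{a_{1},a_{2},b_{1},b_{2}\}$ with $R(a_{i},b_{j})$ for all $i,j$. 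I may add finitely many further forbidden structures (for instance an $R$-path $a\,R\,b\,R\,c$, or loops) to force the structures to look bipartite. This $\mathcal{H}$ is visibly of the required form.

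\emph{Step 1: some theory with age $\mathcal{H}$ is $\mathrm{NSOP}_{3}$.} Since $\mathcal{H}$ is defined by forbidden weak substructures, it has free amalgamation. So by the Cherlin--Shelah--Shi construction (Fact \ref{Cherlin Shelah and Shi theorem} later in this section) there is a generic theory $T^{\mathcal{H}}$ with age $\mathcal{H}$. I then want $T^{\mathcal{H}}$ to be $\mathrm{NSOP}_{1}$, and hence $\mathrm{NSOP}_{3}$. This is the Conant--Kruckman type result for generic $K_{m,n}$-free bipartite graphs. Failing a direct citation, I will verify the independence theorem for Kim-independence, computed as free amalgamation over algebraically closed sets, using the criterion of \cite{KR17}.

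\emph{Step 2: every theory $T$ with age $\mathcal{H}$ is non-simple.} Here only the age may be used. Positive configurations come from the age: by compactness and saturation, a configuration lives in a model of $T$ provided all of its finite pieces, \emph{as induced substructures}, lie in $\mathcal{H}$. Inconsistencies come from the forbidden $K_{2,2}$, which every model of $T$ omits. I therefore need a formula $\varphi$ and a tree $\{b_{\eta}\}_{\eta\in\omega^{<\omega}}$ with three properties. Pairs of siblings must force a $K_{2,2}$, so that they are $2$-inconsistent. Every branch must be realized. And the whole configuration, with only the edges it requires, must be $K_{2,2}$-free. The naive choice $\varphi(x_{1}x_{2};y)=R(y,x_{1})\wedge R(y,x_{2})\wedge x_{1}\neq x_{2}$ has $2$-inconsistent siblings, but it fails: two parameters along one branch already form a $K_{2,2}$ with the realizing pair. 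So I will use more coordinates. One option is parameters $y=(y_{1},y_{2})$ with a fresh vertex per node, together with a realization $x$ that itself varies along the branch through extra existential coordinates. Another is to work with dividing: I will show that some formula $R(b,x)\wedge R(c,x)$ divides via a $2$-inconsistent indiscernible sequence. I will then build an infinite forking chain and assemble the tree by the standard ``dividing chain implies $\mathrm{TP}$'' argument, checking each time that the witnessing finite graph is $K_{2,2}$-free.

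The main obstacle is this last point: choosing the formula so that siblings genuinely create a $K_{2,2}$ while branches do not. If the $K_{2,2}$ class resists, I will fall back to a variant, for example forbidding a pair of $3$-element sets with all edges between them, or replacing $R$ by a ternary relation coding a parametrized family. The guiding principle is that the forbidden configuration must make a $2$-inconsistency for siblings but only ``cross'' a branch once. Step 1 should then go through by the same free-amalgamation argument.
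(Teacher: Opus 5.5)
\textbf{Gap: Step 2 is never carried out.} This is the step that has to cover \emph{every} complete theory with age $\mathcal{H}$, and for the $K_{2,2}$-free class you do not produce a formula and a tree. The obstruction you ran into is structural, not a matter of bookkeeping. Two points lie on at most one common line, so a witness tied by incidences to two parameters shared by two branches must be the same for both branches. The proposal does not explain how extra existential coordinates would get around this. You also cannot simply cite Conant and Kruckman: their non-simplicity theorem is about the generic theory $T^{\mathcal{H}}$ only, while you need it for every theory with age $\mathcal{H}$.

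The missing idea is the transfer the paper uses. Any complete $T$ whose models have age $\mathcal{H}$ has $T^{\mathcal{H}}$ as its model companion: by compactness, models of $T$ and $T^{\mathcal{H}}$ embed into each other, and $T^{\mathcal{H}}$ is model complete. By Fact~\ref{Bodirsky, Bodor and Marimon theorem}, $\mathrm{TP}$ in $T^{\mathcal{H}}$ then gives $\mathrm{TP}$ in $T$. This is your heuristic (``positive data from the age, inconsistency from forbidden substructures'') made correct. By model completeness, both the $\mathrm{TP}$ formula and the inconsistency of sibling pairs can be taken existential in $T^{\mathcal{H}}$. So inconsistency is certified by extra witness points in the configuration, not only by the instances of $\varphi$. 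The quantifier-free type of the whole configuration, witnesses included, lies in the common age, and so it is realized in $T$.

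\textbf{Your fallback already works, and is simpler than the paper's route.} Take the paper's class of graphs of binary partial functions: a ternary $R$, with the finitely many configurations giving one input pair two outputs forbidden. Let $\varphi(x;yz):=R(x,y,z)\wedge x\neq z$ and $b_{\eta\smallfrown\langle i\rangle}:=(c_{\eta},d_{\eta\smallfrown\langle i\rangle})$, with all tree points distinct. A common solution of two siblings gives the input pair $(x,c_{\eta})$ two outputs, so siblings are $2$-inconsistent in every theory of that age. Now add points $a_{\sigma}$ with $R(a_{\sigma},c_{\sigma|_{n}},d_{\sigma|_{n+1}})$ and no other relations. The tree together with these points is itself a partial-function graph, so by compactness it is realized in every such theory. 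Step 1 is then Kruckman and Ramsey's theorem that the generic binary function $T^{f}=T^{\mathcal{H}}$ is $\mathrm{NSOP}_{1}$, and no preservation result is needed.

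\textbf{Side error in Step 1.} Classes $\mathcal{H}(\mathcal{F})$ need not have free amalgamation. Freely amalgamating $\{a_{1},b_{1},b_{2}\}$ and $\{a_{2},b_{1},b_{2}\}$ (with $a_{k}Rb_{j}$) over $\{b_{1},b_{2}\}$ produces $K_{2,2}$; this is exactly why the $K_{2,2}$-free generic theory has nontrivial algebraic closure. The existence of $T^{\mathcal{H}}$ comes from Fact~\ref{Cherlin Shelah and Shi theorem} without free amalgamation. Its age is $\mathcal{H}$ because your forbidden structures are connected, so disjoint unions give joint embedding.
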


We will start by proving this observation. This will be a consequence of work of Bodirsky, Bodor and Marimon (\cite{BBM25}, Theorem 3.22 and paragraph before Subsection 3.1)--that simplicity and $\mathrm{NSOP}_{n}$ are preserved under taking model companions--as well as examples due to Conant and Kruckman (\cite{CoK19}), or Kruckman and Ramsey (\cite{KR18}). The result of Bodirsky, Bodor and Marimon will be discussed below, in Fact \ref{Bodirsky, Bodor and Marimon theorem}. The example of Conant and Kruckman (\cite{CoK19}) gives us (upon obtaining the age of a model of their theory) a two-sorted hereditary class $\mathcal{H}$ exemplifying Observation \ref{main observation, restated}. This will be the hereditary class in sorts $P$ and $L$ with incidence relation $I$, defined by omitting the complete bipartite graph $K_{n,m}$ as a forbidden weak substructure. A contrived variation on this example of Conant and Kruckman will give us a one-sorted example. However, for an informally more natural one-sorted example, we consider the example given by Kruckman and Ramsey (\cite{KR17}) (again, upon obtaining the age of a model of one of the main theories discussed there, the theory of the generic binary function). This will be the hereditary class of binary partial functions, as understood in a relational language; i.e., it will be the hereditary class of graphs of binary partial functions.

\begin{proof}
    (of Observation \ref{main observation, restated}): The proof of this observation is related to Proposition \ref{reduction to generic structures} below, which states that Theorem \ref{main theorem 3, restated} will hold true as long as none of the theories of generic structures of Cherlin, Shelah and Shi (\cite{CSS99})--model companions of $T^{\mathcal{H}}_{0}$, the theory of structures all of whose finite substructures belong to $\mathcal{H}$, where $\mathcal{H}$ is a hereditary class defined by a finite family of forbidden weakly embedded substructures--can be $\mathrm{NSOP}_{3}$ but have $\mathrm{SOP}_{2}$. Conversely, our proof will essentially proceed by showing that Observation \ref{main observation, restated} follows from the fact that there \textit{is} a theory of generic structures as in Cherlin, Shelah and Shi that is non-simple but $\mathrm{NSOP}_{3}$, and exhibiting such a theory.

    Let $\mathcal{L}$ be the one-sorted language with one ternary relation symbol $R(x, y, z)$, and let $\mathcal{F}= \{F_1, F_2\}$, where $F_{1}$ is the structure consisting of distinct $a, b, c_1, c_{2}$ whose instances of $R(x, y, z)$ are $R(a, b, c_1)$ and $R(a, b, c_2)$, and $F_{2}$ is the structure consisting of distinct $a, c_1, c_2$ whose instances of $R(x, y, z)$ are $R(a, a, c_1)$ and $R(a, a, c_2)$. Define $\mathcal{H}=:\mathcal{H}(\mathcal{F})$; this will consist of finite sets with a graph for a binary partial function from that set to itself. Then $T^{\mathcal{H}}_{0}$, the theory of $\mathcal{L}$-structures all of whose finite substructures belong to $\mathcal{H}$ as in the below Fact \ref{Cherlin Shelah and Shi theorem}, is the theory of sets with a graph for a binary partial function from that set to itself. Kruckman and Ramsey (\cite{KR17}, Corollary 3.10) show that the theory $T^{f}_{0}$ of binary functions has a model companion $T^{f}$; we may as well view $T^{f}_{0}$ as a theory in the language $\mathcal{L} = \{ R(x, y, z) \}$, where $R(x, y, z)$ represents the graph of the function. (As a theory in a relational language, $T^{f}_{0}$ will not be a universal theory; see Definition \ref{general model companion} below for the definition of the model companion of theory that is not necessarily universal.)  Since any model of $T^{\mathcal{H}}_{0}$ embeds into a model of $T^{f}_{0}$ (any structure consisting of a set with a binary partial function extends to a structure consisting of a set with a binary function), while every model of $T_{0}^{f}$ is a model of $T^{\mathcal{H}}_{0}$, the model companion $T^{f}$ of $T_{0}^{f}$ will be the model companion $\mathcal{T}^{\mathcal{H}}$ of $T^{\mathcal{H}}_{0}$. Kruckman and Ramsey show, however, that $T^{f} = T^{\mathcal{H}}$ is a complete $\mathrm{NSOP}_{1}$ theory, but has $\mathrm{TP}$. So there will be a complete $\mathrm{NSOP}_{3}$ theory, $T^{\mathcal{H}}$, whose models will have age $\mathcal{H}$. However, for any complete theory $T$ with models whose age is $\mathcal{H}$, the model companion of $T$ will be $T^{\mathcal{H}}$, and because $T^{\mathcal{H}}$ has $\mathrm{TP}$, by the below Fact \ref{Bodirsky, Bodor and Marimon theorem} that simplicity is preserved under model companions, $T$ will have $\mathrm{TP}$. So $\mathcal{H}$ will in fact exemplify Observation \ref{main observation, restated}.
\end{proof}

Having motivated Theorem \ref{main theorem 3, restated} by proving Observation \ref{main observation, restated} (subject to Fact \ref{Bodirsky, Bodor and Marimon theorem} below), we will now begin the setup for the proof of this theorem. We will begin, as described in the introduction, by again applying the result essentially due to \cite{BBM25} on preservation of $\mathrm{NSOP}_{n}$ under model companions. Using this preservation theorem, we will reducing Theorem 5.5 to a verification for a concrete family of theories--particularly, the theories of the generic structures forbidding finitely many forbidden weakly embedded substructures, defined by Cherlin, Shelah and Shi (\cite{CSS99}). Since Cherlin, Shelah and Shi explicitly prove that these theories are well-defined only for graphs, we recall the proof of the well-known fact that these theories are well-defined for relational structures more generally, though the argument will be the same.

\begin{fact}
  \label{Cherlin Shelah and Shi theorem} \emph{(Corollary to Theorem 1 of \cite{CSS99})} Let $\mathcal{L}$ be a finite relational language, and $\mathcal{H}$ be a hereditary class of $\mathcal{L}$-structures defined by a finite family of forbidden weakly embedded substructures. Let $T_{0}^{\mathcal{H}}$ be the (universal) theory of $\mathcal{L}$-structures all of whose finite substructures are members of $\mathcal{H}$. Then $T^{\mathcal{H}}_{0}$ has a model companion, $T^{\mathcal{H}}$.
\end{fact}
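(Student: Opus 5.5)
The plan is the standard argument that a universal theory has a model companion exactly when the existentially closed models form an elementary class. For $\mathcal{H}$ defined by finitely many forbidden weak substructures, this comes down to showing that "existentially closed" is first-order expressible. I would follow Cherlin, Shelah and Shi's proof for graphs, and the argument should go through verbatim for an arbitrary finite relational language.

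First, write $\mathcal{F} = \{F_1, \ldots, F_m\}$ and let $k$ bound the sizes of the $F_\ell$. Then $T_0^{\mathcal{H}}$ is axiomatized by universal sentences saying that no $k$-tuple carries the positive relations of any $F_\ell$. It is therefore universal, and its models are closed under unions of chains, so existentially closed models exist and every model embeds in one. It then suffices to show that the class of existentially closed models of $T_0^{\mathcal{H}}$ is elementary.

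Second, for a finite tuple $a$ in $M \models T_0^{\mathcal{H}}$ and a finite one-point or finite extension $aB$ with $qftp$ given, I will characterize when $B$ can be realized over $a$ in some extension of $M$. Take the free amalgam of $M$ and $aB$ over $a$: no new relation instances meet both $M \setminus a$ and $B \setminus a$. This is the minimal way to add $B$, and it is the optimal choice, because adding fewer positive relations can only help avoid weak copies of the $F_\ell$.

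So $B$ is realizable over $a$ in an extension of $M$ if and only if the free amalgam contains no weak copy of any $F_\ell$. Any such weak copy uses at most $k$ points of $M$. Hence this is the case if and only if there is no $c \subseteq M$ with $|c| \leq k$ such that the free amalgam of $ac$ and $aB$ over $a$ weakly embeds some $F_\ell$. For fixed $B$ this condition is a universal formula $\chi_B(a)$ over $M$: it quantifies over $k$-tuples $c$ and checks one of finitely many quantifier-free configurations.

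Third, the model companion $T^{\mathcal{H}}$ is axiomatized by $T_0^{\mathcal{H}}$ together with the sentences $\forall a\,(\chi_B(a) \to \exists y\, \mathrm{Diag}_B(a,y))$, ranging over all finite extension types. This uses a single-variable or finite-tuple extension, whichever is convenient. The step I expect to be the main obstacle is showing that these axioms capture existential closedness. The issue is that an existential formula may require many witnesses at once, and it may require negative facts, i.e.\ non-relations among witnesses and parameters. The free amalgam takes care of the negative facts automatically, since it realizes only the positive relations that are demanded. Multiple witnesses are handled by letting $B$ range over all finite structures. The condition "freely amalgamable without a forbidden weak copy" then corresponds exactly to realizability in some extension of $M$, because any extension realizing $B$ weakly contains the free amalgam. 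Hence an existential formula over $M$ with parameters $a$ is satisfiable in an extension if and only if some finite $B$ extending $a$, consistent with that formula, satisfies $\chi_B(a)$.

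Finally, I would check the remaining two requirements. Every model of these axioms is existentially closed, by the previous step. Conversely, every existentially closed model satisfies the axioms, because if $\chi_B(a)$ holds then the free amalgam lies in a model of $T_0^{\mathcal{H}}$ extending $M$, and existential closedness pulls the witness back into $M$. Together these give the model companion $T^{\mathcal{H}}$.
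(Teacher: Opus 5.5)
Your route is the paper's. The free amalgam over the parameters is the least constrained way to add $B$. Forbidden weak copies are local, touching at most $k$ points of $M$, so realizability becomes a universal condition $\chi_B(a)$; the paper packages this as the finite set $F_{B/A}$ of bad local extensions. The scheme $\chi_B(a)\to\exists y\,\mathrm{Diag}_B(a,y)$ is then shown to axiomatize the existentially closed models.

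One step needs repair. Your claim that any extension realizing $B$ weakly contains $M\sqcup_a B$ assumes the realization meets $M$ only in $a$. If some witnesses already lie in $M\setminus a$, the natural map from the free amalgam is not injective. In that case your equivalence for existential formulas fails as stated. For example, take one unary predicate $P$, let $\mathcal{F}$ consist of two $P$-points, and let $M$ have exactly one $P$-point. Then $\exists y\,P(y)$ holds in $M$, hence in every extension. Yet every $B$ consistent with it contains a $P$-point, so $M\sqcup_\emptyset B$ has two $P$-points and $\chi_B(\emptyset)$ fails.

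The fix is the paper's reduction ``we may assume $B\cap M=A$''. Given witnesses $b$ in $M'\supseteq M$, replace $a$ by $a'=a\cup(b\cap M)$ and let $B$ be the structure on $a\cup b$. Now $M\sqcup_{a'}B$ weakly embeds injectively into $M'$, so $\chi_B(a')$ holds in $M$, and the axiom for $a'$ supplies witnesses in $M$. With this adjustment your argument is complete.
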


\begin{proof}
Let $\mathcal{H} = \mathcal{H}(\mathcal{F})$ for $\mathcal{F}$ a finite family of forbidden weakly embedded substructures. Then, let $A \subset B \in \mathcal{H}$, with $B$ finite. We begin by showing that there is a finite set $F_{B/A}$ of finite $\mathcal{L}$-structures containing $A$ such that, for any $\mathcal{L}$-structure $C \supset A$ and $D= B\sqcup_{A} C $ the free amalgam of $B$ and $C$ over $A$, $D \notin \mathcal{H}$ if and only if $C$ contains a member of $F_{B/A}$.\footnote{More technically, we show that there is a finite set $F_{B/A}$ of \textit{isomorphism classes over $A$} of finite $\mathcal{L}$-structures containing $A$ satisfying this condition, where by a member of $F_{B/A}$ we mean a structure belong to one of the isomorphism classes in $F_{B/A}$; this is also what we will mean by a member of $F_{B/A}$ throughout what follows.} Let $n$ be larger than the cardinality of any structure in $\mathcal{F}$. Then we define $F_{B/A}$ as follows: $F_{B/A}$ is the set of $\mathcal{L}$-structures $C' \supset A$ with $C'\leq |A|+ n$ such that, for $D = B\sqcup_{A} C' $ the free amalgam of $B$ and $C'$ over $A$, $D \notin \mathcal{H}$. We show $F_{B/A}$ is as desired.

For the ``if" direction, if $C$ contains some $C' \in F_{B/A}$, then $D= B\sqcup_{A} C $ will contain $D'= B\sqcup_{A} C' \notin \mathcal{H} $. So $D \notin \mathcal{H}$, proving this direction.

For the ``only if" direction, if $D= B\sqcup_{A} C \notin \mathcal{H}$, then there is some $F \subset B\sqcup_{A} C$, with $F$ an image of a member of $\mathcal{F}$ under a weak embedding, such that $|F| < n$ and $F \notin \mathcal{H}$. But then, for $C' = (F \cap C) \cup A$, $C' \in F_{B/A}$: first, $F \subset B\sqcup_{A} C'$, where $B\sqcup_{A} C' \subset B\sqcup_{A} C$ is the free amalgam of $B$ and $C'$ over $A$, and $F \notin \mathcal{H}$, so $B\sqcup_{A} C' \notin \mathcal{H}$. But $A \subset C' $, and $|C'| \leq |A| + n$ because $|F| < n$, so $C' \in F_{B/B}$. Since $C' \subset C$, this proves the ``only if" direction.

Then the following condition on $M \models T^{\mathcal{H}}_{0}$ is axiomatizable by a set of sentences in $\mathcal{L}$, since $F_{B/A}$ is always finite:

(*) Let $A \subset M$ be a finite, and let $B \supset A$, $B \in \mathcal{H}$ also be finite. Then $B$ embeds into $M$ over $A$ as an induced substructure, if no member of $F_{B/A}$ embeds into $M$ over $A$ as an induced substructure.

It therefore suffices to show that a model of $T^{\mathcal{H}}_{0}$ is existentially closed if and only if it satisfies condition (*).

First, suppose $M \models T^{\mathcal{H}}_{0}$ is existentially closed; we show that $M$ satisfies condition (*). Let $D=B \sqcup_{A} M$ be the free amalgam of (an $A$-isomorphic copy of) $B$ with $M$ over $A$. If no member of $F_{B/A}$ embeds into $M$ over $A$ as an induced substructure, then $D \models T^{\mathcal{H}}_{0}$ by choice of $F_{B/A}$, and $M \subset D$. So $B$ embeds into $M$ over $A$ by existential closedness of $M$.

Now suppose that $M\models T^{\mathcal{H}}_{0}$ satsifies condition (*); we show that $M$ is existentially closed. It suffices to show that if $M \subset M' \models T_{0}^{\mathcal{H}}$, $A \subset M$ is finite, and $A \subset B \subset M'$ where $B$ is finite, then $B$ (note $B \in \mathcal{H}$) embeds into $M$ over $A$. This will hold if $B$ embeds into $M$ over $B \cap M \supset A$, so we may assume that $B\cap M = A$.  Now, $B \cup M \models T_{0}^{\mathcal{H}} $ because $B \cup M \subset M'$. So, because $B \cap M = A$, the free amalgam $B \sqcup_{A} M$ of $B$ and $M$ over $A$ weakly embeds into $B \cup M $. By choice of $F_{B/A}$, this implies that no member of $F_{B/A}$ embeds into $M$ over $A$ as an induced substructure. Therefore, applying (*), $B$ embeds into $M$ over $A$.

We have thus proven both directions of the equivalence between (*) and existential closedness of $M$; therefore, that the class of existentially closed models of $T_{0}^{\mathcal{H}}$ is axiomatizable by a theory $T^{\mathcal{H}}$, as desired.

\end{proof}

Having defined the theories of generic structures of Cherlin, Shelah and Shi (\cite{CSS99}), our next goal will be to justify (again, essentially as a corollary of \cite{BBM25}) the reduction of Theorem \ref{main theorem 3, restated} to a concrete verification: that none of these theories of (\cite{CSS99}) are strictly $\mathrm{NSOP}_{3}$. We will observe:

\begin{prop}

\label{reduction to generic structures}

Suppose that none of the theories of Cherlin, Shelah and Shi, the model companions $T^{\mathcal{H}}$ for $\mathcal{H}$ defined by a finite family of forbidden weakly embedded substructures as in Fact \ref{Cherlin Shelah and Shi theorem}, are complete strictly $\mathrm{NSOP}_{3}$ theories (i.e., $\mathrm{NSOP}_{3}$, but with $\mathrm{SOP}_{2}$). Then for every hereditary class $\mathcal{H}$ defined by a finite family of forbidden weakly embedded substructures, if the theory of every structure whose age is $\mathcal{H}$ has $\mathrm{SOP}_{2}$, then the theory of every structure whose age is $\mathcal{H}$ has $\mathrm{SOP}_{3}$.

\end{prop}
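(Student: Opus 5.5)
We argue by contraposition. Fix a hereditary class $\mathcal{H}$ defined by a finite family $\mathcal{F}$ of forbidden weakly embedded substructures, and suppose there is a complete theory $T$ whose models have age $\mathcal{H}$ and which is $\mathrm{NSOP}_{3}$. We want to produce a complete strictly $\mathrm{NSOP}_{3}$ theory $T^{\mathcal{H}}$ among the Cherlin--Shelah--Shi theories, under the additional assumption that every theory whose models have age $\mathcal{H}$ has $\mathrm{SOP}_{2}$. So the goal is to show that $T^{\mathcal{H}}$ (or a completion of it) is itself a theory whose models have age $\mathcal{H}$ and which is $\mathrm{NSOP}_{3}$. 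The $\mathrm{SOP}_{2}$ half then follows automatically from the hypothesis, and this contradicts the assumption of the proposition.

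The first step is to compare $T$ with $T^{\mathcal{H}}_{0}$ from Fact \ref{Cherlin Shelah and Shi theorem}. Since every model $M \models T$ has age $\mathcal{H}$, every finite substructure of $M$ lies in $\mathcal{H}$, so $M \models T^{\mathcal{H}}_{0}$. Hence every model of $T$ embeds (trivially) into a model of $T^{\mathcal{H}}_{0}$. Conversely, every model of $T^{\mathcal{H}}_{0}$ embeds into a model of $T$. To see this, take $N \models T^{\mathcal{H}}_{0}$. Every finite substructure of $N$ is in $\mathcal{H}$ and so embeds into a model of $T$. By compactness, applied to the diagram of $N$ together with $T$, $N$ embeds into a model of $T$. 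Thus $T$ and $T^{\mathcal{H}}_{0}$ are mutually model-consistent. It follows that $T^{\mathcal{H}}$, the model companion of $T^{\mathcal{H}}_{0}$, is also the model companion of $T$, in the sense of model companions of not-necessarily-universal theories (the forthcoming Definition \ref{general model companion}). This is the same move made in the proof of Observation \ref{main observation, restated} for $T^{f}_{0}$.

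Next we apply the preservation theorem of Bodirsky, Bodor and Marimon (Fact \ref{Bodirsky, Bodor and Marimon theorem}): $\mathrm{NSOP}_{n}$, in particular $\mathrm{NSOP}_{3}$, passes from a theory to its model companion. Since $T$ is $\mathrm{NSOP}_{3}$, we get that $T^{\mathcal{H}}$ is $\mathrm{NSOP}_{3}$. It remains to check two things:
\begin{itemize}
\item $T^{\mathcal{H}}$ is complete, or at least one of its completions has models of age exactly $\mathcal{H}$;
\item this completion falls under the hypothesis on $\mathcal{H}$.
\end{itemize}
For completeness, $\mathcal{H}$ has free amalgamation over the empty set: a free amalgam of members of $\mathcal{H}$ still omits the weakly embedded members of $\mathcal{F}$ as long as these are connected. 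In general I would instead replace $T^{\mathcal{H}}$ by an appropriate completion. By condition (*) in the proof of Fact \ref{Cherlin Shelah and Shi theorem}, an existentially closed model of $T^{\mathcal{H}}_{0}$ realizes every $B \in \mathcal{H}$ that is consistent over it. So any model of $T^{\mathcal{H}}$ obtained as an existentially closed extension of a model of $T$ has age exactly $\mathcal{H}$: it contains a model of $T$, so its age includes $\mathcal{H}$, and it satisfies $T^{\mathcal{H}}_{0}$, so its age is contained in $\mathcal{H}$. Take its complete theory $T'$. Then $T'$ is $\mathrm{NSOP}_{3}$ (being a completion of an $\mathrm{NSOP}_{3}$ theory, or directly by preservation) and its models have age $\mathcal{H}$. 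Hence, by the hypothesis on $\mathcal{H}$, $T'$ has $\mathrm{SOP}_{2}$. So $T'$ is a complete strictly $\mathrm{NSOP}_{3}$ theory arising as (a completion of) a Cherlin--Shelah--Shi theory, contradicting the assumption.

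The main obstacle I expect is the bookkeeping around completeness: the statement speaks of ``complete'' theories $T^{\mathcal{H}}$. I would handle it by interpreting the hypothesis as applying to completions of $T^{\mathcal{H}}$, noting that all models of a completion containing a model of $T$ have age exactly $\mathcal{H}$. Separately, I need to make sure that the Bodirsky--Bodor--Marimon preservation applies to model companions of complete, non-universal theories. If their result is only stated for model companions of universal theories, I would route through $T^{\mathcal{H}}_{0}$ via an existentially closed extension of a saturated model of $T$.
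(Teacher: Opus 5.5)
Your argument is correct and is essentially the paper's proof in contrapositive form: you identify $T^{\mathcal{H}}$ as the model companion of any complete $T$ whose models have age $\mathcal{H}$ (via $T^{\mathcal{H}}_0$ and compactness), and you transfer $\mathrm{NSOP}_3$ across it using the Bodirsky--Bodor--Marimon fact. The completeness bookkeeping needs neither the free-amalgamation remark nor a reinterpretation of the hypothesis: $\mathcal{H}$ is the age of a model of the complete theory $T$, so it has the joint embedding property, hence $T^{\mathcal{H}}$ is itself complete and all its models have age $\mathcal{H}$. This is the paper's first step, and it also follows from the completeness clause in the paper's statement of that fact, since a model companion of a complete theory is complete.
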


The argument for this will involve model companions for complete theories. Because the reader may be accustomed to only considering model companions of universal theories, rather than complete theories, we review the definition of model companions in the greatest generality.

\begin{definition}
\label{general model companion}

Let $T$ be any theory; then $T'$ is the \textit{model companion} of $T$ if:

\begin{itemize}
    \item every model of $T$ embeds as an induced substructure into a model of $T'$

    \item every model of $T'$ embeds as an induced substructure into a model of $T$

    \item the theory $T'$ is model complete (i.e., if $M \models T'$, $M' \models T'$ and $M \subset M'$ as an induced substructure, then $M \prec M'$).
\end{itemize}

\end{definition}

To show Proposition \ref{reduction to generic structures}, we apply the following fact due to Bodor, Bodirsky and Marimon (\cite{BBM25}). To make the exposition more self-contained, we present a direct version of their proof.

\begin{fact}\emph{(Bodirsky, Bodor and Marimon, \cite{BBM25}, Theorem 3.22 and paragraph before Subsection 3.1.)}

\label{Bodirsky, Bodor and Marimon theorem}

Suppose $T$ is a (resp.) simple, $\mathrm{NSOP}_{2}$, or $\mathrm{NSOP}_{3}$ complete theory, and that its model companion $T'$ is well-defined. Then $T'$ is a simple, $\mathrm{NSOP}_{2}$, or $\mathrm{NSOP}_{3}$ complete theory.

\end{fact}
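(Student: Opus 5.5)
The plan is to prove the contrapositive in each case. Suppose the model companion $T'$ of $T$ has $\mathrm{TP}$ (resp. $\mathrm{SOP}_{2}$, $\mathrm{SOP}_{3}$); we want to show $T$ has the same property. The key tool is model completeness of $T'$: every formula is equivalent modulo $T'$ to an existential formula. So we may take the witnessing formula $\varphi(x,y)$ to be of the form $\exists z\, \psi(x,y,z)$ with $\psi$ quantifier-free. We may likewise take the inconsistency side to be expressed by a universal formula: the condition ``$\varphi(x,y_{1})\wedge\varphi(x,y_{2})$ is inconsistent'' (or, for $\mathrm{SOP}_{3}$, the relevant inconsistency pattern) is given by $T' \vdash \neg\exists x\,(\varphi(x,y_1)\wedge\varphi(x,y_2))$ once we restrict to parameters satisfying an appropriate type. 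The consistency conditions are existential statements about the parameters, and the inconsistency conditions are universal statements about the parameters.

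To transfer the configuration, I will use the characterizations via consistency and inconsistency patterns: the trees in Definitions \ref{simplicity definition} and \ref{nsop1 definition and nsop2 definition}, and, for $\mathrm{SOP}_{3}$, the Shelah--Usvyatsov reformulation with two formulas and an indiscernible sequence. By compactness, each property is witnessed by finite fragments of such a configuration in some model $N \models T'$.

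Take a model $N \models T'$ containing the parameters $b_{\eta}$ together with witnesses to all required consistencies, namely realizations $a_{\sigma}$ and the existential witnesses $z$ for $\psi$. Embed $N$ as an induced substructure of a model $M \models T$; this uses the second clause of Definition \ref{general model companion}. The consistency side then transfers upward, since $\psi$ is quantifier-free and holds of the images in $M$, so $\varphi$ holds in $M$ along branches. The inconsistency side is the delicate part: inconsistency in $T'$ does not obviously persist in $T$, because $M$ may contain new realizations. To handle this, I embed $M$ back into a model $N' \models T'$ using the first clause. By model completeness, $N \prec N'$. Any $a \in M$ with $\models \varphi(a,b_{\eta_1})\wedge\varphi(a,b_{\eta_2})$ in $M$ keeps its existential witnesses in $N'$, and so gives a realization in $N'$. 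By elementarity, that realization pulls back to $N$, contradicting inconsistency there. So the same $\varphi$ with the same parameters witnesses the property in $M$, hence in $T$ (completeness of $T$).

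For $\mathrm{SOP}_{3}$ the same sandwich argument applies. The configuration is an indiscernible-sequence-free finite pattern of instances $\varphi(x,y)$ and $\psi(x,y)$ with prescribed consistencies and inconsistencies, both of which we take existential via model completeness. Completeness of $T'$ is part of the claim; it follows because $T$ is complete and any two models of $T'$ are sandwiched through models of $T$ and compared via model completeness, using a joint embedding argument.

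The main obstacle I expect is ensuring the inconsistency clauses only ever reference existential formulas. Done exactly as above, with $\varphi$ existential, the argument goes through. The subtle point is that $\varphi$ being existential, rather than its negation, is what lets witnesses persist upward into $N'$.
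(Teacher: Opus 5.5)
Your argument is correct, and it handles the inconsistency clauses by a different mechanism from the paper's.

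\textbf{How the paper argues.} Like you, the paper first replaces $\varphi$ by an existential $\exists z\,\varphi_0$. It then applies model completeness a second time, to get an existential $\exists w\,\psi_0(w,y_1,y_2)$ equivalent in $T'$ to $\neg\exists x(\varphi(x,y_1)\wedge\varphi(x,y_2))$. Next it shows that $\exists w\,\psi_0$ already forces inconsistency in $T$. This step uses that $T$ and $T'$ have the same universal theory, hence realize the same quantifier-free types. Finally it realizes in $\mathbb{M}\models T$ the quantifier-free type consisting of the configuration together with all $z$- and $w$-witnesses.

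\textbf{How you argue.} You keep the configuration inside a model $N\models T'$ and sandwich $N\subseteq M\subseteq N'$. Model completeness gives $N\prec N'$. So any common realization appearing in $M$ survives, with its existential witnesses, into $N'$, and the corresponding existential statement pulls back to $N$.

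\textbf{What each route buys.} Your route needs only that $\varphi$ be existential; no existential form of the inconsistency formula is required. The compactness step about realizing quantifier-free types is absorbed into the two embeddings. It also transfers arbitrary consistency/inconsistency patterns (any arity, several formulas) with no extra work. Your opening remark about expressing inconsistency by a universal formula restricted to a type is never used and can be dropped.

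The paper's route instead isolates a single existential formula that certifies the inconsistency relation uniformly in $T$, for all parameters satisfying it. This is in the spirit of the positive straight definability it cites.

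\textbf{A simplification for $\mathrm{SOP}_{3}$.} You do not need the Shelah--Usvyatsov reformulation, nor the implicit re-extraction of an indiscernible sequence in $T$. Apply your sandwich directly to Definition~\ref{integer-valued hierarchy} with $R$ taken existential. The chain transfers upward into $M$. A directed $3$-cycle in $M$ would persist, with its witnesses, into $N'\models T'$, which is impossible.
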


\begin{proof}
   Fix any one of the three properties, between the tree property, $\mathrm{SOP}_{2}$, and $\mathrm{SOP}_{3}$. Then there is some set $S$, family $\mathcal{P} \subset 2^{S}$ of subsets of $S$, and family $\mathcal{N} \subset 2^{S}$ of subsets of $S$ of cardinalty $2$, satisfying the following: a theory has our chosen property if and only if there is a formula $\varphi(x, y)$, and $\{b_{i}\}_{i \in S}$, such that $\{\varphi(x, b_{i})\}_{i \in C}$ is consistent for each $C \in \mathcal{P}$, but $\{\varphi(x, b_{i}), \varphi(x, b_{j}) \}$ is inconsistent for $\{i, j\} \in \mathcal{N}$.  (This is a special case of the \textit{positive straight definability} of Bailetti, \cite{Bailetti25}.) Now suppose $T'$ has our chosen property, exhibited by $\varphi(x, y)$; our aim is to show that $T$ has this property. Since $T'$ is model-complete, so, as is well-known, every formula in $T'$ is equivalent in $T'$ to an existential formula, we may assume $\varphi(x, y) := \exists z \varphi_{0}(x, y, z)$ for $\varphi_{0}(x, y, z)$ a quantifier-free formula. We show that the same formula $\varphi(x, y) := \exists z \varphi_{0}(x, y, z)$ exhibits our chosen property in $T$.

   Again using the same well-known consequence of model completeness of $T'$, find some quantifier-free formula $\psi_{0}(w, y_{1}, y_{2})$ such that $\psi(y_{1}, y_{2}) := \neg \exists x \varphi(x, y_{1}) \wedge \varphi(x, y_{2})$ is equivalent to  $\exists w \psi_{0}(w, y_{1}, y_{2})$ in $T'$. We claim that, in $T$, for $\mathbb{M}$ the sufficiently saturated ambient model of $T$ and $b, c \in \mathbb{M}$, if $b, c \models \exists w \psi_{0}(w, y_{1}, y_{2})$, then $\{\varphi(x, b), \varphi(x, c) \}$ is inconsistent. Otherwise, the quantifier-free formula $\varphi_{0}(x, b, z) \wedge \varphi_{0}(x, c, z) \wedge \psi_{0}(b, c, w)$ will be realized in $\mathbb{M}$, so $\varphi_{0}(x, y_{1}, z) \wedge \varphi_{0}(x, y_{2}, z) \wedge \psi_{0}(y_{1}, y_{2}, w)$ will be realized in $\mathbb{M}$. But then, in $T'$, for $\mathbb{M}' \models T'$ the sufficiently saturated ambient model of $T'$\footnote{We may in fact speak of the sufficiently saturated ambient model $\mathbb{M}' \models T'$: note that $T'$ is complete (as desired in the statement of this fact), as is any model companion of a complete theory. We review the proof of this well-known fact. It suffices to show that for $M_{1}, M_{2}$ models of two completions of $T'$, $M_{1}$ and $M_{2}$ elementarily embed into some common model of $T'$, so those completions agree. Since $T'$ is model complete, it even suffices to show that $M_{1}$, $M_{2}$ embed as induced substructures into some common model of $T'$. But $M_{1}$ and $M_{2}$ both embed into models of $T$, so because $T$ is complete they both embed into any $|M_{1}|+|M_{2}|$-saturated $M \models T$. But $M$ embeds into a model of $T'$, so $M_{1}$ and $M_{2}$ both embed into a model of $T'$, as desired.}, $\varphi_{0}(x, y_{1}, z) \wedge \varphi_{0}(x, y_{2}, z) \wedge \psi_{0}(y_{1}, y_{2}, w)$ will be realized in $\mathbb{M}'$, because $\mathbb{M'} \models T'$ realizes all of the same quantifier-free types as $\mathbb{M} \models T$. (Both $T$ and $T'$ have the same universal theory, because any model of $T$ embeds into a model of $T'$ and vice versa. And models of two complete theories with the same universal theory realize all of the same quantifier-free types (over $\emptyset$).) So $\exists z \varphi_{0}(x, y_{1}, z) \wedge \exists z' \varphi_{0}(x, y_{2}, z') \wedge \exists w \psi_{0}(y_{1}, y_{2}, w)$, and thus $\varphi(x, y_{1}) \wedge \varphi(x, y_{2}) \wedge \psi(y_{1}, y_{2})$, is realized in $\mathbb{M}'$. This contradicts that $\psi(y_{1}, y_{2}):=\neg \exists x \varphi(x, y_{1}) \wedge \varphi(x, y_{2})$. So we have proven that, in $T$,  $b, c \models \exists w \psi_{0}(w, y_{1}, y_{2})$ implies that $\{\varphi(x, b), \varphi(x, c) \}$ is inconsistent for $b, c \in \mathbb{M} \models T$.

Now, in $T'$, let $\{b'_{i}\}_{i \in S} \in \mathbb{M}' \models T'$ exhibit the chosen property for $\varphi(x, y) := \exists z \varphi_{0}(x, y, z)$.   Then for $\{i, j\} \in \mathcal{N}$, $(b'_{i},b'_{j}) \models \exists w \psi_{0}(y_1, y_2, w)$, because $\psi(y_{1}, y_{2}):=\neg \exists x \varphi(x, y_{1}) \wedge \varphi(x, y_{2})$ and $\exists w \psi_{0}(w, y_{1}, y_{2})$ is equivalent to $\psi(y_{1}, y_{2})$. And for $C \in \mathcal{P}$, we may find $a'_{c} \models \{\exists z \varphi_{0}(x, b'_i, z)\}_{i \in C}$.

Then the quantifier-free type

$$\bigcup_{C \in \mathcal{P}}  \{ \varphi_{0}(x^{C}, y_i, z^{C, i})\}_{i \in C} \cup \bigcup_{\{i, j\} \in \mathcal{N}} \{\psi_{0}(y_i, y_j, w^{ij})\}$$

is realized in $\mathbb{M}' \models T'$. Therefore, because $\mathbb{M}$, again, realizes the same quantifier-free types as $\mathbb{M}'$, this type is realized in $\mathbb{M}$. Let $\{b_{i}\}_{i \in S} \subset \mathbb{M}$ instantiate the $\{y_{i}\}_{i \in S}$, and we show that $\{b_{i}\}_{i \in S} \subset \mathbb{M}$ exhibits our chosen property with $\varphi(x, y) := \exists z \varphi_{0}(x, y, z)$. The set $\{\varphi(x, b_{i})\}_{i \in C}$ is consistent for each $C \in \mathcal{P}$: it is realized by the parameter instantiating $x^{C}$ in this realization. And for $\{i, j\} \in \mathcal{N}$, since $b_{i}, b_{j} \models \exists w \psi_{0}(w, y_{1}, y_{2})$, $\{\varphi(x, b_{i}), \varphi(x, b_{j}) \}$ will, as we have shown, be inconsistent. So $\varphi(x, y)$, $\{b_{i}\}_{i \in S}$ exhibit our chosen property in $T$, as we wanted.
\end{proof}

As a corollary of this fact, we now prove the central reduction, Proposition \ref{reduction to generic structures} above.

\begin{proof}
    (of Proposition \ref{reduction to generic structures}) We may assume that $\mathcal{H}$ is the age of some structure. Then $T^{\mathcal{H}}$, which is well-defined by Fact \ref{Cherlin Shelah and Shi theorem}, will be complete, and its models will have age $\mathcal{H}$. (This is well-known: since $\mathcal{H}$ is the age of some structure, it has the joint embedding property: any two members of $\mathcal{H}$ embed as induced substructures into some common member of $\mathcal{H}$. So by compactness, any two models of $T^{\mathcal{H}}$ embed into some common model of $T_{0}^{\mathcal{H}}$, and then into some common model of $T^{\mathcal{H}}$. By model completeness, any two models of $T^{\mathcal{H}}$ then elementarily embed into some common model, so have the same theory (as in the previous footnote), and $T^{\mathcal{H}}$ is complete. And, for any $M \models T^{\mathcal{H}}$, a finite structure will embed as an induced substructure of $M$ if and only if it belongs to $\mathcal{H}$, so the age of any model of $T^{\mathcal{H}}$, by completeness of $T^{\mathcal{H}}$, is $\mathcal{H}$. ) Then suppose that the theory of every structure for which $\mathcal{H}$ is the age has $\mathrm{SOP}_{2}$; in that case, $T^{\mathcal{H}}$ will have $\mathrm{SOP}_{2}$. By the supposition that none of the theories of Cherlin, Shelah and Shi (\cite{CSS99}) are strictly $\mathrm{NSOP}_{3}$, $T^{\mathcal{H}}$ must have $\mathrm{SOP}_{3}$. Now let $T$ be a complete theory whose models have age $\mathcal{H}$. By compactness, for any two complete theories $T_{1}$, $T_{2}$ whose models have the same age, any model of $T_{1}$ embeds into a model of $T_{2}$ and vice versa. So applying this, in particular, to $T$ and $T^{\mathcal{H}}$, $T^{\mathcal{H}}$ must be the model companion of $T$ because $T^{\mathcal{H}}$ is model complete. So by Fact \ref{Bodirsky, Bodor and Marimon theorem}, $T$ has $\mathrm{SOP}_{3}$ because $T^{\mathcal{H}}$ does. We have thus shown that every complete theory whose models have age $\mathcal{H}$ has $\mathrm{SOP}_{3}$, as desired.
\end{proof}

\begin{remark}
  \label{Bodirsky, Bodor and Marimon schema} In the two paragraphs before Question 1.1 of  \cite{BBM25}, Bodirsky, Bodor and Marimon propose a schema for analyzing the \textit{constraint satisfaction problem} $\mathrm{CSP}(\mathfrak{B})$, for structures $\mathfrak{B}$ from some class of structures $\mathcal{C}$, by taking the \textit{core companion} of a structure in this class and reducing the analysis to the case where $\mathfrak{B}$ is a \textit{model complete core}. They also mention doing this with model companions instead of core companions. We observe how Proposition \ref{reduction to generic structures} fits into Bodirsky, Bodor and Marimon's schema, where we want to analyze the \textit{age} $\mathrm{Age}(\mathfrak{B})$ of a structure $\mathfrak{B}$ from some class $\mathcal{C}$, rather than analyzing $\mathrm{CSP}(\mathfrak{B})$. With model companions rather than core companions, Bodirsky, Bodor and Marimon's schema requires that:

  \begin{itemize}
      \item every structure in $\mathcal{C}$ has a model companion, and
      \item the model companion of every structure in $\mathcal{C}$ is again in $\mathcal{C}$.
  \end{itemize}

Now let the class $\mathcal{C}$ be the class of structures with $\mathrm{NSOP}_{3}$ theory whose age is a hereditary class defined by a finite family of forbidden weakly embedded substructures. To prove the main theorem of this section, Theorem \ref{main theorem 3, restated}, what we want to know about $\mathrm{Age}(\mathfrak{B})$ for $\mathfrak{B} \in \mathcal{C}$ is that it cannot be the case that every complete theory whose models have $\mathrm{Age}(\mathfrak{B})$ has $\mathrm{SOP}_{2}$. The first bullet point, the existence of a model companion for any structure whose age is a hereditary class defined by a finite family of forbidden weakly embedded substructures, is Fact \ref{Cherlin Shelah and Shi theorem}. The second bullet point, the preservation of $\mathrm{NSOP}_{3}$ under model companions, is Fact \ref{Bodirsky, Bodor and Marimon theorem}. So we have now reduced to showing our conclusion about $\mathrm{Age}(\mathfrak{B})$ when $\mathfrak{B}$ is a model complete member of $\mathcal{C}$, so a model of a complete $\mathrm{NSOP}_{3}$ example of one of the theories of Cherlin, Shelah and Shi (\cite{CSS99}). This will follow from showing that none of the complete theories of Cherlin, Shelah and Shi can be strictly $\mathrm{NSOP}_{3}$.

In these same paragraphs of \cite{BBM25}, Bodirsky, Bodor and Marimon also propose to use core companions, or model companions, to analyze the existential theory of a structure. The role of existential formulas in our approach to proving Theorem \ref{main theorem 3, restated} is different. In the proposal from \cite{BBM25}, one is interested in some problem which is explicitly about the existential theory of a structure $\mathfrak{B}$, and one investigates the model companion of $\mathfrak{B}$ because it has the same existential theory as $\mathfrak{B}$. However, our assumption in Theorem \ref{main observation, restated} that every theory whose models have age $\mathcal{H}$ has $\mathrm{SOP}_{2}$, and the conclusion that every such theory has $\mathrm{SOP}_{3}$, refers to $\mathrm{SOP}_{2}$ and $\mathrm{SOP}_{3}$ in full first-order logic and not just the existential formulas. Rather, in reducing to showing that the complete theories of Cherlin, Shelah and Shi are not strictly $\mathrm{NSOP}_{3}$, we are reducing to the case where we are \textit{now} only concerned with the existential formulas--because every formula in a theory in this concrete family is equivalent to an existential one. 

\end{remark}

So our goal, for showing Theorem \ref{main theorem 3, restated}, has now become to show that (the complete examples of) the theories of Cherlin, Shelah and Shi (\cite{CSS99}), the complete theories given by model companions $T^{\mathcal{H}}$ where $\mathcal{H}$ is defined by a finite family of forbidden weakly embedded substructures (as in Fact \ref{Cherlin Shelah and Shi theorem}), are never strictly $\mathrm{NSOP}_{3}$. Our initial goal will be to obtain, from a hypothetical strictly $\mathrm{NSOP}_{3}$ theory of generic structures as in \cite{CSS99}, a more combinatorially tractable object whose existence we will ultimately prove impossible using our cycle-removal methods. We will give this object in Lemma \ref{standard hereditary class}, our bridge from the model theory into the combinatorics.  To show this lemma, we will first prove some subtle but crucial technical lemmas about $\mathrm{SOP}_{2}$ in the general setting. These lemmas will ultimately state that, given $\mathrm{SOP}_{2}$, an instance of the \textit{tree property} $\mathrm{TP}$ and its consistency witnesses can be chosen so that certain algebraic closures do not overlap nontrivially (Lemma \ref{noninterference of algebraic closures} below).\footnote{From here to the end of Remark \ref{noninterference in tree property}, we will be working in the setting of an abstract first-order theory $T$, so we will adopt the standard model-theoretic notation for this setting where the sets we refer to are small subsets of an ambient sufficiently saturated model $\mathbb{M} \models T$, and models $M$ refer to small elementary submodels $M \prec \mathbb{M}$. We will necessarily speak more explicitly about ambient models when we again allow ourselves to refer to more than one theory over the course of the same proof, particularly in the proof of theorem \ref{standard hereditary class}.} (See Lemma 5.14 of \cite{AKLL25} for an analogous statement whose proof includes some similar techniques, allowing an instance of $\mathrm{ATP}$ with consistency witnesses to be chosen such that certain algebraic closures do not intersect nontrivially.)

An instance of $\mathrm{SOP}_{2}$ will in particular give us an instance of $\mathrm{SOP}_{1}$, so as in Proposition 3.14 of \cite{KR17} will give us a formula $\varphi(x, b)$, and coheir Morley sequences $\{b_{i}\}_{i< \omega}$, $\{b^{i}\}_{i< \omega}$ over a model $M$ starting with $b$, such that $\{\varphi(x, b_i)\}_{i < \omega}$ is consistent, but $\{\varphi(x, b^i)\}_{i < \omega}$ is inconsistent. (In fact, the proof of that proposition in \cite{KR17} says more: $\{\varphi(x, b^i)\}_{i < \omega}$ is in fact $2$-inconsistent, meaning that any pair of distinct formulas in the set is inconsistent; see Fact \ref{sop1 coheirs} above.)  So our next lemma, our first in producing an instance of $\mathrm{SOP}_{2}$ with the desired nonoverlapping algebraic closures (in Lemma \ref{noninterference of algebraic closures}), will be relevant to this scenario: it will state that, given a pair of coheir Morley sequences over a model beginning with the same term, we can enlarge the terms of both of these coheir Morley sequences such that their initial terms continue to agree, and such that each of the two individual coheir Morley sequences now satisfy desirable conditions on algebraic closures. Recall that given a global $M$-coheir $p(x)$, a \textit{coheir Morley sequence} over $M$ in $p(x)$ is a sequence $\{b_i\}_{i < \omega}$ such that for $i < \omega$, $b_{i} \models p(x)|_{Mb_{< i}}$.

\begin{lemma}
\label{pair of normal coheirs}    Let $M$ be a model and $p(x), q(x)$ global $M$-coheirs with $p|_{M} = q|_M$. Then, for (potentially infinite) variables $y$ extending $x$, we can find global $M$-coheirs $\tilde{p}(y), \tilde{q}(y)$ with $\tilde{p}|_{M} = \tilde{q}|_{M}$, extending $p(x)$ and $q(x)$ respectively, such that for $\{b_{i}\}_{i < \omega}$ a coheir Morley sequence over $M$ in $p(y)$ or $q(y)$ and $n < \omega$, $\mathrm{acl}_{M}(b_{0} \ldots b_{n-1} \cup b_{2n}) \cap \mathrm{acl}_{M}(b_{n} \ldots b_{2n-1} \cup b_{2n})=\mathrm{acl}_{M}(b_{2n})$.
\end{lemma}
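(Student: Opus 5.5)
The plan is to enlarge each term of the sequences to a Skolem hull over $M$. Then algebraic closures in $T$ are trapped inside definable closures in a Skolemization $T^{\mathrm{Sk}}$, and the required intersection property becomes a statement about Skolem terms along an indiscernible sequence.

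\textbf{Step 1: setup.} Fix a Skolemization $T^{\mathrm{Sk}}$. Since every $T$-model expands to a $T^{\mathrm{Sk}}$-model, we may assume $M$ is $\mathrm{dcl}^{\mathrm{Sk}}$-closed. Write $p=\mathrm{Av}(D)$ and $q=\mathrm{Av}(E)$ for ultrafilters $D,E$ on $M^{|x|}$. Let $y$ enumerate the variables $x$ together with one variable $y_{t}$ for each $L^{\mathrm{Sk}}$-term $t(x,m)$ with parameters $m\in M$. For $m\in M^{|x|}$, let $f(m)$ be the tuple $(m,(t(m,m'))_{t})$, which lies in $M$. Define $\tilde p=\mathrm{Av}(f_{*}D)$ and $\tilde q=\mathrm{Av}(f_{*}E)$. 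These are global $M$-coheirs in $T^{\mathrm{Sk}}$ by construction.

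Their $x$-parts are the $T^{\mathrm{Sk}}$-coheirs averaging $D$ and $E$. These reduct to $p$ and $q$, so $\tilde p$ and $\tilde q$ (reducted to $T$) extend $p$ and $q$. Moreover, every coordinate of $\tilde p$ is a fixed Skolem term over $M$ in the $x$-part. So $\tilde p|_{M}$ is determined by the $T^{\mathrm{Sk}}$-type over $M$ of the $x$-part, which is the limit of $D$ (respectively $E$) on $M$.

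\textbf{Step 2: matching restrictions.} One must check that the $T^{\mathrm{Sk}}$-types over $M$ agree, not merely the $T$-types $p|_M=q|_M$. If they do not, I would first replace $q$ by choosing $E$ so that its $T^{\mathrm{Sk}}$-limit over $M$ equals that of $D$. This is possible because any $T^{\mathrm{Sk}}$-type over $M$ extending $p|_M$ can be chosen, via a coheir extension of a common expansion, with reduct still $q$. I expect this to need a short compactness argument, and it is a point to verify carefully. Granting it, $\tilde p|_M=\tilde q|_M$.

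\textbf{Step 3: the intersection property.} Let $\{b_i\}_{i<\omega}$ be a coheir Morley sequence in $\tilde p$ (or $\tilde q$), taken in $T^{\mathrm{Sk}}$. Each $b_i$ is enumerated as $\mathrm{dcl}^{\mathrm{Sk}}(Mb_i)$, which is a model of $T$, so $\mathrm{acl}_M(b_{2n})=b_{2n}$. Also $\mathrm{acl}_M(A)\subseteq \mathrm{dcl}^{\mathrm{Sk}}(MA)$ for all $A$. Hence it suffices to show
$$\mathrm{dcl}^{\mathrm{Sk}}(Mb_{<n}b_{2n})\cap \mathrm{dcl}^{\mathrm{Sk}}(Mb_{n}\ldots b_{2n-1}b_{2n})\subseteq \mathrm{dcl}^{\mathrm{Sk}}(Mb_{2n}).$$
Suppose $e=t(b_{<n},b_{2n})=s(b_{n..2n-1},b_{2n})$. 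The sequence is $M$-indiscernible in $T^{\mathrm{Sk}}$. Refine it by inserting infinitely many further blocks $c^{k}$ between the first block and $b_{2n}$. Then $e=s(c^{k},b_{2n})$ for all $k$, and the $c^{k}$ form a coheir Morley sequence over $M$.

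The goal is to show that $e\in\mathrm{dcl}^{\mathrm{Sk}}(Mb_{2n})$. Since $b_{2n}$ lies \emph{after} the blocks, the finite satisfiability available is that of $\mathrm{tp}(b_{2n}/Mc^{\ast})$, not that of the blocks over $b_{2n}$. I would handle this by reversing roles. Use the $T^{\mathrm{Sk}}$-formula $s(z,b_{2n})=t(b_{<n},b_{2n})$, which is satisfied by every block $c^k$. Finite satisfiability of $\mathrm{tp}(b_{2n}/Mb_{<n}c^{k}c^{k'})$ in $M$ yields $m\in M$ with $s(c^{k},m)=t(b_{<n},m)=s(c^{k'},m)$. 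Combining this with indiscernibility and then pulling back via the coheir property of the blocks over $M$ should show that $s(z,b_{2n})$ is constant on realizations of the block type. That forces $e\in\mathrm{dcl}^{\mathrm{Sk}}(Mb_{2n})$.

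\textbf{Main obstacle.} I expect Step 3 to be the hardest part, namely getting the correct order of quantifiers in the finite-satisfiability argument. If the direct route fails, I would first try to arrange, as in the footnote to Theorem \ref{main theorem 1, restated}, that the sequence is a coheir Morley sequence in both directions. That gives finite satisfiability of $\mathrm{tp}(b_{<2n}/Mb_{2n})$ as well, and from it the standard ``$e$ is fixed by moving the first block, hence lies in $\mathrm{dcl}(Mb_{2n})$'' argument goes through.
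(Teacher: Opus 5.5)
\paragraph{Verdict: Step 3 is false, not just hard.} There is a genuine gap, and it is where you expected it. The Skolem-hull enlargement does not satisfy the required intersection property in general.

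\emph{Counterexample.} Take $T=\mathrm{Th}(\mathbb{N},+,\cdot)$ and $M=\mathbb{N}$, and Skolemize by least witnesses. Then $\mathrm{dcl}^{\mathrm{Sk}}=\mathrm{dcl}=\mathrm{acl}$, and your $\tilde p$ is the type of $\mathrm{dcl}(Ma)$ for $a\models p$. Let $p=q=\mathrm{Av}(D)$ for a nonprincipal ultrafilter $D$ on $\mathbb{N}$, and let $a_0,a_1,a_2$ be a Morley sequence.
\begin{itemize}
\item For every standard $c\geq 1$, both $a_0$ and $a_1$ are congruent mod $c$ to $\lim_D(a\bmod c)$. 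Hence $e:=a_0\bmod a_2=a_1\bmod a_2$ lies in $\mathrm{dcl}(Ma_0a_2)\cap\mathrm{dcl}(Ma_1a_2)$.
\item On the other hand, $e\in\mathrm{dcl}(Ma_2)$ iff $c\mapsto\lim_D(a\bmod c)$ agrees $D$-almost everywhere with some definable function.
\item A countable diagonalization produces $D$ for which no such definable function exists: prescribe residues modulo fresh large primes, and use Dirichlet to place witnesses $c$ in the required progressions.
\end{itemize}
So the property already fails for $n=1$, with $e\notin\mathrm{acl}_M(b_2)=b_2$.

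\emph{Why your mechanism stalls.} Finite satisfiability of $\mathrm{tp}(b_{2n}/\ldots)$ only yields $m\in M$ with $s(c^k,m)=s(c^{k'},m)$. That value lies in $\mathrm{dcl}(Mc^k)\cap\mathrm{dcl}(Mc^{k'})=M$. It says nothing about whether the ``limit'' value $s(c^k,b_{2n})$ is definable from $b_{2n}$.

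\emph{The fallback is unavailable.} The $x$-part of any Morley sequence in $\tilde p$ is a Morley sequence in the \emph{given} $p$, and this need not be a coheir sequence in reverse. In the example above the sequence is strictly decreasing, so the formula $x>a_1$ in $\mathrm{tp}(a_0/Ma_1)$ has no solution in $\mathbb{N}$.

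\paragraph{The missing idea.} You must enlarge each term by exactly these limit elements. The paper does this as follows.
\begin{itemize}
\item Extend the sequence to a long $M$-indiscernible one and set $c_i:=\mathrm{acl}_M(\{b_j\}_{j\in S_1}\cup b_i)\cap\mathrm{acl}_M(\{b_j\}_{j\in S_2}\cup b_i)$ for countably infinite $S_1<S_2<\{i\}$. This set does not depend on the choice of $S_1,S_2$.
\item Verify the intersection property for $\{c_i\}$ directly.
\item Check that $\{c_i\}$ is still a coheir sequence, since finite satisfiability passes to $\mathrm{acl}_M$ on either side.
\item Recover indiscernibility by Erd\H{o}s--Rado.
\item A preliminary saturation assumption on $p|_M$ makes a coheir determined by its Morley sequence, so the new coheir really extends the old one.
\end{itemize}

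\paragraph{Step 2 also needs replacing.} When $p\neq q$, for a suitable expansion of $M$ an $L^{\mathrm{Sk}}(M)$-formula can define $\psi(M,b)$ for some $\psi(x,b)\in p$ with $\neg\psi(x,b)\in q$. For example, an arbitrary unary Skolem function can code this set. Then no ultrafilters averaging to $p$ and to $q$ have the same $T^{\mathrm{Sk}}$-limit over $M$. Matching restrictions instead by left extension in $T$ destroys the Skolem-hull form on the other side. The paper alternates: normalize one side, left-extend the other to match over $M$, and repeat. It then takes the union of the resulting chain, using that normality is preserved under unions of chains.
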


\begin{proof}
    First of all, we may assume that the realizations of $p|_{M}(x)=q|_{M}(x)$ contain an $|M|^{+}$-saturated elementary extension of $M$. This just uses the fact (``left extension for finite satisfiability") that, for $r(x)$ a (global) $M$-coheir, $x \subset x'$ and $r'_{0}(x') \in S(M)$ extending $r(x)|_{M}$, $r(x)$ extends to a global $M$-coheir $r'(x')$ extending $r'_{0}(x')$. (Specifically, we apply this fact setting $r(x)$ to $p(x)$, and setting $r(x)$ to $q(x)$, while setting $r'_{0}(x')$ in both cases to some fixed arbitrary choice of extension of $p|_{M}(x)=q|_{M}(x)$ whose realizations contain a $|M|^{+}$-saturated elementary extension of $M$.)
    
    So, having made this assumption about $p|_{M}(x)=q|_{M}(x)$, we obtain the following background assumption as a consequence: if $x \subset x_{1}$, $r(x_{1})$ is an $M$-coheir extending $p(x)$ or $q(x)$, and $\{b_{i}\}$ is a coheir Morley sequence over $M$ in $r(x_{1})$, $r(x_{1})$ is the \textit{unique} coheir over $M$ for which $\{b_{i}\}$ is a coheir Morley sequence. The reason for this is that any other such $M$-coheir $r'(x_{1})$ agrees with $r(x_{1})$ on $b_{0}$, so $r'(x_{1})$ agrees with $r(x_{1})$ on an $|M|^{+}$-saturated elementary extension $M' \succ M$. But then, $r'(x_{1})$ must agree with $r(x_{1})$ globally, because $r(x_{1})$ is identified among global $M$-coheirs by its restriction to $M'$. The argument that the global $M$-coheir $r(x_{1})$ is determined by its restriction to $M'$ is well-known (see, e.g., \cite{sim12}): by invariance of $r(x_{1})$, and $|M|^{+}$-saturation of $M$,  a formula $\varphi(x, b)$ is in $r(x_{1})$ if and only if, for all $b' \equiv_{M} b$ in $M'$, $\varphi(x, b')$ is in $r(x_1)$.

      Say a global coheir Morley sequence $\{b_{i}\}_{i < \omega}$ over $M$ satisfies condition (*) if for $n < \omega$, $\mathrm{acl}_{M}(b_{0} \ldots b_{n-1} \cup b_{2n}) \cap \mathrm{acl}_{M}(b_{n} \ldots b_{2n-1} \cup b_{2n})=\mathrm{acl}_{M}(b_{2n})$. Our central claim is the following:

    \begin{claim}
\label{extending to sequence satisfying condition *}        Let $\{b_{i}\}_{i< \omega}$ be a global coheir Morley sequence over $M$. Then we may find a global coheir Morley sequence $\{c_{i}\}_{i < \omega}$ over $M$, with $b_{i} \subset c_{i}$, satisfying condition (*).
    \end{claim}

\begin{proof}
    (of claim) 
Extend $\{b_{i}\}_{i< \omega}$ to an $I$-indexed $M$-indiscernible sequence $\{b_{i}\}_{i \in I}$, where $I$ is a sufficiently saturated model of $\mathrm{DLO}$ (specifically, saturated enough to guarantee that it embeds an ordinal large enough to apply the Erdős–Rado theorem, so in particular, $\aleph_{0}^{+}$-saturated). Let $i \in I$ and $S_{1}, S_{2} \subset I$  be countably infinite sets with $S_{1} < S_{2} < \{i\}$  (where the notation $A <  B$ for $A, B \subset I$ means that every element of $A$ is $<$-less in $I$ than every element of $B$; such $S_{1}, S_{2}$ exist for every $i \in I$ by $\aleph^{+}_{0}$-saturation of the index set $I$ as an ordered set). We claim that, holding $i \in I$ fixed, the set $\mathrm{acl}_{M}(\{b_j\}_{j \in S_{1}}\cup b_{i}) \cap \mathrm{acl}_{M}(\{b_j\}_{j \in S_{2}}\cup b_{i})$ does not depend on our choice of $S_{1}$, $S_{2}$ satisfying these conditions. Note that the set $\{b_j\}_{\{j\} < S_{2}}$ is indiscernible over $\{b_j\}_{j \in S_{2}}\cup b_{i}$, so over $\mathrm{acl}_{M}(\{b_j\}_{j \in S_{2}}\cup b_{i})$. So we may always replace $S_{1}$ with any countably infinite $S'_{1} \subset I$ with $S'_{1} < S_{2}$, without changing $\mathrm{acl}_{M}(\{b_j\}_{j \in S_{1}}\cup b_{i}) \cap \mathrm{acl}_{M}(\{b_j\}_{j \in S_{2}}\cup b_{i})$. By similar reasoning, we may always replace $S_{2}$ with any countable $S'_{2} \subset I$ with $S_{1} < S'_{2} < \{i\}$, again without changing $\mathrm{acl}_{M}(\{b_j\}_{j \in S_{1}}\cup b_{i}) \cap \mathrm{acl}_{M}(\{b_j\}_{j \in S_{2}}\cup b_{i})$. So, again without changing this set, we may always replace any $S_{1} < S_{2} < \{i\}$ with any $S'_{1} < S'_{2} < \{i\}$ whatsoever for $S'_{1}, S'_{2} \subset I$ countably infinite: we first find some countable $S''_{1} < S''_{2}$ with $S''_{2} < S_{1}$ and $S''_{2} < S'_{1}$ (here we use $\aleph^{+}_{0}$-saturation as an ordered set of the index set $I$). Then we see, by the preceding, that we can replace $S_{1}, S_{2}$ with $S''_{1}, S''_{2}$ and then $S''_{1}, S''_{2}$ with $S'_{1}, S'_{2}$ so that $\mathrm{acl}_{M}(\{b_j\}_{j \in S_{1}}\cup b_{i}) \cap \mathrm{acl}_{M}(\{b_j\}_{j \in S_{2}}\cup b_{i})$ always remains unchanged.

Now, for $i \in I$, let $c_{i} : = \mathrm{acl}_{M}(\{b_j\}_{j \in S_{1}}\cup b_{i}) \cap \mathrm{acl}_{M}(\{b_j\}_{j \in S_{2}}\cup b_{i})$ as above, knowing now that the specific choice of countably infinite $S_{1}, S_{2} \subset I$ with $S_{1} < S_{2} < \{i\}$ does not matter.  (Here, to view $c_{i}$ as an infinite tuple, we just choose some arbitrary enumeration of each $c_{i}$ extending the one on $b_{i}$; in lieu of trying to choose enumerations so that $\{c_{i}\}_{i \in I}$ remains indiscernible over $M$, we will apply the Erdős–Rado theorem at the end to obtain the desired indiscernibility.) We show that, for $i \in I$ and $S_{1} < S_{2} < \{i\}$ with $S_{1}, S_{2} \subset I$ countable, $\mathrm{acl}_{M}(\{c_j\}_{j \in S_{1}}\cup c_{i}) \cap \mathrm{acl}_{M}(\{c_j\}_{j \in S_{2}}\cup c_{i}) = c_{i}$. Immediately, $\mathrm{acl}_{M}(\{c_j\}_{j \in S_{1}}\cup c_{i}) \cap \mathrm{acl}_{M}(\{c_j\}_{j \in S_{2}}\cup c_{i}) \supseteq c_{i}$. We show the reverse inclusion, $\mathrm{acl}_{M}(\{c_j\}_{j \in S_{1}}\cup c_{i}) \cap \mathrm{acl}_{M}(\{c_j\}_{j \in S_{2}}\cup c_{i}) \subseteq c_{i}$. Extending $S_{1}$ and $S_{2}$, we may assume $S_{1}$, $S_{2}$ infinite with no least element (again using saturation properties of $I$). If we can show, for $\ell = 1, 2$, $\mathrm{acl}_{M}(\{c_j\}_{j \in S_{\ell}}\cup
c_{i}) \subseteq \mathrm{acl}_{M}(\{b_j\}_{j \in S_{\ell}}\cup b_{i})$,  the inclusion will follow from the definition $c_{i}  = \mathrm{acl}_{M}(\{b_j\}_{j \in S_{1}}\cup b_{i}) \cap \mathrm{acl}_{M}(\{b_j\}_{j \in S_{2}}\cup b_{i})$. So we show $\mathrm{acl}_{M}(\{c_j\}_{j \in S_{\ell}}\cup
c_{i}) \subseteq \mathrm{acl}_{M}(\{b_j\}_{j \in S_{\ell}}\cup b_{i})$. For this it suffices to show $c_{i} \subset \mathrm{acl}_{M}(\{b_j\}_{j \in S_{\ell}}\cup b_{i})$, and $c_{j'} \subseteq \mathrm{acl}_{M}(\{b_j\}_{j \in S_{\ell}}\cup b_{i})$ for $j' \in S_{\ell}$. That  $c_{i} \subset \mathrm{acl}_{M}(\{b_j\}_{j \in S_{\ell}}\cup b_{i})$ follows from the definition  $c_{i}  = \mathrm{acl}_{M}(\{b_j\}_{j \in S_{1}}\cup b_{i}) \cap \mathrm{acl}_{M}(\{b_j\}_{j \in S_{2}}\cup b_{i})$. And $c_{j'} \subseteq \mathrm{acl}_{M}(\{b_j\}_{j \in S_{\ell}}\cup b_{i})$ follows from $c_{j'} \subseteq \mathrm{acl}_{M}(\{b_j\}_{j \in S_{\ell}, j < j' }\cup b_{j'})$, using that $S_{\ell}$ is assumed to have no least element, so the index set of $\{b_{j}\}_{j \in S_{\ell}, j < j' }$ is a countably infinite set whose terms lie below $j'$. (For any countably infinite set $S'_{1} < \{j'\}$ with $S'_{1} \subset I$, $c_{j'} \subset \mathrm{acl}_{M}(\{b_j\}_{j \in S'_{1}}\cup b_{j'})$; we may find countably infinite $S'_{2} < S'_{1} < \{j'\}$ by saturation properties of $I$, and $c_{j'}  = \mathrm{acl}_{M}(\{b_j\}_{j \in S'_{1}}\cup b_{j'}) \cap \mathrm{acl}_{M}(\{b_j\}_{j \in S'_{2}}\cup b_{j'})$ by the previous paragraph).

We next show that, for $\ind^{u}$ the relation defined so that $a \ind_{M}^{u} b$ if $\mathrm{tp}(a/Mb)$ is finitely satisfiable, $c_{j} \ind^{u}_{M} c_{< j}$ for all $j \in I$. We extend $\{b_{i}\}_{i \in I}$ to $\{b_{i}\}_{i \in I'}$ for an $|I|^{+}$-saturated model $I'$ of $\mathrm{DLO}$. Now the first paragraph of the proof of this claim shows that, because $c_{i}= \mathrm{acl}_{M}(\{b_j\}_{j \in S_{1}}\cup b_{i}) \cap \mathrm{acl}_{M}(\{b_j\}_{j \in S_{2}}\cup b_{i})$ for $S_{1}, S_{2} \subset I$ with $S_{1} < S_{2} < \{i\}$ by definition, $c_{i}= \mathrm{acl}_{M}(\{b_j\}_{j \in S'_{1}}\cup b_{i}) \cap \mathrm{acl}_{M}(\{b_j\}_{j \in S'_{2}}\cup b_{i})$ for  any countably infinite $S'_{1}, S'_{2} \subset I'$ $S'_{1} < S'_{2} < \{i\}$. Thus, by the analogous claim that we showed for $I$, for any countably infinite set (and thus any infinite set) $S'_{1} < \{j\}$ with $S'_{1} \subset I'$, $c_{j} \subset \mathrm{acl}_{M}(\{b_i\}_{j' \in S'_{1}}\cup b_{j})$. Thus, $c_{j} \subseteq \mathrm{acl}_{M}(\{b_{j'}\}_{j' \in I', j > j' > \{j'' \in I: j'' < j\} } \cup \{b_{j}\})$: the index set of $\{b_{j'}\}_{j' \in I', j > j' > \{j'' \in I: j'' < j\} }$ is an infinite subset of $I'$ whose terms lie below $j$, by $|I|^{+}$-saturation of $I'$. Moreover, for $j'' < j $ with $j'' \in I$, $c_{j''} \subset \mathrm{acl}_{M}(\{b_{j'}\}_{j' \in I, j' < j''} \cup \{b_{j''}\}) \subset \mathrm{acl}_{M}(\{b_{j'}\}_{j' \in I, j' < j})$, the first inclusion also obtained by the conclusion $c_{j''} \subset \mathrm{acl}_{M}(\{b_i\}_{j' \in S_{1}}\cup b_{j''})$ for $S_{1} \subset I$, $S_{1} < \{j''\}$ infinite, and the fact that $I$ has no lower bound. Now note that $\{b_{j'}\}_{j' \in I', j > j' > \{j'' \in I: j'' < j\} } \cup \{b_{j}\} \ind_{M}^{u} \{b_{j'}\}_{j' \in I, j' < j}$, because $\{b_{i}\}_{i \in I'}$ is a coheir Morley sequence over $M$. (This is as stated at the beginning of the preliminaries section; we repeatedly apply that $a_{1} \ind^{u}_{M} a_{0}$, $a_{2} \ind^{u}_{M} a_{0}a_{1}$ implies $a_{2}a_{1} \ind^{u}_{M} a_{0}$--particularly, that being a coheir Morley sequence over $M$ is preserved under concatenations of adjacent terms). We now dispense with $I'$ and again assume all indices are in $I$; at this point, we have shown that there are $a, b$ with  $a \ind_{M}^{u} b$ and $c_{j} \subset \mathrm{acl}_{M}(a)$, $c_{< j} \subset \mathrm{acl}_{M}(b)$. Thus, to prove that  $c_{j} \ind^{u}_{M} c_{< j}$, it suffices to show that, for all $a$, $b$ with $a \ind_{M}^{u} b$, $a' \ind_{M}^{u} b$ for $a' \in \mathrm{acl}_{M}(a)$, and $a \ind_{M}^{u} b'$ for $b' \in \mathrm{acl}_{M}(b)$. (While this is likely well-known, we review the proof.)

We first show that, if $a \ind_{M}^{u} b$, then $a' \ind_{M}^{u} b$ for $a' \in \mathrm{acl}_{M}(a)$. Let $a'$ satisfy $\varphi(x, b)$ (where $\varphi(x, y)$ has parameters in $M$), and we show that $\varphi(x, b)$ is satisfied in $M$. Let $\chi(x, z)$ be a formula with parameters in $M$ such that $a'$ satisfies $\chi(x, a)$, and such that the only realizations of $\chi(x, \tilde{a})$ are algebraic over $M\tilde{a}$ for any $\tilde{a}$. The formula $\exists x \chi(x, z) \wedge \varphi(x, b)$ is satisfied by $a$, so because $a \ind_{M}^{u} b$, by some $m \in M$. Therefore, $\varphi(x, b)$ is satisfied by some realization of $\chi(x, m)$. But that realization of $\varphi(x, b)$ will be algebraic over $m\in M$, and will thus belong to $M$, as desired. 

We next show that,  if $a \ind_{M}^{u} b$, then $a \ind_{M}^{u} b'$ for $b' \in \mathrm{acl}_{M}(b)$. Let $a$ satisfy $\varphi(x, b')$ (again, where $\varphi(x, y)$ has parameters in $M$); we show $\varphi(x, b')$ is satisfied in $M$.  Let $\chi(y, b)$, where $\chi$ has parameters in $M$, isolate $b'$ over $Mb$. We claim that, for all $b''$ satisfying $\chi(y, b)$, $a$ satisfies $\varphi(x, b'')$. Suppose otherwise. Then the formula $\exists y, y' (\chi(y, b) \wedge \chi(y', b) \wedge \varphi(x, y) \wedge \neg\varphi(x, y')) $ is satisfied by $a$. So because $a \ind_{M}^{u} b$,  this formula is satisfied by some $m \in M$. Then there are two realizations of $\chi(y, b)$, one of which satisfied $\varphi(m, y)$ and the other of which satisfies $\neg\varphi(m, y)$. This will contradict that $\chi(y, b)$ isolates a type over $Mb$, so we know that, for all $b''$ satisfying $\chi(y, b)$, $a$ satisfies $\varphi(x, b'')$. So $a$ satisfies $\forall y (\chi(y, b) \to \varphi(x, y))$. Again because $a \ind_{M}^{u} b$, some $m \in M$ satisfies this same formula. So because $b'$ satisfies $\chi(y, b)$, $m$ satisfies $\varphi(x, b')$, as desired.

This completes the proof that $c_{j} \ind^{u}_{M} c_{< j}$ for all $j \in I$. We now apply the Erdős–Rado theorem to $\{c_{i}\}_{i \in I}$ (more precisely, to a large enough ordinal-indexed subsequence). This gives us an $M$-indiscernible sequence $\{c_i\}_{i < \omega}$, for which each finite initial segment has the same type over $M$ as $c_{\bar{i}}$ for $\bar{i}$ some finite increasing tuple in $I$. Mapping the resulting conjugate of the sequence $\{b_{i}\}_{i < \omega}$ back to the original $\{b_{i}\}_{i < \omega}$, we may assume that $c_{i} \supset b_{i}$ for each $i < \omega$. Then $\{c_{i}\}_{i < \omega}$ satisfies the conclusion of condition (*), and is also an $M$-indiscernible sequence satisfying $c_{i} \ind_{M}^{u} c_{<i}$ for $i < \omega$. Let $r(x)$ be a global coheir extending the limit type $\mathrm{tp}(c_{\omega}/ M c_{< \omega})$, where we extend $\{c_{i}\}_{i < \omega} $ to $\{c_{i}\}_{i \leq \omega}$; then $\{c_{i}\}_{i < \omega}$ will be a coheir Morley sequence in $q(x)$ over $M$. This together with the conclusion of condition (*) proves the claim.

\end{proof}

It follows from the claim that if $r(x')$ is an $M$-coheir with $x \subset x'$ extending $p(x)$ or $q(x)$, there is an $M$-coheir $r'(x'')$, $x' \subset x''$ extending $r(x')$ whose coheir Morley sequences satisfy condition (*). To show this, for $\{b_{i}\}_{i< \omega}$ a coheir Morley sequence over $M$ in $r(x')$, $\{c_{i}\}_{i < \omega}$ as in the claim will be a coheir Morley sequence over $M$ in some $M$-coheir $r'(x'')$ whose coheir Morley sequences satisfy condition (*). But  $r'(x'')|_{x'}$ will have $\{b_{i}\}_{i< \omega}$ as a coheir Morley sequence over $M$. Recall, by our background assumption that any coheir Morley sequence over $M$ in $r(x')$ is not a coheir Morley sequence over $M$ in any other coheir over $M$ besides $r(x')$; thus $r'(x'')|_{x'} = r(x')$, and $r'(x'')$ is the desired extension of $r(x')$ whose  coheir Morley sequences satisfy condition (*).

Now define, inductively, the following $M$-coheirs $p_{n}(x_{n})$ and $q_{n}(x_{n})$ with $p_{n}(x_{n})|_{M} = q_{n}(x_{n})|_{M}$. Let $p_{0}(x_{0}) : = p(x) $ and $q_{0}(x_{0}) := q(x)$. For odd $n> 0$, find $x_{n} \supseteq x_{n-1}$ and an $M$-coheir $p_{n}(x_{n}) \supset p_{n-1}(x_{n-1})$ whose coheir Morley sequences over $M$ satisfy condition (*). Using the same fact on  ``left extension for finite satisfiability" used at the very beginning of the proof, and the inductive assumption that $p_{n-1}(x_{n-1})|_{M} = q_{n-1}(x_{n-1})|_{M}$ (so $p_{n}(x_{n})|_{M}$ extends $q_{n-1}(x_{n-1})|_{M}$), find an $M$-coheir $q_{n}(x_{n}) \supset q_{n-1}(x_{n-1})$ extending $p_{n}(x_{n})|_{M}$. So $p_{n}(x_{n})|_{M} = q_{n}(x_{n})|_{M}$. Similarly, for even $n > 0$,  find $x_{n} \supseteq x_{n-1}$ and an $M$-coheir $q_{n}(x_{n}) \supset q_{n-1}(x_{n-1})$ whose coheir Morley sequences over $M$ satisfy condition (*), and an $M$-coheir $p_{n}(x_{n}) \supset p_{n-1}(x_{n-1})$ extending $q_{n}(x_{n})|_{M}$; again $p_{n}(x_{n})|_{M} = q_{n}(x_{n})|_{M}$.

Now define $y = \bigcup^{\infty}_{n=0} x_{n}$,  $\tilde{p}(y) = \bigcup^{\infty}_{n=0} p_{n}(x_{n})$, $\tilde{q}(y) = \bigcup^{\infty}_{n=0} q_{n}(x_{n})$. Note that $\tilde{p}(y)|_{M} = \tilde{q}(y)|_{M}$, and each of $\tilde{p}(y)$, $\tilde{q}(y)$ is the union of an ascending chain of  $M$-coheirs whose coheir Morley sequences over $M$ satisfy condition (*). (This is by restricting attention to the odd and even stages of the union, respectively.) So to show that the coheir Morley sequences over $M$ in $\tilde{p}(y)$ and $\tilde{q}(y)$ satisfy condition (*), as desired, it suffices to show the following claim:

\begin{claim}
    Let $r_{n}(z_{n})$ be $M$-coheirs whose coheir Morley sequences over $M$ satisfy condition (*), and which form an ascending chain:  $r_{n-1}(z_{n-1}) \subset r_{n}(z_{n})$ for $n < \omega$. Let $z = \bigcup^{\infty}_{n=0} z_{n}$ and $r(z)= \bigcup^{\infty}_{n=0} r_{n}(z_{n})$ (so $r(z)$ is a global $M$-coheir.) Then coheir Morley sequences over $M$ in $r(z)$ satisfy condition (*).
\end{claim}

\begin{proof}
    (of claim) 
    
    Let $\{b_{i}\}_{i < \omega}$ be a coheir Morley sequence over $M$ in $r(z)$; we must show  $\mathrm{acl}_{M}(b_{0} \ldots b_{i-1} \cup b_{2i}) \cap \mathrm{acl}_{M}(b_{i} \ldots b_{2i-1} \cup b_{2i})\subseteq \mathrm{acl}_{M}(b_{2i})$. Let $b \in \mathrm{acl}_{M}(b_{0} \ldots b_{i-1} \cup b_{2i}) \cap \mathrm{acl}_{M}(b_{i} \ldots b_{2i-1} \cup b_{2i})$ be a singleton in $\mathbb{M}$; our goal is to show $b \in \mathrm{acl}_{M}(b_{2i})$. Let $b^{n}_{j}$ consist of the $z_{n}$-indexed coordinates of $b_{j}$;  then  $b \in \mathrm{acl}_{M}(b^{n}_{0} \ldots b^{n}_{i-1} \cup b^{n}_{2i}) \cap \mathrm{acl}_{M}(b^{n}_{i} \ldots b^{n}_{2i-1} \cup b^{n}_{2i})$ for some $n < \omega$. But $\{b_{i}^{n}\}_{i < \omega}$ forms a coheir Morley sequence over $M$ in $r_{n}(z_{n})$, so satisfies condition (*): $\mathrm{acl}_{M}(b^{n}_{0} \ldots b^{n}_{i-1} \cup b^{n}_{2i}) \cap \mathrm{acl}_{M}(b^{n}_{i} \ldots b^{n}_{2i-1} \cup b^{n}_{2i}) = \mathrm{acl}_{M}(b^{n}_{2i})$. Then  $b \in \mathrm{acl}_{M}(b^{n}_{2i}) \subset \mathrm{acl}_{M}(b_{2i})$, as desired.
\end{proof}
This completes the proof of the lemma.
    
\end{proof}

So $\tilde{p}(y)$ and $\tilde{q}(y)$ as in this lemma belong to a class of (global) $M$ coheirs with desirable properties involving intersections of algebraic closures in coheir Morley sequences, which we codify in the following definition:

\begin{definition}
\label{normal coheir}    An $M$-coheir $p(x)$ is $\textit{normal}$ if, for $\{b_{i}\}_{i< \omega}$ a coheir Morley sequence over $M$ in $p(x)$, for all $n < \omega$, $\mathrm{acl}_{M}(b_{0} \ldots b_{n-1} \cup b_{2n}) \cap \mathrm{acl}_{M}(b_{n} \ldots b_{2n-1} \cup b_{2n})=\mathrm{acl}_{M}(b_{2n})$.
\end{definition}

As a preliminary step to proving Lemma \ref{noninterference of algebraic closures} and obtaining, from $\mathrm{SOP}_{2}$, our instance of $\mathrm{TP}$ with consistency witnesses satisfying the desired properties for algebraic closures, we prove the following two facts about normal coheirs.

\begin{lemma}
\label{algebraic closures in normal coheir morely sequence}    Let $\{b_{i}\}_{i < \omega}$ be a coheir Morley sequence over $M$ in a normal coheir, with the $b_{i}$ algebraically closed sets containing $M$. Let $B_{i} = b_{i} \backslash M$, $B_{ij} = \mathrm{acl}(b_ib_j) \backslash MB_{i}B_{j}  $ for $i < j$. Then all of the $M$, $B_{i}$, $B_{ij}$ are disjoint.  
\end{lemma}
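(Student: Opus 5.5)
The plan is to check pairwise disjointness case by case. The tools are three: finite satisfiability of the coheir Morley sequence, the acl-transfer arguments from the proof of Lemma \ref{pair of normal coheirs}, and normality (Definition \ref{normal coheir}) for the one remaining case.

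Several disjointnesses hold by definition: $B_{i} \cap M = \emptyset$, and $B_{ij} \cap (M \cup B_{i} \cup B_{j}) = \emptyset$.

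\textbf{Step 1: $B_{i} \cap B_{j} = \emptyset$ for $i < j$.} The type $\mathrm{tp}(b_{j}/Mb_{i})$ is finitely satisfiable in $M$. By the fact from the preliminaries, $\mathrm{acl}(Mb_{j}) \cap \mathrm{acl}(Mb_{i}) = M$, so $b_{i} \cap b_{j} = M$.

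\textbf{Step 2: $b_{k} \cap \mathrm{acl}(b_{i}b_{j}) \subseteq b_{i} \cup b_{j}$ for $i<j$ and $k \notin \{i,j\}$.} This gives $B_{ij} \cap B_{k} = \emptyset$. There are three subcases.
\begin{itemize}
\item If $k > j$, use $b_{k} \ind^{u}_{M} b_{i}b_{j}$.
\item If $k < i$, use $b_{i}b_{j} \ind^{u}_{M} b_{k}$. This holds because adjacent blocks of a coheir Morley sequence concatenate.
\end{itemize}
In both of these subcases the acl-intersection fact gives $\mathrm{acl}(Mb_{k}) \cap \mathrm{acl}(Mb_{i}b_{j}) = M$.
\begin{itemize}
\item If $i < k < j$, finite satisfiability does not separate the sets, so we use normality with $n=1$. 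A coheir Morley sequence is $M$-indiscernible, so we may transport $(b_{0},b_{1},b_{2})$ to $(b_{i},b_{k},b_{j})$. This gives $\mathrm{acl}(b_{i}b_{j}) \cap \mathrm{acl}(b_{k}b_{j}) = \mathrm{acl}(b_{j}) = b_{j}$. Hence any $x \in b_{k} \cap \mathrm{acl}(b_{i}b_{j})$ lies in $b_{k} \cap b_{j} = M$.
\end{itemize}

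\textbf{Step 3: $B_{ij} \cap B_{kl} = \emptyset$ for distinct pairs $\{i<j\} \neq \{k<l\}$.} Take $x \in \mathrm{acl}(b_{i}b_{j}) \cap \mathrm{acl}(b_{k}b_{l})$.

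\emph{Case $j = l$.} Then $i \neq k$, say $i<k<j$. Normality with $n=1$, transported by indiscernibility as in Step 2, gives $x \in \mathrm{acl}(b_{i}b_{j}) \cap \mathrm{acl}(b_{k}b_{j}) = b_{j}$. So $x \notin B_{ij}$.

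\emph{Case $j \neq l$.} By symmetry, let $l$ be the unique largest index among $i,j,k,l$. Then $\mathrm{tp}(b_{l}/Mb_{i}b_{j}b_{k})$ is finitely satisfiable in $M \subseteq b_{k}$, so it is finitely satisfiable in $b_{k}$. I will rerun the acl-transfer argument from the proof of Lemma \ref{pair of normal coheirs}, now over the base $b_{k}$ (an algebraically closed set containing $M$) rather than over a model. First, $x \in \mathrm{acl}(b_{k}b_{l})$ has type over $b_{k}b_{i}b_{j}$ finitely satisfiable in $b_{k}$. Second, since $x \in \mathrm{acl}(b_{k}b_{i}b_{j})$, pick an algebraic formula $\varphi(x,c)$ over $b_{k}b_{i}b_{j}$ satisfied by $x$. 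The conjunction of $\varphi(x,c)$ with $x \neq e$, for the finitely many realizations $e$ of $\varphi$ lying in $b_{k}$, would be in the type if $x \notin b_{k}$. That formula is not satisfied in $b_{k}$, so finite satisfiability forces $x \in b_{k}$. Step 2 (applied when $k \notin\{i,j\}$; if $k \in \{i,j\}$ this is immediate) then gives $x \in b_{i} \cup b_{j} \cup M$, so $x \notin B_{ij}$.

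\textbf{Main obstacle.} I expect the main obstacle to be the interleaved configurations: $i<k<j$ in Step 2, and pairs sharing their top index in Step 3. Finite satisfiability only separates a later block from an earlier one, so these cases are exactly where normality is needed. Beyond that, the care required is in checking that the acl-transfer argument works over the algebraically closed base $b_{k}$ instead of a model. I have sketched why above: only finite satisfiability in a set containing $M$ is used.
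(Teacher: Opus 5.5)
Your proof is correct, and it takes a somewhat different route from the paper's in the cases that do not involve a shared top index.

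\textbf{Where you agree with the paper.} For pairs with a common top index (your Step 3, case $j=l$), the paper also uses normality, transported by $M$-indiscernibility.

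\textbf{How the paper handles the rest.} Everything else comes from indiscernibility.
\begin{itemize}
\item Pairs with a common bottom index are disjoint because $\{b_\ell\}_{\ell>i}$ is an invariant Morley sequence over $b_i$, hence nonoverlapping over $b_i$.
\item The remaining cases are $B_k$ versus $B_{ij}$ for every $k\notin\{i,j\}$, and pairs with distinct top indices. For these, the paper extends the sequence to $\{b_\ell\}_{\ell\in\mathbb{Q}}$ and picks an interval around one index that avoids the others. It then uses that a nonoverlapping indiscernible sequence over $A$ that is also indiscernible over $Ab$ meets $\mathrm{acl}_A(b)$ only in $\mathrm{acl}(A)$.
\end{itemize}

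\textbf{How you handle the rest.} You use finite satisfiability of later terms over earlier ones throughout.
\begin{itemize}
\item For $k$ outside $[i,j]$, you use the invariant-type fact from the preliminaries.
\item For pairs with distinct tops, you relativize the acl-transfer argument to the algebraically closed base $b_k$. This forces $x\in b_k$ and reduces to the point-versus-pair case.
\end{itemize}
The relativized transfer is sound. It needs only that $\mathrm{tp}(b_l/b_kb_ib_j)$ is finitely satisfiable in $M\subseteq b_k$ and that $\mathrm{acl}(b_k)=b_k$.

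\textbf{What each approach buys.} Your approach avoids the $\mathbb{Q}$-extension and the fact that indiscernibility over $B$ passes to $\mathrm{acl}(B)$. It also handles common-bottom pairs by the same mechanism as the distinct-top pairs. The paper's approach is more uniform and needs fewer cases.

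\textbf{A small improvement.} Normality is not actually needed in your interleaved subcase $i<k<j$ of Step 2. Run your own relativized argument over the base $b_i$. Since $\mathrm{tp}(b_j/b_ib_k)$ is finitely satisfiable in $b_i$, you get $\mathrm{acl}(b_ib_j)\cap\mathrm{acl}(b_ib_k)=b_i$. Hence $b_k\cap\mathrm{acl}(b_ib_j)\subseteq b_k\cap b_i=M$. This matches the paper's point that normality is essential only for pairs sharing their top index.
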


\begin{proof}
    Through this and later proofs, a \textit{nonoverlapping indiscernible sequence} $\{a_{i}\}_{i < \omega}$ over a set $A$ will be an $A$-indiscernible sequence such that for $i \neq j$, $\mathrm{acl}_{A}(a_{i}) \cap \mathrm{acl}_{A}(a_{j}) = \mathrm{acl}(A)$. Note for this and later proofs that, for any nonoverlapping indiscernible sequence $\{a_{i}\}_{i < \omega}$ over $A$ that is also indiscernible over $Ab$, $\mathrm{acl}_{A}(a_{i}) \cap \mathrm{acl}_{A}(b) = A$. 
    
    First, the $B_{i}$ are clearly disjoint from each other: $\{b_{i}\}_{i< \omega}$ is a coheir Morley sequence over $M$, so nonoverlapping over $M$.
    
    Moreover, $B_{ij}$ is disjoint from $B_{ij'}$ for  $i < j, j'$ and $j \neq j'$: $\{b_{\ell}\}_{\ell > i}$ is an invariant Morley sequence over $b_{i}$, so nonoverlapping over $b_{i}$. 
    
Next, $B_{ij}$ is disjoint from $B_{i'j}$ for $i, i' < j$ and $i \neq i'$, by normality. 

We next show disjointness of $B_{ij}$ and $B_{i'j'}$, with  $i \neq i'$, $j \neq j'$, and $i < j$, $i' < j'$, so we prove these instances of disjointness. We may assume without loss of generality that $j > j'$, so that $j$ does not coincide with $i$, $j'$ or $i'$. We may, say, extend $\{ b_{i} \}_{i < \omega}$ to $\{ b_{i} \}_{i \in \mathbb{Q}}$. By our assumption, there will exist an interval $I \subset \mathbb{Q}$ containing $j$ and none of $i$, $j'$ or $i'$. We already showed that $\{b_{\ell}\}_{\ell > i}$ is a nonoverlapping indiscernible sequence over $b_{i}$; therefore, so is $\{b_{\ell}\}_{\ell \in I}$. Moreover, $\{b_{\ell}\}_{\ell \in I}$ is also indiscernible over $b_{i}b_{i'}b_{j'}$. This justifies the last equality in $\mathrm{acl}(b_ib_{j}) \cap \mathrm{acl}(b_i'b_j')\subset \mathrm{acl}(b_ib_{j}) \cap \mathrm{acl}(b_ib_i'b_j') =\mathrm{acl}(b_{i})$, showing disjointness of $B_{ij}$ and $B_{i'j'}$..

Then the only remaining instances of disjointness to show, which are not immediate from the definitions, are those between $B_{i}$ and $B_{i'j'}$ when $i$ is not one of the $i', j'$. Let $I$ be an interval of $\mathbb{Q}$ containing $i$ but not $i', j'$. As we already noted, $\{b_{i}\}_{i < \omega}$ is a nonoverlapping indiscernible sequence over $M$, so $\{b_{i}\}_{i \in I}$ is a nonoverlapping indiscernible sequence over $M$. Also, $\{b_{i}\}_{i \in I}$ is indiscernible over $b_{i'}b_{j'}$. So $\mathrm{acl}(b_{i}) \cap \mathrm{acl}(b_{i'}b_{j'}) = M$, as desired.
\end{proof}

\begin{lemma}
  \label{normal coheir sequence indiscernible over a set}  Let $\{a_{i}\}_{i < \omega}$ be a coheir Morley sequence over $M$ in a normal coheir, with the $a_{i}$ algebraically closed sets containing $M$. Suppose $a$ is an algebraically closed set containing $M$ over which $\{a_i\}_{i < \omega}$ is indiscernible. Let $A_{i}:= \mathrm{acl}(aa_i)$ and $A := \cap_{i < \omega} A_{i}$. Then:

    (1) $A_{i} \cap A_{j} = A$ for $i \neq j$.

    (2) $\{a_{i}\}_{i < \omega}$ is indiscernible over $A$, and $A \cap a_{i} = M$ for $i < \omega$.

    (3) $A_{i} \cap \mathrm{acl}(\{a_{i}\}_{i < \omega}) = a_{i}$ for $ i < \omega$.
\end{lemma}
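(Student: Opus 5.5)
The plan is to extend the sequence to $\{a_i\}_{i\in\mathbb{Q}}$, still indiscernible over $a$ and still a coheir Morley sequence over $M$ in the same normal coheir. All three items will come from two ingredients: indiscernibility of subsequences over larger sets, and the normality identity $\mathrm{acl}_M(a_{<n}a_{2n})\cap\mathrm{acl}_M(a_{[n,2n)}a_{2n})=\mathrm{acl}_M(a_{2n})$. By compactness and indiscernibility, the normality identity extends to arbitrary disjoint blocks of indices lying below a given index.

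\textbf{Step 1: item (1).} Fix $i<j$. Since the sequence is indiscernible over $a$, $\mathrm{acl}(aa_i)\cap\mathrm{acl}(aa_j)$ depends only on the order type. Choose a third index $k$ with $i<j<k$. If $e\in A_i\cap A_j$, then an automorphism over $aa_i$ moving $a_j$ to $a_k$ (using indiscernibility over $aa_i$ of the tail beyond $i$) shows $e$ also lies in $A_i\cap A_k$. The intersections $A_i\cap A_j$ are uniformly definable-algebraic, so by the standard sunflower argument they coincide for all pairs. Concretely, $A_i\cap A_j=A_i\cap A_k=A_j\cap A_k$ follows from moving pairs by indiscernibility over $a$ together with finiteness of algebraic orbits. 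Hence $A_i\cap A_j\subseteq A_\ell$ for all $\ell$, so $A_i\cap A_j=A$.

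\textbf{Step 2: item (2).} $A$ is fixed setwise by $\mathrm{Aut}(\mathbb{M}/a\{a_i\}_{i})$, and it is $\mathrm{acl}(aa_0)\cap\mathrm{acl}(aa_1)$. Indiscernibility over $A$ elementwise follows because each $e\in A$ is algebraic over $aa_0$ and over $aa_1$. Extend the sequence to $\mathbb{Q}$, and for a finite tuple $e\subseteq A$ use indiscernibility of $\{a_i\}_{i>0}$ over $aa_0$ and of $\{a_i\}_{i<1}$ over $aa_1$. Hence any two increasing tuples avoiding one of the indices $0,1$ have the same type over $ae$, and by connecting through a chain of such tuples, the whole sequence is indiscernible over $ae$. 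For $A\cap a_i=M$: an element of $a_i\cap A$ lies in $a_i\cap A_j$ for $j\ne i$, hence in $a_i\cap\mathrm{acl}(aa_j)$. Now $\{a_\ell\}_{\ell\neq i}$ near $j$ is a nonoverlapping indiscernible sequence over $M$ that is indiscernible over $aa_i$, so the observation stated in the proof of Lemma \ref{algebraic closures in normal coheir morely sequence} gives $\mathrm{acl}_M(a_j)\cap\mathrm{acl}_M(aa_i)=M$. This handles $a_j$; for $a_i$ versus $\mathrm{acl}(aa_j)$, apply the same observation with the roles of $i$ and $j$ swapped.

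\textbf{Step 3: item (3), the main obstacle.} Take $e\in A_i\cap\mathrm{acl}(a_{F})$ for a finite $F\ni i$. Extend to $\mathbb{Q}$ and, using indiscernibility over $aa_i$ of the indices other than $i$, move the elements of $F\setminus\{i\}$ in two ways. In the first configuration they sit in a block $S_1$ below $i$ in one interval; in the second, in a disjoint block $S_2$ with $S_1<S_2<i$. In each case $e$ is fixed, since $e$ is algebraic over $aa_i$ and there are only finitely many conjugates, so one passes to a further indiscernible reordering where $e$ is fixed. Then $e\in\mathrm{acl}_M(a_{S_1}a_i)\cap\mathrm{acl}_M(a_{S_2}a_i)$, which by the extended normality identity equals $\mathrm{acl}_M(a_i)=a_i$.

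The delicate points are Step 3's handling of the finitely many conjugates of $e$ and the placement of indices of $F$ on both sides of $i$. Indices above $i$ must also be moved below $i$; this is allowed because the sequence is indiscernible over $aa_i$ only in the order-preserving sense. So I would first use indiscernibility over $a$ to reduce to the case where $i=\max F$.
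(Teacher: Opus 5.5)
Your Steps 1 and 2 are essentially right, with two small points noted at the end. Step 3, however, has a genuine gap: the reduction to $i=\max F$ in your last paragraph cannot be carried out. Indiscernibility over $a$, like indiscernibility over $aa_i$, only licenses order-preserving re-indexings. So the indices of $F$ lying above $i$ always stay above the image of $i$.

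The simplest instance is $i=0$, $F=\{0,1\}$, where you must show $\mathrm{acl}(aa_0)\cap\mathrm{acl}(a_0a_1)\subseteq a_0$. Neither of your two ingredients addresses this. Normality only says that blocks lying \emph{below} a point are nonoverlapping over that point, e.g.\ $\mathrm{acl}(a_0a_2)\cap\mathrm{acl}(a_1a_2)=a_2$. What is needed here is nonoverlap over $a_0$ of points lying \emph{above} it.

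The missing ingredient is the forward nonoverlap that comes from the sequence being a Morley sequence in an $M$-invariant type, which your plan never uses. Fix $n$ and group the tail into blocks $(a_{i+1+kn},\dots,a_{i+kn+n})_{k<\omega}$.
\begin{itemize}
\item These blocks form a Morley sequence over $Ma_{\le i}$ in the $M$-invariant type $p^{\otimes n}$, so they are nonoverlapping over $\mathrm{acl}(a_{\le i})$.
\item They are indiscernible over $aa_{\le i}$, hence over $\mathrm{acl}(aa_{\le i})\supseteq A_i\cup a_{\le i}$.
\end{itemize}
Together these give $A_i\cap\mathrm{acl}(a_{\le i+n})\subseteq\mathrm{acl}(a_{\le i})$ for every $n$. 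Hence $A_i\cap\mathrm{acl}(\{a_\ell\}_{\ell<\omega})\subseteq\mathrm{acl}(a_{\le i})$.

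Only after this first reduction does your normality argument apply: move $a_{<i}$ to two blocks $S_1<S_2<i$ over $\mathrm{acl}(aa_i)$ and use the extended normality identity. That finishes the proof, and this two-stage structure is exactly the paper's proof of (3).

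Two smaller points:
\begin{itemize}
\item In Step 1, an automorphism over $aa_i$ need not fix $e$. What you actually use is the standard fact that an $aa_i$-indiscernible sequence is $\mathrm{acl}(aa_i)$-indiscernible.
\item In Step 2, the chaining through the indices $0,1$ is stated too loosely: an increasing tuple containing both $a_0$ and $a_1$ lies in neither $\{a_q\}_{q>0}$ nor $\{a_q\}_{q<1}$. The clean route is to take a $\mathbb{Z}$-indexed extension. Item (1) then gives $A\subseteq\mathrm{acl}(aa_{-1})$, and $\{a_i\}_{i\ge 0}$ is indiscernible over $\mathrm{acl}(aa_{-1})$.
\end{itemize}
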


\begin{proof}
     To show (1), since $\{a_{i}\}_{i < \omega}$ is indiscernible over $a$, it is a standard fact that $A_{i} \cap A_{j}= \mathrm{acl}(aa_i) \cap \mathrm{acl}(aa_j)$ for $i \neq j$ does not depend on $i, j$. (This is proven by a simpler version of the argument from the second paragraph of the proof of Claim \ref{extending to sequence satisfying condition *}.) So $A_{i} \cap A_{j}$ is equal to $A = \cap_{\ell < \omega} A_{\ell}$. 

      To show (2), let $\{a_{i}\}_{i \in \mathbb{Z}}$ be an $a$-indiscernible sequence extending $\{a_i\}_{i < \omega}$. Then by the same reasoning as in (1), $A \subset \mathrm{acl}(aa_{-1})$.  But $\{a_i\}_{i< \omega}$ is $aa_{-1}$-indiscernible, so $\{a_i\}_{i< \omega}$ is $\mathrm{acl}(aa_{-1})$-indiscernible, and thus $A$-indiscernible. And because $\{a_i\}_{i< \omega}$ is a nonoverlapping indiscernible sequence over $M$, which, as we now know, remains indiscernible over $A$, $A \cap a_{i} = M$, as desired.

      For (3), we first prove the claim that $A_{j} \cap \mathrm{acl}(\{a_{i}\}_{i < \omega}) \subset \mathrm{acl}(\{a_{i}\}_{i \leq j})$ for all $j < \omega$. It suffices to show that for any $n$, $A_{j} \cap \mathrm{acl}(\{a_{i}\}_{i \leq j+n}) \subset \mathrm{acl}(\{a_{i}\}_{i \leq j})$. But $\{a_{ (j+1) + kn+  0 }\ldots a_{(j+1)+ kn + (n-1) }\}_{k < \omega}$ is an invariant Morley sequence starting with $\mathrm{acl}(\{a_{i}\}_{j < i \leq j+n})$ over $\mathrm{acl}(\{a_{i}\}_{i \leq j})$. So $\{a_{ (j+1) + kn+  0 }\ldots a_{(j+1)+ kn + (n-1) }\}_{k < \omega}$ is a nonoverlapping over indiscernible sequence over $\mathrm{acl}(\{a_{i}\}_{i \leq j})$. But $\{a_{ (j+1) + kn+  0 }\ldots a_{(j+1)+ kn + (n-1) }\}_{k < \omega}$ is also indiscernible over $A_{j} \mathrm{acl}(\{a_{i}\}_{i \leq j}) \subset \mathrm{acl}(a\{a_{i}\}_{i \leq j})$. It follows that $\mathrm{acl}(A_{j}\{a_{i}\}_{i \leq j}) \cap \mathrm{acl}(\{a_{i}\}_{i \leq j+n}) \subset \mathrm{acl}(\{a_{i}\}_{i \leq j})$. Thus, $A_{j} \cap \mathrm{acl}(\{a_{i}\}_{i \leq j+n }) \subset \mathrm{acl}(\{a_{i}\}_{i \leq j})$, as desired.

      To complete the proof of (3), because we now know that $A_{j} \cap \mathrm{acl}(\{a_{i}\}_{i < \omega}) \subset \mathrm{acl}(\{a_{i}\}_{i \leq j})$, it remains to show that $A_{j} \cap \mathrm{acl}(\{a_{i}\}_{i \leq j}) \subset a_{j}$. Let us again consider the extended $a$-indiscernible sequence $\{a_{i}\}_{i \in \mathbb{Z}}$.  By normality, the sequence $\{a_{(j-1)-(kj+0)} \ldots a_{(j-1)-(kj+(j-1))}\}_{k < \omega}$, which starts with $\{a_{i}\}_{0 \leq i < j}$, is a nonoverlapping $a_j$-indiscernible sequence. But $\{a_{(j-1)-(kj+0)} \ldots a_{(j-1)-(kj+(j-1))}\}_{k < \omega}$ is also indiscernible over $aa_{j}$, so over $\mathrm{acl}(aa_j)=A_{j}$. Thus, $A_{j} \cap \mathrm{acl}(\{a_{i}\}_{0 \leq i < j}) \subset  A_{j} \cap\mathrm{acl}(\{a_{i}\}_{0 \leq i\leq j}) \subset   a_{j}$, as desired.
\end{proof}

Again, our current goal is to show that every $ \mathrm{SOP}_{2}$ theory will have an instance of the tree property, with its consistency witnesses, where some algebraic closures that interest us do not overlap nontrivially. We are now ready to prove a lemma which will, in particular, give us this desired tree-property configuration in every $\mathrm{SOP}_{2}$ theory. Here, for $\varphi(x,y)$, $\{b_{\eta}\}_{\eta \in \omega^{< \omega}}$  an instance of the tree property with consistency witnesses $a_{\sigma} \models \{ \varphi(x, b_{\sigma|_{n}})\}_{n < \omega}$ for $\sigma \in \omega^{\omega}$, the two families of algebraic closures that will interest us will be as follows. First, we will be interested in algebraic closures of the pairs of immediate successors to a common node that exhibit the ($2$-)inconsistency clause of the tree property: sets of the form $\mathrm{acl}(b_{\eta \smallfrown \langle i \rangle}b_{\eta \smallfrown \langle j \rangle})$ for $\eta \in \omega^{< \omega} $,  $i < j < \omega$. And second, we will be interested in the algebraic closures of the pairs giving the required incidences between $\varphi(x, y)$ and consistency witnesses: sets of the form  $\mathrm{acl}(a_{\sigma}b_{\sigma|_{n}})$ for $\sigma \in \omega^{<\omega}$, $n < \omega$. The main part of this lemma that we will use will be that these algebraic closures do not overlap nontrivially, though we will state the lemma at a greater level of detail. This detailed statement will be suggestive of how our tree will be constructed from the coheir Morley sequences given by $\mathrm{NSOP}_{1}$ (coheir Morley sequences $\{b_i\}_{i < \omega} $, $\{b^i\}_{i < \omega}$ starting with $b$ over $M$ with $\{\varphi(x, b_{i})\}_{i < \omega}$ consistent, $\{\varphi(x, b^{i})\}_{i < \omega}$ $2$-inconsistent, as in the proof of Proposition 3.14 of \cite{KR17} and Fact \ref{sop1 coheirs} above) and the extensions to normal coheirs given by Lemma \ref{pair of normal coheirs}.

\begin{lemma}
\label{noninterference of algebraic closures}
Let $M$ be a model, and let $\varphi(x,b)$ be a formula and $\{b_i\}_{i < \omega} $, $\{b^i\}_{i < \omega}$  coheir Morley sequences that exhibit $\mathrm{SOP}_{1}$ as in Fact \ref{sop1 coheirs}: $b_0 = b^0 = b$, $\{\varphi(x, b_{i})\}_{i < \omega}$ is $2$-inconsistent, and $\{\varphi(x, b^{i})\}_{i < \omega}$ is consistent. (For these to exist is the same as the theory being $\mathrm{SOP}_{2}$.)

Then we may find the following:

\begin{itemize}
    \item an algebraically closed set $c$ extending $Mb$, and coheir Morley sequences $J_{1} = \{c_{i}\}_{i < \omega}$, $J_{2}=\{c^{i}\}_{i < \omega}$ over $M$, that still exhibit $\mathrm{SOP}_{1}$\footnote{In the rest of the statement and proof of this lemma, and in the proof of Lemma \ref{standard hereditary class}, we use the following notation. For $\{c_i\}_{i \in I} = c \supset b= \{b_i\}_{i \in J}$ the explicit enumerations of $ c, b$ with $ c_i, b_i$ singletons, and for a set of variables $\{z_i\}_{i \in I} = z \supset y= \{y_i\}_{i \in J}$ with $z_i, y_i$ singletons, and $w \supset x$ another set of variables:

--the formula $\varphi(w, z)$ is just $\varphi(x, y)$ considered as a formula in the larger sets of variables

--when $d = \{d_{i}\}_{i \in I}$, $\varphi(w, d)$ is an instance of $\varphi(w, z)$

So informally speaking, when parameters and variables occurring in $\varphi$ and its instances are enumerated by larger index sets than those assigned to their positions in $\varphi(x, y)$, the additional parameters and variables are just ignored.} for $\varphi(x, c)$: $c_0 = c^0 = c$, $\{\varphi(x, c_{i})\}_{i < \omega}$ is $2$-inconsistent, and $\{\varphi(x, c^{i})\}_{i < \omega}$ is consistent.

\item a tree $\{c_{\eta}\}_{\eta \in \omega^{< \omega}}$, and a $\omega^{\omega}$-indexed set $\{a_{\sigma}\}_{\sigma \in \omega^{\omega}}$, both consisting of algebraically closed sets $c_{\eta}$ containing $M$

\end{itemize}

satisfying the following conditions:

(1) For any $\eta \in \omega^{< \omega}$ $\{c_{\eta \smallfrown \langle i \rangle}\}_{i < \omega} \equiv_{M} J_{1}$, so $\{\varphi(x, c_{\eta \smallfrown \langle i \rangle})\}_{i < \omega}$ is $2$-inconsistent and $\mathrm{tp}(b_{\eta \smallfrown \langle i \rangle}b_{\eta \smallfrown \langle j \rangle})$ does not depend on $\eta \in \omega^{< \omega} $,  $i < j < \omega$.

(2) For any $\sigma \in \omega^{\omega}$, $k$, $c_{\sigma|_{0}}, \ldots  c_{\sigma|_{k-1}}\equiv_{M} c^{k-1}\ldots c^{0}$--i.e., $\{c_{\sigma|_{n}}\}_{n < \omega}$ is a reversed coheir Morley sequence over $M$ in $q(z)$. Moreover, for variables $w \supset x$, $|w| = |a_{\sigma}|$,  $a_{\sigma} \models  \{\varphi(w, c_{\sigma|_{n}})\}_{n < \omega}$, $\{c_{\sigma|_{n}}\}_{n < \omega}$ is indiscernible over $a_{\sigma}$ for each $\sigma \in \omega^{\omega}$, and $\mathrm{tp}(a_{\sigma}\{c_{\sigma|_{n}}\}_{n < \omega})$ does not depend on $\sigma$.

(3) For $\sigma \in \omega^{\omega}$ $n < \omega$, let $A_{\sigma n}:=\mathrm{acl}(a_{\sigma}c_{\sigma|_{n}})\backslash(a_{\sigma} \cup c_{\sigma|_{n}})$. For $\eta \in \omega^{< \omega}$, $i < j < \omega$ let $C_{\eta ij}:= \mathrm{acl}(c_{\eta \smallfrown \langle i \rangle } c_{\eta \smallfrown \langle j \rangle }) \backslash (c_{\eta \smallfrown \langle i \rangle } \cup c_{\eta \smallfrown \langle j \rangle }) $. Then all of the $M, a_{\sigma} \backslash M,c_{\eta} \backslash M, A_{\sigma n}, C_{\eta ij}$ are disjoint.

\end{lemma}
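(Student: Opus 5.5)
Let $p$, $q$ be the global $M$-coheirs with Morley sequences $\{b_i\}_{i<\omega}$ and $\{b^i\}_{i<\omega}$. Since $b_0=b^0=b$, we have $p|_M=q|_M$.

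\textbf{Step 1: normalize.} Apply Lemma \ref{pair of normal coheirs} to obtain normal coheirs $\tilde p(z)\supseteq p$ and $\tilde q(z)\supseteq q$ with $\tilde p|_M=\tilde q|_M$. Before doing so, enlarge the variables so that realizations of $\tilde p|_M$ are algebraically closed sets containing $M$. This uses left extension for finite satisfiability, exactly as at the start of the proof of Lemma \ref{pair of normal coheirs}, and can be interleaved with the $\omega$-chain there: at each stage, add $\mathrm{acl}$ of the current variables together with $M$. By the ascending-chain claim, normality survives the union.

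Take Morley sequences $J_1=\{c_i\}$ in $\tilde p$ and $J_2=\{c^i\}$ in $\tilde q$ with $c_0=c^0=:c$. Restricting to the $x$-coordinates recovers sequences of the types of $\{b_i\}$ and $\{b^i\}$, so the $2$-inconsistency and consistency of $\varphi$ are preserved. We may also take $c\supseteq Mb$.

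\textbf{Step 2: build the tree.} Construct $\{c_\eta\}_{\eta\in\omega^{<\omega}}$ by the standard Kim--Ramsey construction of an $\mathrm{SOP}_1$ tree from coheir sequences, building upward by levels. Assume the tree of height $n$ is built, and write $\bar c^{\,n}$ for the tree above the root. By an automorphism, take $\{c_{\langle i\rangle}\}_{i<\omega}\equiv_M J_1$, and attach above each $c_{\langle i\rangle}$ a copy of the previous tree. The copies are chosen so that along each branch, reading downward from the leaf, we get a Morley sequence in $\tilde q$. Concretely, at each stage realize the new root $c_\emptyset$ so that $c_\emptyset\models \tilde q|_{M\,\text{(everything above)}}$ along branches, and so that the new level of successors forms a Morley sequence in $\tilde p$ over $M$.

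Compactness then gives a full $\omega^{<\omega}$-tree in which (1) and the first part of (2) hold. For each branch $\sigma$, consistency of $\{\varphi(x,c^i)\}$ yields a realization. Taking $a_\sigma=\mathrm{acl}(Ma'_\sigma)$ for a realization $a'_\sigma$ over which the branch is indiscernible (by Ramsey and compactness applied to the reversed Morley sequence), and then conjugating, gives a $\sigma$-independent type, completing (2).

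\textbf{Step 3: disjointness (3).} This is the main obstacle. The pairs $c_{\eta\frown i},c_{\eta\frown j}$ lie on a Morley sequence in a normal coheir over $M$. The disjointness of the $C_{\eta ij}$ among fixed-$\eta$ siblings, and from $c_{\eta\frown k}\setminus M$, is Lemma \ref{algebraic closures in normal coheir morely sequence}.

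For the incidences $A_{\sigma n}$, apply Lemma \ref{normal coheir sequence indiscernible over a set} to the branch sequence, which is a Morley sequence in normal $\tilde q$ read in reverse and indiscernible over $a_\sigma$. Part (3) of that lemma gives $\mathrm{acl}(a_\sigma c_{\sigma|n})\cap\mathrm{acl}(\text{branch})=c_{\sigma|n}$, and part (1) shows that distinct $A_{\sigma n}$, $A_{\sigma m}$ meet only in $A\subseteq a_\sigma$. Here we must first arrange $a_\sigma\supseteq A$, replacing $a_\sigma$ by $\mathrm{acl}$ of that intersection, which keeps indiscernibility by part (2).

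Cross-branch and cross-level cases are handled by locating an interval of an indiscernible sequence (a sibling sequence $\{c_{\eta\frown i}\}_{i}$, over which everything else relevant is indiscernible), which is nonoverlapping over its base. This is the argument of Lemma \ref{algebraic closures in normal coheir morely sequence}. To make it work, the tree must be chosen strongly indiscernible (modeled on $\omega^{<\omega}$ with the appropriate language), extracted via the modeling property while preserving the types in (1) and (2). One also needs that the $a_\sigma$ for distinct $\sigma$ are indiscernibly placed over the sibling at the splitting node.

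I expect these mixed cases, involving two branches or a branch with a sibling pair, together with maintaining indiscernibility when extracting, to be the delicate part. For each I would first try reducing to a single $\omega$-indexed sibling sequence, indiscernible over the remaining finitely many sets, and then invoking nonoverlap.
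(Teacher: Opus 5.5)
There is a genuine gap, in the disjointness cases you leave open in Step 3.

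\textbf{What matches the paper.} Your Steps 1--2 and the single-branch part of Step 3 follow the paper's route:
\begin{itemize}
\item normalize via Lemma \ref{pair of normal coheirs}, with algebraically closed realizations;
\item take siblings from $\tilde p$ and branches read downward from $\tilde q$;
\item shrink the witness to $\mathrm{acl}(c'c^{i})\cap\mathrm{acl}(c'c^{j})$ and apply Lemma \ref{normal coheir sequence indiscernible over a set}.
\end{itemize}
One small point on Step 2: for disjointness of incomparable nodes, the whole subtrees $\{c_{\unrhd \eta\smallfrown\langle i\rangle}\}_{i<\omega}$ must form a coheir Morley sequence over $M$, not just the new level of roots. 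The paper gets this by extending $\tilde p$ to a coheir whose restriction to $M$ is the type of the previously built tree.

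\textbf{The gap.} You do not establish that $a_\sigma\setminus M$ and the $A_{\sigma n}$ are disjoint from the nodes and $C_{\eta ij}$ off the branch $\sigma$. Nor do you establish that they are disjoint from $a_{\sigma'}\setminus M$ and $A_{\sigma' m}$ for $\sigma'\neq\sigma$. Your witnesses are chosen only up to their type over their own branch (realize, take $\mathrm{acl}$, conjugate). Nothing in that choice prevents, say, $a_\sigma$ and $a_{\sigma'}$ from sharing elements outside $M$. Making the tree strongly indiscernible does not help, because the $a_\sigma$ are not part of the tree. Also, the sibling sequence at the node where $\sigma$ branches off cannot be indiscernible over $a_\sigma$. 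By $2$-inconsistency, $a_\sigma$ satisfies $\varphi$ with exactly one of those siblings.

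\textbf{The missing idea.} Place the witnesses by algebraic independence, which has full existence in any theory. Write $B_\sigma=\{c_{\sigma|_n}\}_{n}$ and $T=\{c_\eta\}_{\eta}$.
\begin{itemize}
\item First fix $\mathrm{tp}(a_\sigma/T)$ extending the prescribed type over $B_\sigma$ with $a_\sigma\ind^{a}_{B_\sigma}T$.
\item Then realize these types over $T$ one at a time (transfinitely), so that $a_\sigma\ind^{a}_{T}a_{\sigma'}$ for $\sigma\neq\sigma'$. Conjugating over $T$ preserves $a_\sigma\ind^{a}_{B_\sigma}T$.
\end{itemize}
Parts (2) and (3) of Lemma \ref{normal coheir sequence indiscernible over a set}, transported along the reversed branch, give $a_\sigma\cap\mathrm{acl}(B_\sigma)=M$ and $A_{\sigma n}\cap\mathrm{acl}(B_\sigma)=\emptyset$. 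Independence upgrades both to the same statements with $\mathrm{acl}(T)$ in place of $\mathrm{acl}(B_\sigma)$. This disposes of every intersection with a tree piece. Any common element of the $\sigma$-pieces and the $\sigma'$-pieces lies in $\mathrm{acl}(a_\sigma T)\cap\mathrm{acl}(a_{\sigma'}T)=\mathrm{acl}(T)$, so there is none. This is how the paper finishes.

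For the cross-level cases involving only tree nodes, the paper does not extract a strongly indiscernible tree. It argues directly from the coheir-Morley structure of the subtrees (nonoverlap plus invariance).
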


\begin{proof}
    Let $p_{0}(y)$ be a coheir in which $\{b_{i}\}_{i < \omega}$ is a coheir Morley sequence over $M$, and $q_{0}(y)$ a coheir in which $\{b^{i}\}_{i < \omega}$ is a coheir Morley sequence over $M$. Then by Lemma \ref{pair of normal coheirs}, there are $p(z) \supset p_{0}(z)$ , $q(z) \supset p_{0}(z)$ normal coheirs over $M$ which restrict to the same type $r(z) \in S(M)$. But we can even find extensions of each of $p(z), q(z)$ to new variables that are coheirs over $M$ and restrict to the same type over $M$, where that type over $M$ is realized by $\mathrm{acl}_{M}(a)$ for some $a \models r(z)$. (Just find, say, some type $r'(z')$ over $M$ extending $r(z)$ which is realized by $\mathrm{acl}_{M}(a)$. Then using ``left extension for finite satisfiability," find a coheir over $M$ extending $r'(z')$ and $p(z)$, and do the same with $r'(z')$ and $q(z)$.) These extensions will be normal, because $p(z)$ and $q(z)$ are normal, and the conditions $\mathrm{acl}_{M}(a_{0} \ldots a_{n-1} \cup a_{2n}) \cap \mathrm{acl}_{M}(a_{n} \ldots a_{2n-1} \cup a_{2n})=\mathrm{acl}_{M}(a_{2n})$ are always preserved replacing $a_{i}$ with $\mathrm{acl}_{M}(a_{i})$. So we may assume that our original $p(z)$ and $q(z)$ as above restrict, over $M$, to the type of an algebraically closed set containing $M$. By assumption on $\{b_{i}\}_{i < \omega}$, $\{b^{i}\}_{i < \omega}$, for $J_{1} = \{c_{i}\}_{i < \omega}$, $J_{2}=\{c^{i}\}_{i < \omega}$ coheir Morley sequences over $M$ in $p(z)$, $q(z)$ respectively, $J_{1}$ and $J_{2}$ exhibit $\mathrm{SOP}_{1}$ for $\varphi(x, c)$ as in the statement of the lemma.

    Now, similarly to the proof of Lemma 4.5 of \cite{NSOP2} (but much more easily), we may find a tree $\{c_{\eta}\}_{\eta \in \omega^{< \omega}}$ such that:

(a) for each $\eta \in \omega^{<\omega}$, $\{c_{\unrhd \eta \smallfrown \langle i \rangle}\}_{i < \omega}$ is a coheir Morley sequence over $M$, and $\{c_{ \eta \smallfrown \langle i \rangle}\}_{i < \omega} \equiv_{M} J_{1}$.

(b) for each $\eta \in \omega^{<\omega}$, $c_{\eta} \models q(x)|_{c_{\rhd \eta}}$.

We review the form of the argument from the proof of Lemma 4.5 of \cite{NSOP2} that we need. Suppose a tree $A^{n}$ of height $n$ satisfying these conditions is already constructed, and we get a tree of height $n+1$ satisfying these conditions. We may find a coheir Morley sequence $\{A^{n}_{i}\}_{i < \omega}$ over $M$, consisting of realizations of $\mathrm{tp}(A^{n}/M)$, such that, for $a^{n}_{i}$ the term of $A_{i}^{n}$ indexed by the root node, $\{a^{n}_{i}\}_{i< \omega} \models \mathrm{tp}(J_{1}/M)$. To do this, first extend the coheir $p(z)$ to a coheir $P(Z)$ restricting, over $M$, to $\mathrm{tp}(A^{n}/M)$ (using ``left extension for finite satisfiability"). Then take a coheir Morley sequence over $M$ in $P(Z)$. This gives us $\{A^{n}_{i}\}_{i < \omega}$. But then, we can find a realization of $q(x)$ over $\{A^{n}_{i}\}_{i < \omega}$, and reindexing, we get a tree of height $n+1$ satisfying (a) and (b).

Already, we see that (1) is satisfied, by (a). We also see that, for distinct $\eta, \eta' \in \omega^{< \omega}$, $c_{\eta} \backslash M$  is disjoint from $c_{\eta'} \backslash M$. When $\eta, \eta'$ are incomparable, this is (a) and the fact that coheir Morley sequences over $M$ are nonoverlapping indiscernible sequences over $M$. When $\eta, \eta'$ are comparable, this is (b) with the same reasoning.

We also see that, for fixed $\eta$, the $M, c_{ \eta \smallfrown \langle i \rangle}\backslash M, C_{\eta ij}$ are all disjoint. This is by Lemma \ref{algebraic closures in normal coheir morely sequence}, and normality of $p(z)$.

We next prove the following two claims together: that any two $C_{\eta ij}, C_{\eta' i'j'} $ are disjoint when $\eta \neq \eta'$, and that $C_{\eta ij}$ is disjoint from $c_{\eta'} \backslash M$ when $\eta'$ is not of the form $\eta \smallfrown \langle \ell \rangle$. This, together with the preceding, will show that any two of the $M,  c_{\eta} \backslash M , C_{\eta ij}$ are disjoint. 

It suffices for both of these cases to show that 
$\mathrm{acl}(c_{ \mu\smallfrown \langle i_{1} \rangle} c_{ \mu\smallfrown \langle i_{2} \rangle}) \backslash M$ is disjoint from $\mathrm{acl}(\{c_{\nu}\}_{\nu \in S})\backslash M$, when $i_{1} <i_{2}$ and $S \subset \omega^{< \omega}$ contains no $\nu \rhd \mu$. But by (a) above, $\{c_{ \mu \smallfrown \langle k(i_{2}+1) + 0 \rangle} \ldots c_{ \mu \smallfrown \langle k(i_{2}+1) + ((i_{2}+1)-1) \rangle}\}_{k < \omega}$ is a coheir Morley sequence over $M$. So $\{c_{ \mu \smallfrown \langle k(i_{2}+1) + 0 \rangle} \ldots c_{ \mu \smallfrown \langle k(i_{2}+1) + ((i_{2}+1)-1) \rangle}\}_{k < \omega}$ is a nonoverlapping indiscernible sequence over $M$. Also notice that $\{c_{ \mu \smallfrown \langle k(i_{2}+1) + 0 \rangle} \ldots c_{ \mu \smallfrown \langle k(i_{2}+1) + ((i_{2}+1)-1) \rangle}\}_{k < \omega}$ is indiscernible over $\mathrm{acl}(\{c_{\nu}\}_{\nu \in S})$. So the algebraic closure of the first term, $\mathrm{acl}(c_{ \mu \smallfrown \langle  0 \rangle} \ldots c_{ \mu \smallfrown \langle i_{2} \rangle}) \supset \mathrm{acl}(c_{ \mu\smallfrown \langle i_{1} \rangle} c_{ \mu\smallfrown \langle i_{2} \rangle})$, does not meet $\mathrm{acl}(\{c_{\nu}\}_{\nu \in S})$ outside of $M$. Thus $\mathrm{acl}(c_{ \mu\smallfrown \langle i_{1} \rangle} c_{ \mu\smallfrown \langle i_{2} \rangle}) \backslash M$ is disjoint from $\mathrm{acl}(\{c_{\nu}\}_{\nu \in S})\backslash M$, as desired.

Since $\{\varphi(x, c^{i})\}_{i < \omega}$ is consistent, we may find an algebraically closed set $c'$ containing $M$ and variables $w' \supset x$ with $|w'| = |c'|$ such that $c' \models \{\varphi(w', c^{i})\}_{i < \omega}$ and $J_{2} = \{c^{i}\}_{i < \omega}$ is indiscernible over $c'$. Let $c=\mathrm{acl}(c'c^{i}) \cap \mathrm{acl}(c'c^{j})$ for some $i, j$ distinct, and $w \supset w' \supset x$ be variables with $|w| = |c|$ . Then $c \models \{\varphi(w, c^{i})\}_{i < \omega}$, and by (2) of Lemma \ref{normal coheir sequence indiscernible over a set}, $J_{2} = \{c^{i}\}_{i < \omega}$ is indiscernible over $c$. By Lemma \ref{normal coheir sequence indiscernible over a set}, and the observation that $\mathrm{acl}(c'c^{i}) =\mathrm{acl}(cc^{i})$, we have

(1$'$) $\mathrm{acl}(cc_{i}) \cap \mathrm{acl}(cc_{j}) = c$ for $i \neq j$.

(2$'$) $c \cap c_i = M$ for each $i < \omega$.

(3$'$) $\mathrm{acl}(cc_{i}) \cap \mathrm{acl}(\{c^{i}\}_{i < \omega}) = c_{i}$ for each $i < \omega$.

By (b), $c_{\sigma|_{0}}, \ldots  c_{\sigma|_{k-1}}\equiv_{M} c^{k-1}\ldots c^{0}$ for each $k$ by (b)--i.e., $\{c_{\sigma|_{n}}\}_{n < \omega}$ is a reversed coheir Morley sequence over $M$ in $q(z)$. So find, for each $\sigma \in \omega^{\omega}$, $a_{\sigma}$ with $a_{\sigma}c_{\sigma|_{0}}, \ldots  c_{\sigma|_{k-1}}\equiv_{M} cc^{k-1}\ldots c^{0}$ for each $k$. We already see that (2) will be satisfied for any such choice of $a_{\sigma}$. But we can also choose the $a_{\sigma}$ such that $a_{\sigma} \ind_{\{c_{\sigma|_{n}}\}_{n < \omega}}^{a} \{c_{\eta}\}_{\eta \in \omega^{< \omega}}$ for each $\sigma$, and $a_{\sigma} \ind_{ \{c_{\eta}\}_{\eta \in \omega^{< \omega}}}^{a} a_{\sigma'} $ for distinct $\sigma, \sigma'$, where $A\ind_{C}^{a}B$ means that $\mathrm{acl}(AC) \cap \mathrm{acl}(BC) = C$.  We show that these are as desired. All that remains is the rest of the instances of disjointness in (3), having already shown the disjointness of all of the $M, c_{\eta} \backslash M, C_{\eta ij}$ .

Now, by (3$'$), (2$'$) and choice of $a_{\sigma}$, $a_{\sigma} \cap \mathrm{acl}(\{c_{\sigma|_{n}}\}_{n < \omega}) = M$. So by this and $a_{\sigma} \ind_{\{c_{\sigma|_{n}}\}_{n < \omega}}^{a} \{c_{\eta}\}_{\eta \in \omega^{< \omega}}$,  $a_{\sigma} \cap \mathrm{acl}(\{c_{\eta}\}_{\eta \in \omega^{< \omega}}) = M$. Thus $a_{\sigma} \backslash M$ is disjoint from each of the $M, M\backslash c_{\eta}, C_{\eta ij}$ for all $\sigma \in \omega^{\omega}$.

Moreover, for $\sigma \neq \sigma'$, $a_{\sigma}\backslash M \cup \bigcup^{\infty}_{i=0} A_{\sigma n}$ is disjoint from $a_{\sigma'} \backslash M \cup \bigcup^{\infty}_{i=0} A_{\sigma' n}$: their intersection, by $a_{\sigma} \ind_{ \{c_{\eta}\}_{\eta \in \omega^{< \omega}}}^{a} a_{\sigma'} $ for distinct $\sigma, \sigma'$, will be in $\mathrm{acl}( \{c_{\eta}\}_{\eta \in \omega^{< \omega}})$. So it remains to show that both of $a_{\sigma}\backslash M \cup \bigcup^{\infty}_{n=0} A_{\sigma n}$, $a_{\sigma'} \backslash M \cup \bigcup^{\infty}_{n=0} A_{\sigma' n}$ is disjoint from $\mathrm{acl}( \{c_{\eta}\}_{\eta \in \omega^{< \omega}})$. Without loss of generality we show $a_{\sigma}\backslash M \cup \bigcup^{\infty}_{n=0} A_{\sigma n}$ is disjoint from $\mathrm{acl}( \{c_{\eta}\}_{\eta \in \omega^{< \omega}})$. But we already saw that each $a_{\sigma} \backslash M$ is disjoint  from $\mathrm{acl}(\{c_{\eta}\}_{\eta \in \omega^{< \omega}})$. And by (3$'$) and choice of $a_{\sigma}$, for each $k < \omega$, $A_{\sigma k} \cap \mathrm{acl}(\{c_{\sigma|_{n}}\}_{n < \omega}) \subset c_{\sigma|_{k}}$, So $A_{\sigma k} \cap \mathrm{acl}(\{c_{\sigma|_{n}}\}_{n < \omega}) = \emptyset$. By this and $a_{\sigma} \ind_{\{c_{\sigma|_{n}}\}_{n < \omega}}^{a} \{c_{\eta}\}_{\eta \in \omega^{< \omega}}$, $A_{\sigma k}$ is disjoint from  $\mathrm{acl}( \{c_{\eta}\}_{\eta \in \omega^{< \omega}})$. So , having shown each of $a_{\sigma}$, $A_{\sigma k}$ is disjoint from  $\mathrm{acl}( \{c_{\eta}\}_{\eta \in \omega^{< \omega}})$, we conclude that $a_{\sigma}\backslash M \cup \bigcup^{\infty}_{n=0} A_{\sigma n}$, (and $a_{\sigma'} \backslash M \cup \bigcup^{\infty}_{n=0} A_{\sigma' n}$) is disjoint from $\mathrm{acl}( \{c_{\eta}\}_{\eta \in \omega^{< \omega}})$. This gives us our conclusion that for  $\sigma \neq \sigma'$, $a_{\sigma}\backslash M \cup \bigcup^{\infty}_{n=0} A_{\sigma n}$ is disjoint from $a_{\sigma'} \backslash M \cup \bigcup^{\infty}_{n=0} A_{\sigma' n}$.

Also, any two $A_{\sigma n}, A_{\sigma m}$ for $n \neq m$ are disjoint by (1$'$).

Finally, each $A_{\sigma n}$ is disjoint from $\mathrm{acl}(\{c_{\eta}\}_{\eta \in \omega^{< \omega}})$, as just shown, so disjoint from each of the $c_{\eta} \backslash M, C_{\eta ij}$. We have now checked all of the instances of disjointness among the $M, a_{\sigma} \backslash M, c_{\eta} \backslash M, A_{\sigma n}, C_{\eta ij}$ that are not  immediate from the definitions. So we have shown (3), and we are done.

\end{proof}

\begin{remark}
\label{noninterference in tree property}

Ironically, the existence of an instance of the tree property as in Lemma \ref{noninterference of algebraic closures}, where the specified algebraic closures do not overlap nontrivially as in point (3), does not follow \textit{from the tree property itself}; if it did, so that we could replace the assumption of $\mathrm{SOP}_{2}$ with $\mathrm{TP}$ in Lemma \ref{noninterference of algebraic closures} and Lemma \ref{standard hereditary class},  we would be able to replace $\mathrm{SOP}_{2}$ with $\mathrm{TP}$ in the statement of Theorem \ref{main theorem 3, restated}, contradicting Observation \ref{main observation, restated}. So, as per the examples of generic $K_{n, m}$-free incidence structures and generic binary functions cited as separate justifications for Observation \ref{main observation, restated}, the reader wishing to see concrete examples of how the tree property can occur \textit{even in the absence of an instance of the tree property satisfying these criteria that specific algebraic closures do not nontrivially overlap} can turn to the proofs that these theories are not simple. These are, respectively, the following verifications that some $\mathrm{NSOP}_{1}$ theories are not simple: Corollary 4.17 of \cite{CoK19} (the model companion of the theory $T_{n,m}$ of generic $K_{n, m}$-free incidence structures is not simple) and Proposition 3.14 of \cite{KR18} (the theory $T^{f}$ of the generic binary function is not simple). But historically, the paradigmatic example of this phenomenon is the observation of Shelah (Claim 2.8.6, \cite{She95}) that the theory $T_{feq}^{*}$ of the generic parametrized equivalence relation (introduced in \cite{She94}; shown to be $\mathrm{NSOP}_1 $ in \cite{SU08} as referenced in \cite{Sh99}), also $\mathrm{NSOP}_{1}$ is not simple: a failure of simplicity relies on overlaps between points of the parameter sort that vary between the sets of immediate successors $\{c_{\eta \smallfrown \langle i \rangle} \}_{i < \omega}$ for different $\eta \in \omega^{< \omega}$. This is exposited as part of more general results on generic parametrizations in Corollary 6.18 of \cite{CR15}. And in fact, it appears to be open whether there is \textit{any} (strictly) $\mathrm{NSOP}_{1}$ theory where there is an instance of the failure of simplicity (with consistency witnesses) satisfying the nonoverlapping conditions for algebraic closures from Lemma \ref{noninterference of algebraic closures}: $\varphi(x, y)$ with $\{b_{\eta}\}_{\eta \in \omega^{< \omega}}$ with $\{\varphi(x, b_{\eta \smallfrown \langle i \rangle}) \}_{i < \omega}$ $2$-inconsistent for all $\eta \in \omega^{< \omega}$ and with $\{a_{\sigma}\}_{\sigma \in \omega^{\omega}}$ with $a_{\sigma} \models \{ \varphi(x, b_{\sigma|_{n}})\}_{n < \omega}$ for all $\sigma \in \omega^{\omega}$, chosen such that $\mathrm{tp}(a_{\sigma}b_{\sigma|_{n}})$ does not depend on $\sigma \in \omega^{<\omega}$, $n < \omega$ and $\mathrm{tp}(b_{\eta \smallfrown \langle i \rangle}b_{\eta \smallfrown \langle j \rangle})$ does not depend on $\eta \in \omega^{< \omega} $,  $i < j < \omega$, where (in $T^{\mathrm{eq}}$) all of the sets of the form $\mathrm{acl}(\emptyset), \mathrm{acl}(b_{\eta})\backslash \mathrm{acl}(\emptyset), \mathrm{acl}(a_{\sigma})\backslash \mathrm{acl}(\emptyset),  \mathrm{acl}(a_{\sigma}b_{\sigma|_{n}}) \backslash (\mathrm{acl}(a_{\sigma}) \cup \mathrm{acl}(b_{\sigma|_{n}})), \mathrm{acl}(b_{\eta \smallfrown \langle i \rangle}b_{\eta \smallfrown \langle j \rangle}) \backslash (\mathrm{acl}(b_{\eta \smallfrown \langle i \rangle}) \cup \mathrm{acl}(b_{\eta \smallfrown \langle j \rangle}) )$ (for $i < j$)  are disjoint. We mention this question in the hope that this or similar questions could suggest some general structural phenomenon in $\mathrm{NSOP}_{1}$ theories.

\end{remark}

Having proven our main general lemma on the property $\mathrm{SOP}_{2}$, we now turn to completing the task of translating a hypothetical strictly $\mathrm{NSOP}_{3}$ generic structure into a combinatorially tractable object. As a preliminary step, we prove the following lemma, which generalizes the fact that the terms of a $n$-cycle obtained from $\mathrm{NSOP}_{n}$ can be chosen not to overlap over a base (see Claim 2.19 of \cite{ApproxOrder}). (While we will need, and will prove, only the $n=3$ case of this generalization, the proof will generalize to large values of $n$.)   We will apply this lemma again in the more combinatorial phase of the proof. There we will use it to give generalizations of the \textit{helix map} characterization of $\mathrm{NSOP}_{n}$ (specifically for $n=3$), due to Shelah (\cite{She95}) and Malliaris (\cite{Mal10b}), that was explicated in Fact 3.8 of \cite{ApproxOrder}.

\begin{lemma}
\label{non-overlapping pairs}    Let $\mathcal{H}$ be any hereditary class in a relational language, and let $A \sqcup \bigsqcup_{i < \omega} A_i \sqcup \bigsqcup_{i < j < \omega} A_{ij}  \in \mathcal{H}$ with $A_iA_jA_{ij} \equiv_A A_{i'}A_{j'}A_{i'j'}$ for all $i < j < \omega$, $i' < j' < \omega$.  Suppose $\mathcal{H}$ is the age of a structure whose theory is $\mathrm{NSOP}_{3}$. Then there exists a structure of the form $B \sqcup \bigsqcup^{2}_{i = 0} B_{i} \sqcup \bigsqcup^{2}_{i = 0} B_{i(i+1\mathrm{\:mod\:} 3)} \in \mathcal{H}$ where for $i =0, 1, 2$,  $B B_{i} B_{i+1 \mathrm{\: mod \:} 3}B_{i(i+1\mathrm{\:mod\:} 3)}  \equiv AA_0A_1A_{01}$.
\end{lemma}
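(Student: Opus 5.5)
The plan is to realize the given configuration inside a monster model of an $\mathrm{NSOP}_{3}$ theory, read off an infinite chain, apply $\mathrm{NSOP}_{3}$ to get a $3$-cycle, and then arrange that the pieces of the cycle do not overlap. This follows Claim 2.19 of \cite{ApproxOrder}, with the pairs $A_{ij}$ carried along as witnesses.

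First I fix the model. Let $\mathcal{H}$ be the age of a structure $N_{0}$ with $T = \mathrm{Th}(N_{0})$ $\mathrm{NSOP}_{3}$, and work in a monster $\mathbb{M} \models T$. Every finite substructure of $A \sqcup \bigsqcup_{i} A_{i} \sqcup \bigsqcup_{i<j} A_{ij}$ lies in $\mathcal{H}$, so it embeds into $N_{0}$. By compactness the whole configuration then embeds into $\mathbb{M}$ as an induced substructure, and I identify it with its image. Any finite substructure of $\mathbb{M}$ again lies in $\mathcal{H}$. So it suffices to find $B, B_{i}, B_{i(i+1)}$ inside $\mathbb{M}$, pairwise disjoint in the required pattern, with the stated quantifier-free types. (If $B_{i}$ is infinite, "$\in \mathcal{H}$" is meant in the paper's abuse-of-notation sense, which is exactly what we get.)

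Next I build the relation and the chain. Set $B := A$ and fix $p(X,Y,Z) := \mathrm{qftp}(A_{0}A_{1}A_{01}/A)$, together with the formulas saying that the coordinates of $X, Y, Z$ are pairwise distinct and lie outside $A$. Let $R(X,Y)$ be the type-definable relation over $A$ given by $\exists Z\, p(X,Y,Z)$. By the hypothesis $A_{i}A_{j}A_{ij} \equiv_{A} A_{0}A_{1}A_{01}$, the sequence $\{A_{i}\}_{i<\omega}$ is an infinite $R$-chain. Using Ramsey and compactness I may assume the sequence $\{A_{i}A_{ij}\}$ is $A$-indiscernible in the natural two-dimensional sense. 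By compactness, $\mathrm{NSOP}_{3}$ for formulas yields the type-definable version, for finite pieces of $p$ and then in the limit. So there are $B_{0}, B_{1}, B_{2}$ and $B_{01}, B_{12}, B_{20}$ with $A B_{i} B_{i+1} B_{i(i+1)} \models p$ for each $i$.

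The hard step will be the remaining disjointness: each $B_{i}$ must be disjoint from the others, and each witness $B_{i(i+1)}$ must be disjoint from everything except through the type $p$. To force it, I apply $\mathrm{NSOP}_{3}$ not to $\{A_{i}\}$ but to the blocks $\bar{A}_{n} := (A_{n\omega + k})_{k<\omega}$. The block relation $\bar{R}(\bar{X},\bar{Y})$ says three things: for all $k, l$ there are witnesses $Z_{kl}$ with $X_{k}Y_{l}Z_{kl} \models p$; all the $X_{k}$, $Y_{l}$, $Z_{kl}$ are pairwise disjoint outside $A$ within one pair of blocks; and the type of the block pair is the one copied from the original indiscernible array. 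A $3$-cycle $\bar{B}_{0}, \bar{B}_{1}, \bar{B}_{2}$ for $\bar{R}$ gives, for each choice of coordinates $k_{0}, k_{1}, k_{2}$, a candidate cycle $B_{0,k_{0}}, B_{1,k_{1}}, B_{2,k_{2}}$ with witnesses. Overlaps can then only occur between different block pairs. Within a block the coordinates are disjoint, so a fixed element of $\bar{B}_{0}$ lies in at most one coordinate of $\bar{B}_{1}$, and similarly for the witness families. I would color triples $k_{0} < k_{1} < k_{2}$ by their finite overlap pattern and use Ramsey's theorem to get an infinite homogeneous set. On that set, a nontrivial uniform overlap would put one element into infinitely many pairwise disjoint coordinates or witnesses, which is a contradiction. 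This counting and Ramsey step is where I expect the real work, especially for the witnesses $Z$. A fallback is to take the sequence to be a coheir Morley sequence over a model containing $A$ and use the nonoverlapping-indiscernible arguments as in Lemma \ref{algebraic closures in normal coheir morely sequence}. Picking a homogeneous triple gives the required structure $B \sqcup \bigsqcup_{i} B_{i} \sqcup \bigsqcup_{i} B_{i(i+1 \mathrm{\:mod\:} 3)} \in \mathcal{H}$.
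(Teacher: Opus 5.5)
Your argument is correct when the pieces are finite, and its last step differs from the paper's. Up to the $3$-cycle of blocks you match the paper. It also applies $\mathrm{NSOP}_{3}$ to the type of two consecutive blocks $\bar a_{0},\bar a_{1}$ of an $a$-indiscernible sequence of length $\omega\times\omega$, which already gives pairwise disjoint witnesses within each edge.

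The routes part at cross-edge disjointness.
\begin{itemize}
\item The paper uses that all the $a^{\ell}_{i}$ are disjoint to define binary functions $f_{\mathbf{j}}$ on a single coordinate $\mathbf{i}^{*}$ that return the witnesses. It passes to the expansion $T_{f}$ and invokes Lemma 1.2 of \cite{Che14} to get mutually $a$-indiscernible sequences $\{b^{\ell}_{i}\}$ with the same pairwise types. A point common to $\overline{f}(b^{0}_{0},b^{1}_{0})$ and $\overline{f}(b^{1}_{0},b^{2}_{0})$ would then, by indiscernibility of $\{b^{0}_{i}\}$ over $b^{1}_{0}b^{2}_{0}$, also lie in $\overline{f}(b^{0}_{1},b^{1}_{0})$, which is impossible.
\item You keep the witness arrays $Z^{01}_{kl},Z^{12}_{kl},Z^{20}_{kl}$ as fixed chosen sets. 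A Ramsey-homogeneous triple $k_{0}<k_{1}<k_{2}$ then plays the role of mutual indiscernibility.
\end{itemize}
Your version works. Take any of the six cross pairs, for example $B_{2,k_{2}}$ against $Z^{01}_{k_{0}k_{1}}$, or $Z^{12}_{k_{1}k_{2}}$ against $Z^{20}_{k_{2}k_{0}}$. A coordinate coincidence constant on the homogeneous set $H$ can be moved by varying an index that occurs in only one of the two sets. This puts one element into two disjoint members of a single edge's family. For pairs like $Z^{12}_{k_{1}k_{2}},Z^{20}_{k_{2}k_{0}}$ you can only vary $k_{1}$ between $k_{0}$ and $k_{2}$, so you get two such indices rather than infinitely many; two suffice. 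This avoids the language expansion and the mutual-indiscernibility lemma. The paper itself remarks that a more direct combinatorial argument is possible, and yours is one.

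What your sketch does not cover as written is infinite pieces, and the lemma is needed for them: in the proof of Lemma \ref{standard hereditary class} it is applied to pieces built from Skolem hulls. If $A_{0}$ or $A_{01}$ is infinite, the overlap patterns form an infinite set of colors, so Ramsey's theorem is not available. Homogeneity for each fixed pair of coordinates does not rescue it either, because the coinciding coordinates may depend on the triple. For instance, with coordinates indexed by $\omega$, coordinate $k_{0}$ of $Z^{12}_{k_{1}k_{2}}$ could equal coordinate $k_{1}$ of $Z^{20}_{k_{2}k_{0}}$ for every triple. This is compatible with all within-edge disjointness.

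The fix is a compactness reduction that you should state explicitly.
\begin{itemize}
\item The required configuration is a partial type over $A$ in the saturated $\mathbb{M}$, and each finite part of it mentions only finitely many coordinates.
\item Restricting all $A_{i}$ to one common finite set of coordinates, and all $A_{ij}$ likewise, preserves both hypotheses of the lemma.
\item So your finite argument realizes each finite part, and saturation gives the whole configuration.
\end{itemize}
The paper's function-symbol device buys exactly this uniformity. Once the witnesses are definable from the pair, a single application of indiscernibility controls all coordinates at once, however many there are.
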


\begin{proof}
    Let $T$ be a complete theory whose models have $\mathcal{H}$ as its age, and $\mathbb{M} \models T$ the ambient model (so all of its substructures belong to $\mathcal{H}$). Then there is $a \in \mathbb{M}$, a sequence $\{a_{i}\}_{i< \omega} \in \mathbb{M}$ and a family $\{a_{ij}\}_{i < j< \omega} \in \mathbb{M}$ such that any two sets from among the $a, a_i, a_{ij}$ are disjoint, and such that $aa_ia_ja_{ij} \equiv^{\mathrm{qf}} A A_{0}A_{1}A_{01}$ for $i < j < \omega$, where $\equiv^{\mathrm{qf}}$ denotes isomorphism as enumerated $\mathcal{L}$-structures. Since the existence of such $\{a_{ij}\}_{i < j< \omega}$ for $\{a_{i}\}_{i< \omega}$ is a type-definable condition on $\{a_{i}\}_{i< \omega}$, these can additionally be chosen such that $\{a_{i}\}_{i< \omega}$ is $a$-indiscernible. It will be enough to find $ \{b_i\}^{2}_{i =0}, \{b_{i(i+1\mathrm{\:mod\:} 3)}\}^{2}_{i=0}$ such that any two sets from among the $a, b_i, b_{ij}$ are disjoint, and for $i =0, 1, 2$,  $ab_{i} b_{i+1 \mathrm{\: mod \:} 3}b_{i(i+1\mathrm{\:mod\:} 3)}  \equiv^{\mathrm{qf}} AA_0A_1A_{01}$. Let $\{a_i\}_{i < \omega\times\omega}$ be an $a$-indiscernible sequence extending $\{a_{i}\}_{i< \omega}$, and let $p(\overline{x}, \overline{y})=\mathrm{tp}(\{a_i\}_{i< \omega}, \{a_i\}_{\omega\leq i < \omega+\omega}/a)$. Then the $a$-indiscernible sequence $\{\overline{a}_{i}\}_{i < \omega} : = \{a_{i \omega} \ldots a_{i\omega + n}, \ldots\}_{i < \omega}$ satisfies $\overline{a}_{0}, \overline{a}_1\models p(\overline{x}, \overline{y})$. So because $T$ is $\mathrm{NSOP}_{3}$, we may find (just as we did in the proof of Claim 2.19 of \cite{ApproxOrder}) $\{a^\ell_i\}_{i < \omega, 0 \leq \ell \leq 2}$ such that, for $0 \leq \ell \leq 2$, $\{a^\ell_i\}_{i < \omega} , \{a^{\ell+1 \mathrm{\: mod \:}3}_i\}_{i < \omega} \models p(\overline{x}, \overline{y})$. So for each fixed $0 \leq \ell \leq 2$, we may find $\{a^{\ell (\ell+1 \mathrm{\: mod\: } 3)}_{ij}\}_{i,j < \omega}$  such that $a a^{\ell}_i a_{j}^{\ell+1 \mathrm{\:mod\:}3} a^{\ell (\ell+1 \mathrm{\: mod\: } 3)}_{ij}\equiv^{\mathrm{qf}} AA_0A_1A_{01}$ for all $i < j < \omega$, and any two sets from among the $a, a^{\ell}_i, a^{\ell+1 \mathrm{\: mod \:}3}_{i},  a^{\ell (\ell+1 \mathrm{\: mod\: } 3)}_{ij}$ are disjoint. Moreover, for $\ell \neq \ell'$ and $i, i' < \omega$, $a^{\ell}_{i}$ is disjoint from $a^{\ell'}_{i'}$: without loss of generality we may assume that $\ell' = \ell + 1 \mathrm{\: mod \:} 3$, so $a^{\ell}_{i}a^{\ell'}_{i'} \equiv^{\mathrm{qf}}a_{0}a_{1}$, and $a^{\ell}_{i}$ is disjoint from $a^{\ell'}_{i'}$ because $a_{0}$, $a_{1}$ are disjoint. Thus all of the $a^{\ell}_{i}$ are disjoint.

    Now to find $ b_i, b_{ij}$ as desired, we may use a more direct combinatorial argument (involving extracting a long indiscernible sequence) to ensure that the sets $b_{ij}$ do not overlap. However, if we want to argue entirely by extracting indiscernible sequences, we will need to use an extended language in a way that will appear more artificial.  We may assume that the sets $A_i$ in the statement we want to prove are nonempty--otherwise, the statement is obvious. So the sets $a^{\ell}_i$ are all nonempty. Write the $a_i^{\ell}, $$a^{\ell (\ell+1 \mathrm{\: mod\: } 3)}_{ij}$ more explicitly as $\{(a_i^{\ell})_{\mathbf{i}}\}_{\mathbf{i} \in I}$ and $\{(a^{\ell (\ell+1 \mathrm{\: mod\: } 3)}_{ij})_{\mathbf{j}}\}_{\mathbf{j} \in J}$, according to their (previously implicit) enumerations. Then choose some $\mathbf{i}^{*} \in I$. Because the $a^{\ell}_i$ are all disjoint, $\ell$, $i$ will depend only on $(a_{i}^{\ell})_{\mathbf{i}^{*}}$. Therefore, we may find a family of functions $\{f_{\mathbf{j}}: \mathbb{M}^{2} \to \mathbb{M}\}_{\mathbf{j} \in J}$ such that for all $i, j, \ell, \mathbf{j}$, $f_{\mathbf{j}}((a^{\ell}_i)_{\mathbf{i}^{*}}, (a^{\ell+1 \mathrm{\: mod \:} 3}_{j})_{\mathbf{i}^{*}})= (a^{\ell (\ell+1 \mathrm{\: mod\: } 3)}_{ij})_{\mathbf{j}}$. For any $a=\{a_{\mathbf{i}}\}_{\mathbf{i} \in I}$, $b=\{b_{\mathbf{i}}\}_{\mathbf{i} \in I}$, denote $\overline{f}(a, b):= \{f_{\mathbf{j}}((a)_{\mathbf{i}^{*}}, (b)_{\mathbf{i}^{*}})\}_{\mathbf{j} \in J}$. Let $\mathbb{M}_{f}$ be the structure, in the original relational language expanded by a family $\{\mathbf{f}_{\mathbf{j}}\}_{\mathbf{j} \in J}$ of new binary function symbols, consisting of $\mathbb{M}$ equipped with the functions $\{f_{\mathbf{j}}: \mathbb{M}^{2} \to \mathbb{M}\}_{\mathbf{j} \in J}$  with $\mathbf{f}_{\mathbf{j}}$ interpreted as $f_{\mathbf{j}}$. Let $T_{f}$ be the theory of $\mathbb{M}_{f}$ Now define, as in, say, \cite{Che14}, a family of sequences $\{\{c_{i}^{j}\}_{i < \omega} : j < n \}$ to be \textit{mutually indiscernible} over a set $C$ if, for each $j < n$, $\{c_{i}^{j}\}_{i < \omega}$ is indiscernible over the $C\overline{c}^{\neq j}$  ($C$ together with all of the other sequences.) By applying Lemma 1.2 of \cite{Che14} within $T_{f}$ to the sequences  $\{a^0_i\}_{i < \omega}$, $\{a^1_i\}_{i < \omega}$,  $\{a^2_i\}_{i < \omega}$, we may then find $\overline{\mathbb{M}}_{f} \succ \mathbb{M}_{f}$ containing mutually $a$-indiscernible sequences $\{b^0_i\}_{i < \omega}$, $\{b^1_i\}_{i < \omega}$,  $\{b^2_i\}_{i < \omega}$ such that, for $\ell < 3$, $\{b^{\ell}_i\}_{i < \omega} \{b^{\ell+1 \mathrm{\: mod\:} 3}_i\}_{i < \omega} \equiv_{a} \{a^{\ell}_i\}_{i < \omega} \{a^{\ell+1 \mathrm{\: mod\:} 3}_i\}_{i < \omega}$.\footnote{More technically, the actual statement of this lemma from \cite{Che14} would apply to indiscernible sequences $\{a^0_i\}_{i < \lambda}$, $\{a^1_i\}_{i < \lambda}$,  $\{a^2_i\}_{i < \lambda}$ for some $\lambda$,. While the condition $\{b^{\ell}_i\}_{i < \omega} \{b^{\ell+1 \mathrm{\: mod\:} 3}_i\}_{i < \omega} \equiv_{a} \{a^{\ell}_i\}_{i < \omega} \{a^{\ell+1 \mathrm{\: mod\:} 3}_i\}_{i < \omega}$ is type-definable, so it is not really necessary to start with longer sequences to show our statement that the  $\{b^0_i\}_{i < \omega}$, $\{b^1_i\}_{i < \omega}$,  $\{b^2_i\}_{i < \omega}$ exist, to apply the lemma as stated, we can just first use compactness to obtain $\{a^0_i\}_{i < \lambda}$, $\{a^1_i\}_{i < \lambda}$,  $\{a^2_i\}_{i < \lambda}$ for which the EM-type of the sequence $\{a_{i}^{0}a_{i}^{1}a_{i}^{2}\}_{i< \lambda}$ contains the EM-type of $\{a_{i}^{0}a_{i}^{1}a_{i}^{2}\}_{i< \omega}$, then apply the lemma from \cite{Che14} to $\{a^0_i\}_{i < \lambda}$, $\{a^1_i\}_{i < \lambda}$,  $\{a^2_i\}_{i < \lambda}$. } It follows that $a b^{\ell}_i b_{j}^{\ell+1 \mathrm{\:mod\:}3} \overline{f}(b^{\ell}_i ,b_{j}^{\ell+1 \mathrm{\:mod\:}3})\equiv^{\mathrm{qf}} AA_0A_1A_{01}$ for all $i < j < \omega$ and $\ell < 3$. It also follows that, for each \textit{fixed} $\ell < 3$, any two sets from among the $a, b^{\ell}_i,  \overline{f}(b^{\ell}_i ,b_{j}^{\ell+1 \mathrm{\:mod\:}3})$ are disjoint. We show $b_{i} : =b^{0}_{i}$, $b_{ij} : = \overline{f}(b^{i}_0 ,b^{j}_{0})$ are as desired for $i < 3$, $j = i+1 \mathrm{\: mod\:} n$. (These will be in $\overline{\mathbb{M}}_{f}$, but of course we can then find them as desired in $\mathbb{M}$.)  By $a b^{\ell}_i b_{j}^{\ell+1 \mathrm{\:mod\:}3} \overline{f}(b^{\ell}_i ,b_{j}^{\ell+1 \mathrm{\:mod\:}3})\equiv^{\mathrm{qf}} AA_0A_1A_{01}$, all that remains is to show that $b_{01}, b_{12}, b_{20}$ are disjoint. Without loss of generality, it suffices to show that $b_{01} := \overline{f}(b^{0}_0 ,b_{0}^{1})$ is disjoint from $b_{12} :=\overline{f}(b^{1}_0 ,b_{0}^{2})$. But for $i \neq j$, any two of $\overline{f}(b^{0}_i ,b_{0}^{1})$, $\overline{f}(b^{0}_j ,b_{0}^{1})$ are disjoint.  Now suppose $\overline{f}(b^{0}_0 ,b_{0}^{1})$ were not disjoint from $\overline{f}(b^{1}_0 ,b_{0}^{2})$, so there is some $c \in \overline{f}(b^{1}_0 ,b_{0}^{2})$ which is also in $\overline{f}(b^{0}_0 ,b_{0}^{1})$.  Then, by mutual $a$-indiscernibility of $\{b^0_i\}_{i < \omega}$, $\{b^1_i\}_{i < \omega}$,  $\{b^2_i\}_{i < \omega}$ in $T_{f}$, each $b^{0}_{i}$ satisfies the same type in $T_{f}$ over $b^{1}_{0} b^{2}_{0}$ (i.e., not depending on $i$). So because $c \in \overline{f}(b^{0}_0 ,b_{0}^{1})$, it is also the case that $c \in \overline{f}(b^{0}_1 ,b_{0}^{1})$. And this contradicts that $\overline{f}(b^{0}_0 ,b_{0}^{1})$ and $\overline{f}(b^{0}_1 ,b_{0}^{1})$ are disjoint.
\end{proof}

We are now ready to prove our lemma giving the interface between our model-theoretic hypothesis--the existence of a strictly $\mathrm{NSOP}_{3}$ example among the constructions of Cherlin, Shelah and Shi (\cite{CSS99})--and the combinatorial arguments that will follow. This will be analogous to Lemma 3.3 of \cite{ApproxOrder} in the proof that $\mathfrak{o}(\mathcal{H})$ is an integer for $\mathcal{H}$ a hereditary class defined by finitely many forbidden weakly embedded substructures. There, we started with a relation $R$ exhibiting $\mathrm{SOP}_{r}$ within an $\mathrm{NSOP}_{n}$ hereditary class defined by finitely many forbidden weakly embedded substructures, and obtained a $\mathrm{NSOP}_{n}$ hereditary class, which we saw was \textit{still} defined by finitely many forbidden weakly embedded substructures, in a language with just a edge relation symbol representing this relation $R$ exhibiting $\mathrm{SOP}_{r}$. Analogously, here we see that if $T^{\mathcal{H}}$, for $\mathcal{H}$ a hereditary class defined by finitely many forbidden weakly embedded substructures, is $\mathrm{NSOP}_{3}$ but has $\mathrm{SOP}_{2}$, then $\mathrm{SOP}_{2}$, but in particular $\mathrm{TP}$, can be viewed as being represented by a relation $R(x, y)$ that exhibits the tree property, and a relation $I(y_{1}, y_{2})$ that exhibits the instances of inconsistency in the tree property. Our lemma will give us a hereditary class that is also the age of a structure whose theory is $\mathrm{NSOP}_{3}$, in the language with symbols for $R$ and $I$ where these symbols for $R$ and $I$ also represent the tree property, that is \textit{still} defined by finitely many forbidden weakly embedded substructures in this language. Within an instance of the tree property exhibited by $\varphi(x,y)$, $\{b_{\eta}\}_{\eta \in \omega^{< \omega}}$  with consistency witnesses $a_{\sigma} \models \{ \varphi(x, b_{\sigma|_{n}})\}_{n < \omega}$ for $\sigma \in \omega^{\omega}$, the relation $R(x, y)$ will correspond to algebraic closures of the form $\mathrm{acl}(a_{\sigma}b_{\sigma|_{n}})$ that give the incidences of $\varphi(x, y)$, and the relation $I(y_{1}, y_{2})$ will correspond to algebraic closures of the form $\mathrm{acl}(b_{\eta \smallfrown \langle i \rangle}b_{\eta \smallfrown \langle j \rangle})$ that exhibit $2$-inconsistency. It will be essential that our instance of $\mathrm{TP}$ will be of the special kind where these algebraic closures do not overlap nontrivially, as in the above Lemma \ref{noninterference of algebraic closures}.

Before stating our lemma, we must give some definitions. We first give a standard language in which we can express in a purely formal way an instance of the tree property.

\begin{definition}
  \label{standard tp-structure}  Let $\mathcal{L}_{\mathrm{std}}$ be the language in sorts $O, P$ consisting of the relation symbols $R$ and $I$, where $R$ is a symbol for a binary relation between sorts $O$ and $P$, and $I$ is a symbol for a binary relation on $P$. Then the \textit{standard $\mathrm{TP}$-structure} is the $\mathcal{L}_{\mathrm{std}}$-structure $\mathbb{TP}$ consisting of distinct $b_{\eta} \in P(\mathbb{TP})$ for $\eta \in \omega^{< \omega}$, and $a_{\sigma} \in O(\mathbb{TP})$ for $\sigma \in \omega^{\omega}$, with all and only the following instances of $R$ and $I$: $I(b_{\eta \smallfrown \langle i \rangle},b_{\eta \smallfrown \langle j \rangle})$ for $\eta \in \omega^{< \omega}$, $i < j < \omega$, and $R(a_{\sigma}, b_{\sigma|_{n}} )$ for $\sigma \in \omega^{\omega}$, $n < \omega$.
\end{definition}

Since $I(y_{1}, y_{2})$ is intended to represent inconsistency between two instances of the relation $R(x, y)$, we express this requirement in the following definition:

\begin{definition}
    \label{sound structure} An $\mathcal{L}_{\mathrm{std}}$-structure $A$ is \textit{sound} if:
    
    --there are no distinct $a \in O(A)$, $b_1, b_2 \in P(A)$ with $R(a, b_i)$ for $i = 1, 2$ and $I(b_1,b_2)$, 
    
    --$I$ is irreflexive: there is no $b \in P(A)$ with $I(b, b)$.
\end{definition}
With these definitions in hand, we can now state our lemma:

\begin{lemma}
 \label{standard hereditary class}   Suppose that $\mathcal{H}$ is a hereditary class defined by a finite family of forbidden weakly embedded substructures, and that  $T^{\mathcal{H}}$ is a strictly $\mathrm{NSOP}_{3}$ complete theory (i.e., an $\mathrm{NSOP}_{3}$ theory with $\mathrm{SOP}_{2}$). Then there is a hereditary class $\mathcal{H}^{\mathrm{std}}$ consisting of sound $\mathcal{L}_{\mathrm{std}}$-structures, defined by a finite family of forbidden weakly embedded substructures, which is the age of some structure whose theory is $\mathrm{NSOP}_{3}$, and such that $\mathbb{TP} \in \mathcal{H}^{\mathrm{std}}$.
\end{lemma}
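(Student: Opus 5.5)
We will construct $\mathcal{H}^{\mathrm{std}}$ by the same "blow-up" procedure sketched in the proof of Fact \ref{sop_n vs tp_2 in hereditary classes defined by finitely many forbidden weakly embedded substructures}, now fed by the nonoverlapping instance of $\mathrm{TP}$ from Lemma \ref{noninterference of algebraic closures}. Work in $T^{\mathcal{H}}$, which is $\mathrm{SOP}_{2}$, hence $\mathrm{SOP}_{1}$. Fact \ref{sop1 coheirs} then gives $\varphi(x,b)$ and coheir Morley sequences over a model $M$, and Lemma \ref{noninterference of algebraic closures} upgrades these to algebraically closed $c$, a tree $\{c_{\eta}\}$ and witnesses $\{a_{\sigma}\}$ satisfying (1)--(3). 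Fix $i<j$ and $\sigma$, $n$. We record the quantifier-free data
\begin{itemize}
\item $\mathrm{qftp}(c_{\eta}\backslash M/M)$,
\item $\mathrm{qftp}(a_{\sigma}\backslash M/M)$,
\item $\mathrm{qftp}(C_{\eta ij}/c_{\eta\smallfrown\langle i\rangle}c_{\eta\smallfrown\langle j\rangle})$, which is independent of $\eta$ and of $i<j$ by (1),
\item $\mathrm{qftp}(A_{\sigma n}/a_{\sigma}c_{\sigma|_{n}})$, which is independent of $\sigma$ and $n$ by (2).
\end{itemize}
For an $\mathcal{L}_{\mathrm{std}}$-structure $S$, we form the $\mathcal{L}$-structure $\tilde S$ as follows. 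It consists of $M$, a copy of $c\backslash M$ for each $P$-point, a copy of $a\backslash M$ for each $O$-point, a copy of $C_{\eta 01}$ for each $I$-instance, and a copy of $A_{\sigma 0}$ for each $R$-instance. We add no relation instances beyond those these types force, so $\tilde S$ is a free amalgam. Define $\mathcal{H}^{\mathrm{std}}$ to be the class of finite $S$ with $\tilde S\in\mathcal{H}$. Since $M$ is infinite, we will first replace $M$ by a sufficiently large finite piece. This is legitimate because $\mathcal{H}$ is determined by forbidden weak substructures of size below some $n_{0}$, so only boundedly many elements of $M$ can ever participate in a violation. Alternatively we can phrase everything over $M$ as extra named structure, as in Lemma 3.3 of \cite{ApproxOrder}.

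The verifications then proceed in four steps.

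\emph{Step (i): $\mathbb{TP}\in\mathcal{H}^{\mathrm{std}}$.} By Lemma \ref{noninterference of algebraic closures}(3), the pieces $M$, $a_{\sigma}\backslash M$, $c_{\eta}\backslash M$, $A_{\sigma n}$, $C_{\eta ij}$ are pairwise disjoint inside $\mathbb{M}\models T^{\mathcal{H}}$. Hence $\tilde{\mathbb{TP}}$ weakly embeds, as a free amalgam, into the ambient model, and its finite parts lie in $\mathcal{H}$ because $\mathcal{H}$ is closed under weakening (it is defined by forbidden \emph{weak} substructures).

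\emph{Step (ii): finitely many forbidden weak substructures.} Any forbidden weak copy of some $F\in\mathcal{F}$ in $\tilde S$ meets at most $n_{0}$ blocks. The part of $S$ it uses therefore has size bounded in terms of $n_{0}$. Adding $R$/$I$ instances only adds relations to $\tilde S$, so membership is monotone under weak embeddings. Consequently $\mathcal{H}^{\mathrm{std}}=\mathcal{H}(\mathcal{F}')$, where $\mathcal{F}'$ is the finite set of bounded-size $S$ with $\tilde S\notin\mathcal{H}$.

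\emph{Step (iii): soundness.} Suppose $S$ has $R(a,b_{1})$, $R(a,b_{2})$, $I(b_{1},b_{2})$ with $\tilde S\in\mathcal{H}$. By model completeness and the genericity axioms (*) of Fact \ref{Cherlin Shelah and Shi theorem}, $\tilde S$ embeds over $M$ into $\mathbb{M}$. Then the existential formulas $\varphi(w,c_{1})$ and $\varphi(w,c_{2})$, witnessed through the $A$-blocks, would hold together with an $I$-witness for $2$-inconsistency. Here we first choose $\varphi$ existential with its witnesses inside $\mathrm{acl}(a_{\sigma}c_{\sigma|_{n}})$, which is possible since $T^{\mathcal{H}}$ is model complete and algebraic closure there is trivial/controlled by the free amalgam structure. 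This contradicts inconsistency. Irreflexivity of $I$ is immediate.

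\emph{Step (iv): $\mathcal{H}^{\mathrm{std}}$ is the age of an $\mathrm{NSOP}_{3}$ structure.} This is the step we expect to be the main obstacle. The plan is to take $T^{\mathcal{H}^{\mathrm{std}}}$ from Fact \ref{Cherlin Shelah and Shi theorem} (completeness via joint embedding, given by free amalgamation of $\tilde S$'s over $M$). We then show that an $\mathrm{SOP}_{3}$ instance there transfers back to $T^{\mathcal{H}}$. By Fact \ref{Bodirsky, Bodor and Marimon theorem} and the consistency/inconsistency pattern description of $\mathrm{SOP}_{3}$, it suffices to realize the relevant quantifier-free configurations. We would use Lemma \ref{non-overlapping pairs} to choose the $3$-cycle witnesses with disjoint pair-blocks. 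This lets a $3$-cycle pattern in $\mathcal{H}^{\mathrm{std}}$ be converted into a pattern of the same shape in $\tilde S\subset\mathbb{M}$, and hence into an $\mathrm{SOP}_{3}$ instance of an existential formula in $T^{\mathcal{H}}$, contradicting $\mathrm{NSOP}_{3}$. The delicate point is that Lemma \ref{non-overlapping pairs} is exactly what prevents spurious identifications between blocks when pulling back. We expect to need the mutual-indiscernibility argument from its proof, applied to sequences of tuples encoding whole blocks.
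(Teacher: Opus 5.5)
Your architecture matches the paper's: blow each $S$ up to $\tilde S$ from the nonoverlapping $\mathrm{TP}$-configuration of Lemma \ref{noninterference of algebraic closures}, get $\mathbb{TP}\in\mathcal{H}^{\mathrm{std}}$ from disjointness, and get a finite forbidden family from boundedness of blocks. But soundness has a genuine gap, caused by running Lemma \ref{noninterference of algebraic closures} in $T^{\mathcal{H}}$ itself. Your Step (iii) needs $\varphi(\tilde a,\tilde b_i)$, and the inconsistency of $\{\varphi(x,\tilde b_1),\varphi(x,\tilde b_2)\}$, to be forced by the quantifier-free data of the blocks. You get this by taking $\varphi$ existential with witnesses in $\mathrm{acl}(a_\sigma c_{\sigma|_n})$, and implicitly taking the existential form of the inconsistency to be witnessed in $C_{\eta ij}$. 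Model completeness makes formulas existential but does not make their witnesses algebraic: in the generic triangle-free graph $\mathrm{acl}$ is trivial, yet ``$x,y$ have a common neighbor'' has no algebraic witness. The missing idea is to work in a Skolemization $(T^{\mathcal{H}})^{\mathrm{Sk}}$ and apply Lemma \ref{noninterference of algebraic closures} there, with $\mathrm{acl}=\mathrm{acl}^{\mathrm{Sk}}$. Then $a_\sigma\cup c_{\sigma|_n}\cup A_{\sigma n}$ and $c_{\eta\smallfrown\langle i\rangle}\cup c_{\eta\smallfrown\langle j\rangle}\cup C_{\eta ij}$ are Skolem hulls, so their $\mathcal{L}$-reducts are models of $T^{\mathcal{H}}$ containing $M$. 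The corresponding block of $\tilde S\subseteq\mathbb{M}$ has the same quantifier-free type over $M$, so it is itself a model, hence elementary in $\mathbb{M}$ by model completeness, and so has the same complete type over $M$ as the original; soundness is then immediate. This uses the quantifier-free type over all of $M$, so shrinking $M$ to a finite piece buys nothing. It is also unnecessary, since by the paper's convention an infinite structure lies in $\mathcal{H}$ when all its finite substructures do.

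Your Step (iv) is only a plan and lacks the same device on the $\mathcal{L}_{\mathrm{std}}$ side. Lemma \ref{non-overlapping pairs} needs as input a configuration $A\sqcup\bigsqcup_i A_i\sqcup\bigsqcup_{i<j}A_{ij}\in\mathcal{H}$ with pairwise disjoint pieces of uniform quantifier-free type. You do not say where it comes from, or why its quantifier-free type controls an arbitrary $\mathcal{L}_{\mathrm{std}}$-formula $\psi$ with an infinite chain. The paper takes the chain indiscernible in $(T^{\mathcal{H}^{\mathrm{std}}})^{\mathrm{Sk}}$, extends it on both sides, and lets $\mathcal{M}$ be the Skolem hull of the extensions, so the chain is a coheir Morley sequence over $\mathcal{M}$ read in reverse. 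Lemma \ref{algebraic closures in normal coheir morely sequence} then makes $\mathcal{M}$, $M_i$, $M_{ij}$ disjoint, and each $\mathcal{M}M_iM_jM_{ij}$ is a model of $T^{\mathcal{H}^{\mathrm{std}}}$. Apply Lemma \ref{non-overlapping pairs} in $\mathcal{H}$ (legitimate because $T^{\mathcal{H}}$ is $\mathrm{NSOP}_3$) to the blow-up of this configuration. This gives $\overline B\in\mathcal{H}$ weakly embedding $\tilde B$ for the free $3$-cycle $B$, so $B\in\mathcal{H}^{\mathrm{std}}$, and model completeness of $T^{\mathcal{H}^{\mathrm{std}}}$ turns $B$ into a $3$-cycle for $\psi$. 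So the cycle is produced in $\mathcal{H}$ from a blown-up chain, not converted from a cycle in $\mathcal{H}^{\mathrm{std}}$, and the Bodirsky--Bodor--Marimon transfer is not needed.

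Two smaller points. First, build irreflexivity and antisymmetry of $I$ into the definition of $\mathcal{H}^{\mathrm{std}}$; otherwise the quantifier-free types imposed by the $C$-blocks can conflict and $\tilde S$ is not well defined, so irreflexivity is not ``immediate.'' Second, for joint embedding, realize $\tilde A,\tilde B$ in $\mathbb{M}$ with $\tilde A\cap\tilde B=M$, using that $M$ is algebraically closed, so that the free amalgam over $M$ weakly embeds into $\tilde A\cup\tilde B\in\mathcal{H}$.
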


\begin{proof}
    Let the formula $\varphi(x, y)$ exhibit $\mathrm{SOP}_{2}$, so $\mathrm{SOP}_{1}$, in $T^{\mathcal{H}}$. Let  the $\mathcal{L}^{\mathrm{Sk}}$-theory $(T^{\mathcal{H}})^{\mathrm{Sk}}$ be a Skolemization of $T^{\mathcal{H}}$, and unless otherwise noted let $\mathrm{acl}$ denote $\mathrm{acl}^{(T^{\mathcal{H}})^{\mathrm{Sk}}}$. By the coheir characterization of $\mathrm{SOP}_{1}$ given by Fact \ref{sop1 coheirs}, there will exist coheir Morley sequences $\{\varphi(x, b_{i})\}_{i < \omega}$ and $\{\varphi(x, b^{i})\}_{i < \omega}$ over some model $M \prec \mathbb{M} \models (T^{\mathcal{H}})^{\mathrm{Sk}}$ such that $b_0 = b^0 = b$, $\{\varphi(x, b_{i})\}_{i < \omega}$ is $2$-inconsistent, and $\{\varphi(x, b^{i})\}_{i < \omega}$ is consistent. So we can apply Lemma \ref{noninterference of algebraic closures} to $\varphi(x, y)$ within $(T^{\mathcal{H}})^{\mathrm{Sk}}$. This gives us a model $M \prec \mathbb{M} \models (T^{\mathcal{H}})^{\mathrm{Sk}}$ (where $\mathbb{M}$ is a sufficiently saturated ambient model of $(T^{\mathcal{H}})^{\mathrm{Sk}}$), a tree $\{c_{\eta}\}_{\eta \in \omega^{< \omega}} \in \mathbb{M}$ of algebraically closed sets (in $T^{\mathrm{Sk}}$) containing $M$, and an $\omega^{\omega}$-indexed set $\{a_{\sigma}\}_{\sigma \in \omega^{\omega}} \in \mathbb{M}$ of algebraically closed sets (in $T^{\mathrm{Sk}}$) containing $M$, subject to the following conditions\footnote{Again, we may add variables to $\varphi(x, y)$ and incorporate parameters with index sets strictly containing those of the variables $x, y$, using the same notation as in the proof of Lemma \ref{noninterference of algebraic closures}.}:

    (a) For $\eta \in \omega^{< \omega}$, $\{\varphi(x, c_{\eta \smallfrown \langle i \rangle})\}_{i < \omega}$ is $2$-inconsistent.

    (b) For $\sigma \in \omega^{\omega}$,  $a_{\sigma} \models  \{\varphi(w, c_{\sigma|_{n}})\}_{n < \omega}$.

    (c) For $\sigma \in \omega^{\omega}$, let $A_{\sigma n}:=\mathrm{acl}(a_{\sigma}c_{\sigma|_{n}})\backslash(a_{\sigma} \cup c_{\sigma|_{n}})$. For $\eta \in \omega^{< \omega}$, $i < j < \omega$ let $C_{\eta ij}:= \mathrm{acl}(c_{\eta \smallfrown \langle i \rangle } c_{\eta \smallfrown \langle j \rangle }) \backslash (c_{\eta \smallfrown \langle i \rangle } \cup c_{\eta \smallfrown \langle j \rangle }) $. Then all of the $M, a_{\sigma} \backslash M,c_{\eta} \backslash M, A_{\sigma n}, C_{\eta ij}$ are disjoint.

    (d) For $C_{\eta ij}$, $A_{\sigma n}$ as defined in (c), $\mathrm{qftp}^{\mathcal{L}}(a_{\sigma} A_{\sigma n}c_{\sigma|_{n}})$ does not depend on $\sigma, n$. Moreover, $ \mathrm{qftp}^{\mathcal{L}}(c_{\eta \smallfrown \langle i \rangle }C_{\eta ij} c_{\eta \smallfrown \langle j \rangle }/M)$ does not depend on $\eta$, $i, j$ with $i < j$. 

Here (d) comes from the fact that, in applying Lemma \ref{noninterference of algebraic closures}, the statement of this lemma says in $T^{\mathrm{Sk}}$ that $\mathrm{tp}^{\mathcal{L}^{\mathrm{Sk}}}(a_{\sigma}c_{\sigma|_{n}})$ does not depend on $\sigma, n$ and  $ \mathrm{tp}^{\mathcal{L}^{\mathrm{Sk}}}(c_{\eta \smallfrown \langle i \rangle } c_{\eta \smallfrown \langle j \rangle }/M)$ does not depend on $\eta$, $i, j$ with $i < j$. We then choose enumerations of $A_{\sigma n}:=\mathrm{acl}^{(T^{\mathcal{H}})^{\mathrm{Sk}}}(a_{\sigma}c_{\sigma|_{n}})\backslash(a_{\sigma} \cup c_{\sigma|_{n}})$, $C_{\eta ij}:= \mathrm{acl}^{(T^{\mathcal{H}})^{\mathrm{Sk}}}(c_{\eta \smallfrown \langle i \rangle } c_{\eta \smallfrown \langle j \rangle }) \backslash (c_{\eta \smallfrown \langle i \rangle } \cup c_{\eta \smallfrown \langle j \rangle }) $ satisfying (d).

In (d), let $p(w, W, z)  :=\mathrm{qftp}^{\mathcal{L}}(a_{\sigma} A_{\sigma n}c_{\sigma|_{n}}/M)$ and $q(z_{1}, Z, z_{2}):= \mathrm{qftp}^{\mathcal{L}}(c_{\eta \smallfrown \langle i \rangle }C_{\eta ij} c_{\eta \smallfrown \langle j \rangle }/M)$. (Note that these quantifier-free types are in $\mathcal{L}$, not $\mathcal{L}^{\mathrm{Sk}}$.) Now let $\mathcal{H}^{\mathrm{std}}$ be the hereditary class of $\mathcal{L}^{\mathrm{std}}$-structures defined as follows. The hereditary class $\mathcal{H}^{\mathrm{std}}$ will consist only of $\mathcal{L}^{\mathrm{std}}$-structures for which $I$ is antisymmetric and anti-reflexive: there are no $A \in \mathcal{H}^{\mathrm{std}}$ and $b_1, b_2 \in P(A)$ with $I(b_1, b_2)$ and $I(b_2, b_1)$. And for $A$ such a $\mathcal{L}^{\mathrm{std}}$-structure, $A \in \mathcal{H}^{\mathrm{std}}$ if and only if  $\tilde{A} \in \mathcal{H}$, where $\tilde{A}$ is the $\mathcal{L}$-structure defined as follows. The $\mathcal{L}$-structure $\tilde{A}$ will consist of the disjoint union of:

  (1) the model $M \models T^{\mathcal{H}}$,

    (2) for each $a \in O(A)$, a set $\tilde{a}$ satisfying $\mathrm{qftp}(a_{\sigma}\backslash M/M)$,

    (3) for each $b \in P(A)$, a set $\tilde{b}$ satisfying $\mathrm{qftp}(b_{\eta}\backslash M/M)$,

    (4) for each $a \in O(a), b \in P(A)$ with $R(a, b)$, a set $R^{ab}$ realizing $p(\tilde{a}M, W, \tilde{b}M)$, and

     (5) for each $ b_1, b_{2} \in P(A)$ with $I(b_{1}, b_{2})$, a set $I^{ab}$ realizing $q(\tilde{b_{1}}M, Z, \tilde{b_{2}}M)$

     and  the set of instances of the relation symbols of $\mathcal{L}$ on $\tilde{A}$ will be \textit{minimal} such that $M$ is an induced substructure of $\tilde{A}$ and the requirements of (2)-(5) are satisfied. (In particular, when $a$ and $b$ are related by $R$, the points $\tilde{a}\tilde{b}M$ themselves must satisfy the instances of the relation symbols imposed by $p(\tilde{a}M, W, \tilde{b}M)$. A similar consideration holds for any $b_{1}, b_{2}$ related by $I$, and $q(\tilde{b_{1}}M, Z, \tilde{b_{2}}M)$. Note that $\tilde{A}$ will be well-defined, because $I$ is antireflexive and asymmetric while $R$ is between two different sorts, so the sets of instances of relation symbols imposed on the $\tilde{a}\tilde{b}M$, $\tilde{b}_{1}\tilde{b}_{2}M$ as in (4) and (5) are well-defined.)

     We first show that all of the structures in $\mathcal{H}^{\mathrm{std}}$ are sound. Otherwise (by definition, $I$ is irreflexive) there will be $\{a, b_1, b_2\} \in \mathcal{H}^{\mathrm{std}}$ with $R(a, b_i)$, $I(b_1, b_2)$. Then $\widetilde{\{a, b_1, b_2\}} \in \mathcal{H}$ so by model completeness of $T^{\mathcal{H}}$ we can assume $\widetilde{\{a, b_1, b_2\}} \subset \mathbb{M}$.  So there will be $\tilde{a}, \tilde{b_{1}}, \tilde{b_{2}} \subset \mathbb{M}$ and, for $i= 1, 2$, $R^{i} \models p(\tilde{a}M, W, \tilde{b_{i}}M)$, as well as $I \models q(\tilde{b_{1}}M, Z, \tilde{b_{2}}M)$, both within $\mathbb{M}$. But $Mb_{1}b_{2}I$ will be a model of $T^{\mathcal{H}}$ containing $Mb_{1}b_{2}$, since we specifically applied Lemma \ref{noninterference of algebraic closures} \textit{within the Skolemization} $(T^{\mathcal{H}})^{\mathrm{Sk}}$. By $I \models q(\tilde{b_{1}}M, Z, \tilde{b_{2}}M)$, $Mb_{1}b_{2}I\equiv^{\mathrm{qftp}} M (c_{\langle0 \rangle}\backslash M) (c_{\langle 1 \rangle}\backslash M)C_{\emptyset 01}$.  So by model completeness, $Mb_{1}b_{2}I\equiv M (c_{\langle0 \rangle}\backslash M) (c_{\langle 1 \rangle}\backslash M)C_{\emptyset 01}$. Particularly, $Mb_{1}b_{2}\equiv M (c_{\langle0 \rangle}\backslash M) (c_{\langle 1 \rangle}\backslash M)$. Similarly, by $R^{i} \models p(\tilde{a}M, W, \tilde{b_{i}}M)$, $Mab_{i}\equiv M (a_{\langle 0 \rangle^{\omega}}\backslash M) (c_{\langle 0 \rangle}\backslash M)$ for $i = 1, 2$. But this contradicts (a), (b).

     Moreover, $\mathbb{TP}\in \mathcal{H}^{\mathrm{std}}$, because $\widetilde{\mathbb{TP}} \in \mathcal{H}$: by (c) and the definition of $p(w, W, z)$ $q(z_{1}, Z, z_{2})$, the partitioned set consisting of the $M, a_{\sigma} \backslash M,c_{\eta} \backslash M, A_{\sigma n}, C_{\eta ij}$ obtained at the beginning of this proof defines a weak embedding from $\widetilde{\mathbb{TP}}$ into $\mathbb{M}$. So $\widetilde{\mathbb{TP}}$ weakly embeds into some $A \in \mathcal{H}$, and therefore  $\widetilde{\mathbb{TP}} \in \mathcal{H}$.

We next show that $\mathcal{H}$ is defined by a finite family of forbidden weakly embedded substructures. Let $\mathcal{H} = \mathcal{H}(\mathcal{F})$ for $\mathcal{F}$ finite, and let $n \geq 1$ be at least as large as the largest cardinality of any $F \in \mathcal{F}$. Then define $\mathcal{F}'$ to be the set of $\mathcal{L}^{\mathrm{std}}$-structures $A \notin \mathcal{H}^{\mathrm{std}}$ with $|A| \leq 2n$. We show that $\mathcal{H}^{\mathrm{std}} = \mathcal{H}(\mathcal{F}')$. For $\mathcal{H}^{\mathrm{std}} \supseteq \mathcal{H}(\mathcal{F}')$, suppose $A$ weakly embeds $F \in \mathcal{F}'$; we show $A \notin \mathcal{H}^{\mathrm{std}}$. We may assume $I$ is irreflexive and antisymmetric on $A$, so $\tilde{A}$, and $\tilde{F}$, is well-defined.  But $\tilde{F} \notin \mathcal{H}$, and the weak embedding $F\hookrightarrow A$ gives us a weak embedding $\tilde{F} \hookrightarrow \tilde{A}$, so $\tilde{A} \notin \mathcal{H}$ and $A \notin \mathcal{H}^{\mathrm{std}}$. 

To show $\mathcal{H}^{\mathrm{std}} \subseteq \mathcal{H} (\mathcal{F}')$, suppose $A \notin \mathcal{H}^{\mathrm{std}}$. We may assume $I$ is irreflexive and antisymmetric on $A$, because otherwise $A$ weakly embeds some $A_{0} \notin \mathcal{H}^{\mathrm{std}}$ with $|A_{0}| \leq 2$; then $A_{0} \in \mathcal{F}'$, and $A \notin \mathcal{H} (\mathcal{F}')$. We want to find $A_{0} \in \mathcal{F}'$ with $A_{0} \subset A$. We have  $\tilde{A} \notin \mathcal{H}$, so for some induced substructure $F \subseteq \tilde{A}$, $|F| \leq n$ and $F \notin\mathcal{H}$. Let the induced substructure $A_{0} \subset A$ consist of each of those $a \in O(A)$ for which $\tilde{a}$ contains a point of $F$, each of those $b \in O(A)$ for which $\tilde{b}$ contains a point of $F$, both of $a \in O(A)$, $b \in P(A)$ whenever $R^{ab}$ contains a point of $F$, and both of $b_1, b_2 \in P(A)$ whenever $I^{b_{1}b_{2}}$ contains a point of $F$. Since $|F| \leq n$ and the $\tilde{a}$, $\tilde{b}$, $R^{ab}$, $I^{b_{1}b_{2}}$ are all disjoint, $|A_{0}| \leq 2n$. Moreover, $\tilde{A_{0}} \supset F$ with $F$ as an induced substructure, so $\tilde{A_{0}} \notin \mathcal{H}$ because $F \notin\mathcal{H}$.  So $A_{0} \notin \mathcal{H}^{\mathrm{std}}$, and $A_{0} \in \mathcal{F}'$ because $|A_0| \leq 2n$, with $A_{0} \subset A$ as desired. 

Thus, by Fact \ref{Cherlin Shelah and Shi theorem}, $T^{\mathcal{H}^{\mathrm{std}}}$ is well-defined. We show that $T^{\mathcal{H}^{\mathrm{std}}}$ is complete; then $\mathcal{H}^{\mathrm{std}}$ will be the age of any model of $T^{\mathcal{H}^{\mathrm{std}}}$, as in the proof of Proposition \ref{reduction to generic structures}. As in the proof of that proposition it suffices to show that any two members of $\mathcal{H}^{\mathrm{std}}$ embed as induced substructures into a common member of $\mathcal{H}^{\mathrm{std}}$. Let $A, B \in \mathcal{H}^{\mathrm{std}}$; we show that $A \sqcup B \in \mathcal{H}^{\mathrm{std}}$ for $A \sqcup B $ the free amalgam of $A$ and $B$ over $\emptyset$, which will give us this common extension. Now, $\tilde{A} , \tilde{B} \in \mathcal{H}$, so by model completeness of $T^{\mathcal{H}}$, we may assume that $\tilde{A}, \tilde{B} \subset \mathbb{M} \models T^{\mathcal{H}}$. Since $M$ is algebraically closed in $\mathbb{M}$ (in the sense of $T^{\mathcal{H}}$), we may even, finding $M$-isomorphic copies $\tilde{A}', \tilde{B}'$ of $\tilde{A}, \tilde{B}$ with $\tilde{A}' \ind_{M}^{a} \tilde{B}'$, assume that $\tilde{A} \cap \tilde{B} = M$.  Then the free amalgam $\tilde{A} \sqcup_{M} \tilde{B}$ of $\tilde{A}$ and $\tilde{B}$ over $M$ will weakly embed in $\tilde{A} \cup \tilde{B} \subset \mathbb{M}$. But $\tilde{A} \cup \tilde{B}  \in \mathcal{H}$, so $\widetilde{A \sqcup B}= \tilde{A} \sqcup_{M} \tilde{B} \in \mathcal{H}$ and $A \sqcup B \in \mathcal{H}$.

     It remains to show $T^{\mathcal{H}^{\mathrm{std}}}$  is $\mathrm{NSOP}_{3}$, so that $\mathcal{H}^{\mathrm{std}}$ is the age of a structure with $\mathrm{NSOP}_{3}$ theory. It suffices to show no $\mathcal{L}^{\mathrm{std}}$-formula exhibits $\mathrm{SOP}_{3}$ in the Skolemization $(T^{\mathcal{H}^{\mathrm{std}}})^{\mathrm{Sk}}$ .  Let $(\mathbb{M}^{\mathrm{std}})^{\mathrm{Sk}} \models (T^{\mathcal{H}^{\mathrm{std}}})^{\mathrm{Sk}}$ be a sufficiently saturated ambient model of $(T^{\mathcal{H}^{\mathrm{std}}})^{\mathrm{Sk}}$. Let $\varphi(x, y)$ be a $\mathcal{L}^{\mathrm{std}}$-formula, and suppose that there is $\{a_{i}\}_{i \in \mathbb{Z}} \in (\mathbb{M}^{\mathrm{std}})^{\mathrm{Sk}}$ indiscernible in the sense of $(T^{\mathcal{H}^{\mathrm{std}}})^{\mathrm{Sk}}$ such that $(\mathbb{M}^{\mathrm{std}})^{\mathrm{Sk}} \models \varphi(a_{i}, a_{j}) $ for $i < j$. We must show that there are $b_0, b_1, b_2 \in (\mathbb{M}^{\mathrm{std}})^{\mathrm{Sk}}$ with $\varphi(b_0, b_1),\varphi(b_1, b_2), \varphi(b_2, b_0) $. We may assume that for some $\mathcal{M} \prec (\mathbb{M}^{\mathrm{std}})^{\mathrm{Sk}}$, $\{a_{i}\}_{i \in \mathbb{Z}}$ is indiscernible over $\mathcal{M}$ with $\mathrm{tp}(a_{0}/a_{\neq 0} \mathcal{M})$ finitely satisfiable in $\mathcal{M}$. (This is standard: extending $\{a_{i}\}_{i \in \mathbb{Z}}$ to $\{\alpha_{i}\}_{i \in \mathbb{Z}} \smallfrown \{a_{i}\}_{i \in \mathbb{Z}} \smallfrown \{\beta_{i}\}_{i \in \mathbb{Z}}$ , take $\mathcal{M} := \mathrm{acl}^{\mathrm{Sk}}(\{\alpha_i\}_{i \in \mathbb{Z}}\{\beta_i\}_{i \in \mathbb{Z}})$.) So $\{a_{i}\}_{i \in \mathbb{Z}}$ is a Morley sequence over $\mathcal{M}$ in a normal $\mathcal{M}$-coheir: $\{a_{i}\}_{i \in \mathbb{Z}}$ is a coheir Morley sequence over $\mathcal{M}$ when read in reverse, in the sense of $(T^{\mathcal{H}^{\mathrm{std}}})^{\mathrm{Sk}}$. Therefore, $\mathrm{tp}(a_{0} \ldots a_{n-1} /\mathcal{M} a_{n} \ldots a_{2n-1} \cup a_{2n} )$ extends to a $\mathcal{M}$-invariant type, so a $\mathcal{M} a_{2n}$-invariant type. So letting $\mathrm{acl}_{\mathcal{M}}, \mathrm{acl}$ denote $\mathrm{acl}_{\mathcal{M}}^{\mathrm{Sk}}, \mathrm{acl}^{\mathrm{Sk}}$, $\mathrm{acl}_{\mathcal{M}}(a_{0} \ldots a_{n-1} \cup a_{2n}) \cap \mathrm{acl}_{\mathcal{M}}(a_{n} \ldots a_{2n-1} \cup a_{2n})=\mathrm{acl}_{\mathcal{M}}(a_{2n})$. (We really only need this for $n =1$.) Let $M_{i}:= \mathrm{acl}(\mathcal{M}a_{i})\backslash \mathcal{M}$ and $M_{ij}= \mathrm{acl}(\mathcal{M}a_{i}a_{j}) \backslash (\mathcal{M} \sqcup M_{i} \sqcup M_{j} )$ for $i < j < \omega$. Then by Lemma \ref{algebraic closures in normal coheir morely sequence}, the $\mathcal{M}, M_i, M_{ij}$ are all disjoint.

     Thus, just in the original language $\mathcal{L}^{\mathrm{std}}$, we have found $\mathcal{M}, M_{i}, M_{ij} \in \mathbb{M}^{\mathrm{std}} :=(\mathbb{M}^{\mathrm{std}})^{\mathrm{Sk}}|_{\mathcal{L}^{\mathrm{std}}}$ such that:

     (i) the $\tilde{M}$, $\tilde{M}M_{i}$, $\tilde{M}M_{i}M_{j}M_{ij}$ are all models of $T^{\mathcal{H}^{\mathrm{std}}}$ (because they are Skolem closures).

     (ii) the $\mathcal{M}, M_i, M_{ij}$ are all disjoint

     (iii)  For all $i < j$, $\varphi(\mathcal{M}M_{i}, \mathcal{M}M_{j})$\footnote{(again adopting a similar notation to before for extended sets of parameters: $\varphi(A, B)$ will mean that the  parameters assigned to the variables $x, y$ will satisfy $\varphi(x, y)$.}.

     Now let the $\mathcal{L}^{\mathrm{std}}$-structure $B:= \mathcal{N} \sqcup \bigsqcup^{2}_{i=0} N_{i} \sqcup N_{01} \sqcup N_{12} \sqcup N_{20}$ such that:

     \begin{itemize}
         \item for $i \leq 2$,  $\mathcal{N} N_{i}N_{i+1 \mathrm{\: mod \:} 3} N_{i(i+1 \mathrm{\: mod \:} 3)}\equiv^{\mathrm{qftp}} \mathcal{M}M_{0}M_{1}M_{01}$ in $\mathcal{L}^{\mathrm{std}}$
         \item the sets of instances of the relations $R, I$ on $B$ are minimal subject to the previous condition.  
     \end{itemize}

Then, to get  $b_0, b_1, b_2 \in \mathbb{M}^{\mathrm{std}}$ with $\varphi(b_0, b_1),\varphi(b_1, b_2), \varphi(b_2, b_0) $ it suffices to show $B \in \mathcal{H}^{\mathrm{std}}$. To see this, if $B \in \mathcal{H}^{\mathrm{std}}$ then $B$ will embed in $\mathbb{M}^{\mathrm{std}}$. By (i) and model completeness of $T^{\mathcal{H}^{\mathrm{std}}}$, the image of each $\mathcal{N} N_{i}N_{i+1 \mathrm{\: mod \:} 3} N_{i(i+1 \mathrm{\: mod \:} 3)}$ will satisfy the same type as $\mathcal{M}M_{0}M_{1}M_{01}$. So by (iii), the images  $b_0, b_1, b_2 \in \mathbb{M}^{\mathrm{std}}$ of $\mathcal{M}N_{0}, \mathcal{M}N_{1}, \mathcal{M}N_{2}$ will satisfy $\varphi(b_0, b_1),\varphi(b_1, b_2), \varphi(b_2, b_0) $. These $b_i$ (or really the coordinates of the $b_i$ actually assigned to the variables $x$, $y$ of $\varphi(x, y)$) will be as desired. 

     So it remains to show that $B \in \mathcal{H}^{\mathrm{std}}$. Because $\mathcal{M}, M_{i}, M_{ij} \in \mathbb{M}^{\mathrm{std}} :=(\mathbb{M}^{\mathrm{std}})^{\mathrm{Sk}}|_{\mathcal{L}^{\mathrm{std}}}$, $\mathcal{M} \sqcup \bigsqcup_{i < \omega} M_{i}  \sqcup \bigsqcup_{i < \omega} M_{ij} \in \mathcal{H}^{\mathrm{std}}$, so $\widetilde{\left(\mathcal{M} \sqcup \bigsqcup_{i < \omega} M_{i}  \sqcup \bigsqcup_{i < \omega} M_{ij}\right)} \in \mathcal{H}$. We apply Lemma \ref{non-overlapping pairs} to $\tilde{\mathcal{M}} \sqcup \bigsqcup_{i < \omega} \tilde{M_{i}}  \sqcup \bigsqcup_{i < \omega} \tilde{M_{ij}} = \widetilde{\left(\mathcal{M} \sqcup \bigsqcup_{i < \omega} M_{i}  \sqcup \bigsqcup_{i < \omega} M_{ij}\right)} \in \mathcal{H}$ (where the instances of the relation symbols on $\tilde{\mathcal{M}} \sqcup \bigsqcup_{i < \omega} \tilde{M_{i}}  \sqcup \bigsqcup_{i < \omega} \tilde{M_{ij}}$ are specified by this equation). Doing so, we get a structure $\overline{B} = \overline{\mathcal{N}} \sqcup \bigsqcup^{2}_{i=0} \overline{N}_{i} \sqcup \overline{N}_{01} \sqcup \overline{N}_{12} \sqcup \overline{N}_{20} \in \mathcal{H}$, satisfying the conditions that for $i \leq 2$, $\overline{\mathcal{N}} \overline{N}_{i}\overline{N}_{i+1 \mathrm{\: mod \:} 3} \overline{N}_{i(i+1 \mathrm{\: mod \:} 3)} \equiv^{\mathrm{qftp}} \tilde{\mathcal{M}}\tilde{M_{0}}\tilde{M_{1}}\tilde{M_{01}} = \widetilde{\left(\mathcal{M}M_{0}M_{1}M_{01}\right)}$. But $\overline{B}$ will weakly embed $\tilde{B}$, since $\tilde{B}= \tilde{\mathcal{N}} \sqcup \bigsqcup^{2}_{i=0} \tilde{N_{i}} \sqcup \tilde{N_{01}} \sqcup \tilde{N_{12}} \sqcup \tilde{N_{20}}$, with $\mathcal{N} N_{i}N_{i+1 \mathrm{\: mod \:} 3} N_{i(i+1 \mathrm{\: mod \:} 3)}\equiv^{\mathrm{qftp}} \mathcal{M}M_{0}M_{1}M_{01}$ implying $\widetilde{\left(\mathcal{N} N_{i}N_{i+1 \mathrm{\: mod \:} 3} N_{i(i+1 \mathrm{\: mod \:} 3)}\right)}=\tilde{\mathcal{N}} \tilde{N_{i}}\widetilde{N_{i+1 \mathrm{\: mod \:} 3}}\widetilde{ N_{i(i+1 \mathrm{\: mod \:} 3)}} \equiv^{\mathrm{qftp}} \tilde{\mathcal{M}}\tilde{M_{0}}\tilde{M_{1}}\tilde{M_{01}} = \widetilde{\left(\mathcal{M}M_{0}M_{1}M_{01}\right)}$ for each $i < 3$. So $\overline{B} \in \mathcal{H}$, implies $\tilde{B} \in \mathcal{H}$ and $B \in \mathcal{H}^{\mathrm{std}}$, as desired.

\end{proof}

We have now reached the combinatorial phase of the proof, where we aim to show that the complete theories $T^{\mathcal{H}}$ of Cherlin, Shelah and Shi (\cite{CSS99}) can never be strictly $\mathrm{NSOP}_{3}$ by showing that the hereditary class $\mathcal{H}^{\mathrm{std}}$ from Lemma \ref{standard hereditary class} cannot exist. This will be Corollary \ref{main combinatorial lemma} below. And we really will look at $\mathcal{H}^{\mathrm{std}}$ from the purely combinatorial point of view: the only part of the conditions on $\mathcal{H}^{\mathrm{std}}$ from Lemma \ref{standard hereditary class} that involves model-theoretic vocabulary is that $\mathcal{H}^{\mathrm{std}}$ is the age of a structure whose theory is $\mathrm{NSOP}_{3}$. Yet what we use about this is the above Lemma \ref{non-overlapping pairs}, an entirely combinatorial statement allowing us to obtain cycles from sequences within an hereditary class. (And Lemma \ref{non-overlapping pairs} really only uses the assumption that a hereditary class is the age of the models of a theory whose \textit{existential} formulas do not exhibit $\mathrm{NSOP}_{3}$.)

\begin{remark}
    \label{potential noninterference in SOP_2} 

    We are therefore in a very unusual position, because in proving $\mathcal{H}^{\mathrm{std}}$ from Lemma \ref{standard hereditary class} cannot exist, we will be proving a statement that appears at the formal level like a dichotomy between \textit{simplicity} and $\mathrm{SOP}_{3}$, rather than between $\mathrm{NSOP}_{2}$ and $\mathrm{SOP}_{3}$. Specifically, we will be showing (Corollary \ref{main combinatorial lemma}) that if $\mathcal{H}$ is a hereditary class of $\mathcal{L}^{\mathrm{std}}$-structures defined by a finite family of forbidden weakly embedded substructures, then if the relations $R(x, y)$, $I(y_1, y_2)$ of this language give a formal instance of the tree property in the sense that the structures of $\mathcal{H}$ are sound and $\mathbb{TP} \in \mathcal{H}$, the theory of every structure with $\mathcal{H}$ as the age must have $\mathrm{SOP}_{3}$. Naïvely, one might make the incorrect inference from this that there is no hereditary class $\mathcal{H}$ defined by finitely many forbidden weak substructures such that the theory $T^{\mathcal{H}}$ of Cherlin, Shelah and Shi (\cite{CSS99}) is a non-simple $\mathrm{NSOP}_{3}$ theory, or equivalently (by the reasoning in the proof of Proposition \ref{reduction to generic structures} and Observation \ref{main observation, restated}) a hereditary class $\mathcal{H}$ for which every theory whose models have age $\mathcal{H}$ is non-simple, but not every theory whose models have age $\mathcal{H}$ has $\mathrm{SOP}_{3}$. This incorrect inference would contradict Observation \ref{main observation, restated} and falsely imply that the assumptions of the main theorem of this section, Theorem \ref{main theorem 3, restated}, are overly powerful. What, fortunately for us, prevents this from happening is discussed in Remark \ref{noninterference in tree property}: if $T^{\mathcal{H}}$ is $\mathrm{NSOP}_{3}$ but has the tree property, the tree property itself will not be enough to obtain an instance of the tree property with consistency witnesses \textit{where the algebraic closures that interest us do not overlap}, and thus not enough to get $\mathcal{H}^{\mathrm{std}}$ as in Lemma \ref{standard hereditary class}. It is $\mathrm{SOP}_{2}$ (really $\mathrm{SOP}_{1}$), not just the tree property in general, which gives us that special instance of the tree property.

    So this apparent formal dichotomy between simplicity and $\mathrm{SOP}_{3}$ (Corollary \ref{main combinatorial lemma} below) is quite powerful, though, as we have just seen, it is not too powerful. Its proof, as we are about to see, is also hard enough that one may wish to make it easier. This prompts the question of whether we can prove that the theories of Cherlin, Shelah and Shi (\cite{CSS99}) cannot be strictly $\mathrm{NSOP}_{3}$--and thus, by Proposition \ref{reduction to generic structures}, prove the main theorem of this section--through an approach that appears more like the obvious one: proving an analogous statement to Corollary \ref{main combinatorial lemma}, but one that appears more like a $\mathrm{NSOP}_{2}$-$\mathrm{SOP}_{3}$ dichotomy instead of a simplicity-$\mathrm{SOP}_{3}$ dichotomy. Specifically, let the ``standard $\mathrm{SOP}_{2}$-structure" be the $\mathcal{L}_{\mathrm{std}}$-structure $\mathbb{SOP}_{2}$ consisting of distinct $b_{\eta} \in P(\mathbb{TP})$ for $\eta \in 2^{< \omega}$, and $a_{\sigma} \in O(\mathbb{TP})$ for $\sigma \in 2^{\omega}$, with all and only the following instances of $R$ and $I$: $I(b_{\eta_1},b_{\eta_2})$ for incomparable $\eta_1, \eta_2 \in 2^{<\omega}$  with $\eta_{2}$ lexicographically greater than $\eta_{1}$, and $R(a_{\sigma}, b_{\sigma|_{n}} )$ for $\sigma \in \omega^{\omega}$, $n < \omega$. Thus, we can ask whether proving the following statement suffices for proving the main theorem of this section:

 (*): If $\mathcal{H}$ is a hereditary class of $\mathcal{L}^{\mathrm{std}}$-structures defined by a finite family of forbidden weakly embedded substructures, then if the structures of $\mathcal{H}$ are sound and $\mathbb{SOP}_2 \in \mathcal{H}$, every theory whose models have age $\mathcal{H}$ must have $\mathrm{SOP}_{3}$.

In fact, this statement has a quicker proof than its counterpart for $\mathrm{TP}$, Corollary \ref{main combinatorial lemma}; we sketch this simpler proof in Remark \ref{potential simplified proof} below. So if we can show that the main theorem of this section follows from this statement (*), just as we now know by Lemma \ref{standard hereditary class} that the main theorem follows from Corollary \ref{main combinatorial lemma}, we will have expedited the proof of the main theorem.

We will be able to do so if the answer to the following general question on $\mathrm{SOP}_{2}$ is yes:

(**): Is it true that, in any theory with $\mathrm{SOP}_{2}$, there is a set $C$ and instance $\varphi(x, y)$, $\{b_{\eta}\}_{\eta \in 2^{< \omega}}$ of $\mathrm{SOP}_{2}$--i.e. with $\{\varphi(x, b_{\eta_1}), \varphi(x, b_{\eta_{2}})\}$ inconsistent for $\eta_1, \eta_2 \in 2^{<\omega}$ incomparable, and $\{\varphi(x, b_{\sigma|_{n}})\}_{n < \omega}$ for $\sigma \in 2^{\omega}$--together with consistency witnesses $a_{\sigma} \models \{\varphi(x, b_{\sigma|_{n}})\}_{n < \omega}$ for $\sigma \in 2^{\omega}$, chosen such that $\mathrm{tp}_{C}(b_{\eta_1}b_{\eta_2})$ for $\eta_1, \eta_2 \in 2^{<\omega}$ incomparable does not depend on the choice of $\eta_{1}, \eta_{2}$ with $\eta_{2}$ lexicographically greater than $\eta_{1}$ and $\mathrm{tp}_{C}(a_{\sigma}b_{\sigma|_{n}})$ does not depend on the choice of $\sigma \in 2^{ \omega}$, $n < \omega$, such that the sets $\mathrm{acl}(C)$ $\mathrm{acl}_{C}(a_{\sigma}) \backslash \mathrm{acl}(C)$ for $\sigma \in 2^{\omega}$, $\mathrm{acl}_{C}(b_{\eta}) \backslash \mathrm{acl}(C)$ for $\eta \in 2^{< \omega}$, $\mathrm{acl}_{C}(a_{\sigma} b_{\sigma|_{n}})\backslash (\mathrm{acl}_{C}(a_{\sigma} ) \cup \mathrm{acl}_{C}(b_{\sigma|_{n}}))$ for $\sigma \in \omega^{\omega}$, and $\mathrm{acl}_{C}(b_{\eta_{1}}b_{\eta_{2}}) \backslash (\mathrm{acl}_{C}(b_{\eta_{1}})\cup \mathrm{acl}_{C}(b_{\eta_{2}}))$ for $\eta_1, \eta_2 \in 2^{<\omega}$ incomparable are all disjoint?

Here, having already shown (Lemma \ref{noninterference of algebraic closures}) that any instance of $\mathrm{SOP}_{2}$ will give us a special instance of $\mathrm{TP}$ where the algebraic closures that interest us do not overlap nontrivially, we are now asking whether any instance of $\mathrm{SOP}_2$ will give us a special instance of $\mathrm{SOP}_{2}$ of the same kind. Then, by the proof of Lemma \ref{standard hereditary class}, a positive answer to (**) will imply that, if one of the complete theories of Cherlin, Shelah and Shi is strictly $\mathrm{NSOP}_{3}$, statement (*) is false--so only the quicker task of proving statement (*) would be necessary to complete the proof of the main theorem of this section, rather than the more technically difficult proof of Corollary \ref{main combinatorial lemma} that we currently need.

While we were not able to answer question (**), we were able to show that a statement analagous to a positive answer to (**) is true for obtaining a special instance of $\mathrm{SOP}_{1}$, rather than a special instance of $\mathrm{SOP}_{2}$. However, because the version of (*) with the analogously defined ``standard $\mathrm{SOP}_{1}$-structure" $\mathbb{SOP}_{1}$ instead of the ``standard $\mathrm{SOP}_{2}$-structure" $\mathbb{SOP}_{2}$ does not appear much easier to prove than Corollary \ref{main combinatorial lemma}, our statement analogous to a positive answer to (**) for a special instance of $\mathrm{SOP}_{1}$ will not likely help expedite our proof of the main theorem of this section, and we will omit the proof.

 \end{remark}

In this combinatorial part of the proof, we will first obtain criteria for a $\mathcal{L}^{\mathrm{std}}$-structure $A$ to (weakly) embed into the standard $\mathrm{TP}$-structure, $\mathbb{TP}$, in the form of requirements that $A$ must not contain certain kinds of cycles involving the relations $R(x, y)$ and $I(y_{1}, y_{2})$. Then we will show--for $\mathcal{H}$ a hereditary class of sound $\mathcal{L}^{\mathrm{std}}$-structures defined by finitely many forbidden weakly embedded substructures which is the age of a structure whose theory is $\mathrm{NSOP}_{3}$--that there is some $A \notin \mathcal{H}$ such that $A$ satisfies the embedding criterion. Thus we will obtain $\mathbb{TP} \notin \mathcal{H}$, so the conditions on $\mathcal{H}^{\mathrm{std}}$ in Lemma \ref{standard hereditary class} are never satisfied.

Our cycle-freeness conditions for a structure to embed into the standard  $\mathrm{TP}$-structure will be analogous to the far more basic observation used in the proof of Theorem 3.1 of \cite{ApproxOrder}, that directed graphs without directed cycles embed into the infinite chain graph $(\omega, <)$ (Claim 3.16, \cite{ApproxOrder}).  Our conditions that will guarantee that a $\mathcal{L}^{\mathrm{std}}$-structure $A$ embeds into $\mathbb{TP}$ are twofold:

First, $A$ will be a \textit{presidium} (Definition \ref{alternating cycle}.1), a sound $\mathcal{L}^{\mathrm{std}}$-structure where $P(A)$ can be decomposed into a disjoint union of linear orders for $I$. Embedding into a presidium itself follows from a cycle-freeness condition, the absence of any \textit{split cycles} (Definition \ref{split cycle}) below.

Second, $A$ will omit any \textit{alternating cycles} (Definition \ref{alternating cycle}.1). In order to obtain a presidium omitting alternating cycles, we will actually define the larger, more robust class of \textit{potentially pinched alternating cycles} (Definition \ref{potentially pinched alternating cycle, flat}.1 below) and find a presidium omitted from $A$ without any potentially pinched alternating cycles (step three and Lemma \ref{eliminating potentially pinched alternating cycles} below).

We will give the necessary definitions for these conditions, then show that these conditions are sufficient to embed into the standard $\mathrm{TP}$-structure.

\begin{definition}
\label{alternating cycle}    
    (1) A \textit{presidium} is a sound $\mathcal{L}_{\mathrm{std}}$-structure $A$ such that $P(A) = \bigsqcup_{i \in I} C_i$ for some $C_i$, where $I(b, c)$ only if $b, c $ belong to some common $C_i$, and the $C_i$ are linearly ordered by $I$. (This means that $I$ is transitive and antisymmetric on $C_i$, as well as linear on $C_i$: for each distinct $b, c \in C_{i}$, either $I(b,c)$ or $I(c, b)$.) We call the $C_{i}$ the \textit{convivia} of $A$.

    (2) For $A$ an $\mathcal{L}_{\mathrm{std}}$-structure and $a, b \in P(A)$, let $I^{*}(a, b)$ denote that either $I(a, b)$ or $I(b, a)$. Then an \textit{alternating cycle} in $A$ consists of distinct $b_{1}, \ldots, b_{2n} \in P(A)$, $a_1, \ldots, a_n \in O(A)$ for $n \geq 1$ with $I^{*}(b_i, b_{i+1} )$ for odd $i \leq 2n$, $R(a_i, b_{2i})$ and $R(a_i, b_{2i+1})$ for $i= 1, \ldots, n-1$, and $R(a_n, b_{2n})$ and $R(a_n, b_{1})$. 
\end{definition}

\begin{lemma}
    \label{embedding into the standard tp-structure} Let $A$ be a finite presidium with no alternating cycles. Then $A$ (weakly) embeds into the standard $\mathrm{TP}$-structure.
\end{lemma}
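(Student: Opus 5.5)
The plan is to build the embedding by induction on the number of convivia, placing each convivium as a set of siblings $\{b_{\eta \smallfrown \langle i \rangle}\}_{i}$ in $\mathbb{TP}$, ordered compatibly with $I$. Each $a \in O(A)$ will be sent to some $a_{\sigma}$ whose branch $\sigma$ passes through the images of all of its $R$-neighbours.

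Two observations come first. By soundness, if $R(a,b)$ and $R(a,b')$ with $b \neq b'$, then $b$ and $b'$ lie in distinct convivia: elements of a common convivium are $I^{*}$-related, and soundness forbids $R(a,b)$, $R(a,b')$, $I(b,b')$. So the $R$-neighbourhood $N(a)$ of each $a$ picks at most one element from each convivium. It therefore suffices to embed $P(A)$ into $\{b_{\eta}\}_{\eta \in \omega^{<\omega}}$ so that two conditions hold. First, each convivium $C$ goes onto siblings $b_{\eta_{C} \smallfrown \langle i \rangle}$, with the $I$-order sent to the order on $i$. Second, for each $a$, the images of $N(a)$ are pairwise $\unlhd$-comparable. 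We then send each $a$ to $a_{\sigma}$ for some branch $\sigma$ through the deepest image, choosing distinct branches for distinct $a$; this is possible since there are continuum many branches through any node. Under these two conditions, $I$-instances and $R$-instances are preserved, and injectivity is automatic.

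To organise the construction, form the bipartite incidence graph $\Gamma$ on the convivia and the elements of $O(A)$. Join $a$ to $C$ whenever $a$ has an $R$-neighbour in $C$, and label the edge by that (unique) neighbour. The no-alternating-cycles hypothesis should say exactly the following. In any cycle $a_{1} C_{1} a_{2} C_{2} \cdots a_{n} C_{n} a_{1}$ of $\Gamma$ with distinct vertices, some convivium $C_{k}$ is entered and exited through the \emph{same} element $b$; otherwise the labels produce distinct $b_{1}, \ldots, b_{2n}$ forming an alternating cycle. (For $n=1$ this case is already excluded by soundness.)

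The induction would then try to locate a ``leaf'' convivium $C$. Such a $C$ has a distinguished element $b^{*} \in C$ (possibly absent) with the following property: every $a$ that is $R$-adjacent to some element of $C \setminus \{b^{*}\}$ has all of its other neighbours in a part of $A$ that we may place \emph{above} $C$ in the tree. Concretely, I would split along the cut $b^{*}$ and root the tree at a convivium chosen as an endpoint of a longest path in $\Gamma$, modulo the identification of edges sharing a label. Given $A$ minus the convivium $C$, already embedded by the inductive hypothesis, we place $C$ as fresh siblings below a node lying deep enough on the branches of all the $a$ that touch $C$.

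The main obstacle is justifying the existence of such a leaf, that is, turning the cycle condition on $\Gamma$ into a tree-like (forest) structure once $\Gamma$ is quotiented by the relation ``two edges at $C$ carry the same label''. I would first try to prove that this quotient graph, in which each element $b$ becomes a vertex joined to its convivium and to its $R$-neighbours, has the property that every cycle passes through some $b$ twice consecutively. This would force all cycles to be ``pinched'' at elements of $P(A)$. I would then run a block decomposition at these cut-elements to obtain the required partial order of convivia, along which the nodes $\eta_{C}$ can be chosen.
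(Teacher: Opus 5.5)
Your reduction is correct and matches how the paper finishes. By soundness each $N(a)$ meets each convivium at most once, convivia must go onto sibling sets in $I$-order, and once every $N(a)$ is a $\unlhd$-chain you can send $a$ to a fresh branch. Your reformulation of the hypothesis (every cycle of $\Gamma$ is pinched at some convivium) is also right.

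The remaining step, producing a convivium that can be placed last, is the entire content of the lemma, and you do not supply it. Worse, the mechanism you sketch fails. Take $C_1=\{x_1,x_2\}$, $C_2=\{y_1,y_2\}$, $C_3=\{z_1,z_2\}$, with $N(a_1)=N(a_2)=\{x_1,y_1\}$, $N(a_3)=\{x_2,z_1\}$ and $N(a_4)=\{y_2,z_1\}$. This is a sound presidium with no alternating cycles, and its embedding is rigid:
\begin{itemize}
\item If a convivium $D$ lies below an element of $C$, every $O$-point joining $C$ and $D$ meets $C$ in that element.
\item All three pairs of convivia are joined, so the convivia are stacked.
\item The top convivium must therefore meet every $O$-point touching it in a single element. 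Only $C_3$ does (at $z_1$), so $C_3$ is on top, $C_1$ and $C_2$ hang below $z_1$, and then either $C_2$ lies below $x_1$ or $C_1$ lies below $y_1$.
\end{itemize}

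This example breaks both parts of your plan. First, the pinch elements $x_1,y_1$ of the cycle $x_1a_1y_1a_2$ are not cut vertices. In the graph on elements, convivia and $O$-points, everything except $C_3,z_2$ lies in one $2$-connected block, so a block decomposition at cut elements cannot produce this order.

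Second, your inductive step fails, because an arbitrary embedding of $A\setminus C$ need not extend:
\begin{itemize}
\item $A\setminus C_2$ legitimately embeds with $C_1$ on top and $C_3$ below $x_2$. Then $C_2$ cannot be added at all, since $y_1$ must be comparable with $x_1$ (via $a_1$) and $y_2$ with $z_1$ (via $a_4$). The same happens for $C_1$ by symmetry.
\item $A\setminus C_3$ embeds with $C_1$ on top and $C_2$ below $x_1$. Then there is no node below both $x_2$ and $y_2$ under which to put $C_3$, which in fact has to go on top.
\end{itemize}
Also, when your distinguished $b^*$ has material hanging below it, $C$ cannot be placed ``below everything'' in the first place. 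An induction of this kind needs a strengthened hypothesis controlling the embedding of $A\setminus C$. For instance, adjoin to $A\setminus C$ a new $O$-point with neighbourhood $\bigcup_a (N(a)\setminus C)$ and show that no alternating cycle is created. Proving that a suitable $C$ exists is exactly the unproved part.

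The paper supplies this missing ingredient by saturation rather than by a leaf induction.
\begin{itemize}
\item Its first claim: if $e$ joins $b\in C$ to $c\in C'$, and $a\in C$, $d\in C'$ are other elements, then one can adjoin a new $O$-point joining $a$ to $c$, or one joining $b$ to $d$, without creating alternating cycles.
\item After padding convivia to size at least $4$ and iterating, in every joined pair of convivia one whole convivium shares $R$-neighbours with a single element of the other. This defines $a<b$.
\item Forbidden small alternating cycles give antisymmetry and no $3$-cycles for $<$.
\item Two further claims (adjoining points joining $a$ and $c$ when $a<b<c$ or $a,c<b$) make $<$ a transitive tree order.
\item The embedding is then built by induction on $<$-height.
\end{itemize}
What your proposal lacks is some such global argument that decides, consistently, which convivium of each joined pair lies above.
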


\begin{proof}
    We first prove the following:

        \begin{claim}
       \label{order construction 1} Let $A$ be a presidium with no alternating cycles. Let $a, b$ be distinct elements of $P(A)$ belonging to a common convivium, and let $c, d$ be distinct elements of $P(A)$ belonging to a different common convivium. Suppose that there is $e \in O(A) $ such that $R(e, b)$ and $R(e, c)$. Then either the presidium $A' : = A \sqcup \{f\}$, $f \in O(A')$ whose instances of $R, I$ are exactly those of $A$ together with $R(f, a)$ and $R(f, c)$ has no alternating cycles, or the presidium $A' : = A \sqcup \{f\}$, $f \in O(A')$ whose instances of $R, I$ are exactly those of $A$ together with $R(f, b)$ and $R(f, d)$ has no alternating cycles.
    \end{claim}
\begin{proof}
    (of claim)
    
    Suppose that the first of the two possibilities stated in the claim fails: the presidium $A' : = A \sqcup \{f\}$, $f \in O(A')$ whose instances of $R, I$ are exactly those of $A$ together with $R(f, a)$ and $R(f, c)$ does have an alternating cycle. Then because $A$ itself does not already have any alternating cycles, there is a sequence 
   
   $$a_1, a_2, a^{1}, a_{3}, a_{4}, a^{2}, \ldots a_{2k-1}, a_{2k}, a^{k}, a_{2k+1}, a_{2k+2}, \ldots a_{2n-1}, a_{2n}, a^{n}, a_{2n+1}, a_{2n+2} $$

for $n \geq 1$ consisting of distinct $a_i \in P(A)$, $a^i \in O(A)$ with $I^{*}(a_i, a_{i+1})$ for $i \leq 2n+2$ odd, $R(a^i, a_{2i})$ and $R(a^i, a_{2i+1})$ for $i \leq n$,  and $a_1 = a$ and $a_{2n+2} = c$.  We claim $a_2 = b$. Suppose otherwise, and suppose first that one of the $a_i$ (but not $a_1= a$ or $a_2$) is equal to $b$; say this is $a_k$.  In the case that $k$ is odd, we have $I^{*}(a_k, a_2)$ because $I^{*}(a, b) $ and $I^{*}(a_{1}, a_{2})$ for $a_{1} = a$, so $a_{2}$ and $a_{k} = b$ are distinct members of the same convivium. So the interval of the $a_i, a^{i}$, as ordered according to the above enumeration, between $a_{2}$ and $a_{k}$ inclusive will comprise an alternating cycle, a contradiction.  In the case that $k$ is even, we have $I^{*}(a_k, a_2)$ as before, but also $I^{*}(a_{k-1}, a_k)$, so $a_{2}$ and $a_{k-1}$ belong to the same convivium and $I^{*}(a_{k-1}, a_{2})$. Thus, the interval of the $a_i, a^{i}$ between $a_{2}$ and $a_{k-1}$ inclusive will comprise an alternating cycle, again a contradiction.  So given our supposition that $a_2 \neq b$, we see that none of the $a_i$ are equal to $b$. Now suppose one of the $a^i$, say $a^{k}$, is equal to $e$.  Then as before, $I^{*}(b, a_{2})$, while also $R(a^{k}, b)$. By this and the fact that none of the $a_i$ are equal to $b$, the interval between $a_{2}$ and $a^{k}$ inclusive together with $b$ will comprise an alternating cycle, giving us a contradiction yet again. So because none of the $a_i$ are equal to $b$ or $a_i$ to $e$, while $R(e, a_{2n+2})$ (because $a_{2n+2}=c$), $R(e, b)$ and $I^{*}(b, a_1)$, the $a_i$ and $a^i$ together with $e$ and $b$ will comprise an alternating cycle, another contradiction.

We have shown that $a_2 = b$; before proceeding, we will also show that none of the $a^{i}$ are equal to $e$. Suppose, say, that $a^{k} = e$; then $R(a^{k}, a_{2n+2})$. So we get another contradiction: the interval between $a^k$ and $a_{2n+2}$ inclusive comprises an alternating cycle.

In summary, taking the interval between $a_2$ and $a_{2n+2}$ inclusive in the above enumeration of the $a_i$ and $a^{i}$, we get a sequence

$$h_{2}, h^{1}, h_{3}, h_{4}, h^{2}, \ldots h_{2k-1}, h_{2k},  h^{k}, h_{2k+1}, h_{2k+2}, \ldots h_{2n-1}, h_{2n},  h^{n}, h_{2n+1}, h_{2n+2}$$

for $n \geq 1$ consisting of distinct $h_i \in P(A)$ and $h^i \in O(A)$ with $I^{*}(h_i, h_{i+1})$ for odd $i$ with $2 \leq i \leq  2n+2$, $R(h^{1}, h_2)$ and $R(h^1, h_3)$, $R(h^i, h_{2i})$ and $R(h^i, h_{2i+1})$ for $i = 2, \ldots n $, $h_{2}= b $ and $h_{2n+2} = c$, and all of the $h^{i}$ distinct from $e$.

Now suppose the second of the two possibilities stated in the claim also fails. Then similarly, we get a sequence

$$g_{2}, g^{1}, g_{3}, g_{4}, g^{2}, \ldots g_{2k-1}, g_{2k},  g^{k}, g_{2k+1}, g_{2k+2}, \ldots g_{2m-1}, g_{2m},  g^{m}, g_{2m+1}, g_{2m+2}$$

for $m \geq 1$ consisting of distinct $g_i \in P(A)$ and $g^i \in O(A)$ with $I^{*}(g_i, g_{i+1})$ for odd $i$ with $2 \leq i \leq  2m+2$, $R(g^{1}, g_2)$ and $R(g^1, g_3)$, $R(g^i, g_{2i})$ and $R(g^i, g_{2i+1})$ for $i = 2, \ldots m $, $g_{2}= c $ and $g_{2m+2} = b$, and all of the $h^{i}$ distinct from $e$. (So all that is different about these conditions is that $b$ and $c$ are reversed.)

Now suppose that the intersection of these two sequences is more than just $\{b, c\}$. Suppose first that the first of the $g_i, g^i$ besides $g_2=b$ in this intersection, per the above enumeration, is of the form $g^i$, say $g^k$, and that it coincides with $h^{\ell}$. Then the interval between $g_2$ and $g^k$ inclusive (again per that enumeration), together with the interval between $h^{\ell} = g^k$ and $h_{2n+2} = g_2$ (per the above enumeration on the $h_i, h^i$), will comprise an  alternating cycle, a contradiction. Now suppose that the first of the $g_i, g^i$ besides $g_2$ in the intersection is of the form $g_i$, say $g_k$, and it coincides with $h_\ell$. First consider the cases where $k$ is even and $\ell$ is even, or $k$ is odd and $\ell$ is odd. Then the interval between $g_2$ and $g_k$ inclusive, together with the interval between $h_{\ell} = g_k$ and $h_{2n+2} = g_2$, will comprise an alternating cycle, again a contradiction. Next, consider the case where $k$ is even and $\ell$ is odd. Then $I^{*}(g_{k-1}, g_{k})$ and $I^{*}(h_{\ell}, h_{\ell+1})$, so $g_{k-1}$ and $h_{\ell+1}$ share a convivium and $I^{*}(g_{k-1}, h_{\ell+1})$. Then the interval between $g_2$ and $g_{k-1}$ inclusive, together with the interval between $h_{\ell+1}$ inclusive and  $h_{2n+2} = g_2$, will comprise an alternating cycle, again a contradiction. Lastly, consider the case where $k$ is odd and $\ell$ is even. Then, using that all of the $g^i$, $h^i$ are distinct from $e$, the interval between $g_{2}=c$ and $g_{k}$, inclusive, together with the interval between $h_{2}=b$ inclusive and $h_{\ell} = g_{k}$ (which will be read in reverse), together with $e$, will comprise an alternating cycle, yet again a contradiction.

So the intersection of the sequence of $g_i, g^i$ and the sequence of $h_i, h^i$ is just $\{b, c\}$. Therefore, the former sequence together with the latter (read in reverse) comprise an alternating cycle. This final contradiction proves the claim.

\end{proof}
Now let $A$ be a finite presidium with no alternating cycles; we wish to show that $A$ embeds into the standard $\mathrm{TP}$-structure. First, suppose that $A$ has a convivium which is a singleton, $\{a\}$. Then $A \sqcup \{b\}$ with $b$ of sort $P$, where the instances of $R, I$ are exactly those of $A$ together with $I(b, a)$, will be a presidium with no alternating cycles. It follows from this that we may assume that each convivium of $A$ has size at least $2$. Similarly, we may assume that each convivium of $A$ has size at least $3$, even at least $4$.

Moreover, by the previous claim, we may now assume the following: if $a, b$ are distinct elements of $P(A)$ belonging to a common convivium, and $c, d$ are distinct elements of $P(A)$ belonging to a different common convivium, and if there is $e \in O(A) $ such that $R(e, b)$ and $R(e, c)$, then either there is $f \in O(A)$ (distinct from $e$) such that $R(f, a)$ and $R(f, c)$, or there is $f\in O(A)$ (again, distinct from $e$) such that $R(f, b)$ and $R(f, d)$.

With these assumptions in place, let $C_i$ and $C_j$ be two distinct convivia of $A$, and suppose that there are some $a \in C_i$, $b \in C_j$ and  $c \in O(A)$ such that $R(c, a)$ and $R(c, b)$.  Since $A$ has no alternating cycles (particularly, none with $1$ or $2$ points of sort $O$), there can be no distinct $a_1, a_2 \in C_i$, $b_1, b_2 \in C_j$ such that $a_1$ and $b_1$ have a common $R$-neighbor, and $a_2$ and $b_2$ have a common $R$-neighbor. Therefore, by the above assumption, it is either the case that for $C_i=\{a_\ell\}_{\ell \in I}$ with $a_i$ distinct, there are $\{c_\ell\}_{\ell \in I}$ with $c_\ell \in O(A)$ distinct such that $R(c_\ell, a_\ell)$ and $R(c_\ell, b)$, or that for $C_j=\{b_\ell\}_{\ell \in I}$ with $b_\ell$ distinct, there are $\{c_\ell\}_{\ell \in I}$ with $c_\ell \in O(A)$ distinct such that $R(c_\ell, b_\ell)$ and $R(c_\ell, a)$. (That the $c_{\ell}$ are distinct follows, say, from soundness of $A$.)

For $a, b \in P(A)$ belonging to distinct convivia, we now define $a < b$ if, for $\{b_i\}_{i \in I}$ the convivium to which $b$ belongs with $b_i$ distinct, there are $\{c_i \}_{i \in I}$ with $c_\ell \in O(A)$ (which must be distinct, by soundness) such that $R(c_i, b_i)$ and $R(c_i, a)$.  By the previous paragraph we now know that, if $a$ and $b$ have a common $R$-neighbor (and thus, by soundness, must belong to different convivia), either $a < b$ or $b < a$. Moreover, since $A$ has no alternating cycles (with $1$ or $2$ $O$-points) and each convivium has size at least $2$, the relation $<$ is antisymmetric: it is impossible that $a < b$ and $b < a$. We will also need that $<$ has no $3$-cycles: there are no $a < b$, $b < c$, $c < a$. To see this, because the convivium to which $c$ belongs has size at least $2$, there is $c'$ with $I^{*}(c, c')$. Because $b < c$, $c'$ and $b$ will have a common $R$-neighbor, $d$. Since the convivium to which $b$ belongs has size at least $3$, and $a < b$, we may find another member $b'$ (so $I^{*}(b, b')$) for which $b'$ and $a$ have a common $R$-neighbor $e$, distinct from $d$. Finally, since the convivium to which $a$ belongs has size at least $4$, and $c < a$, we may find another member $a'$ (so $I^{*}(a, a')$) for which $a'$ and $c$ have a common $R$-neighbor, $f$, distinct from $d$ and $e$. Then $b, b', d, c, c', e, a, a', f$ form an alternating cycle, a contradiction.

Our next goal will be to show that we can assume that $<$ is a tree ordering: $<$ is transitive and for $b < a$, $c< a$, either $b < c$ or $c < b$. To this end we show the following two claims:

\begin{claim}
    \label{order construction 2} Suppose $a < b$, $b < c$, and there is not already some $f\in O(A)$ with $R(f, a)$ and $R(f, c)$. Then the presidium $A' : = A \sqcup \{f\}$, $f \in O(A')$, whose instances of $R, I$ are exactly those of $A$ together with $R(f, a)$ and $R(f, c)$, has no alternating cycles.
\end{claim}

\begin{proof}
(of claim) 

(We do not really need ``there is not already some $f\in O(A)$ with $R(f, a)$ and $R(f, c)$" here, but include it to simplify the proof.) Because $a < b$ there are $b' \in P(A)$ with $I^{*}(b', b)$ (so $b'$ is distinct from $b$, and $b'$ is distinct from $a$ and $c$ because $b'$ is in the same convivium as $b$), and distinct $d, d' \in O(A)$ with $R(d, a)$, $R(d', a)$, $R(d, b)$, $R(d', b')$. Because $b < c$, there is also $e \in O(A)$  such that $R(e, b)$ and $R(e, c)$; $e$ is distinct from $d $ and $d'$ by the assumption that there is not already some $f\in O(A)$ with $R(f, a)$ and $R(f, c)$. Now suppose the presidium $A' : = A \sqcup \{f\}$ described in the claim has an alternating cycle. Then since $A$ does not already have an alternating cycle, there is a sequence 

  $$a_1, a_2, a^{1}, a_{3}, a_{4}, a^{2}, \ldots a_{2k-1}, a_{2k}, a^{k}, a_{2k+1}, a_{2k+2}, \ldots a_{2n-1}, a_{2n}, a^{n}, a_{2n+1}, a_{2n+2} $$

for $n \geq 1$ consisting of distinct $a_i \in P(A)$, $a^i \in O(A)$ with $I^{*}(a_i, a_{i+1})$ for $i \leq 2n+2$ odd, $R(a^i, a_{2i})$ and $R(a^i, a_{2i+1})$ for $i \leq n$,  and $a_1 = a$ and $a_{2n+1} = c$. Now suppose that the intersection of this sequence with $\{b, b', d, d', e\}$ is nonempty. First, suppose in particular that the first member of this intersection in the sequence (ordered as per the above enumeration), $a^k$, is either $d$ or $d'$. Then the interval between $a_1$ and $a^k$ inclusive will comprise an alternating cycle in $A$, a contradiction. Now suppose the same intersection is nonempty and its first member in the sequence, $a_k$, is $b$. If $k$ is odd, then the interval between $a_1$ and $a_k$ inclusive together with $b'$ and $d'$ comprises an alternating cycle, a contradiction. If $k$ is even, then $I^{*}(a_{k-1}, a_{k})$ and $I^{*}(a_k, b')$ implies $a_{k-1}$ and $b'$ are in the same convivium, so $I^{*}(a_{k-1}, b')$. Then the interval between $a_{1}$ and $a_{k-1}$ inclusive together with $b'$ and $d'$ comprises an alternating cycle, also a contradiction. The case where this same intersection is nonempty, and its first member in the sequence is $b'$, is handled similarly, with $b$ and $d$ in the role of $b'$ and $d'$, again getting a contradiction. If the intersection is nonempty and its first member in the sequence, $a^{k}$, is $e$, then the interval between $a_1$ and $a^{k}$ inclusive together with, say, $b$, $b'$ and $d'$ will comprise an alternating cycle, yet again a contradiction. So the intersection between this sequence and $\{b, b', d, d', e\}$, so in particular with $\{b,b',  d, e\}$, is nonempty. Therefore,  $a_i, a^{i}$ together with $e$, $b$, $b'$ and $d$ will comprise an alternating cycle; this final contradiction gives us the claim.

\end{proof}

\begin{claim}
    \label{order construction 3} Suppose $a < b$, $c < b$ with $a, c$ distinct, and there is not already some $f\in O(A)$ with $R(f, a)$ and $R(f, c)$. Then the presidium $A' : = A \sqcup \{f\}$, $f \in O(A')$, whose instances of $R, I$ are exactly those of $A$ together with $R(f, a)$ and $R(f, c)$, has no alternating cycles.
\end{claim}

\begin{proof}
    (of claim)

    (Again, we do not really need ``there is not already some $f\in O(A)$ with $R(f, a)$ and $R(f, c)$" here, but include it to simplify the proof.) 

    There is $b'$ distinct from $b$ in the same convivium as $b$, as well as $d, d', e, e' \in O(A)$, such that $R(d, a)$, $R(d', a)$, $R(d, b)$, $R(d', b')$ and $R(e, c)$, $R(e', c)$, $R(e, b)$, $R(e', b')$; any of the $d, d'$ must be distinct from any of the $e, e'$ by the assumption that  there is not already some $f\in O(A)$ with $R(f, a)$ and $R(f, c)$, while $c$ must be distinct from $c'$ and $d$ from $d'$ by soundness. Suppose that the presidium $A' : = A \sqcup \{f\}$ described in the claim has an alternating cycle. As  in the previous two claims, there is a sequence 

  $$a_1, a_2, a^{1}, a_{3}, a_{4}, a^{2}, \ldots a_{2k-1}, a_{2k}, a^{k}, a_{2k+1}, a_{2k+2}, \ldots a_{2n-1}, a_{2n}, a^{n}, a_{2n+1}, a_{2n+2} $$

for $n \geq 1$ consisting of distinct $a_i \in P(A)$, $a^i \in O(A)$ with $I^{*}(a_i, a_{i+1})$ for $i \leq 2n+2$ odd, $R(a^i, a_{2i})$ and $R(a^i, a_{2i+1})$ for $i \leq n$,  and $a_1 = a$ and $a_{2n+1} = c$. Suppose that the intersection of this sequence with $\{b, b', d, d', e, e'\}$ is nonempty. First, suppose in particular that the first member of this intersection in the sequence (once again, ordered according to the above enumeration), $a^{k}$ is one of $d, d', e, e'$. If $a^{k} $ is $d$ or $d'$, then the interval from $a_1$ to $a^k$ inclusive in this sequence will comprise an alternating cycle, a contradiction. If $a^{k}$ is $e$, the interval from $a_1$ to $a^k$ inclusive together with $b, b', d'$ will comprise an alternating cycle, a contradiction, while if $a^{k}$ is $e'$, the interval from $a_1$ to $a^k$ inclusive together with $b', b, d$ will comprise an alternating cycle, also a contradiction. Now suppose this first member, $a_k$, is $b$; if $k$ is odd then the interval from $a_1$ to $a_k$ inclusive together with $b'$ and $d'$ will comprise an alternating cycle, a contradiction. If $k$ is even then $I^{*}(a_{k-1}, a_{k})$ and $I^{*}(a_{k}, b')$ will imply $a_{k-1}$ and $b'$ are in the same convivium, so $I^{*}(a_{k-1}, b')$. Then the interval from $a_1$ to $a_{k-1}$ inclusive together with $b'$ and  $d'$ will comprise an alternating cycle, again a contradiction. (This is exactly as in the case where $a_{k}$ is equal to $b$ in the previous claim.) The case where this first member is $b'$ is handled similarly, with $b$ and $d$ in the role of $b'$ and $d'$, again getting a contradiction. So the intersection of the sequence with $\{b, b', d, d', e, e'\}$, and particularly with, say, $\{b, b', d', e\}$, must be empty. The sequence of $a_i, a^i$ together with $e$, $b$, $b'$, and $d'$ must comprise an alternating cycle, a contradiction that proves our claim. (So we did not need, say, $e'$ here at all, but include it anyway for reasons of symmetry.)
\end{proof}

Then, applying our three claims in this proof, we may assume that $A$ satisfies the assumptions from the paragraph after the proof of Claim \ref{order construction 1}, and also that any distinct $a, b$, such that there is a $c$ with $a < c < b$ or $a < c$, $b < c$, must have a common $R$-neighbor. (Assuming that $A$, and thus the number of pairs of points of $P(A)$, is finite, we only need to add finitely many points of $O(A)$, so $A$ stays finite). So, by the discussion preceding Claim \ref{order construction 2}, $<$ is, in addition to being antisymmetric, transitive and a tree order. (In particular, the absence of $3$-cycles for $<$ is used to prove transitivity: if $a< b$, $b< c$, then either $a < c$ or $c < a$, because $a$, $c$ must have a common $R$-neighbor. But $c <a$ is ruled out by the absence of $3$-cycles.)

Now define an injective map $\iota$ of $P(A)$ into the sort-$P$ points $\{b_{\sigma}\}_{\sigma \in \eta}$ of the standard $\mathrm{TP}$-structure, inductively on the $<$-height of an element of $P(A)$, as follows. Note that by definition of $<$, all elements of the same convivia have the same $<$-downsets, so have the same $<$-height. First, let $C_0, \ldots, C_\ell$ enumerate the convivia of $A$ all of whose elements have height $0$.  Then define $\iota(b)$, for $b$ the $k$-th element of $C_{i}$ in the linear order on $C_i$ defined by $I$, to be equal to $b_{\langle ik \rangle}$. Now suppose $\iota$ has been defined on elements of $P(A)$ of $<$-height at most $n-1$. For each element $b'$ of $<$-height $n -1$, let $C^{b'}_{0}, \ldots, C^{b'}_{\ell^{b'}}$ be a fixed enumeration of the convivia whose elements are all immediate $<$-successors of $b$. Let $b$ be an element of $P(A)$ of $<$-height $n > 0$; we define $\iota$ on $b$. Since $<$ is a tree order, there is a unique element $b'$ of $P(A)$ of height $n-1$ with $b' < b$.  Suppose we have already defined $\iota(b') = b_{\eta}$. Then define $\iota(b) := b_{\eta \smallfrown \langle ik \rangle}$, where $b$ is in $C^{b'}_i$ and is the $k$-th element of that convivium in the linear order on it defined by $I$. Having defined this map, we see it has the following two features: first, $I(b_1, b_2)$ implies $I(\iota(b_1), \iota(b_2))$ for $b_1, b_2 \in P(A)$, and second, if $b_1 < b_2$ for $b_1, b_2 \in P(A)$, and $\iota(b_1)=b_{\eta_1}$ while $\iota(b_2) = b_{\eta_2}$, then $\eta_{1} \lhd \eta_2$.

It remains to extend this map $\iota$ on $P(A)$ to an embedding of all of $A$ into the standard $\mathrm{TP}$-structure. We can find a family of distinct points $a^i_{S}$ of sort $O$ of the standard $\mathrm{TP}$-structure, where $i < \omega$ and $S$ ranges over those finite subsets of $\omega^{< \omega}$ linearly ordered by $\lhd$, such that $R(a_{S}^{i}, b_{\eta})$ for $\eta \in S$. Now, for any $A_{0} \subset P(A)$, let $\{a_{A_{0}}^{i}\}^{n_{A_{0}}}_{i =0}$ be an enumeration, with $a^{i}_{A_{0}} \neq a_{A_{0}}^{i'}$ for $i \neq i'$, of the set of points in $O(A)$ consisting of those $a$ for which $A_0$ is equal to the set of all $b \in P(A)$ with $R(a, b)$. By definition, for distinct $A_{0}, A'_{0}$, the set $\{a_{A_{0}}^{i}\}^{n_{A_{0}}}_{i =0}$ and $\{a_{A'_{0}}^{i}\}^{n_{A'_{0}}}_{i =0}$ are disjoint.  Then $\{a_{A_{0}}^{i}\}^{n_{A_{0}}}_{i =0}$ can only be nonempty if $A_{0}$ is linearly ordered by $<$, by the discussion before Claim \ref{order construction 2}.  For any linearly ordered $A_{0} \subset P(A)$, let $S_{A_0}$ be the set of $\eta \in \omega^{<\omega}$ such that there is $b \in A_{0}$ with $\iota(b) = b_{\eta}$; by the second condition satisfied by the map $\iota$ on $P(A)$, $S_{A_0}$ is linearly ordered by $\lhd$. So we define $\iota(a^i_{A_{0}}):=a_{S_{A_{0}}}^{i}$ for $A_0$ any subset of $P(A)$. Then $\iota$, an injective map defined on all of $A$, satisfies the condition that $R(a, b)$ implies $R(\iota(a), \iota(b))$ for $a \in O(A)$, $b\in P(A)$. With the condition that $\iota$ was noted to satisfy before that $I(b_1, b_2)$ implies $I(\iota(b_1), \iota(b_2))$, we have shown that $\iota$ is a weak embedding of $\mathcal{L}_{\mathrm{std}}$-structures, completing the proof of the lemma.

\end{proof}

Having shown that being a presidium without alternating cycles suffices for embedding into $\mathbb{TP}$, we now want to find $A \notin \mathcal{H}$--again, where $\mathcal{H}$ is a hereditary class of sound $\mathcal{L}^{\mathrm{std}}$-structures defined by finitely many forbidden weakly embedded substructures which is the age of a structure whose theory is $\mathrm{NSOP}_{3}$--for which $A$ satisfies these embedding criterion. To find this $A \notin \mathcal{H}$, we will need to extend the facts about \textit{helix maps} explicated in \cite{ApproxOrder} to make use of the assumption of $\mathrm{NSOP}_{3}$ as it applies to existential formulas, rather than just to quantifier-free formulas. These helix-shaped covering maps, essentially developed in \cite{She95} and \cite{Mal10b}, have become implicitly well-established as the correct framework for reasoning about $\mathrm{NSOP}_{n}$ at a combinatorial level; the technique of viewing helix maps as composable maps and applying their abstract cycle-removal properties was only introduced more recently, in the setting of the real-valued $\mathrm{NSOP}_{r}$ hierarchy in \cite{ApproxOrder}. 

For $\mathcal{H}$ a hereditary class that is the age of a structure whose theory is $\mathrm{NSOP}_{3}$, we now define our general class of helix maps under which this hereditary class must be closed, using $\mathrm{NSOP}_{3}$ for existential formulas; while this will straightforwardly generalize to $\mathrm{NSOP}_{n}$ for any $n \geq 3$, we will only need it for $\mathrm{NSOP}_{3}$.

\begin{definition}
    \label{jughandle decomposition} Let $A$ be a structure in a relational language. Then a $\textit{cyclic $3$-jughandle decomposition}$ of $A$ is a partition $A= D \sqcup \bigsqcup^{2}_{i = 0} A_{i} \sqcup \bigsqcup^{2}_{i = 0} A_{i(i+1\mathrm{\:mod\:} 3)}$ where every instance of a relation symbol in $A$ has all coordinates in a set of the form $DA_{i}A_{i+1 \mathrm{\:mod\:}3}A_{i(i+1\mathrm{\:mod\:} 3)}$,
\end{definition}

As in Definition 3.7 of \cite{ApproxOrder}, for clarity, in defining the helix map associated to a cyclic $3$-jughandle decomposition of a structure, we will make the enumeration of each set in the decomposition explicit. However, we may refer to these sets using standard model-theoretic notation later on.

\begin{definition}
  \label{jughandle helix map}  Let $A= D \sqcup \bigsqcup^{2}_{i = 0} A_{i} \sqcup \bigsqcup^{2}_{i = 0} A_{i(i+1\mathrm{\:mod\:} 3)}$ be a cyclic $3$-jughandle decomposition of a structure $A$. Enumerate $D$ as $\{d(0), \ldots, d(n_{d}))\}$,  $A_{i}$ as $\{a_{i}(0), \ldots, a_{i}(n_{i})\}$, and $A_{i(i+1\mathrm{\:mod\:} 3)}$ as $\{a_{i(i+1\mathrm{\:mod\:} 3)}(0), \ldots, a_{i(i+1\mathrm{\:mod\:} 3)}(n_{i(i+1\mathrm{\:mod\:} 3)})\}$. Let $B$ be the structure with distinct vertices $b^{jj'}_{i(i+1\mathrm{\:mod\:} 3)}(k)$ for $j < j' < \omega$, $i \leq 2$ and $k <n_{i(i+1\mathrm{\:mod\:} 3)}$,  $b^{j}_{i}(k)$, for $j < \omega$, $i \leq 2$ and $k \leq n_{i}$, along with the vertices $d(0) \ldots, d(n_{d})$ of $D$ forming an induced substructure of $B$ (with $b^{jj'}_{i(i+1\mathrm{\:mod\:} 3)}(k), b^{j}_{i}(k) \notin D$), with the unique interpretations of the relation symbols subject to the following conditions:

    (1) For $j < j'$, and $i' = i + 1 \mathrm{\: mod \:} 3$, the map from $\{d(0), \ldots, d(n_{d}))\} \sqcup \{b^{j}_{i}(0)\ldots b^{j}_{i}(n_{i})\} \sqcup \{b^{j'}_{i'}(0)\ldots b^{j'}_{i'}(n_{i'})\} \sqcup  \{b^{jj'}_{ii'}(0)\ldots b^{jj'}_{ii'}(n_{ii'})\}$ to $ D  \sqcup A_{i} \sqcup A_{i'} \sqcup A_{ii'}$ given by $b^{j}_{i}(k) \mapsto a_{i'}(k)$, $b^{j'}_{i'}(k) \mapsto a_{i}(k)$ $b^{jj'}_{ii'}(k) \mapsto a_{ii'}(k)$, $d(k) \mapsto d(k)$ is an isomorphism.

    (2) There are no instances of the relation symbols whose coordinates are not in some fixed set of the form $\{d(0), \ldots, d(n_{d}))\} \sqcup \{b^{j}_{i}(0)\ldots b^{j}_{i}(n_{i})\} \sqcup \{b^{j'}_{i'}(0)\ldots b^{j'}_{i'}(n_{i'})\} \sqcup  \{b^{jj'}_{ii'}(0)\ldots b^{jj'}_{ii'}(n_{ii'})\}$ for $j < j'$, and $i' = i + 1 \mathrm{\: mod \:} 3$.

    Then the \textit{$3$-helix map} associated with the cyclic $3$-jughandle decomposition $A= D \sqcup \bigsqcup^{2}_{i = 0} A_{i} \sqcup \bigsqcup^{2}_{i = 0} A_{i(i+1\mathrm{\:mod\:} 3)}$ is the map $h: B \twoheadrightarrow A$ given by $d(k) \mapsto d(k)$, $b^{j}_{i}(k) \mapsto a_{i}(k)$, $b_{i(i \mathrm{\: mod\:} 3)}^{jj'}(k) \mapsto a_{i(i \mathrm{\: mod\:} 3)}(k) $. For $\ell > 1$, the \emph{$3$-helix map of length $\ell$} associated with this cyclic $3$-jughandle decomposition is the map $h^{\ell}: B^{\ell} \twoheadrightarrow A$ that is defined similarly, where $B^{\ell}$ is defined similarly to $B$ but for $j, j' < \ell$ instead of $j, j' < \omega$.
\end{definition}

In Fact 3.8 of \cite{ApproxOrder}, the argument for which is contained within the proof of Fact 7.7 of \cite{Mal10b}, we made the longstanding interpretation of $\mathrm{NSOP}_{n}$ in terms of helix maps explicit, by presenting the proof of the equivalence between $\mathrm{NSOP}_{n}$ and closure under  helix maps associated with cyclic decompositions. Here we will do the same for our extended class of helix maps, associated with cyclic $3$-jughandle decompositions. This will give a general schema for standard combinatorial arguments involving $\mathrm{NSOP}_{3}$ for \textit{existential} formulas, just as the earlier presentation in Fact 3.8 of \cite{ApproxOrder} gave a general schema for standard combinatorial arguments involving $\mathrm{NSOP}_{n}$ for quantifier-free formulas. We will need to apply Lemma \ref{non-overlapping pairs} from earlier in this section, which states the combinatorial content of $\mathcal{H}$ being the age of a structure whose theory is $\mathrm{NSOP}_{3}$.

\begin{fact}
   \label{closure under jughandle helix maps} \emph{(Malliaris, \cite{Mal10b})} Let $\mathcal{H}$ be a hereditary class of structures in a relational language that is closed under weak embeddings, and suppose $\mathcal{H}$ is the age of a structure whose theory is $\mathrm{NSOP}_{3}$. Then $\mathcal{H}$ is closed under $3$-helix maps associated with cyclic $3$-jughandle decompositions.
\end{fact}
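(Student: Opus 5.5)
We argue by induction on the length $\ell$ of the helix map, using Lemma \ref{non-overlapping pairs} at each step. Since the target $A$ of a helix map is finite and $\mathcal{H}$ is hereditary, it suffices to show, for a structure $A$ with a cyclic $3$-jughandle decomposition $A= D \sqcup \bigsqcup^{2}_{i = 0} A_{i} \sqcup \bigsqcup^{2}_{i = 0} A_{i(i+1\mathrm{\:mod\:} 3)}$: if the domain $B$ of the associated $3$-helix map (or every finite piece $B^{\ell}$ of it) lies in $\mathcal{H}$, then $A \in \mathcal{H}$. Here ``$\mathcal{H}$ closed under helix maps'' means exactly that membership of the domain forces membership of the target. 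By the abuse of notation for infinite structures, $B \in \mathcal{H}$ means every $B^{\ell}$ is in $\mathcal{H}$.

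The first step is to reorganize $B$ into the shape required by Lemma \ref{non-overlapping pairs}. Put $A' := D$. For $j<\omega$, let $A'_j := \bigsqcup_{i \leq 2} \{b^{j}_{i}(k)\}_k$ be the $j$-th ``turn'' of the helix. For $j<j'$, let $A'_{jj'} := \bigsqcup_{i\leq 2}\{b^{jj'}_{i(i+1\mathrm{\:mod\:}3)}(k)\}_k$. By conditions (1) and (2) of Definition \ref{jughandle helix map}, the quantifier-free type of $A'_jA'_{j'}A'_{jj'}$ over $D$ is the same for all $j<j'$: the relation instances among these sets are determined uniformly by the copies of $DA_iA_{i+1}A_{i(i+1)}$. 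So $A' \sqcup \bigsqcup_j A'_j \sqcup \bigsqcup_{j<j'}A'_{jj'} = B \in \mathcal{H}$ satisfies the hypothesis $A_iA_jA_{ij}\equiv_A A_{i'}A_{j'}A_{i'j'}$ of Lemma \ref{non-overlapping pairs}. Since $\mathcal{H}$ is the age of a structure with $\mathrm{NSOP}_3$ theory, that lemma yields $C \sqcup \bigsqcup_{i\le 2}C_i\sqcup\bigsqcup_{i\le2}C_{i(i+1)} \in \mathcal{H}$ with each $CC_iC_{i+1}C_{i(i+1)} \equiv A'A'_0A'_1A'_{01}$.

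The remaining step is to find $A$ weakly inside this $3$-cycle of copies of $A'A'_0A'_1A'_{01}$; closure under weak embeddings then finishes. I will take $D \mapsto C$. I will send $A_i$ to the copy of $\{b^{0}_{i}\}$ inside $C_{i}$, and $A_{i(i+1)}$ to the copy of $\{b^{01}_{i(i+1)}\}$ inside $C_{i(i+1)}$. Condition (1) gives that $D\{b^0_i\}\{b^{1}_{i+1}\}\{b^{01}_{i(i+1)}\}\cong DA_iA_{i+1}A_{i(i+1)}$, with the appropriate index matching; in particular $b^j_i$ plays the role of $a_{i'}$, so I may need to cyclically shift which coordinate block of $C_i$ is used. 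The $i$-th jughandle $DA_iA_{i+1}A_{i(i+1)}$ of $A$ then maps isomorphically into $CC_iC_{i+1}C_{i(i+1)}$, and every relation instance of $A$ lies in some jughandle. The chosen images are pairwise disjoint because $C$, the $C_i$, and the $C_{i(i+1)}$ are disjoint, so the map is a weak embedding of $A$.

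I expect the main obstacle to be the bookkeeping of this last step. The choice of blocks has to be made consistently: vertex $a_i(k)$ must receive a single image that works both for the jughandle containing $A_{i-1}A_i$ and for the one containing $A_iA_{i+1}$. If the naive choice fails, I will use all of $C_i$ rather than a single block. Each $A'_j$ contains a copy of every $A_i$, so the copy of $A_{i}$ in $C_i$ can be the block that is forced by the second coordinate of the pair $(C_{i-1},C_i)$, and the first-coordinate role of the same vertex is then checked against the identical type.
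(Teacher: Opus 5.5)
Your proposal is correct and is essentially the paper's proof: group each turn $j$ of the helix into one block and the three connecting sets for each $j<j'$ into another, apply Lemma \ref{non-overlapping pairs} to obtain a $3$-cycle of copies, and embed $A$ diagonally. Your ``naive'' choice (block $i$ of $C_i$ and block $i(i+1)$ of $C_{i(i+1)}$) is exactly the paper's $C^{i}_{i}$, $C^{i(i+1)}_{i(i+1)}$ and works as is, so the announced induction on $\ell$ and the fallback are unnecessary. The possible cyclic shift is also unnecessary: it comes only from a literal reading of condition (1) of Definition \ref{jughandle helix map}, whose indices are evidently swapped, since the helix map itself sends $b^{j}_{i}(k)\mapsto a_{i}(k)$.
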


\begin{proof}
    Let $A= D \sqcup \bigsqcup^{2}_{i = 0} A_{i} \sqcup \bigsqcup^{2}_{i = 0} A_{i(i+1\mathrm{\:mod\:} 3)}$ be a cyclic $3$-jughandle decomposition of a structure $A$, and let $h: B \twoheadrightarrow A$ be the associated helix map. Suppose $B \in \mathcal{H}$; we show $A \in \mathcal{H}$. For $i \leq 2$, $i' = i + 1 \mathrm{\: mod \:} 3$, $j < j' < \omega$, let $B^{jj'}_{ii'}$ denote $\{b^{jj'}_{ii'}(0)\ldots b^{jj'}_{ii'}(n_{ii'})\}$ and let $B^{j}_{i}$ denote $\{b^{j}_{i}(0)\ldots b^{j}_{i}(n_{i})\}$ as in Definition \ref{jughandle helix map}. For $j < j' < \omega$, define $B^{j} :=\{B^{j}_{1}B^{j}_{2}B^{j}_{3}\}$, $B^{jj'}: = \{B_{01}^{jj'}B^{jj'}_{12}B_{23}^{jj'}\}$.  Then we may apply Lemma \ref{non-overlapping pairs} to $B= D \sqcup \bigsqcup_{i < \omega} B^i \sqcup \bigsqcup_{i < j < \omega} B^{ij}  \in \mathcal{H}$. This gives us a structure $C= D \sqcup \bigsqcup^{2}_{i, j = 0} C_{i}^{j} \sqcup \bigsqcup^{2}_{i,j = 0} C^{j(j+1\mathrm{\:mod\:} 3)}_{i(i+1\mathrm{\:mod\:} 3)} \in \mathcal{H}$ such that, for $j \leq 2$, $j' = j+1 \mathrm{\: mod \:}3$

    $$C^{j}_{0}C^{j}_{1}C^{j}_{2}C^{j'}_{0}C^{j'}_{1}C^{j'}_{2}C_{01}^{jj'}C^{jj'}_{12}C_{23}^{jj'} \equiv_{D} B^{0}_{0}B^{0}_{1}B^{0}_{2}B^{1}_{0}B^{1}_{1}B^{1}_{2}B_{01}^{01}B^{01}_{12}B_{23}^{01}$$

In particular, for $i \leq 2$, 

 $$C^{i}_{i}C^{i + 1 \mathrm{\:mod\:}3}_{i + 1 \mathrm{\:mod\:}3} C^{i(i + 1 \mathrm{\:mod\:}3)}_{i(i + 1 \mathrm{\:mod\:}3)} \equiv_{D} B^{0}_{i}B^{1}_{i+1 \mathrm{\:mod \:} 3 }B^{01}_{i(i + 1 \mathrm{\:mod\:}3)}\equiv_{D} A_{i}A_{i + 1 \mathrm{\:mod\:}3}A_{i(i + 1 \mathrm{\:mod\:}3)}$$
Here the second equivalence is given by the construction of the $3$-helix map.  So because $A= D \sqcup \bigsqcup^{2}_{i = 0} A_{i} \sqcup \bigsqcup^{2}_{i = 0} A_{i(i+1\mathrm{\:mod\:} 3)}$ is a cyclic $3$-jughandle decomposition, $A$ weakly embeds into $C$ by the map that restricts to the identity on $D$, restricts on $A_i$ to the isomorphism exhibiting $A_{i}\equiv_{D} C^{i}_{i}$, and restricts on $A_{i(i+1\mathrm{\:mod\:} 3)}$ to the isomorphism exhibiting $A_{i(i+1\mathrm{\:mod\:} 3)} \equiv_{D} C^{i(i + 1 \mathrm{\:mod\:}3)}_{i(i + 1 \mathrm{\:mod\:}3)}$. (This is again the ``diagonal argument" used in Theorem 7.7 of \cite{Mal10b}.) Therefore, because $C \in \mathcal{H}$ and $\mathcal{H}$ is closed under weak embeddings, $A \in \mathcal{H}$.
    
\end{proof}

Having defined our target conditions for $A \notin \mathcal{H}$ to embed into the standard $\mathrm{TP}$-structure (being a presidium and omitting alternating cycles), and explicated the core combinatorial substrate of our arguments within the helix map interpretation of $\mathrm{NSOP}_{3}$, we are ready to refine the cycle-removal argument we introduced in the context of the real-valued $\mathrm{NSOP}_{r}$ hierarchy (\cite{ApproxOrder}) to handle our current, more strenuous context. Our refinements of this argument will include three steps:

\begin{itemize}
    \item \textbf{Step one:} Remove \textit{split cycles} from some initial $A_{0} \notin \mathcal{H}$ to obtain a presidium $A \notin \mathcal{H}$.
    \item \textbf{Step two:} From our presidium, remove the smallest potentially pinched alternating cycles (with three or four points of sort $P$), which requires different helix maps than the larger potentially pinched alternating cycles.
    \item \textbf{Step three:} Remove the rest of the potentially pinched alternating cycles--in particular, obtaining a presidium with no alternating cycles, as desired.
\end{itemize}

We start by dealing with the split cycles.

\:

\textbf{Step one: Omitting a presidium}

In this first step, we want to show:

\begin{lemma}
  \label{omitting a presidium}  Let $\mathcal{H}$ be a hereditary class consisting of sound $\mathcal{L}_{\mathrm{std}}$-structures, defined by a finite family of forbidden weakly embedded substructures, which is the age of some structure whose theory is $\mathrm{NSOP}_{3}$. Then there is some presidium $A$ such that $A \notin \mathcal{H}$.
\end{lemma}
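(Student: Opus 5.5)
Since $\mathcal{H}$ is defined by a finite family $\mathcal{F}$ of forbidden weakly embedded substructures, $\mathcal{H}$ is closed under weak embeddings. This lets us apply Fact \ref{closure under jughandle helix maps}: $\mathcal{H}$ is closed under $3$-helix maps associated with cyclic $3$-jughandle decompositions. The plan is to start from some $A_{0}\notin \mathcal{H}$ and, mimicking the cycle-removal argument of Theorem 3.1 of \cite{ApproxOrder}, repeatedly pull back along helix maps. Each pullback keeps us outside $\mathcal{H}$, and we choose the pullbacks so as to destroy the obstructions to being a presidium.

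For the starting structure, take any $F\in\mathcal{F}$. If $\mathcal{F}$ is empty, then every finite structure lies in $\mathcal{H}$. Since every structure in $\mathcal{H}$ is sound, this is impossible, because an unsound three-point structure exists. We then identify the obstruction to a sound structure being a presidium. Consider the graph on $P(A)$ whose edges are the pairs related by $I^{*}$. The structure $A$ is a presidium exactly when $I$, restricted to each connected component of this graph, is a strict linear order. We call a failure of this a \emph{split cycle}. There are two kinds of failure: a path $b_{0}I^{*}b_{1}I^{*}\cdots I^{*}b_{k}$ with $b_{0}\neq b_{k}$ not $I^{*}$-related (non-completeness), or an $I$-directed cycle (non-transitivity or non-antisymmetry). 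First we would prove the easy direction: a sound structure with no split cycles extends, by adding $I$-edges inside each component, to a presidium. Since a weak embedding only needs to preserve positive instances, any $A\notin\mathcal{H}$ without split cycles then weakly embeds into a presidium $A'$. Because $\mathcal{H}$ is closed under weak embeddings, $A'\notin\mathcal{H}$. One subtlety is that adding $I$-edges could break soundness. But this only matters for membership in $\mathcal{H}$, and $A'$ is already outside $\mathcal{H}$, so the statement allows it. If the paper's "presidium" requires soundness, we instead add the $I$-edges only within components, and argue separately that soundness is preserved, or that the unsound case already lands outside $\mathcal{H}$ trivially.

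The main step is a cycle-removal lemma. Suppose $g:H\to G$ is a homomorphism with $H\notin\mathcal{H}$, and $\gamma$ is a minimal split cycle in $G$. We want a helix pullback $h:\tilde H\to H$ with $\tilde H\notin \mathcal{H}$ in which no split cycle maps onto $\gamma$ via $g\circ h$. To do this, write $\gamma$ as a path $b_{0},\ldots,b_{k}$ and build a cyclic $3$-jughandle decomposition of $H$. We cut the preimages of the cycle vertices into three consecutive arcs $A_{0},A_{1},A_{2}$. The $I$-edges between arc endpoints, together with the $O$-points $R$-adjacent to both arcs, go into the jughandles $A_{i(i+1)}$, and everything else goes into $D$. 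In the helix $B$, the copies $b^{j}_{i}$ are then ordered by $j$. A split cycle lifting $\gamma$ would have to return to the same copy index after going once around the three arcs, which is impossible because the index strictly increases. This is the same mechanism as in Proposition 3.11 of \cite{ApproxOrder}. Closure under helix maps gives $B\notin\mathcal{H}$, and compactness then gives a finite $B^{\ell}\notin\mathcal{H}$. We iterate over the finitely many split cycles of bounded length. Because $\mathcal{F}$ is finite, only split cycles of length at most $\max|F|$ matter: a weak copy of $F$ in the final structure can only witness cycles of bounded length. This lets us terminate, in the way Fact \ref{n-cycle-free to small cycle-free} and Claim 3.15 of \cite{ApproxOrder} handle the analogous bound.

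The main obstacle is the case of \emph{non-completeness} (a path whose endpoints are not $I^{*}$-related), as opposed to directed $I$-cycles. This case is not a cycle in the graph-homomorphism sense, so helix pullbacks do not obviously remove it. For this case we would first try to add the missing $I$-edge in one of the two orientations. We would show that at least one orientation yields a structure still outside $\mathcal{H}$: if both orientations landed in $\mathcal{H}$, we would amalgam them through a jughandle decomposition, using Lemma \ref{non-overlapping pairs}, to place $H$ itself in $\mathcal{H}$. That converts non-completeness into a possible directed $I$-cycle, which the helix argument then removes. The worry is that adding edges can create new short split cycles and so break termination. We would control this by choosing the minimal split cycle at each stage and tracking a lexicographic measure on (length, number of such cycles).
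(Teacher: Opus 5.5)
\textbf{There is a genuine gap: you are removing the wrong obstruction.}

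\emph{Soundness is the whole content of the lemma.} A presidium is sound by definition (Definition~\ref{alternating cycle}). Every member of $\mathcal{H}$ is sound, so any unsound structure lies outside $\mathcal{H}$ trivially. Your remark that broken soundness ``only matters for membership in $\mathcal{H}$'' therefore gets things backwards.

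\emph{Non-completeness is not an obstacle.} Adding $I$-instances to some $A\notin\mathcal{H}$ gives a structure into which $A$ weakly embeds. Since $\mathcal{H}$ is closed under weak embeddings, that structure is again outside $\mathcal{H}$. So no choice of orientation, and no amalgamation via Lemma~\ref{non-overlapping pairs}, is needed; both orientations can never land in $\mathcal{H}$.

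\emph{What actually blocks extending $I$ to a linear order on each $I^{*}$-component} is two things:
(i) a directed $I$-cycle;
(ii) an $O$-point $\alpha$ with two distinct $R$-neighbors $a_{0},a_{n}$ joined by an $I^{*}$-path $a_{0},\dots,a_{n}$.
In case (ii), any completion makes $I^{*}(a_{0},a_{n})$ hold, which violates soundness. These configurations are exactly the paper's split cycles (Definition~\ref{split cycle}). Once a structure outside $\mathcal{H}$ has neither (i) nor (ii), linearly extending $I$ inside components yields a sound presidium outside $\mathcal{H}$. Your plan never removes (ii). Your helix construction targets non-linearity of $I$, and ``argue separately that soundness is preserved'' is precisely the missing step.

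\emph{Your starting point is also unsafe.} An arbitrary $F\in\mathcal{F}$ may contain $I$-loops or $I$-$2$-cycles. These lift to every $3$-helix cover, so they can never be removed. The paper instead starts from the unsound structure $\{\alpha,a_{0},a_{1}\}$ with $R(\alpha,a_{0})$, $R(\alpha,a_{1})$, $I(a_{0},a_{1})$. It has no directed $I$-cycles, and such cycles cannot appear in preimages under maps.

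\textbf{What is missing is a removal lemma for the $R$--$I^{*}$ cycles in (ii).} For a minimal split cycle $\alpha,a_{0},\dots,a_{n}$ in $A$, the paper uses the following decomposition, pulled back along $g$:
$A_{0}=\{\alpha\}$, $A_{1}=\{a_{0}\}$, $A_{2}$ the other $R$-neighbors of $\alpha$, and $A_{12}$ everything else.
Minimality guarantees there is no $I^{*}$-path of length $<n$ from $A_{1}$ to $A_{2}$.
\begin{itemize}
\item A lift would place $\alpha$ in some $B^{j}_{0}$ and $a_{0}$ in some $B^{j'}_{1}$ with $j'>j$.
\item The $I^{*}$-path can only leave $B^{j'}_{1}$ through some $B^{j'j'''}_{12}$, so it ends in $B^{j'''}_{2}$ with $j'''>j'$.
\item But $R(\alpha,a_{n})$ forces $a_{n}\in B^{j''}_{2}$ with $j''<j$, a contradiction.
\end{itemize}

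Moreover, unlike directed cycles, split cycles are not preserved under homomorphic images, because the two $R$-neighbors can be identified. Iterated pullbacks could therefore reintroduce short split cycles. The paper rules this out by proving each such helix map is $n$-satisfactory (Definition~\ref{n-satisfactory}), again using minimality. Your ``lexicographic measure'' does not supply this.

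With these two pieces in place, your final step matches the paper's: choose $n>\max_{F\in\mathcal{F}}|F|$ and pass to a weakly embedded member of $\mathcal{F}$, which then has no split cycles at all.
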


In this first step, as in the two later steps, we employ the same strategy of removing unwanted cycles by obtaining iterated helix covers as in the proof of Theorem 3.1 of \cite{ApproxOrder}, in the real-valued $\mathrm{NSOP}_{r}$ setting. While we refer to the concept of ``cycles" informally, the split cycles, alternating cycles and potentially pinched alternating cycles we will remove here, and the directed cycles removed in \cite{ApproxOrder}, give several different incarnations of this common pattern. However, we unfortunately can no longer work in any natural category of morphisms that is as well-behaved with respect to the cycles we are trying to eliminate as the category of directed graph morphisms is with respect to the class of directed cycles. We define the kind of cycles we must eliminate at this stage:

\begin{definition}
   \label{split cycle} Let $A$ be a $\mathcal{L}_{\mathrm{std}}$-structure. A \textit{split cycle} in $A$ is a set consisting of distinct $\alpha \in O(A)$, $a_0, \ldots, a_{n} \in P(A)$ for $n \geq 1$ with $R(\alpha, a_0)$, $R(\alpha, a_{n})$ and $a_0, \ldots, a_{n}$ a path for the relation $I^{*}$.
\end{definition}

We will call this an \textit{$n$-split cycle} when we need to specify the value of $n$. In lieu of a natural class of morphisms valid for our purposes, we give the following more ad hoc family of properties applying to our choices of helix maps. Similarly to a morphism of graphs, in what follows a \textit{map} $f: A \to B$ of $\mathcal{L}_{\mathrm{std}}$-structures will be a sort-preserving function for which $I(a, b)$ implies $I(f(a), f(b))$ for all $a, b \in P(A)$ (so in particular, as long as $I$ is irreflexive on $B$, that $f(a) \neq f(b)$), and $R(a, b)$ implies $R(f(a), f(b))$ for $a \in O(A)$, $b \in P(A)$.

\begin{definition}
\label{n-satisfactory}    Let $A$ and $B$ be $\mathcal{L}_{\mathrm{std}}$-structures. A map $f: A \to B$ of $\mathcal{L}_{\mathrm{std}}$-structures is $\textit{n}$-satisfactory if the image of any $k$-split cycle under $f$, for $k \leq n$, contains a split cycle.
\end{definition}

The central lemma of the proof of this step will be the following,, which will play the same role as Proposition 3.11 of \cite{ApproxOrder}. That proposition gave our abstract cycle-removal property for the helix maps obtained from $\mathrm{NSOP}_{n}$ within the category of directed graphs. Because of the more technical nature of our next statement in comparison to Proposition 3.11 of \cite{ApproxOrder}, we will refer to it only as a lemma, rather than as a proposition of independent interest. (For similar reasons, the analogous lemmas in steps two and three, Lemma \ref{consanguineous pair removal} and Lemma \ref{potentially pinched alternating cycle removal}, will be just that--lemmas--rather than full-blown propositions.) 

Before giving our central lemma for this step, let us first make the following notational remark. We may alternatively view an $n$-split cycle in the $\mathcal{L}_{\mathrm{std}}$-structure $A$ as an injective map $\gamma \hookrightarrow A$, with $\gamma = \{\alpha, a_0, \ldots, a_{n}\}$, $\alpha \in O(\gamma)$, $a_i \in P(\gamma)$ where the instances of $R$ in $\gamma$ are exactly $R(\alpha, a_0)$, $R(\alpha, a_n)$ and the instances of $I$ in $\gamma$ are only between $a_i$ and $a_{i+1}$ in either direction, with $I^{*}(a_i, a_{i+1})$ for all $i < n$. By abuse of notation, we may use $\gamma$ to refer to an $n$-split cycle in an $\mathcal{L}_{\mathrm{std}}$-structure considered as set, to this same $n$-split cycle considered as a map, and to the domain of that map. Which usage we will be employing will be clear from context.

\begin{lemma}
    \label{split cycle removal} Let $\mathcal{H}$ be a hereditary class consisting of sound $\mathcal{L}_{\mathrm{std}}$-structures which is the age of some structure whose theory is $\mathrm{NSOP}_{3}$. Let $g: B \to A$ be a map between finite $\mathcal{L}_{\mathrm{std}}$-structures with $A, B\notin \mathcal{H}$. Suppose, for $n \geq 1$, that $A$ has no $k$-split cycles for $k < n$, and let $\gamma \hookrightarrow A$ be an $n$-split cycle in $A$.  Then there is an $n$-satisfactory $3$-helix map $h: \tilde{B} \twoheadrightarrow B$ of some length $\ell$ associated with a cyclic $3$-jughandle decomposition of $B$, such that $\tilde{B} \notin \mathcal{H}$ and such that there is no $n$-split cycle $\gamma' \hookrightarrow \tilde{B}$ with $\gamma = g\circ h\circ \gamma'$.

$$\begin{tikzcd}
                        &  & B \arrow[dd, "g"] &  & \tilde{B} \arrow[ll, two heads, "h", dotted] \arrow[lldd] \\
                        &  &              &  &                                      \\
\gamma \arrow[rr, hook] &  & A            &  &                                     
\end{tikzcd}$$

\end{lemma}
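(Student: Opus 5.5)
We follow the pattern of Proposition 3.11 of \cite{ApproxOrder}. Write the $n$-split cycle $\gamma$ as $\alpha, a_0, \ldots, a_n$ in $A$, with $R(\alpha,a_0)$, $R(\alpha,a_n)$ and $I^{*}(a_i,a_{i+1})$. Since $A$ has no $k$-split cycles for $k<n$, $\gamma$ is "induced" in the relevant sense: $\alpha$ is $R$-related to no $a_i$ with $0<i<n$, and there are no $I^{*}$-chords $I^{*}(a_i,a_j)$ with $|i-j|>1$. (A chord or extra $R$-edge would shorten the split cycle; for $n=1,2$ soundness of $A$ rules out the degenerate configurations.) We pull back along $g$: set $U=g^{-1}(\alpha)$, $V_i=g^{-1}(a_i)$, and let $E$ be the rest of $B$. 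The aim is a cyclic $3$-jughandle decomposition $B=D\sqcup\bigsqcup_{i\le 2}B_i\sqcup\bigsqcup_{i\le 2}B_{i(i+1\bmod 3)}$ in which the preimage of $\gamma$ is cut at the point $\alpha$, so that the helix cover unwinds every lift of $\gamma$.

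\textbf{The decomposition.} Put $B_0:=U$ and $B_1:=V_0$. Spread the path $V_1,\ldots,V_{n}$ over the remaining pieces: $B_2:=V_n$, the interior preimages $V_1,\ldots,V_{n-1}$ go into the jughandle $B_{12}$, and $D:=E$. The instances of $R$ between $U$ and $V_0$ lie in $D B_0 B_1 B_{01}$, and the $I$-path from $V_0$ through the interior to $V_n$ lies in $D B_1 B_2 B_{12}$. The edges $R$ from $U$ to $V_n$ lie in $D B_2 B_0 B_{20}$. Every remaining instance of a relation lies inside some $D B_i B_{i+1}B_{i(i+1)}$, because the only relations among the $V_i$ and $U$ are the pullbacks of the induced relations of $\gamma$. (For small $n$ this step needs a separate check, or a variant with some jughandles empty.) Call the associated helix map $h^{\ell}:\tilde B\twoheadrightarrow B$. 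By Fact \ref{closure under jughandle helix maps}, if $\tilde B\in\mathcal{H}$ for every $\ell$ then $B\in\mathcal{H}$. Closure under weak embeddings holds here: the class is defined by forbidden weak substructures in the intended application, and otherwise we pass to the weak-closure as in that Fact. So some length $\ell$ gives $\tilde B\notin\mathcal{H}$.

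\textbf{No lift of $\gamma$ survives.} In the helix, a copy of $B_0$ at level $j$ is $R$-linked to $B_1$ at levels $j'>j$ via the $01$ jughandle. Following the path, $B_1$ at level $j'$ reaches $B_2$ at levels $j''>j'$, and $B_2$ at level $j''$ reaches $B_0$ only at levels $j'''>j''$. A lift $\gamma'$ of $\gamma$ closing up at a single point over $\alpha$ would therefore need $j<j'<j''<j$, which is impossible. This is exactly the cycle-removal mechanism of \cite{ApproxOrder}, transferred to the cyclic jughandle setting.

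\textbf{The main obstacle} is $n$-satisfactoriness: we must show that the image under $h$ of any $k$-split cycle of $\tilde B$, $k\le n$, contains a split cycle. Since $h$ preserves $R$ and $I$, the image of a $k$-split cycle is a closed walk of the right shape. It may fail to be a split cycle only because of collisions, which arise when the walk passes through two copies of the same vertex of $B$ at different levels. We plan to handle this as follows. Whenever two points of the walk collide, we shortcut the walk between them. The choice of which side to keep is made by parity, as in the alternating-cycle arguments of Claim \ref{order construction 1}: an $I^{*}$-step from the same convivium lets us re-route. Shortcutting yields a shorter closed configuration that is still a split cycle, unless it degenerates to $R(\beta,b)$, $R(\beta,b)$ with no $I$-step. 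That degenerate case would force the two $R$-edges of the original split cycle to map onto the same edge. We rule it out using that $\tilde B$ restricted to a single jughandle piece is isomorphic to a piece of $B$, so $h$ is injective on each such piece. We expect this collision case analysis to be the delicate part of the proof.
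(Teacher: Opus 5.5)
There is a genuine gap, and it lies in your choice of decomposition, not in the collision analysis you postpone. Your unwinding argument for lifts of $\gamma$ is correct. But putting the whole remainder $E$ into the base $D$ makes $h$ fail to be $n$-satisfactory as soon as $n\ge 2$.

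The base $D$ is not copied in the helix, so each point of $D$ is related to \emph{every} copy of its neighbours in $B_0,B_1,B_2$. Suppose some $v\in V_0$ has an $I^{*}$-neighbour $w\in E$, and $R(u,v)$ for some $u\in U$. Take $\ell\ge 3$. The copies $v^{(1)}\in B_1^{1}$ and $v^{(2)}\in B_1^{2}$ are joined by the $I^{*}$-path $v^{(1)},w,v^{(2)}$, and the copy $u^{(0)}\in B_0^{0}$ is $R$-related to both. So $u^{(0)},v^{(1)},w,v^{(2)}$ is a $2$-split cycle in $\tilde B$. Its image $\{u,v,w\}$ contains no split cycle, because $R(u,w)$ would push forward to the $1$-split cycle $\alpha,a_0,g(w)$ in $A$.

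The hypotheses allow this configuration. Take $B=A$, $g=\mathrm{id}$, and give $a_0$ one extra $I$-neighbour off $\gamma$; an isolated $I$-loop keeps $A\notin\mathcal H$. The same failure occurs, already for $\ell\ge 2$, with any sort-$O$ point of $E$ that is $R$-related to $v$.

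Your proposed repair cannot help. This split cycle straddles two pieces glued along $D$, so injectivity of $h$ on single pieces is irrelevant. And no rerouting in the image can produce a split cycle that is not there.

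The missing idea is to make $D$ empty, as the paper does:
\begin{itemize}
\item $B_0:=g^{-1}(\alpha)$ and $B_1:=g^{-1}(a_0)$;
\item $B_2$ is the preimage of \emph{all} $R$-neighbours of $\alpha$ other than $a_0$, not just $a_n$;
\item $B_{12}$ is everything else.
\end{itemize}
Enlarging $B_2$ is forced: once $D$, $B_{01}$, $B_{20}$ are empty, every $R$-neighbour of $B_0$ must lie in $B_1\sqcup B_2$. The decomposition is then valid, since the only relations touching $B_0$ are $R$-edges into $B_1\sqcup B_2$. Minimality of $\gamma$ now says there is no $I^{*}$-path of length $<n$ in $B$ from $B_1$ to $B_2$, since it would push forward to a shorter split cycle in $A$. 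Your unwinding argument goes through unchanged for this decomposition.

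The distance bound is exactly what gives $n$-satisfactoriness. Suppose the image of a $k$-split cycle ($k\le n$) contains no split cycle; then its two $R$-endpoints $b\neq b'$ satisfy $h(b)=h(b')$.
\begin{itemize}
\item Its $O$-point cannot lie in some $B_{12}^{jj'}$: its $R$-neighbours would then lie in $B_1^{j}\sqcup B_{12}^{jj'}\sqcup B_2^{j'}$, where $h$ is injective.
\item So it lies in some $B_0^{j}$, and $b,b'$ lie in distinct copies $B_1^{j'}\neq B_1^{m}$, or in distinct copies of $B_2$.
\item An $I^{*}$-path of length $\le n$ from $B_1^{j'}$ stays inside $B_1^{j'}\cup\bigcup_x B_{12}^{j'x}\cup\bigcup_x B_2^{x}$, entering $B_2$ only at its last step. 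So it never reaches $B_1^{m}$, a contradiction; the $B_2$ case is symmetric.
\end{itemize}

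A minor point: $A\notin\mathcal H$ need not be sound, so you cannot invoke soundness of $A$ for $n\le 2$. The no-chord facts there follow from minimality alone.
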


In practice, $g$ will be $n$-satisfactory. Before proving this lemma, we first confirm that this lemma really does play the same role in removing all small split cycles that our original abstract directed-cycle removal properties played in removing all small directed cycles. Specifically, we recapitulate  the argument from Claim 3.15 of \cite{ApproxOrder}, with Lemma \ref{split cycle removal} playing the role of Proposition 3.11 of \cite{ApproxOrder} and split cycles in a $\mathcal{L}_{\mathrm{std}}$-structure playing the role of directed cycles in a directed graph.

\begin{lemma}
 \label{arbitrarily large minimal split cycles}   Let $\mathcal{H}$ be a hereditary class consisting of sound $\mathcal{L}_{\mathrm{std}}$-structures which is the age of some structure whose theory is $\mathrm{NSOP}_{3}$. Then for any $n$, there is some $A \notin \mathcal{H}$ with no $k$-split cycles for $k \leq n$, and in which $I$ has no directed cycles of any length. (Particularly, $I$ has no $1$- or $2$-cycles, i.e., is irreflexive and antisymmetric.)
\end{lemma}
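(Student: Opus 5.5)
We argue by iterating Lemma \ref{split cycle removal}, following the scheme of Claim 3.15 of \cite{ApproxOrder}. We need a starting structure outside $\mathcal{H}$. Since $\mathcal{H}$ consists only of sound structures, the one-point structure $A_{0}$ with a single $b \in P(A_{0})$ and $I(b,b)$ is not in $\mathcal{H}$, and neither is the structure with two points $b, c \in P$ and $I(b,c)$, $I(c,b)$, nor any structure in which $I$ has a directed cycle of some length $m$. This last claim does not follow from soundness alone, and it is where the $\mathrm{NSOP}_{3}$ hypothesis enters. We plan to first show, via the helix maps of Fact \ref{closure under jughandle helix maps} applied to the relation $I$ (the quantifier-free, $D=\emptyset$ case of cyclic $3$-jughandle decompositions), that any $\mathcal{L}_{\mathrm{std}}$-structure in $\mathcal{H}$ has $I$ acyclic. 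Suppose $I$ had a directed $m$-cycle in some member of $\mathcal{H}$. Then, exactly as in the directed-graph argument of \cite{ApproxOrder} (Claims 3.12 and 3.15, using the $n=3$ case of Proposition 3.11 there), we would remove short cycles through repeated helix covers and reach a directed $3$-cycle or shorter. A $1$- or $2$-cycle contradicts soundness and antisymmetry. A $3$-cycle also contradicts soundness once we pull back to a $P$-cycle pattern; if it does not, we take the sharper route below.

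The cleaner formulation is to combine both requirements into one induction. We begin with $A_{0} :=$ the $I$-loop structure above, which is not in $\mathcal{H}$, and define a notion of ``bad cycle'' consisting of both $k$-split cycles and directed $I$-cycles. We then build a sequence $A_{0} \twoheadleftarrow A_{1} \twoheadleftarrow \cdots$ of structures not in $\mathcal{H}$. At each stage we let $g\colon A_{t} \to A_{0}$ be the composite map. We take a minimal-length bad cycle $\gamma$ in the image and apply Lemma \ref{split cycle removal}, or its directed-$I$-cycle analogue, which we obtain from Proposition 3.11 of \cite{ApproxOrder} via Fact \ref{closure under jughandle helix maps}. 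This yields $\tilde{A}_{t} \notin \mathcal{H}$ with no lift of $\gamma$. Because each helix map is $n$-satisfactory, the images of short split cycles are again short split cycles. Since helix maps are maps of $\mathcal{L}_{\mathrm{std}}$-structures, directed $I$-cycles of $\tilde{A}_{t}$ also map to directed $I$-closed walks. Hence the finite set of bad cycles in $A_{0}$ (or in a fixed finite target) that admit a lift strictly decreases. Because all maps are compositions into a fixed finite $A$, the set of possible image cycles of length $\le n$ is finite, so after finitely many steps no split cycle of length $\le n$ lifts. Consequently the final structure has no $k$-split cycles for $k \le n$.

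For the directed $I$-cycles of arbitrary length, we will instead arrange from the start that $A_{0}$ carries an $I$ that is already acyclic. To do this we first apply the directed-graph integrality argument of \cite{ApproxOrder} (Claim 3.16) to the $I$-reduct, which produces some $A \notin \mathcal{H}$ whose $I$-graph embeds in $(\omega,<)$. Then we check that helix maps associated with cyclic $3$-jughandle decompositions never create $I$-cycles: the map $h$ sends $I$-edges to $I$-edges, so an $I$-cycle upstairs would project to an $I$-closed walk downstairs, and by acyclicity no such walk exists. We expect the main obstacle to be this first step. It requires showing that $\mathcal{H}$ omits some structure whose $I$-reduct is acyclic, and we would obtain it by running the directed-cycle-removal argument of \cite{ApproxOrder} on the graph $(P, I)$, using the $\mathrm{NSOP}_{3}$ closure from Fact \ref{closure under jughandle helix maps} and soundness to dispose of $1$- and $2$-cycles. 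A secondary difficulty is verifying the bookkeeping that the lifts of split cycles decrease monotonically under composition; there we rely on the minimality of $\gamma$ and on $n$-satisfactoriness.
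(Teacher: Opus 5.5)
There is a genuine gap in how you handle the base structure and the directed $I$-cycles. Your proposed starting structure $A_0$ is a single $P$-point $b$ with $I(b,b)$, and it cannot work. It has no split cycles, so Lemma \ref{split cycle removal} is never invoked and the output still has an $I$-loop. Helix covers could not remove the loop anyway, since they reproduce it on every copy of $b$.

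Your other claims about the starting point are false.
\begin{itemize}
\item Soundness only forbids $I$-loops and $I$-edges between two $R$-neighbours of a common $O$-point. So the two-point structure with $I(b,c)$, $I(c,b)$ is sound and can lie in $\mathcal{H}$.
\item Members of $\mathcal{H}$ need not have acyclic $I$. Take an infinite $O$-sort with $R=\emptyset$, and let $I$ be the generic irreflexive binary relation on $P$. This theory is simple, hence $\mathrm{NSOP}_{3}$. Its age consists of sound structures and contains directed $I$-cycles of every length $\geq 2$.
\end{itemize}
So your plan to show, via helix maps, that $\mathcal{H}$ omits every $I$-cyclic structure cannot succeed. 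Your fallback of running the directed-cycle removal of \cite{ApproxOrder} on $(P,I)$ is circular: that argument needs as input an omitted structure whose $I$ already has no $1$- or $2$-cycles, which is exactly what you were trying to produce.

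The missing observation is elementary. The structure $\{\alpha, a_0, a_1\}$ with $R(\alpha,a_0)$, $R(\alpha,a_1)$, $I(a_0,a_1)$ is not sound, hence not in $\mathcal{H}$, and its $I$ is a single edge. This is the paper's base case. As you correctly note, $I$-acyclicity then passes back along every map of $\mathcal{L}_{\mathrm{std}}$-structures, so $\mathrm{NSOP}_{3}$ is needed only for split cycles.

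There is also a secondary problem with your bookkeeping. Composing into a fixed target $A_0$ and picking a minimal-length cycle ``in the image'' does not meet the hypothesis of Lemma \ref{split cycle removal}. That lemma requires the target itself to have no split cycles shorter than $\gamma$, since this is what keeps $I^{*}$-paths between $B_1$ and $B_2$ long. The paper instead runs an outer induction on $n$:
\begin{itemize}
\item At each stage the target is replaced by the previous output $A$, which has acyclic $I$ and no $k$-split cycles for $k<n$.
\item It enumerates the $n$-split cycles $\gamma_0,\dots,\gamma_k$ of $A$ and kills their lifts one at a time.
\item It uses that further composition creates no new lifts, and that compositions of $n$-satisfactory maps are $n$-satisfactory.
\end{itemize}
Minimality of $n$ in $A$ is also what closes the argument. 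Take an $\ell$-split cycle of the final $B$ with $\ell\le n$. Its image contains a split cycle of length at most $\ell$, which must have length exactly $n$. So the composite is injective on it and it is a lift of some $\gamma_i$, a contradiction.
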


\begin{proof}
    We assume Lemma \ref{split cycle removal}, which we will prove after concluding this proof.  We start with the following claim, asserting that adding new morphisms to our composition preserves the absence of lifts of split cycles. This claim plays the exact same role in this proof as Lemma 3.13 of \cite{ApproxOrder} plays in Claim 3.15 there.

\begin{claim}\label{pulling back does not add split cycles}

Let $g: B \to A$ be a map of $\mathcal{L}_{\mathrm{std}}$-structures, and let $\gamma \hookrightarrow A$ be an $n$-split cycle in $A$. Suppose there is no $n$-split cycle $\gamma' \hookrightarrow A$ such that $g \circ \gamma' = \gamma$. Let $h: B' \to B$ be another map of $\mathcal{L}_{\mathrm{std}}$-structures. Then there is no $n$-split cycle $\gamma'' \hookrightarrow  B$ such that $(g \circ h) \circ \gamma'' = \gamma$.

\end{claim}

\begin{proof}
    Exactly as in the proof of Lemma 3.13 of \cite{ApproxOrder} (which is itself essentially immediate). If $\gamma'' \hookrightarrow B$ did exist, then for $\gamma' := h \circ \gamma''$, $\gamma' \hookrightarrow H$ would satisfy $g \circ \gamma'= g \circ ( h \circ \gamma'')  = (g \circ h) \circ \gamma'' = \gamma$. Also, $\gamma'$ will be injective because $g \circ \gamma' =\gamma$ is injective. So $\gamma'$ will be an $n$-split cycle with $g \circ \gamma' =\gamma$, a contradiction.
\end{proof}

We argue by induction on $n$. We first show the base case, where we just want to find $A \notin \mathcal{H}$ where $I$ has no directed cycles. But it is immediate from the assumption that $\mathcal{H}$ consists of sound $\mathcal{L}_{\mathrm{std}}$-structures that $A = \{\alpha, a_0, a_1\}$, $\alpha \in O(A)$, $a_i \in P(A)$, where the instances of $R$ and $I$ are just $R(\alpha, a_0)$, $R(\alpha, a_1)$, $I(a_0, a_1)$, will be as desired.

Now assume that $A \notin \mathcal{H}$ for some $A$ without any $k$-split cycles for $k < n$ or directed cycles for $I$; our induction step will be to find $B \notin \mathcal{H}$ for $B$ without any $k$-split cycles for $k \leq n$ or directed cycles for $I$. As in the proof of Lemma 3.13 of \cite{ApproxOrder}, we enumerate the $n$-split cycles of $A$ as $\gamma_0 \hookrightarrow A, \ldots, \gamma_k \hookrightarrow A$. Then we show, by induction on $i \leq k+1$, that there is $B_i \notin \mathcal{H}$ and $n$-satisfactory $g_i: B_i \to A$ such that, for $j < i$, there is no $n$-split cycle $\gamma' \hookrightarrow B_i$ with $g_i \circ \gamma' = \gamma_j$. As the base case, take $g_0$ to be the identity on $A$. Now assume that we have found $g_{i}: B_i \to A$ for $i < k+1$, and we show how to find $g_{i+1}: B_{i+1} \to A$.  By Lemma \ref{split cycle removal}, we may find an $n$-satisfactory map $h: \tilde{B}_{i} \twoheadrightarrow B_i$ such that there is no $n$-split cycle $\gamma' \hookrightarrow \tilde{B}_{i}$ with $g_i \circ h \circ \gamma' = \gamma_i$. (Here we use that $A$ has no $\ell$-split cycles for $\ell < n$.)  Moreover, for $j < i$, there is no $n$-split cycle $\gamma' \hookrightarrow \tilde{B}_{i}$  $  g_j \circ h  \circ \gamma' = \gamma_j$, by the induction hypothesis and the previous claim. So we may choose $B_{i+1} : =  \tilde{B}_{i}$ and $g_{i+1} := h \circ g_i $, which will be $n$-satisfactory, noting that a composition of $n$-satisfactory maps is an $n$-satisfactory map.

Then $B : = B_{}$ will be as desired: first, it will have no directed cycles for $I$. This is because the image of such a cycle under $g_{k}$ would contain a directed cycle for $I$ in $A$. But this cannot exist, because, by the induction hypothesis, $I$ has no directed cycles in $A$. It remains to show that $B$ has no $\ell$-split cycles for $\ell \leq n$. Suppose there is a $\ell$-split cycle $\gamma \hookrightarrow B$ for $\ell \leq n$. Then the image of $g_{k} \circ \gamma$ must contain an $\ell'$-split cycle in $A$ for $\ell' \leq \ell$, because $g_{k}$ is $n$-satisfactory. But $A$ does not have any $\ell'$-split cycles for $\ell' < n$, so $\ell' = \ell = n$. So $\gamma$ is an $n$-split cycle with $g_{k} \circ \gamma$ one of the $n$-split cycles $\gamma_0, \ldots, \gamma_k$, contradicting our hypothesis on $g_k$. This completes the inductive step and the proof of the lemma.

\end{proof}

Now that we have proven that we can remove all small split cycles (Lemma \ref{arbitrarily large minimal split cycles}) as long as we assume our split cycle-removal property for $n$ -satisfactory helix maps (Lemma \ref{split cycle removal}), we give the proof of this helix map removal property.

\begin{proof}
 (of Lemma \ref{split cycle removal})  

 We first define a cyclic $3$-jughandle decomposition of $A$. We will pull it back along $g$ to a cyclic $3$-jughandle decomposition of $B$. Let $\gamma = \{\alpha, a_0, \ldots, a_{n}\}$ as in the discussion before Lemma \ref{split cycle removal}. First, define $A_0$ to be the singleton consisting of the image of $\alpha$ under the map $\gamma \hookrightarrow A$. Second, define $A_1$ to be the singleton consisting of the image of $a_0$. Third, define $A_2$ to be the set consisting of all other $R$-neighbors of the image of $\alpha$, except the image of $a_0$. Finally, define $A_{12} := A \backslash (A_0 \sqcup A_1 \sqcup A_2)$. Then the image of $\alpha$, the sole element of $A_{0}$, has no $R$-neighbors in $A_{12}$. So $A = \bigsqcup^{2}_{i=0} A_i \sqcup A_{12}$\footnote{Throughout this proof, we suppress any references to a subset indexed in a cyclic $3$-jughandle decomposition, or the domain of its associated helix map, whenever that subset is empty. For example, here we do not explicitly make reference to $A_{01}$ or $A_{20}$ in the cyclic $3$-jughandle decomposition  $A = \bigsqcup^{2}_{i=0} A_i \sqcup A_{12}$, because these are empty.} is a cyclic $3$-jughandle decomposition of $A$. Moreover, there is no $I^{*}$-path in $A$ between the element of $A_1$ and any element of $A_{2}$ with less than $n$ edges: otherwise, $A$ would have a $k$-split cycle for $k < n$, contradicting our assumption on $A$.

  Define $B_i := g^{-1}(A_i)$, $B_{12} := g^{-1}(A_{12})$. Then because $g$ is a map of $\mathcal{L}_{\mathrm{std}}$-structures, we preserve the properties from before: $B = \bigsqcup^{2}_{i=0} B_i \sqcup B_{12}$ is a cyclic $3$-jughandle decomposition of $B$, and there is no $I^{*}$-path in $B$ with less than $n$ edges between a point of $B_1$ and a point of $B_2$. Thus, by Fact \ref{closure under jughandle helix maps}, for some $\ell < \omega$ and $\tilde{B} : = \tilde{B}^{\ell}$ where $h: \tilde{B}^{\ell} \twoheadrightarrow B $ is the $3$-helix map of length $\ell$ associated with this cyclic $3$-jughandle decomposition, $\tilde{B}^{\ell} \notin \mathcal{H}$. Of course $h$ is a map of $\mathcal{L}^{\mathrm{std}}$-structures. Write $\tilde{B}^{\ell} = \bigsqcup_{i \leq 2, j < \ell} B_{i}^{j} 
\sqcup \bigsqcup_{j < j' < \ell} B_{12}^{jj'}$ as in the proof of Fact \ref{closure under jughandle helix maps}.

 To show that $\tilde{B}, h$ are as desired, we first show that there is no $n$-split cycle $\gamma' \hookrightarrow \tilde{B} $ with $g \circ h \circ \gamma' = \gamma$.  Suppose otherwise, and write the domain of $\gamma'$ as  $\{\alpha, a_0, \ldots, a_{n}\}$ as we wrote the domain of $\gamma$ above. Then $\gamma'(\alpha) \in B^{j}_{0}$ for some $j$. Moreover, $R(\gamma'(\alpha), \gamma'(a_{0}))$; therefore, $\gamma'(a_0) \in B^{j'}_{1} $ for some $j' > j$ (by construction of $3$-helix maps). Also, $R(\gamma'(\alpha), \gamma'(a_{n}))$, so $\gamma'(a_n) \in B^{j''}_{2} $ for some $j'' < j$.  In the case that $n=1$, by $j'' < j'$, there are no instances of $I^{*}$ between a point of $B^{j'}_{1}$ and a point of $B^{j''}_{2}$. In particular, it is not the case that $I^{*}(\gamma'(a_{0}), \gamma'(a_{1}))$, a contradiction. If $n > 1$, by $I^{*}(\gamma'(a_0), \gamma'(a_1))$, and the fact that there are no instances of $I^{*}$ between a point of $B^{j'}_{1}$ and a point of $B_{12}^{\ell\ell'}$ unless $\ell=j'$ (in which case $\ell' > j'$), $\gamma'(a_1) \in B^{j'j'''}_{12}$ for some $j'<j'''$.  We next show by induction on $k$ that, for $ 1 \leq k \leq n-1$, $\gamma'(a_k) \in B^{j'j'''}_{12}$. If $\gamma'(a_{k-1}) \in B^{j'j'''}_{12}$, then since $I^{*}(\gamma'(a_{k-1}), \gamma'(a_k))$ and there are no instances of $I^{*}$ between points of two distinct $B^{\ell\ell'}_{12}$, $\gamma'(a_k) \in B^{j'j'''}_{12}$.  We have proven that, for $ 1 \leq k \leq n-1$, $\gamma'(a_k) \in B^{j'j'''}_{12}$, and in particular that  $\gamma'(a_{n-1}) \in B^{j'j'''}_{12}$. Finally, since $I^{*}(\gamma'(a_{n-1}), \gamma'(a_n))$, $\gamma'(a_{n-1}) \in B^{j'j'''}_{12}$ and there are no instances of $I^{*}$ between $\gamma'(a_{n-1}) \in B^{j'j'''}_{12}$ and $B_{2}^{\ell}$ unless $\ell=j'''$, $\gamma'(a_n) \in B^{j'''}_{2} $. But we showed it is also the case that $\gamma'(a_n) \in B^{j''}_{2} $, while $j'' < j$ and $j''' > j$. This contradicts that $B^{j'''}_{2}$ is disjoint from $B^{j''}_{2}$ for $j'' \neq j'''$.

 It remains to show that $h$ is $n$-satisfactory. Suppose that $\alpha, a_{0}, \ldots, a_k$ is a $k$-split cycle in $\tilde{B}$ for $k \leq n$, and that its image under $h$ contains no split cycle. The only way this is possible is if $h(a_0)=h(a_k)$. Otherwise, since $h$ is a map of $\mathcal{L}_{\mathrm{std}}$-structures, $h(\alpha), h(a_{0}), h(a_{k})$ will be distinct with $R(h(\alpha), h(a_0))$, $R(h(\alpha), h(a_k))$, while $h(a_{0}), h(a_k)$ are the endpoints of some $I^{*}$-path consisting of distinct vertices in the image of $h$.  This implies that the image contains a split cycle, a contradiction. So $h(a) = h(b)$ for two distinct $a, b \in P(\tilde{B})$ such that $a$ and $b$ admit both a common $R$-neighbor $\alpha \in O(\tilde{B})$ and an $I^{*}$-path with at most $n$ edges. The point $\alpha$ must be in $B^{j}_{0}$ for some $j$: otherwise $\alpha$ is in $B^{jj'}_{12}$ for some $j < j'$. Then the set of $R$-neighbors of $\alpha$, and particularly $\{a, b\}$, will be contained in $B^{j}_{1}  \sqcup B^{jj'}_{12} \sqcup B_{2}^{jj'}$. But $B^{j}_{1}  \sqcup B^{jj'}_{12} \sqcup B_{2}^{jj'}$ contains no two points mapped to the same point by $h$, a contradiction. We have shown that $\alpha \in B^{j}_{0}$ for some $j$. Moreover,  $a, b$ must either belong to two distinct sets of the form $B_{1}^{j'}$, or to two distinct sets of the form $B_{2}^{j'}$: no set of the form $B^{j'j''}_{12}$ has an $R$-neighbor in $B^{j}_{0}$, so because $R(\alpha, a)$, $R(\alpha, b)$, neither $a$ nor $b$ belong to any set of the form $B^{j'j''}_{12}$. Therefore,  one of the two aforementioned possibilities is required for $h$ to map $a$ and $b$ to the same point. Let us assume the case that $a$ and $b$ belong to distinct sets of the form $B_{1}^{j'}$; the rest of the proof in the other case is exactly the same except for exchanging the subscripts $1$ and $2$, and replacing the expression $j''> j'$ with $j'' < j'$. It suffices to show that any point with an $I^{*}$-path from $a$ with at most $n$ edges\footnote{To avoid confusion when using the shorthand term \textit{length} in what follows, we define the length of a path to be its number of edges.}  must belong to $B_{1}^{j'}$, a set of the form $B_{12}^{j'j''}$ for some $j'' >j'$, or a set of the form $B^{j''}_{2}$: then such a point cannot be $b$, a contradiction. We first show by induction on $k \leq n-1$ that a point $a'$ with a path of length $k$ from $a$ must belong either to $B_{1}^{j'}$ or to one of the $B_{12}^{j'j''}$ as above. The point $a'$ will have an $I^{*}$-neighbor with an $I^{*}$-path of length $n-1$ from $a$, and this $I^{*}$-neighbor must be in $B_{1}^{j'}$ or one of the $B_{12}^{j'j''}$ by induction. But an $I^{*}$-neighbor of a point of $B_{1}^{j'}$ or one of the $B_{12}^{j'j''}$ must belong to $B_{1}^{j'}$, one of the $B_{12}^{j'j''}$, or one of the $B^{j''}_{2}$. So $a'$ belongs to $B_{1}^{j'}$, one of the $B_{12}^{j'j''}$, or one of the $B^{j''}_{2}$. But $a'$ cannot belong to one of the $B^{j''}_{2}$, because $a'$ has an $I^{*}$-path of length less than $n$ to $a \in B^{j'}_{1}$, and there is no $I^{*}$-path of length less than $n$ between a point of $B_{1}$ and a point of $B_{2}$. We have shown, for  $k \leq n-1$, that a point $a'$ with a path of length $k$ from $a$ must belong either to $B_{1}^{j'}$ or to one of the $B_{12}^{j'j''}$. It remains to show that, if $a'$ is a point with an $I^{*}$-path of length $n$ from $a$, then $a'$ must belong to $B_{1}^{j'}$, a set of the form $B_{12}^{j'j''}$ for some $j'' >j'$, or a set of the form $B^{j''}_{2}$. But $a'$ must have an $I^{*}$-neighbor with an $I^{*}$-path of length $n-1$ from $a$. By the previous claim for $k = n-1$, this $I^{*}$-neighbor belongs to $B_{1}^{j'}$ or one of the $B_{12}^{j'j''}$. But, again by the fact an $I^{*}$-neighbor of a point of $B_{1}^{j'}$ or one of the $B_{12}^{j'j''}$ must belong to $B_{1}^{j'}$, one of the $B_{12}^{j'j''}$, or one of the $B^{j''}_{2}$, $a'$ must belong to $B_{1}^{j'}$, one of the $B_{12}^{j'j''}$, or one of the $B^{j''}_{2}$. This completes our argument that any $a'$ with an $I^{*}$-path of length at most $n$ from $a$ must belong to  $B_{1}^{j'}$, one of the $B_{12}^{j'j''}$, or one of the $B^{j''}_{2}$, and thus completes our proof that $h$ is $n$-satisfactory.
    
\end{proof}

We now complete our proof of Lemma \ref{omitting a presidium}, using the same strategy as in the conclusion to the proof of Theorem 3.1 of \cite{ApproxOrder}. Here, to infer (among $\mathcal{L}^{\mathrm{std}}$-structures without directed cycles for $I$) the existence of $A \notin \mathcal{H}$ without split cycles from the existence of $A' \in \mathcal{H}$ without \textit{small} split cycles, we apply the assumption that $\mathcal{H}$ is defined by a finite family of forbidden weakly embedded substructures; we will do the same for potentially pinched alternating cycles in step three.

\begin{proof}
We first show, using the same argument from \cite{LZ17} used in the proof of Theorem 3.1 of \cite{ApproxOrder}, that there is some $A \notin \mathcal{H}$ with no split cycles at all and no directed cycles for $I$. Let $\mathcal{H} = \mathcal{H}(\mathcal{F})$ for some finite family $\mathcal{F}$ of finite $\mathcal{L}_{\mathrm{std}}$-structures. Choose $n$ greater than the cardinality of every structure in $\mathcal{F}$. By Lemma \ref{arbitrarily large minimal split cycles}, there is some $A_0 \notin \mathcal{H}$ with no $k$-split cycles for $k \leq n$ and no directed cycles for $I$. The set $A_0$ must weakly embed some $A \in \mathcal{F}$, and $A \notin \mathcal{H}$. But since $A_{0}$ contains no $n$-split cycles, and $A$ has a weak embedding into $A_0$, $A$ contains no $n$-split cycles. And because $|A| < n$ and any $n$-split cycle has cardinality greater than $n$, $A$ has no split cycles at all (and no directed cycles for $I$).

Then $A$ will be as desired, and we show that, by adding instances of $I$ on $A$, we can obtain a presidium: this suffices, because the fact that $\mathcal{H}$ is closed under weak embeddings means that this presidium, into which $A$ weakly embeds, will not be in $\mathcal{H}$. Let $P(A) = \sqcup^{N}_{i =1} C_i$ where $C_i$ are the connected components of $P(A)$ with respect to $I^{*}$; no two members of the same $C_i$ have a common $R$-neighbor in $A$, because $A$ has no split cycles. The $C_{i}$ have no directed cycles for $I$; now recall the well-known fact, Claim 3.16 of \cite{ApproxOrder}, that every relation with no directed cycles on a set can be extended to a linear order on that set. So let us extend the relation $I$ on $A$ so that each of the $C_i$ from before is now linearly ordered by this extended relation, and there are no instances of this extended relation between points of distinct $C_i$. To show that the resulting $\mathcal{L}_{\mathrm{std}}$-structure is a presidium (with the $C_i$ its convivia), it remains to show it is sound. But  any two neighbors of $P(A)$ with respect to this extended relation must belong to the same $C_i$, so have no common $R$-neighbor, as remarked above. This proves soundness of $A$ with $R$ given the original interpretation and $I$ interpreted as this extended relation, as desired.
\end{proof}

This completes step one.

\begin{remark}
  \label{standard tp2-structure}  Before proceeding to step two, we note, from a formal perspective, the presence of $\mathrm{TP}_{2}$ in the proof. Specifically, let the \textit{standard $\mathrm{TP}_{2}$-structure} be the $\mathcal{L}_{\mathrm{std}}$-structure $\mathbb{TP}_{2}$ consisting of distinct $b_{ij} \in P(\mathbb{TP}_{2})$ for $i, j < \omega$, and $a_{\sigma} \in O(\mathbb{TP}_{2})$ for $\sigma \in \omega^{\omega}$, with all and only the following instances of $R$ and $I$: $I(b_{i j_{1}},b_{ij_{2}})$ for $i, j_{1}, j_{2} < \omega$, $j_{1} < j_{2}$, and $R(a_{\sigma}, b_{i\sigma(i)} )$ for $\sigma \in \omega^{\omega}$, $i < \omega$. Then a finite $\mathcal{L}^{\mathrm{std}}$-structure is a presidium if and only if it (weakly) embeds into the standard $\mathrm{TP}_{2}$-structure. So in step one we have shown that there is an $\mathcal{L}^{\mathrm{std}}$-structure $A \notin \mathcal{H}$ that embeds into the standard $\mathrm{TP}_{2}$-structure, and we now want to find $\mathcal{L}^{\mathrm{std}}$-structure $A \notin \mathcal{H}$ that embeds into the standard $\mathrm{TP}$-structure.
\end{remark}

\:

\textbf{Step two: Eliminating consanguineous pairs} 

In steps two and three, we will show, from the conclusion (Lemma \ref{omitting a presidium}) that $\mathcal{H}$ omits a presidium when $\mathcal{H}$ is a hereditary class of $\mathcal{L}_{\mathrm{std}}$-structures defined by finitely many forbidden weak substructures which is the age of a structure whose theory is $\mathrm{NSOP}_{3}$, that $\mathcal{H}$ even omits a presidium with no \textit{potentially pinched alternating cycles}. This is a more robust class of cycles containing all of the alternating cycles, defined in Definition \ref{potentially pinched alternating cycle, flat} in step three below. Because all alternating cycles are potentially pinched alternating cycles, it follows that $\mathcal{H}$ omits a presidium with no alternating cycles. Then applying Lemma \ref{embedding into the standard tp-structure}, we conclude that $\mathcal{H}$ omits the standard $\mathrm{TP}$-structure, proving Corollary \ref{main combinatorial lemma} as needed to conclude the proof of the main theorem of this section. However, the helix maps required to find a presidium not belonging to $\mathcal{H}$ omitting potentially pinched alternating cycles with only $3$ or $4$ points of sort $P$ are defined differently from the helix maps required to find a presidium omitting all potentially pinched alternating cycles smaller than a size larger than $4$. Therefore, eliminating these smallest potentially pinched alternating cycles will constitute its own step, which we pursue here.

We make and justify the following assumption purely as an expositional convenience. Call a finite presidium \textit{balanced} if all convivia have the same cardinality, which is at least $2$. As in the short argument within the proof of Lemma \ref{embedding into the standard tp-structure} that we may assume that the presidium considered in that proof has convivia of cardinality at least four, we may extend any presidium to a balanced presidium. Thus, within the statement of Lemma \ref{omitting a presidium}, the omitted presidium $A \notin \mathcal{H}$ may be chosen to be balanced, and within steps two and three, when we refer to a presidium, \textit{we mean a balanced presidium}.

Our goal in this step will be to show that our hereditary class $\mathcal{H}$ omits a presidium with no potentially pinched alternating cycles whose points of sort $P$ lie in just two convivia:

\begin{definition}
    \label{consanguineous pair} Let $A$ be a presidium. Then a \textit{consanguineous pair} in $A$ is a pair of distinct convivia $C$, $C'$ of $A$, such that there are $c, d \in C$, $c', d' \in C'$ such that $c$ and $c'$ have a common $R$-neighbor, $d$ and $d'$ have a common $R$-neighbor, and it is not the case that both $c = d$  and $c' = d'$.
\end{definition}

So, using soundness, a presidium $A$ has no consanguineous pair if and only if $A$ has no subset consisting of distinct $a_1, a_2 \in O(A)$, $b_1, b_2, \beta \in P(A)$ with $I^{*}(b_1, b_2)$ and $R(a_{i}, b_{i})$ and $R(a_i, \beta)$ for $i = 1, 2$, and no subset consisting of distinct $a_1, a_2 \in O(A)$, $b_1, b_2, c_1, c_2 \in P(A)$ with $I^{*}(b_1, b_2)$ and $I^{*}(c_1, c_2)$ and $R(a_{i}, b_{i})$ and $R(a_i, c_i)$ for $i = 1, 2$.  We call the first of these, a subset of $A$ consisting of distinct $a_1, a_2 \in O(A)$, $b_1, b_2, \beta \in P(A)$ with $I^{*}(b_1, b_2)$ and $R(a_{i}, b_{i})$ and $R(a_i, \beta)$ for $i = 1, 2$, a \textit{potentially pinched alternating cycle} with exactly three points of sort $P$. And we have already stated (Definition \ref{alternating cycle}) that the second kind of subset, a subset of $A$ consisting of distinct $a_1, a_2 \in O(A)$, $b_1, b_2, c_1, c_2 \in P(A)$ with $I^{*}(b_1, b_2)$ and $I^{*}(c_1, c_2)$ and $R(a_{i}, b_{i})$ and $R(a_i, c_i)$ for $i = 1, 2$, is the definition of an \textit{alternating cycle} with exactly four points of sort $P$. While we have not yet given the full definition of a potentially pinched alternating cycle (as we will in step three, in Defintion \ref{potentially pinched alternating cycle, flat} below), the potentially pinched alternating cycles we will refer to in the rest of this step will be those with exactly three points of sort $P$, as well as alternating cycles with exactly $4$ points of sort $P$. So those presidia without consanguineous pairs will be those without the two smallest possible sizes of potentially pinched alternating cycles (there are no alternating cycles with only two points of sort $P$ by soundness), the ones with exactly three or four points of sort $P$.

We want to find such a presidium $A$ with $A \notin \mathcal{H}$. Having defined what a consanguineous pair is, we can now state the goal of this step:

\begin{lemma}
    \label{eliminating consanguineous pairs} Let $\mathcal{H}$ be a hereditary class consisting of sound $\mathcal{L}_{\mathrm{std}}$-structures, defined by a finite family of forbidden weakly embedded substructures, which is the age of some structure whose theory is $\mathrm{NSOP}_{3}$. Then there is some presidium $A$ such that $A \notin \mathcal{H}$ and that $A$ has no consanguineous pairs, or equivalently, does not have any potentially pinched alternating cycles (see Definition \ref{potentially pinched alternating cycle, flat} below) with exactly $3$ or $4$ points of sort $P$.
\end{lemma}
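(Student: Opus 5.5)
The proof follows the same three-part template as step one: a local cycle-removal lemma for a single consanguineous configuration via a well-chosen cyclic $3$-jughandle decomposition; an iteration lemma producing presidia outside $\mathcal{H}$ with no consanguineous pairs; and, if needed, a final finiteness argument. Here no finiteness argument should be needed, since we remove only boundedly small configurations (three or four points of sort $P$).

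\textbf{Starting point and bookkeeping.} We begin with a balanced presidium $A \notin \mathcal{H}$ given by Lemma \ref{omitting a presidium}. We enumerate the finitely many potentially pinched alternating cycles of $A$ with three or four points of sort $P$, say $\gamma_0, \ldots, \gamma_k$. By induction on $i$ we build maps $g_i: B_i \to A$ of $\mathcal{L}_{\mathrm{std}}$-structures with the following properties: $B_i \notin \mathcal{H}$, $B_i$ is again a presidium, and no such small cycle of $B_i$ maps injectively onto $\gamma_j$ for $j<i$. The maps $g_i$ should be ``satisfactory'' in the sense that every small potentially pinched alternating cycle in $B_i$ maps onto one in $A$. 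The analogue of Claim \ref{pulling back does not add split cycles} is immediate, so lifts of previously removed cycles never reappear under further composition. At the end, $B_{k+1}$ has no consanguineous pairs.

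\textbf{The local removal step.} Given $g: B \to A$ and a small cycle $\gamma$ in $A$ with $O$-points $a_1, a_2$, I would define the decomposition on $A$ and pull it back along $g$, as in the proof of Lemma \ref{split cycle removal}. I would put the whole convivium $C$ containing $b_1, b_2$ into a single piece, so that convivia are never split and the helix cover remains a presidium. The first point of sort $O$ would go in $A_0$. The other point, together with its $R$-neighbors outside $C$, would be arranged so that every relation instance lies in some $DA_iA_{i+1}A_{i(i+1)}$.

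Concretely, for the three-$P$ case I would try $D=\emptyset$, $A_0=\{a_1\}$, $A_1=C$, $A_2=\{\beta\}\cup(\text{other }R\text{-neighbours of }a_1)$, and put the remaining points (including $a_2$) in $A_{12}$. Then $a_1$ sees $C$ only in the copy $B^{j'}_1$ with $j'>j$ and sees $\beta$ only in copies $B^{j''}_2$ with $j''<j$. Meanwhile $a_2$, lying in some $B^{jj'}_{12}$, sees $C$ and $\beta$ only in $B^{j}_1$ and $B^{j'}_2$. A lift of $\gamma$ would therefore force $\beta$'s copy to satisfy both $j''<j$ and $j''\ge$ (the index of the $C$-copy) $>j$, which is a contradiction. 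For the four-$P$ case, with convivia $C \ni b_1,b_2$ and $C' \ni c_1,c_2$, I would take $A_1=C$, $A_2=C'\cup(\text{other neighbours of }a_1)$, and argue the same way, using that $I^*$ holds only inside a single copy of a convivium.

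Fact \ref{closure under jughandle helix maps} then gives a finite-length helix cover $\tilde{B}\notin\mathcal{H}$. Soundness and the presidium property pass to $\tilde{B}$ because convivia are copied whole. Balance is recovered by padding.

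\textbf{Main obstacle.} The hardest step is satisfactoriness of the helix map. We must check that a new small potentially pinched alternating cycle in $\tilde{B}$ cannot collapse under $h$, with two of its points identified, into something that is not such a cycle. As in the $n$-satisfactory argument of step one, the only collapse possible is between different copies $B^{j}_1, B^{j'}_1$ (or $B^{j}_2,B^{j'}_2$) of the same point. For a collapse to occur, two such copies would have to share an $I^*$-edge or a common $R$-neighbour with a third point. I would exclude this by case analysis on which piece the $O$-points lie in. An $O$-point in $B^{jj'}_{12}$ sees only one copy of each piece. An $O$-point in $B^j_0$ sees copies of $A_1$ and $A_2$ on opposite sides of $j$, and $I^*$ never crosses copies. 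I expect this case analysis, especially the pinched configuration in which $\beta$ lies in $A_{12}$ or in $D$, to be where the decomposition may need adjustment, for example by moving $\beta$'s convivium into $A_2$ whole.
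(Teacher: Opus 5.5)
Your overall scheme is the paper's: start from a balanced presidium $A\notin\mathcal{H}$, enumerate its finitely many potentially pinched alternating cycles with $3$ or $4$ points of sort $P$, kill their lifts one at a time with helix covers from Fact~\ref{closure under jughandle helix maps}, and carry a preservation invariant through the compositions. But the decomposition you write down does not give what you claim.

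\textbf{The decomposition splits convivia.} You take $A_2=\{\beta\}\cup(\text{other }R\text{-neighbours of }a_1)$ and put the remaining points in $A_{12}$. Then every convivium $K\neq C$ containing an $R$-neighbour of $a_1$ is split. By soundness exactly one of its points lies in $A_2$, and the rest lie in $A_{12}$. In the cover, each copy of $\beta$ in $B^{m'}_2$ is then $I^*$-adjacent to the copies of $K\setminus\{\beta\}$ in every $B^{mm'}_{12}$ with $m<m'$. These copies are pairwise $I$-unrelated, so $\tilde B$ is not a presidium and $h$ is not regular. Padding cannot repair this, because linearizing would need $I$-edges between copies of the same point, which $h$ cannot map.

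You must put all of $P(A)\setminus C$ into $A_2$ and $O(A)\setminus\{a_1\}$ into $A_{12}$. This is exactly the mirror image of the paper's decomposition $A_0=C$, $A_1=\{a\}$, $A_2=P(A)\setminus C$, $A_{20}=O(A)\setminus\{a\}$. With it, your index argument against a lift of $\gamma$ goes through in both the $3$- and $4$-point cases, using that $\gamma'(a_2)\notin B_0$ because $\gamma(a_2)\neq a_1$. The cover is then automatically balanced with the same convivium size, so no padding is needed.

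\textbf{The missing idea is in the preservation step.} The case analysis you sketch cannot exclude collapses, because distinct copies of one point do share $R$-neighbours. A point of $B^j_0$ is $R$-adjacent to the copies of its neighbours in every $B^m_1$ with $m>j$ and in every $B^m_2$ with $m<j$. For instance, suppose distinct $o,o'\in g^{-1}(a_1)$ were $R$-adjacent to distinct points $x,y$ of one convivium of $B_1$. Then $x^m,y^m,y^{m'},x^{m'}$, together with suitable copies of $o',o$, would form a $4$-point alternating cycle in $\tilde B$ that $h$ collapses onto $\{x,y,o,o'\}$. Nothing in your sketch rules this out.

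What rules it out is the key observation in the paper's proof. The $O$-piece is the single point $a_1$ and $A$ is sound, so each convivium of $A$ contains at most one $R$-neighbour of $a_1$. Maps of presidia are injective on convivia, so each convivium of $B$ contains at most one point with an $R$-neighbour in $g^{-1}(a_1)$. Jughandle points see only one copy of each piece. Hence two copies of a convivium are linked only through the copies of that single point, and they never form a consanguineous pair.

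This is why the paper carries the invariant ``consanguineous'' (regular, and never identifying the two convivia of a consanguineous pair) rather than your ``satisfactory''. It is closed under composition, and it is exactly what the uniqueness property delivers for the helix map. It also immediately implies that $3$- and $4$-point potentially pinched alternating cycles map injectively onto such cycles, which is what the final step needs.
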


We define a class of maps between presidia, \textit{regular} maps, that respect the structure of presidia. Then analogously to Definition \ref{n-satisfactory} in step one, we define an ad hoc property of maps that preserve potentially pinched alternating cycles with exactly $3$ or $4$ points of sort $P$.

\begin{definition}
    \label{regular, consanguineous}

    (1) A map $f: A \to B$ of presidia is \textit{regular} if the restriction of $f$ to any convivium $C$ of $A$ is a bijection between $C$ and the convivium of $B$ containing $f(C)$. (So for balanced finite presidia, this will be true if the cardinality of all of the convivia in $A$ is equal to the cardinality of all of the convivia in $B$.)

    (2) A regular map $f: A \to B$ is \textit{consanguineous} if, whenever $C, C'$ are a consanguineous pair of convivia in $A$, $f(C)$ and $f(C')$ are not the same convivium of $B$.
\end{definition}

We observe that, for any consanguineous map $f$, the image under $f$ of (resp.) any potentially pinched alternating cycle with $3$ or $4$ points of sort $P$ is an alternating cycle with $3$ or $4$ points of sort $P$.  In the case where the potentially pinched alternating cycle consists of $b_1, b_2, \beta \in P(A)$, $a_1, a_2 \in O(A)$ (as in the discussion before \ref{eliminating consanguineous pairs}). $b_1, b_2$ belong to one convivium and $\beta$ (by soundness) belongs to a distinct convivium. These two convivia form a consanguineous pair.  So $f(b_1), f(b_2), f(\beta)$ are distinct: $f (\beta)$ is distinct from $f(b_{1})$ and $f(b_{2})$ by consanguineousness of $f$, and $f(b_1)$ and $f(b_2)$ are distinct because $I^{*}(b_1, b_2)$, so $I^{*}(f(b_1), f(b_2))$. And $f(a_1), f(a_2)$ are distinct by soundness. So the image of $b_1, b_2, \beta, a_1, a_2 $ is a potentially pinched alternating cycle with $3$ points of sort $P$, as desired. The case of $4$ points of sort $P$ is similar.

The following will play the role played by Lemma \ref{split cycle removal} in step one, and in turn, Proposition 3.11 of \cite{ApproxOrder}. We refer the reader to the commutative diagram presented in the statement of Lemma \ref{split cycle removal}. Just as for split cycles in step one (and allowing the same abuse of notation), we can alternatively view a potentially pinched alternating cycle in a presidium $A$, with $3$ or $4$ points of sort $P$, as an injective map $\gamma \hookrightarrow A$. Here $\gamma$ consists of either distinct $a_1, a_2 \in O(\gamma)$, $b_1, b_2, \beta \in P(\gamma)$ with $I^{*}(b_1, b_2)$ and $R(a_{i}, b_{i})$ and $R(a_i, \beta)$ for $i = 1, 2$ giving all the instances of $I$ and $R$ on $\gamma$, or of distinct $a_1, a_2 \in O(\gamma)$, $b_1, b_2, c_1, c_2 \in P(\gamma)$ with $I^{*}(b_1, b_2)$ and $I^{*}(c_1, c_2)$ and $R(a_{i}, b_{i})$ and $R(a_i, c_i)$ for $i = 1, 2$ giving all of the instances of $I$ and $R$ on $\gamma$.

\begin{lemma}
    \label{consanguineous pair removal} Let $\mathcal{H}$ be a hereditary class consisting of sound $\mathcal{L}_{\mathrm{std}}$-structures which is the age of some structure whose theory is $\mathrm{NSOP}_{3}$. Let $g: B \to A$ be a regular map between finite presidia with $A, B\notin \mathcal{H}$. Let $\gamma \hookrightarrow A$ be a potentially pinched alternating cycle with (resp.) exactly $3$ or $4$ points of sort $P$.  Then there is a consanguineous $3$-helix map $h: \tilde{B} \twoheadrightarrow B$ of some length $\ell$ associated with a cyclic $3$-jughandle decomposition of $B$ such that $\tilde{B} \notin \mathcal{H}$ and such that there is no potentially pinched alternating cycle $\gamma' \hookrightarrow \tilde{B}$ with exactly $3$ or $4$ points of sort $P$ with $\gamma = g\circ h\circ \gamma'$.

\end{lemma}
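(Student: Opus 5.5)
Following the template of the proof of Lemma \ref{split cycle removal}, I will build a cyclic $3$-jughandle decomposition of $A$ adapted to $\gamma$, pull it back along $g$ to $B$, and take $h$ to be the length-$\ell$ helix map that Fact \ref{closure under jughandle helix maps} supplies with $\tilde{B}=\tilde{B}^{\ell}\notin\mathcal{H}$. Pulling back preserves the decomposition, since $g$ is a map of $\mathcal{L}_{\mathrm{std}}$-structures, so only two things need checking: no copy of $\gamma$ lifts through $g\circ h$, and $h$ is consanguineous (in particular regular).

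For the decomposition I group whole convivia, so that $h$ is regular and lifts of the $I$-relations stay inside single pieces. In both cases $\gamma$ meets exactly two convivia $C$ and $C'$ of $A$. In the $3$-point case $b_1,b_2\in C$ and $\beta\in C'$; in the $4$-point case $b_1,b_2\in C$ and $c_1,c_2\in C'$. I take $A_1:=C$, $A_2:=C'$, and put into $A_0$ the set of $O$-points of $A$ having $R$-neighbors in both $C$ and $C'$; this set contains $a_1,a_2$, and by soundness each such point has exactly one neighbor in each of $C$ and $C'$. All remaining convivia and $O$-points go into $D$. Since no $O$-point can join $C$ or $C'$ to $D$ through $A_0$, the $O$-points meeting only $C$ (respectively only $C'$) go into $A_{01}$ (respectively $A_{20}$), or into $D$ when they meet neither; I will arrange these pieces so that every relation instance lies within some $DA_iA_{i+1}A_{i(i+1)}$. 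The assignment I expect to use has $O$-points meeting $C$ and $D$ in $A_{01}$, those meeting $C'$ and $D$ in $A_{20}$, and $A_{12}=\emptyset$. This requires checking that an $O$-point whose neighbors lie in $C\cup C'\cup D$ and meet all three cannot occur. If it could, I would instead put a third convivium into the decomposition, and handle that possible obstruction separately, perhaps by refining which convivia go into $D$.

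Set $B_i:=g^{-1}(A_i)$ and $B_{ij}:=g^{-1}(A_{ij})$. Each $B_1$ and $B_2$ is a union of convivia because $g$ is regular. In $\tilde{B}$, an $R$-instance between $B^j_0$ and $B^{j'}_1$ occurs only for $j<j'$, and one between $B^j_0$ and $B^{j'}_2$ only for $j'<j$. Suppose $\gamma'$ lifts $\gamma$. In the $4$-point case, $a_1$ and $a_2$ lie in $B^{j_1}_0$ and $B^{j_2}_0$, say with $j_1\le j_2$. The lifts of $b_1,b_2$ are $I^*$-related, so they lie in a single convivium, hence in one copy $B^{k}_1$; likewise the lifts of $c_1,c_2$ lie in one copy $B^{k'}_2$. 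This forces $j_1,j_2<k$ and $k'<j_1,j_2$, which is consistent, so the argument must rely on more than the $j$-inequalities alone. To make them decisive, I plan to assign roles asymmetrically, putting $a_1$'s side into $A_0$ and $a_2$'s side into a jughandle piece, or equivalently splitting $C$ across $A_1$ and an $A_{ij}$. I expect this to be the main obstacle: choosing which of $C$, $C'$ and the common neighbors go where, so that a lift of $\gamma$ would require both $j<j'$ and $j'<j$.

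Finally, for consanguineousness I need that if two lifted convivia $\tilde{C}$ and $\tilde{C}'$ in $\tilde{B}$ form a consanguineous pair, then $h(\tilde{C})\neq h(\tilde{C}')$. Distinct copies of the same convivium, namely $B^{j}_1$ and $B^{j'}_1$ with $j\ne j'$, can only be linked through $O$-points in $B^{j''}_0$ with $j''<j$ and $j''<j'$, and each such $O$-point meets only one copy. I would show that two such linkages between the same pair of copies would produce a consanguineous pair already present in $B$ whose image under $g$ collapses. This would use soundness, together with the fact that $g$ is injective on each convivium.
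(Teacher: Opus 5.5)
There is a genuine gap, and you have located it yourself. The decomposition you actually propose, with $A_1=C$, $A_2=C'$ and $A_0$ the common $R$-neighbors, does not block lifts.

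\emph{Lifts survive.} The inequalities $k'<j_1,j_2<k$ are satisfiable. So whenever $B$ contains a lift of $\gamma$ through $g$ (for instance when $g=\mathrm{id}_A$), placing the copies of its points in $B^0_2$, $B^1_0$ and $B^2_1$ gives a lift through $g\circ h$ in $\tilde B^{\ell}$ for every $\ell\ge 3$. This happens in both the $3$-point and the $4$-point case.

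\emph{The repair is unspecified.} The variant of splitting $C$ between $A_1$ and a jughandle piece would destroy the convivium structure of $\tilde B$, so $h$ would no longer be regular.

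\emph{The consanguineousness argument rests on a false claim.} In the helix construction, a point of $B^{j''}_0$ is adjacent to the corresponding points of \emph{every} copy $B^{j}_1$ with $j>j''$, not just one copy. Take $g=\mathrm{id}_A$, where $b_1,b_2$ have $R$-neighbors $a_1,a_2$ in $B_0$. Then two copies of $C$ in $B^{j}_1$ and $B^{j'}_1$ form a consanguineous pair with the same image. So under your decomposition $h$ is not consanguineous.

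The missing idea is to isolate a single $O$-point rather than the common neighbors. Put $a:=\gamma(a_1)$ and set
\begin{itemize}
\item $A_1:=\{a\}$,
\item $A_0:=C$, the convivium of $b_1,b_2$,
\item $A_2:=P(A)\setminus C$,
\item $A_{20}:=O(A)\setminus\{a\}$.
\end{itemize}
By soundness, $a$ has at most one $R$-neighbor in each convivium. By regularity of $g$, each convivium of $B_0\cup B_2$ then has at most one point with an $R$-neighbor in $B_1=g^{-1}(a)$.

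\emph{Blocking lifts.} A lift $\gamma'$ would need $\gamma'(a_1)\in B^{j}_1$ and $\gamma'(b_1),\gamma'(b_2)\in B^{j'}_0$ with $j'<j$. The remaining $P$-points of the cycle would lie in some $B^{j''}_2$ with $j''>j$. But $\gamma'(a_2)$ lies over $\gamma(a_2)\neq a$, so it is in some $B^{\ell\ell'}_{20}$. The $R$-neighbors of such a point lie only in $B^{\ell}_2$ and $B^{\ell'}_0$ with $\ell<\ell'$, which forces $j''<j'$, contradicting $j'<j<j''$.

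\emph{Consanguineousness.} Two copies of a convivium in $B^{j}_i$ and $B^{j'}_i$, with $i\in\{0,2\}$ and $j\neq j'$, can share $R$-neighbors only through copies of $B_1$. Each such neighbor meets each copy only at its unique distinguished point. Hence no consanguineous pair collapses under $h$.
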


In practice, $g$ will be consanguineous. Also note that, due to the nature of our construction of the helix map, unlike in Lemma \ref{split cycle removal} in step one above or Lemma \ref{potentially pinched alternating cycle removal} in step three below, we need no minimality assumption on the potentially  pinched alternating cycle $\gamma$ (say, that there must be no potentially pinched alternating cycles with exactly $3$ points of sort $P$ in $A$, if $\gamma$ is a potentially pinched alternating cycle with exactly $4$ points of sort $P$.) We first observe that Lemma \ref{eliminating consanguineous pairs} follows from Lemma  \ref{consanguineous pair removal}, so it will then remain to show Lemma \ref{consanguineous pair removal} to complete this step.

\begin{proof}
    (of Lemma \ref{eliminating consanguineous pairs}, assuming Lemma \ref{consanguineous pair removal}.)

By Lemma \ref{omitting a presidium}, there is a presidium $A  \notin \mathcal{H}$. Now enumerate the potentially pinched alternating cycles with $3$ or $4$ sort-$P$ points in $A$ as $\gamma_1 \hookrightarrow A, \ldots, \gamma_k \hookrightarrow A$. Using Lemma \ref{consanguineous pair removal} (in place of Lemma \ref{split cycle removal}), and the observation that the composition of two consanguineous maps is consanguineous (in place of the observation that the composition of $n$-satisfactory maps is $n$-satisfactory), the following is proven just as in the proof of Lemma \ref{arbitrarily large minimal split cycles}: there is a presidium $B \notin \mathcal{H}$ and a consanguineous map $g_k: B \to A$ such that, for all $i \leq k$, there is no potentially pinched alternating cycle $\gamma' \hookrightarrow B$ with $3$ or $4$ sort-$P$ points with $g_k \circ \gamma' = \gamma_i$. However, this implies that there is no potentially pinched alternating cycle $\gamma' \hookrightarrow B$ with $3$ or $4$ sort-$P$ points at all, because otherwise, by consanguineousness, $g_{k} \circ \gamma'$ would be equal to one of the $\gamma_i$. So $B$ will give us the desired presidium in the statement of Lemma \ref{eliminating consanguineous pairs}.
\end{proof}

We now prove Lemma \ref{consanguineous pair removal}, which will complete the proof of Lemma \ref{eliminating consanguineous pairs} and complete step two.

\begin{proof}
 (of Lemma \ref{consanguineous pair removal})

 As in the proof of Lemma \ref{split cycle removal}, we define a cyclic $3$-jughandle decomposition of $A$, and then pull it back by $g: B \to A$. With the domain of $\gamma$ consisting of distinct $a_1, a_2 \in O(\gamma)$, $b_1, b_2, \beta \in P(\gamma)$ or distinct $a_1, a_2 \in O(\gamma)$, $b_1, b_2, c_1, c_2 \in P(\gamma)$ as in the discussion preceding the statement of this lemma (Lemma \ref{consanguineous pair removal}), let $a : = \gamma(a_1)$, and let $C$ be the convivium to which $b_1$ and $b_2$ belong. Define $A_{0} := C$, $A_{1} := \{a\}$, $A_{2} := P(A) \backslash C$ and $A_{20} = O(A) \backslash \{a\}$. Then because $A_{1}$, $A_{20}$ both consist of points of sort $O$, there are no instances of $R$ or of $I$ between points of $A_{1}$ and points of $A_{20}$, so $A = \sqcup^{2}_{i=0} A_i \sqcup A_{02}$ is a cyclic $3$-jughandle decomposition of $A$. Moreover, by soundness it is the case that for any convivium $C$ in $A_{0}$ (here just the one convivium) or in $A_{2}$, there is a point $c \in C$ such that for every point (here just the one) in $A_{1}$, the only $R$-neighbor of that point in $C$ is $c$.

 Now let $B_i : = g^{-1}(A_i)$, $B_{20} : = g^{-1}(A_{20})$. Then the properties $A = \sqcup^{2}_{i=0} A_i \sqcup A_{02}$ stated above are preserved: $B = \sqcup^{2}_{i=0} B_i \sqcup B_{20}$ is a cyclic $3$-jughandle decomposition of $B$, and it remains the case that any convivium in $B_{0}$ or $B_{2}$ has at most one point that has any $R$-neighbors at all in $B_{1}$. By Fact \ref{closure under jughandle helix maps}, for some $\ell < \omega$ and $\tilde{B} : = \tilde{B}^{\ell}$ where $h: \tilde{B}^{\ell} \twoheadrightarrow B $ is the $3$-helix map of length $\ell$ associated with the cyclic $3$-jughandle decomposition $B = \sqcup^{2}_{i=0} B_i \sqcup B_{20}$, $\tilde{B} \notin \mathcal{H}$. Of course, $h$ will be a regular map of presidia (because $h$ is a map of $\mathcal{L}^{\mathrm{std}}$-structures, if $\tilde{B}$ is not sound then $h(\tilde{B})$ would not be sound, contradicting that $B$ is sound.)

 To show $\tilde{B}$, $h$ are as desired, we first show there is no potentially pinched alternating cycle $\gamma' \hookrightarrow \tilde{B}$ with $3$ or $4$ sort-$P$ points (as many as $\gamma$) such that $h \circ \gamma' = \gamma$. Suppose otherwise. Then $\gamma'(a_1) \in B^{j}_{1}$ for some $j$. Moreover, $\gamma'(b_1) \in B^{j'}_{0}$ for some $j' < j$, as these are the only $j'$ for which $B^{j'}_{0}$ has an $R$-neighbor in $B_{1}^{j}$ (just by the construction of the helix map). And $\gamma'(b_2) \in B^{j'}_{0}$ as well, because $\gamma'(b_1)$ and $\gamma'(b_2)$ must be in the same convivium. Similarly, depending on the kind of potentially pinched alternating cycle, either $\gamma'(c_1), \gamma'(c_{2}) \in B^{j''}_{2}$ or $\gamma'(\beta) \in B^{j''}_{2}$ for some $j'' > j$. But there must be a common $R$-neighbor in $\tilde{B}$ of $\gamma'(b_2)$ and either $\gamma'(\beta)$ or $\gamma'(c_{2})$. (Specifically, this common $R$-neighbor will be $\gamma'(a_{2})$.) Since each covivium of $B_0$ has exactly one point that has some $R$-neighbor in $B_1$, each convivium of $B^{j'}_{0}$ has only one point that has some $R$-neighbor in one of the $B^{\ell}_{1}$. So $\gamma'(b_1)$ is the unique point in its convivium with an $R$-neighbor in one of the $B^{\ell}_{1}$ (one of which will be $\gamma'(a_{1}) \in B_{1}^{j}$). Therefore, $\gamma'(b_2)$ cannot have an $R$-neighbor in one of the $B^{\ell}_{1}$. So the common $R$-neighbor of $\gamma'(b_2)$ and either $\gamma'(\beta)$ or $\gamma'(c_{2})$ must be in one of the $B^{\ell\ell'}_{20}$ for $ \ell < \ell'$.  But a point of $B^{\ell\ell'}_{20}$ can only have $R$-neighbors in $B^{\ell}_{2}$ or $B^{\ell'}_{0}$. So $\ell = j''$, $\ell' = j'$ and $j'' < j'$. This contradicts $j' < j < j''$.

 We now show consanguineousness of $h$. Suppose otherwise, and let $C, C'$ be a consanguineous pair in $\tilde{B}$ with $h(C) = h(C')$. Then $C, C'$ must be in distinct $B_{i}^{j}$ for fixed $i = 0, 2$. No point of one of the $B^{\ell\ell'}_{20}$ can have two $R$-neighbors in distinct $B_{i}^{j}$ for fixed $i = 0, 2$, so any common $R$-neighbor of a point of $C$ and a point of $C'$ must be in one of the $B^{j}_{1}$.  But $h(C) = h(C')$ has at most one point that is an $R$-neighbor of some point of $B_1$. So $C$ and $C'$ each have at most one point that is an $R$-neighbor of some point of some $B^{j}_{1}$;  call these points $c \in C$ and $c' \in C'$ respectively. But then any common $R$-neighbor in $\tilde{B}$ of a point in $C$ and a point in $C'$, being in one of the $B^{j}_{1}$, must have $c$ its one $R$-neighbor in $C$ and $c'$ its one $R$-neighbor in $C'$. So $C$ and $C'$ are not a consanguineous pair, a contradiction.

\end{proof}

This concludes step two.

\:

\textbf{Step three: Eliminating potentially pinched alternating cycles}

We now define in general our more robust class of cycles, the \textit{potentially pinched alternating cycles}, that in particular contains all of the alternating cycles; our goal in this step will be to show, from our conclusion from step two that $\mathcal{H}$ omits a presidium without the smallest potentially pinched alternating cycles, that $\mathcal{H}$ omits a presidium without any potentially pinched alternating cycles at all--and hence without any alternating cycles. The class of potentially pinched alternating cycles will specifically be robust in the sense that it will be preserved under a class of maps of $\mathcal{L}^{\mathrm{std}}$-structures that we will also now define, the \textit{flat} maps, for which we will also have a cycle-removal lemma: Lemma \ref{potentially pinched alternating cycle removal} in what follows, corresponding to Lemma \ref{consanguineous pair removal} for removing the smallest potentially pinched alternating cycles in step two, Lemma \ref{split cycle removal} for removing split cycles, and Proposition 3.11 of \cite{ApproxOrder} for removing directed cycles within the context of the real-valued $\mathrm{NSOP}_{r}$ hierarchy.

\begin{definition}
   \label{potentially pinched alternating cycle, flat} (1) Let $A$ be a presidium. A \textit{potentially pinched alternating cycle} in $A$ consists of either an alternating cycle in $A$, or of distinct $b_{1}, \ldots, b_{2n}, \beta_1 \in P(A)$, $a_1, \ldots, a_{n-1}, \alpha_1, \alpha_2 \in O(A)$ for $n \geq 1$, such that  $I^{*}(b_i, b_{i+1} )$ for odd $i \leq 2n$, $R(a_i, b_{2i})$ and $R(a_i, b_{2i+1})$ for $i \leq n-1$, and $R(\alpha_{1}, b_{2n})$, $R(\alpha_1, \beta_1)$, $R(\alpha_2, \beta_1)$, $R(\alpha_2, b_1)$. (So there are no $a_i$ when $n = 1$, only $b_1$, $b_2$, $\alpha_1$, $\alpha_2$, $\beta_1$ as described).

   (2) A \textit{flat} map of $\mathcal{L}^{\mathrm{std}}$-structures is a map $\flat: A \to B$ between $\mathcal{L}^{\mathrm{std}}$-structures $A$ and $B$ such that, for any $a, b \in P(A)$ with a common $R$-neighbor, $\flat(a) \neq \flat(b)$.
\end{definition}

We show robustness of the class of potentially pinched alternating cycles to flat maps of presidia:

\begin{lemma}
    \label{potentially pinched alternating cycle preserved under flat map} Let $A$ be a $\mathcal{L}^{\mathrm{std}}$-structure and let $B$ be a presidium, and let $\flat: A \to B$ be a flat map of $\mathcal{L}^{\mathrm{std}}$-structures. Then the image under $\flat$ of any potentially pinched alternating cycle contains a potentially pinched alternating cycle.
\end{lemma}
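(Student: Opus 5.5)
The plan is to take a potentially pinched alternating cycle $\gamma$ in $A$ and look at its image under $\flat$. Note that $A$ need not be a presidium, so the statement quietly extends the notion of alternating cycle to arbitrary $\mathcal{L}^{\mathrm{std}}$-structures; the definition only uses $I^{*}$ and $R$, so this is harmless. From $\flat(\gamma)$ we will extract a potentially pinched alternating cycle of $B$, by induction on the number of points of $\gamma$.

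\textbf{Behaviour of the image.} Since $\flat$ is a map, $I^{*}(x,y)$ implies $I^{*}(\flat x,\flat y)$, and since $B$ is sound, $\flat x\neq \flat y$. So consecutive $I^{*}$-linked pairs stay distinct and lie in a common convivium of $B$. Flatness also keeps distinct the two $P$-neighbours of each $O$-point of $\gamma$ (and the pairs around $\alpha_1,\alpha_2$), and by soundness these lie in different convivia. Thus, if $\flat$ is injective on $\gamma$, the image is again a potentially pinched alternating cycle, of the same type as $\gamma$, and we are done. The only way injectivity can fail is a coincidence $\flat(x)=\flat(y)$ for two non-adjacent points of $\gamma$.

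\textbf{Case where two $P$-points coincide.} Take such a coincidence $\flat(b_i)=\flat(b_j)$ with $i<j$, chosen so that the cyclic arc between them is minimal. The point is that we can split the cycle there, exactly as in the arguments in the proof of Claim \ref{order construction 1}, where an interval of a cycle closed up at a repeated point gives a shorter cycle.
\begin{itemize}
\item If $i,j$ have the same parity pattern, the arc from $b_i$ to $b_j$ closes up at the common image into a shorter alternating cycle in $B$.
\item If the parities differ, we use transitivity of membership in a convivium. The $I^{*}$-partner of one endpoint lies in the same convivium as the other endpoint, so $I^{*}$ holds between them in $B$, because convivia are linearly ordered by $I$. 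This again closes a shorter alternating cycle.
\end{itemize}
A degenerate closing-up, where an $O$-point ends up with two equal $P$-neighbours, is excluded by flatness. If closing up instead produces two $O$-points sharing a $P$-point with no $I^{*}$-edge between them, that is exactly the pinched configuration $\alpha_1,\beta_1,\alpha_2$. This is why the pinched cycles were included in the class.

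\textbf{Case where two $O$-points coincide.} If $\flat(a_i)=\flat(a_k)$, the common image has $R$-edges to $P$-points on both arcs. We pick one arc together with this shared $O$-point. Its two $P$-neighbours on that arc are distinct by flatness, and if the arc's endpoints are $I^{*}$-related in $B$ we get an alternating cycle. Otherwise we use the pinched shape, with the shared $O$-point playing $\alpha_1$ or $\alpha_2$.

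\textbf{Main obstacle.} The hard part will be the bookkeeping when the shortened cycle passes through the original pinch $\beta_1$, or has length giving $n=1$. There we must check that the output genuinely fits one of the two clauses of Definition \ref{potentially pinched alternating cycle, flat}, including distinctness of all the listed points. The minimal-arc choice guarantees distinctness inside the chosen arc. The $n=1$ pinched cycle is the base case, and it is handled directly by flatness together with soundness.
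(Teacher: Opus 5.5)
You have the right overall strategy, and it is the paper's: induct on the size of the cycle, locate a coincidence $\flat(x)=\flat(y)$, cut there, and close up into a smaller alternating or pinched cycle, using that two $I^{*}$-partners of one image point lie in a common convivium. But two steps fail as you state them.

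\emph{The $O$-coincidence case.} Write the arc as $c^{0},c_{1},c_{2},c^{1},\ldots,c^{k-1},c_{2k-1},c_{2k},c^{k}$ with $\flat(c^{0})=\flat(c^{k})$. The $P$-neighbours $c_{1}$ and $c_{2k}$ of the repeated point have no common $R$-neighbour in $A$. So flatness says nothing about them, and $\flat(c_{1})=\flat(c_{2k})$ can happen, even on both arcs at once. The paper needs two further subcases here:
\begin{itemize}
\item If $\flat(c_{2})\neq\flat(c_{2k-1})$, these are distinct points of the convivium of $\flat(c_{1})$, hence $I^{*}$-related. Then $c_{2},\ldots,c_{2k-1}$ together with $c^{1},\ldots,c^{k-1}$ close up into a smaller alternating cycle.
\item If $\flat(c_{2})=\flat(c_{2k-1})$, flatness forces $k\geq 3$. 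You get a pinched cycle with $\beta_{1}\mapsto\flat(c_{2})$, $\alpha_{2}\mapsto\flat(c^{1})$, $\alpha_{1}\mapsto\flat(c^{k-1})$. The shared $O$-point does not appear in it at all; it does not play $\alpha_{1}$ or $\alpha_{2}$.
\end{itemize}
Conversely, when $\flat(c_{1})\neq\flat(c_{2k})$, no $I^{*}$-relation between the arc's endpoints is needed: the shared $O$-point closes an alternating cycle with its two $R$-edges. You could sidestep the subcases by first disposing of all $P$-coincidences and then invoking $P$-injectivity. But then the justification is that ordering, not flatness. And your $P$-case would then be unable to assume that the $O$-points on its arc have distinct images.

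\emph{The choice of arc for pinched cycles.} Taking a \emph{minimal} arc is the wrong rule here, and this is exactly the obstacle you left open. Suppose $\gamma$ is pinched and $\flat(b_{1})=\flat(b_{2n})$. For $n\geq 3$ the shorter arc is $b_{2n},\alpha_{1},\beta_{1},\alpha_{2},b_{1}$. Closing it up gives only the two $P$-points $\flat(\beta_{1}),\flat(b_{1})$ with common $R$-neighbours $\flat(\alpha_{1}),\flat(\alpha_{2})$. That is not a potentially pinched alternating cycle; it even embeds in $\mathbb{TP}$. The cycle that actually lies in the image comes from the other arc.

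The paper instead always cuts along the arc whose interior avoids $\beta_{1}$. On that arc the links alternate, so closing up yields one of the three shapes $R\cdots I^{*}$, $I^{*}\cdots I^{*}$, $R\cdots R$. Since that arc need not be minimal, distinctness of its points cannot come from minimality either. The paper handles this by building an abstract smaller alternating or pinched cycle $C$ with a map $\overline{\flat}:C\to B$ whose image lies in $\flat(\gamma)$. It checks that $\overline{\flat}$ is flat at the junction, using either flatness of $\flat$ or the case hypothesis, e.g.\ $\flat(c_{2k-2})\neq\flat(c_{2k-1})=\flat(c_{2})\neq\flat(c_{3})$. It then applies the induction hypothesis to $C$. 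With these two repairs your outline becomes the paper's proof.
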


\begin{proof}
    Suppose otherwise, and let $\flat: A \to B$ be a flat map of presidia for which this fails. We first show that we may assume that $\flat$ does not map any two distinct points of sort $O$ in the potentially pinched alternating cycle in $A$ to the same point of $B$.  Suppose $\flat$ is not already injective on the sort-$O$ points of the potentially pinched alternating cycle. Otherwise, in the case of an alternating cycle, let us consider either arc between these two points (inclusive) in that alternating cycle. However, in the other case, let us consider the arc between these two points in that potentially pinched alternating cycle not containing $\beta_1$. In either case, this will give us $c^{0}, \ldots, c^k \in O(A) $, $c_{1}, \ldots, c_{2k} \in P(A)$ ($k \geq 1$)  in the potentially pinched alternating cycle in $A$ with $I^{*}(c_i, c_{i+1} )$ for odd $i \leq 2k$, $R(c^i, c_{2i})$ and $R(c^i, c_{2i+1})$ for $i = 1, \ldots, k-1$, $R(c^0, c_1)$, and $R(c^k, c_{2k})$, with $\flat(c^0)=\flat(c^k)$. Then also, by flatness of $\flat$, $\flat(c_i) \neq \flat(c_{i+1}) $ even for even $i < 2k$ (because $c_{i}, c_{i+1}$ have a common $R$-neighbor.)
    
    Suppose first that $\flat(c_1) \neq \flat(c_{2k})$ (which will include the case $k =1$, since $I^{*}(c_1, c_{2k})$ in this case). Let the $\mathcal{L}_{\mathrm{std}}$-structure $C$ consist of an alternating cycle with exactly $k$ points of sort $O$: distinct $b_{1}, \ldots, b_{2k} \in P(C)$, $a_1, \ldots, a_k \in O(C)$ for $n \geq 1$ with $I^{*}(b_i, b_{i+1} )$ for odd $i \leq 2k$, $R(a_i, b_{2i})$ and $R(a_i, b_{2i+1})$ for $i= 1, \ldots, k-1$, and $R(a_k, b_{2k})$ and $R(a_k, b_{1})$, with no other instances of $R$, $I$.  Define $\overline{\flat}: C \to B$ by $\overline{\flat}(b_i) = \flat(c_i)$ and $\overline{\flat}(a_i)=\flat(c^{i})$. Then $\overline{\flat}(a_k)= \flat(c^k) = \flat(c^0)$. By this last point and the fact that $\flat$ is a map of $\mathcal{L}^{\mathrm{std}}$-structures, $\overline{\flat}$ is a map of $\mathcal{L}^{\mathrm{std}}$-structures (particularly $R(\overline{\flat}(a_k),\overline{\flat}(b_1) )$). Moreover, by $\flat(c_i) \neq \flat(c_{i+1}) $ for even $i < 2k$ and the supposition that $\flat(c_1) \neq \flat(c_{2k})$, $\overline{\flat}$ is flat. So by induction on the size of a potentially pinched alternating cycle in the domain of a flat map of $\mathcal{L}^{\mathrm{std}}$-structures, the image of $\overline{\flat}$ contains a potentially pinched alternating cycle. But by definition, the image of $\overline{\flat}$ contains the image of $\flat$ on the potentially pinched alternating cycle in $A$. So the image of $\flat$ on the potentially pinched alternating cycle in $A$ contains a potentially pinched alternating cycle, a contradiction.

    Suppose next that $\flat(c_1) = \flat(c_{2k})$ (so $k \geq 2$, because if $k =1$ then $I^{*}(c_1, c_{2k})$), but $\flat(c_2) \neq \flat(c_{2k-1})$.  Let the $\mathcal{L}_{\mathrm{std}}$-structure $C$ consist of an alternating cycle with exactly $k-1$ points of sort $O$: distinct $b_{1}, \ldots, b_{2k-2} \in P(C)$, $a_1, \ldots, a_{k-1} \in O(C)$ for $n \geq 1$ with $I^{*}(b_i, b_{i+1} )$ for odd $i \leq 2k-2$, $R(a_i, b_{2i})$ and $R(a_i, b_{2i+1})$ for $i= 1, \ldots, k-2$, and $R(a_{k-1}, b_{2k-2})$ and $R(a_{k-1}, b_{1})$, with no other instances of $R$, $I$.  Define the map $\overline{\flat}: C \to B$ by $\overline{\flat}(b_1) = \flat(c_{2k-1})$, $\overline{\flat}(b_i) =\flat(c_i)$ for $i = 2, \ldots 2k-2$ (so in particular $\overline{\flat}(b_2) = \flat(c_{2})$), and $\overline{\flat}(a_i) =\flat(c^i)$. Now, note that $I^{*}(c_1, c_2)$ implies $I^{*}(\flat(c_1), \flat(c_2))$, and $I^{*}(c_{2k}, c_{2k-1})$ implies $I^{*}(\flat(c_{2k}), \flat(c_{2k-1}))$. So  $I^{*}(\flat(c_{1}), \flat(c_{2k-1}))$, $\flat(c_2)$ and $\flat(c_{2k-1})$ are distinct members of the same convivium, and $I^{*}(\flat(c_2), \flat(c_{2k-1}))$. (This is where we use that $\flat(c_2) \neq \flat(c_{2k-1})$.)  It follows from this and the fact that $\flat$ is a map of $\mathcal{L}_{\mathrm{std}}$-structures that $\overline{\flat}$ is a map of $\mathcal{L}_{\mathrm{std}}$-structures (particularly $I^{*}(\overline{\flat}(b_1), \overline{\flat}(b_2))$. Moreover,  by $\flat(c_i) \neq \flat(c_{i+1}) $ for even $i < 2k$, $\overline{\flat}$ is flat, so we conclude as before, getting a contradiction.

    Finally, suppose that $\flat(c_1) = \flat(c_{2k})$, and $\flat(c_2) = \flat(c_{2k-1})$. In that case, since $\flat$ is flat, $k \geq 3$ (otherwise $c_2$ and $c_{2k-1} = c_{3}$ would have a common $R$-neighbor). Let the $\mathcal{L}$-structure $C$ consist of the potentially pinched alternating cycle with $n = (k-2)$ as in the above Definition \ref{potentially pinched alternating cycle, flat}: distinct  $b_{1}, \ldots, b_{2(k-2)}, \beta_1 \in P(C)$, $a_1, \ldots, a_{(k-2)-1}, \alpha_1, \alpha_2 \in O(C)$, such that  $I^{*}(b_i, b_{i+1} )$ for odd $i \leq 2(k-2)$, $R(a_i, b_{2i})$ and $R(a_i, b_{2i+1})$ for $i \leq (k-2)-1$, and $R(\alpha_{1}, b_{2(k-2)})$, $R(\alpha_1, \beta_1)$, $R(\alpha_2, \beta_1)$, $R(\alpha_2, b_1)$. Define $\overline{\flat}: C \to B$ as follows: $\overline{\flat}(b_i)= \flat(c_{i+2})$, $\overline{\flat}(a_i)=\flat(c^{i+1})$, $\overline{\flat}(\alpha_1)= \flat(c^{k-1})$, $\overline{\flat}(\beta_1)= \flat(c_{2k-1})$, $\overline{\flat}(\alpha_{2})=\flat(c^1)$. Then $\overline{\flat}(\beta_1)=\flat(c_2)$ as well. By this last point and the fact that $\flat$ is a map of $\mathcal{L}^{\mathrm{std}}$-structures, $\overline{\flat}$ is again a map of $\mathcal{L}^{\mathrm{std}}$-structures (particularly $R(\overline{\flat}(\alpha_{2}),\overline{\flat}(\beta_1))$). Moreover, since $\flat(c_i) \neq \flat(c_{i+1}) $ for even $i < 2k$ and $\overline{\flat}(b_{2(k-2)})=\flat(c_{2k-2}) \neq \flat(c_{2k-1}) = \overline{\flat}(\beta_1)=\flat(c_2) \neq \flat(c_{3})=\overline{\flat}(b_1)$, $\overline{\flat}$ is flat. So we again conclude as before, getting a contradiction.

    So we may now assume that $\flat: A \to B$ does not map any two points of sort $O$ in the potentially pinched alternating cycle to the same point of $B$. Because the image of $\flat$ on the potentially pinched alternating cycle is assumed not to contain any potentially pinched alternating cycle, $\flat$ is not injective on this potentially pinched alternating cycle. So there must be two points of sort $P$ within this potentially pinched alternating cycle which are mapped to the same point of $B$. In the case of an alternating cycle, let us consider either arc between these two points in that alternating cycle (inclusive of those two points).  In the other case, let us consider the arc between those points in that potentially pinched alternating cycle not containing $\beta_1$.  There are three possibilities for this arc: first, there are $d_0, \ldots, d_{2k} \in P(A)$, $d^{0}, \ldots, d^{k-1} \in O(A)$ for $k \geq 1$ in the potentially pinched alternating cycle in $A$ such that $I^{*}(d_{i}, d_{i+1})$ for $i \leq 2k$ odd, $R(d^{i}, d_{2i})$, $R(d^{i}, d_{2i+1})$ for $i \leq k-1$, and $\flat(d_0) = \flat(d_{2k})$. (I.e., starting with an instance of $R$ and ending with an instance of $I^{*}$.) Second, there are $d_1, \ldots, d_{2k} \in P(A)$, $d^{1}, \ldots, d^{k-1} \in O(A)$ for $k \geq 1$ in the potentially pinched alternating cycle in $A$ such that $I^{*}(d_{i}, d_{i+1})$ for $i \leq 2k$ odd, $R(d^{i}, d_{2i})$, $R(d^{i}, d_{2i+1})$ for $i \leq k-1$, and $\flat(d_1) = \flat(d_{2k})$. (I.e., starting and ending with instances of $I^{*}$.) Finally, since $\flat$ is a flat map, so this arc cannot consist of two points of sort $P$ with a common $R$-neighbor (or two points of sort $P$ that are $I^{*}$-neighbors), the sole remaining possibility is that there are $d_0, \ldots, d_{2k-1} \in P(A)$, $d^{0}, \ldots, d^{k-1} \in O(A)$ for $k \geq 2$ in the potentially pinched alternating cycle in $A$ such that $I^{*}(d_{i}, d_{i+1})$ for $i \leq 2k-2$ odd, $R(d^{i}, d_{2i})$, $R(d^{i}, d_{2i+1})$ for $i \leq k-1$, and $\flat(d_0) = \flat(d_{2k-1})$. (I.e., starting and ending with instances of $R$.) In all three of these cases, $\flat(d_{i}) \neq \flat(d_{i+1}) $ even for $i < 2k$ even, by flatness of $\flat$.

    We first obtain a contradiction from the first possibility ($R$ on one end and $I^{*}$ on the other). Let the $\mathcal{L}_{\mathrm{std}}$-structure $C$ consist of an alternating cycle with exactly $k$ points of sort $O$: distinct $b_{1}, \ldots, b_{2k} \in P(C)$, $a_1, \ldots, a_k \in O(C)$ for $n \geq 1$ with $I^{*}(b_i, b_{i+1} )$ for odd $i \leq 2k$, $R(a_i, b_{2i})$ and $R(a_i, b_{2i+1})$ for $i= 1, \ldots, k-1$, and $R(a_k, b_{2k})$ and $R(a_k, b_{1})$, with no other instances of $R$, $I$. Define $\overline{\flat}(b_i)=\flat(d_{i})$, $\overline{\flat}(a_{i})=\flat(d^{i})$ for $i < k$, $\flat(a_k)=\flat(d^0)$. Then $\overline{\flat}(b_{2k})=\flat(d_{2k})=\flat(d_{0})$. By this last point and the fact that $\flat$ is a map of $\mathcal{L}_{\mathrm{std}}$-structures, $\overline{\flat}$ is also a map of $\mathcal{L}_{\mathrm{std}}$-structures (and particularly, $R(\overline{\flat}(a_{k}), \overline{\flat}(b_{2k}))$). By $\flat(d_{i}) \neq \flat(d_{i+1}) $ for $i < 2k$ even and $\overline{\flat}(b_{2k})=\flat(d_{k})=\flat(d_{0}) \neq \flat(d_1) = \overline{\flat}(b_1)$, $\overline{\flat}$ is flat. So we conclude as before and reach a contradiction.

Finally, in the second case ($I^{*}$ on both ends), we reach a contradiction as in the cases with $\flat(c_1) = \flat(c_{2k})$ and $\flat(c_{2}) \neq \flat(c_{2k-1}) $ and with $\flat(c_1) = \flat(c_{2k})$ and $\flat(c_{2}) = \flat(c_{2k-1}) $ within the reduction to where $\flat$ is injective on the sort-$O$ points of the potentially pinched alternating cycle (the third and fourth paragraphs of this proof). In the third case ($R$ on both ends), we reach a contradiction as in the case with $\flat(c_1) = \flat(c_{2k})$ and $\flat(c_i) = \flat(c_{i+1}) $ within that same reduction (the fourth paragraph of this proof). This completes the proof of the lemma.
    
\end{proof}

Having defined the class of potentially pinched alternating cycles and shown its robustness under flat maps, we now state the lemma which it will be our goal in this step to prove, on finding a presidium omitted by $\mathcal{H}$ without any potentially pinched alternating cycles.

\begin{lemma}
 \label{eliminating potentially pinched alternating cycles}   Let $\mathcal{H}$ be a hereditary class consisting of sound $\mathcal{L}_{\mathrm{std}}$-structures, defined by a finite family of forbidden weakly embedded substructures, which is the age of some structure whose theory is $\mathrm{NSOP}_{3}$. Then there is some presidium $A \notin \mathcal{H}$ with no potentially pinched alternating cycles.
\end{lemma}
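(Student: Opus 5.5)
The plan follows the template of steps one and two, and through them of Theorem 3.1 of \cite{ApproxOrder}. First, a removal lemma for flat helix maps (the analogue of Lemma \ref{split cycle removal} and Lemma \ref{consanguineous pair removal}). Then an induction that removes all small potentially pinched alternating cycles. Finally, the finiteness of the forbidden family is used to pass to a presidium with none at all.

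\textbf{Removal lemma.} Fix $n \geq 3$, a flat regular map $g: B \to A$ of presidia with $A, B \notin \mathcal{H}$, and assume $A$ has no consanguineous pairs and no potentially pinched alternating cycle with fewer than $n$ points of sort $P$. Let $\gamma \hookrightarrow A$ be a potentially pinched alternating cycle with exactly $n$ points of sort $P$. The goal is a flat regular $3$-helix map $h: \tilde{B} \twoheadrightarrow B$ with $\tilde{B} \notin \mathcal{H}$ and no lift $\gamma'$ of $\gamma$ through $g \circ h$.

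The decomposition of $A$ I would try is this. Pick a sort-$O$ point $a$ of $\gamma$ (say $\alpha_1$ in the pinched case, or some $a_i$ otherwise), with $R$-neighbours $b, b'$ on $\gamma$ lying in convivia $C, C'$. Set $A_0 := \{a\}$, $A_1 := C$, $A_2 := C'$. Let $A_{12}$ be everything else, except that sort-$O$ points having $R$-neighbours in both $C$ and $C'$ also go into $A_0$. Every remaining $R$-instance involving $A_0$ must lie in $A_0A_1A_2$, so other $R$-neighbours of points of $A_0$ also have to be handled. Because $A$ has no consanguineous pairs, $C$ and $C'$ share at most one $R$-connection, and that connection is through $a$. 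Minimality of $\gamma$ should give that the rest of the cycle, the path from $C'$ back to $C$ avoiding $a$, lives inside $A_1A_2A_{12}$. Any interaction of that path with other neighbours of $a$ would create a shorter potentially pinched alternating cycle, and the pinched configuration is exactly what this exclusion argument needs. Pull the decomposition back along $g$ and apply Fact \ref{closure under jughandle helix maps} to obtain $\tilde{B}^{\ell} \notin \mathcal{H}$.

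A lift $\gamma'$ would place $a$ in some $B_0^j$, $C$ in $B_1^{j'}$ with $j' > j$, and $C'$ in $B_2^{j''}$ with $j'' < j$. The return path from $C'$ to $C$ would then have to travel through a single $B_{12}^{kk'}$, which forces $k = j'$ and $k' = j''$, contradicting $j'' < j < j'$. This is the same index argument as in Lemma \ref{split cycle removal}.

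\textbf{Flatness and regularity of $h$.} Regularity follows as in step two, since convivia sit entirely inside a single piece of the decomposition. For flatness, suppose two $P$-points of $\tilde{B}$ share an $R$-neighbour and have the same image. Their common neighbour lies in some $B_0^j$ or $B_{12}^{kk'}$, whose $R$-neighbourhoods map injectively under $h$ by construction, except possibly through $B_0^j$ meeting two different copies $B_1^{j'}$ and $B_1^{j'''}$. That exception has to be ruled out using the absence of consanguineous pairs in $B$ (pulled back from $A$ via $g$). I expect this step, choosing the decomposition so that both flatness and the no-lift property hold, to be the main obstacle.

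\textbf{Induction and finishing.} Starting from Lemma \ref{eliminating consanguineous pairs}, run the induction exactly as in Lemma \ref{arbitrarily large minimal split cycles}. The claim that pulling back adds no new lifts carries over verbatim. Lemma \ref{potentially pinched alternating cycle preserved under flat map} replaces $n$-satisfactoriness: a cycle in $B$ maps to a set containing a potentially pinched alternating cycle of size at most its own. This yields, for every $N$, a presidium not in $\mathcal{H}$ with no potentially pinched alternating cycle with at most $N$ points of sort $P$.

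Take $N$ larger than the size of every member of $\mathcal{F}$. Such a presidium weakly embeds some $F \in \mathcal{F}$, and $F$ then has no potentially pinched alternating cycles at all. Extend $F$ to a balanced presidium, as in step one: complete $I$ to linear orders on its $I^*$-components and pad the convivia. Check that this adds no potentially pinched alternating cycles, then use closure of $\mathcal{H}$ under weak embeddings to conclude the extended presidium is not in $\mathcal{H}$.
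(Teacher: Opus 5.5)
There is a genuine gap, and it is exactly at the step you flagged as the main obstacle. It cannot be closed with your choice of decomposition.

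\textbf{Flatness fails for your decomposition.} You put the sort-$O$ point $a$ in $A_0$ and its neighbour $b$ in $A_1=C$. By condition (1) of Definition \ref{jughandle helix map}, $D\,B_0^{j}B_1^{j'}B_{01}^{jj'}$ is a copy of $D\,A_0A_1A_{01}$ for \emph{every} $j<j'$. So $b_0^{j}(a)$ is $R$-adjacent to $b_1^{j'}(b)$ for all $j'>j$.

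You cannot keep $\ell$ small, since Fact \ref{closure under jughandle helix maps} only provides some $\ell$ with $\tilde B^{\ell}\notin\mathcal H$. As soon as $\ell\ge 3$, the distinct points $b_1^{1}(b)$ and $b_1^{2}(b)$ share the $R$-neighbour $b_0^{0}(a)$ and both map to $b$. So $h$ is not flat.

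The absence of consanguineous pairs does not help. The two copies of $C$ are joined only through the single pair $(b_1^1(b),b_1^2(b))$, which is not a consanguineous pair. This is the same phenomenon that makes the step-one map merely $n$-satisfactory rather than flat.

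Your induction relies on Lemma \ref{potentially pinched alternating cycle preserved under flat map} for two things:
\begin{itemize}
\item that $\tilde B$ acquires no potentially pinched alternating cycles with fewer than $n$ points of sort $P$;
\item that every surviving one with $n$ points of sort $P$ is a lift of some enumerated $\gamma_i$.
\end{itemize}
Without flatness, neither is justified. For example, an alternating cycle through $b_0^0(a)$, $b_1^1(b)$, \dots, $b_1^2(b)$ maps to a walk that doubles back through $b$.

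\textbf{The missing idea.} Choose the decomposition so that flatness is automatic. Put whole convivia (sort $P$) in the vertex pieces and every sort-$O$ point in an edge piece $A_{i(i+1)}$. Then each copy of an $O$-point sees exactly one copy of each vertex piece, so two $P$-points with the same image never share an $R$-neighbour.

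This requires a cycle of convivia $C^0,\dots,C^{k-1}$ that is \emph{induced}:
\begin{itemize}
\item the only common-$R$-neighbour pairs among them are a single pair between each consecutive $C^i,C^{i+1}$;
\item the two points of $C^0$ used are distinct;
\item $k\ge 3$, which follows from the absence of consanguineous pairs.
\end{itemize}
Minimality of the cycle alone does not give inducedness. Chords can yield multiply pinched configurations that are not shorter potentially pinched alternating cycles.

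The paper obtains inducedness as follows:
\begin{itemize}
\item It works with potentially pinched alternating connectivity patterns (Definition \ref{potentially pinched alternating connectivity pattern}) whose $\mathrm{Conn}$-relation is maximal among patterns on the same underlying presidium realized in $A$.
\item It extracts a minimal $\mathrm{Conn}$-cycle $D^1,\dots,D^k$ of convivia inside the pattern.
\item It uses maximality to show that the images $C^i=\gamma(D^{i+1})$ are connected only as recorded by $\mathrm{Conn}$.
\item It sets $A_0=C^0$, $A_1=C^1\sqcup\dots\sqcup C^{k-2}$, $A_2=C^{k-1}$, with $D$ the remaining convivia and all $O$-points in the edge pieces.
\end{itemize}
Accordingly, the induction removes realizations of the finitely many patterns, ordered by decreasing number of $\mathrm{Conn}$-instances. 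There are finitely many because all convivia have a fixed size $M$. It does not remove lifts of the cycles themselves.

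Your final passage through the finite family $\mathcal F$ is fine as written.
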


Note that, to apply Lemma \ref{embedding into the standard tp-structure} and show that $A \notin \mathcal{H}$ above embeds into the standard $\mathrm{TP}$-structure, we do not need that $A$ has no potentially pinched alternating cycles--only that it has no alternating cycles. There are in fact potentially pinched alternating cycles that embed into the standard $\mathrm{TP}$-structure. So the statement of Lemma \ref{eliminating potentially pinched alternating cycles} is stronger than is needed, but the more robust nature of the class of potentially pinched alternating cycles compared with the class of alternating cycles, as demonstrated by Lemma \ref{potentially pinched alternating cycle preserved under flat map}, will play an important role in the proof.

To prove Lemma \ref{eliminating potentially pinched alternating cycles}, we first show that we may restrict our attention to potentially pinched alternating cycles respecting the presidium structure of the ambient presidium:

\begin{lemma}
    \label{potentially pinched alternating cycle respects presidium structure}

Let $A$ be a presidium, and let distinct $b_{1}, \ldots, b_{2n} \in P(A)$, $a_1, \ldots, a_n \in O(A)$ be an alternating cycle as in Definition \ref{alternating cycle}, or let distinct $b_{1}, \ldots, b_{2n}, \beta_1 \in P(A)$, $a_1, \ldots, a_{n-1}, \alpha_1, \alpha_2 \in O(A)$ be a potentially pinched alternating cycle as in Definition \ref{potentially pinched alternating cycle, flat}. Suppose that this is a potentially pinched alternating cycle of minimal size in $A$. Then there are distinct convivia $C_{1}, \ldots, C_{n} $, as well as, in the second, non--alternating cycle case, a convivium $\Gamma_{1}$ distinct from the $C_{i}$, such that $C_{i}$ contains $b_{2i-1}$ and $b_{2i}$, and $\beta_{1} \in \Gamma_{1}$.

\end{lemma}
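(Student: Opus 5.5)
The plan is to argue by contradiction from minimality: whenever the convivium structure is violated, a strictly smaller potentially pinched alternating cycle can be extracted from the given one, in the same way the shortening arguments in the proof of Claim \ref{order construction 1} cut an arc of the enumeration and close it up using transitivity of ``same convivium''. First, $b_{2i-1}$ and $b_{2i}$ automatically lie in a common convivium, since $I^{*}(b_{2i-1},b_{2i})$ and in a presidium $I$ only holds inside convivia; call it $C_i$. It remains to show that the $C_i$ are pairwise distinct and, in the non-alternating case, that the convivium $\Gamma_1$ of $\beta_1$ differs from every $C_i$.

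Suppose $C_i = C_j$ for some $i<j$. Then $b_{2i}$ and $b_{2j-1}$ are distinct members of one convivium (they are distinct by distinctness of the cycle), so $I^{*}(b_{2i},b_{2j-1})$ holds because convivia are linearly ordered by $I$. I would then take the arc $b_{2i}, a_i, b_{2i+1}, \ldots, a_{j-1}, b_{2j-1}$ and close it up with this new $I^{*}$-edge. After reindexing this is an alternating cycle with $j-i$ points of sort $O$, and $j-i$ is strictly less than the number of $O$-points of the original cycle. If $j-i\ge 1$ this is a legitimate alternating cycle; the degenerate case where the arc is a single pair is excluded because $i<j$ forces at least one $a$ between them. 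In the pinched case, the arc chosen should be the one avoiding $\beta_1,\alpha_1,\alpha_2$, which the indexing already gives. Soundness ensures that the closing edge does not coincide with an existing $R$-incidence in a way that collapses the cycle.

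Now suppose $\beta_1 \in C_i$. If $\beta_1 \in C_1$, then $I^{*}(\beta_1, b_2)$ holds, because $\beta_1 \ne b_1$ and both lie in $C_1$ with $b_2$. Then $\beta_1, b_2, a_1, \ldots, b_{2n}, \alpha_1$ forms an alternating cycle using $\beta_1$ in place of $b_1$ and $\alpha_1$ closing it. This cycle has $n$ points of sort $O$, whereas the pinched cycle has $n+1$ sort-$O$ points and $2n+1$ sort-$P$ points, so it is strictly smaller. The case $i=n$ is symmetric, using $\alpha_2$ and $b_1$. For an intermediate $i$, I split at $\beta_1$ and use $\alpha_2$ on the $b_1$ side. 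Since $\beta_1$ differs from at least one of $b_{2i-1}, b_{2i}$, I close with $I^{*}(\beta_1,b_{2i})$ or $I^{*}(\beta_1,b_{2i-1})$, which yields an alternating cycle $b_1,\ldots$ through $b_{2i-1}$ (or $b_{2i}$) returning via $\beta_1,\alpha_2$. This has fewer $O$-points than the original.

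The main obstacle is bookkeeping. I need to check in each case that the extracted configuration really is an alternating cycle in the sense of Definition \ref{alternating cycle}: distinct points, the right alternation of $I^{*}$ and $R$, and $n\ge1$. I also need to fix the measure of ``size'' so that every extracted cycle is strictly smaller. I would take size to be the total number of points, and verify the strict decrease in each case, especially in the boundary situations where $\beta_1$ coincides in convivium with $b_{2i-1}$ versus $b_{2i}$.
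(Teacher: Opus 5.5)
Your proposal is correct and is essentially the paper's proof: a violation of the convivium structure gives an $I^{*}$-edge between distinct members of a shared convivium, and this shortcuts the cycle to a smaller one. For $C_i=C_j$ the smaller cycle is $b_{2i},\ldots,b_{2j-1}$ with $a_i,\ldots,a_{j-1}$. For $\beta_1\in C_i$ it is $b_1,\ldots,b_{2i-1},\beta_1$ with $a_1,\ldots,a_{i-1},\alpha_2$, and the paper uses this one construction for every $i$, so your separate treatment of $i=1$ is unnecessary.

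There is one small slip. The ``(or $b_{2i}$)'' closure does not return via $\alpha_2$: it would put two $I^{*}$-edges in a row, and would instead need $\alpha_1$ and the arc $b_{2i},\ldots,b_{2n}$. You never need that option, because $\beta_1\neq b_{2i-1}$ (and $\beta_1\neq b_2$, which is the fact you actually want in your $C_1$ case) by distinctness.
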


\begin{proof}
    Clearly $b_{2i-1}$ and $b_{2i}$ belong to the same convivium (because $I^{*}(b_{2i-1}, b_{2i})$). So it suffices to show that the convivium containing $b_{2i-1}, b_{2i}$ is distinct from the convivium containing $b_{2j-1}, b_{2j}$ for $i \neq j$, and that the convivium containing $b_{2i-1}, b_{2i}$ and the continuum containing $\beta_1$ are distinct (in the second case). For the first claim,  suppose otherwise, so that $b_{2i-1}, b_{2i}$ are in the same convivium as $b_{2j-1}, b_{2j}$ for $i \neq j$; we may of course assume $i < j$. Then $I^{*}(b_{2i}, b_{2j-1})$. So $b_{2i} , \ldots b_{2j-1}$ together with $a_{i} \ldots, a_{j-1}$ will form an alternating cycle smaller than the one we started with, contradicting minimality.  For the second claim, otherwise $I^{*}(b_{2i-1}, \beta_{1})$. So $\beta_{1}, b_{1}, \ldots, b_{2i-1}$ and $\alpha_{2}$ together with (if $i > 1$) $a_{1} \ldots a_{i-1}$ will form a smaller alternating cycle, again a contradiction.
\end{proof}

We now prepare to state our cycle-removal lemma for this step, Lemma \ref{potentially pinched alternating cycle removal}, which will say that flat helix maps satisfy a similar cycle-removal property for potentially pinched alternating cycles that consanguineous helix maps satisfy for consanguineous pairs (as in Lemma \ref{consanguineous pair removal} in step two) and $n$-satisfactory helix maps satisfy for split cycles (as in Lemma \ref{split cycle removal} in step one). To give the precise statement of Lemma \ref{potentially pinched alternating cycle removal}, we will need to introduce the following definition, which will allow us to handle potentially pinched alternating cycles for which the convivia $C_{i}$, $\Gamma_{1}$ as in the previous lemma may have additional pairs of points with a common $R$-neighbor that are not already accounted for by the potentially pinched alternating cycle.

\begin{definition}
      \label{potentially pinched alternating connectivity pattern}
      
      (1) A \textit{potentially pinched alternating connectivity pattern} consists of:

      (a) a presidium $C$, such that there is

\begin{itemize}
    \item an alternating cycle in $C$ given (resp.) by distinct $b_{1}, \ldots, b_{2n} \in P(C)$, $a_1, \ldots, a_n \in O(C)$ or distinct $b_{1}, \ldots, b_{2n}, \beta_1 \in P(C)$, $a_1, \ldots, a_{n-1}, \alpha_1, \alpha_2 \in O(C)$, and
    \item convivia (resp.) $C_{1}, \ldots C_{n}$ or $C_{1}, \ldots C_{n}, \Gamma_{1}$ as in Lemma \ref{potentially pinched alternating cycle respects presidium structure}, so $b_{2i}, b_{2i+1} \in C_{i}$ and $\beta_{1} \in \Gamma_{1}$
\end{itemize}

      such that

      \begin{itemize}
          \item (resp.) $C = \bigsqcup^{n}_{i=1} C_{i} \sqcup  \{a_{1}, \ldots, a_{n}\} $ or  $C = \bigsqcup^{n}_{i=1} C_{i} \sqcup \Gamma_{1} \sqcup \{a_{1}, \ldots, a_{n-1}, \alpha_{1}, \alpha_{2}\} $ (so the $C_{i}$ or $C_{i}, \Gamma_{1}$ are all of the convivia of $C$), and
          \item (resp.) $R(a_i, b_{2i})$ and $R(a_i, b_{2i+1})$ for $i= 1, \ldots, n-1$, and $R(a_n, b_{2n})$ and $R(a_n, b_{1})$ are the only instances of $R$ on $C$, or $R(a_i, b_{2i})$ and $R(a_i, b_{2i+1})$ for $i \leq n-1$, and $R(\alpha_{1}, b_{2n})$, $R(\alpha_1, \beta_1)$, $R(\alpha_2, \beta_1)$, $R(\alpha_2, b_1)$ are the only instances of $R$ on $C$.
      \end{itemize}

(Note that $C$ will have exactly one potentially pinched alternating cycle: any potentially pinched alternating cycle will lie inside of the $a_{i}, \alpha_{i}$ together with their $R$-neighbors, so within the $a_{i}, \alpha_{i}, b_{i}, \beta_{1}$, but then must be equal to that potentially pinched alternating cycle. So, more succinctly, we can state that $C$ is equivalently the presidium consisting of the union of a potentially pinched alternating cycle that will be the unique one in $C$, together with convivia containing the sort-$P$ points of that potentially pinched alternating cycle, with only the instances of $R$ and $I$ necessary to satisfy these conditions.)

      (b) an undirected graph relation $\mathrm{Conn}$ on $P(C)$, such that whenever $b, c \in P(C)$ have a common $R$-neighbor in  $C$, $\mathrm{Conn}(b,c)$.

(2) For $A$ a presidium, a \textit{realization} of a potentially pinched alternating connectivity pattern $C$ in $A$ is an injective regular map $\gamma: C \hookrightarrow A$ of presidia such that, if $\mathrm{Conn}(b, c)$, $\gamma(b)$ and $\gamma(c)$ have a common $R$-neighbor in $A$. 
      
\end{definition}

Now, when formulating our analogue of Lemma \ref{split cycle removal} (or Proposition 3.11 of \cite{ApproxOrder}) stating that we find flat helix maps allowing us to remove potentially pinched alternating cycles, Definition \ref{potentially pinched alternating connectivity pattern} will have given us a suitable analogue of a split cycle of minimal length in a presidium (or a directed cycle of minimal length in a directed graph): a realization of a potentially pinched alternating connectivity pattern corresponding to a minimal-length potentially pinched alternating cycle in a presidium, that maximizes the number of instances of $\mathrm{Conn}$. In stating this lemma we refer the reader to the commutative diagram in Lemma \ref{split cycle removal}, just with $\gamma: C \hookrightarrow A$ instead of $\gamma \hookrightarrow A$ and $\flat$ instead of $h$.

\begin{lemma}
    \label{potentially pinched alternating cycle removal} Let $\mathcal{H}$ be a hereditary class consisting of sound $\mathcal{L}_{\mathrm{std}}$-structures which is the age of some structure whose theory is $\mathrm{NSOP}_{3}$. Let $g: B \to A$ be a regular map of presidia with $A, B\notin \mathcal{H}$, where $A$ has no consanguineous pairs. Suppose, for $N \geq 1$, that $A$ has no potentially pinched alternating cycle with $k$ points of sort $P$ for $k < N$, and let $\gamma: C \hookrightarrow A$ be a realization of a potentially pinched alternating connectivity pattern $C$ in $A$ such that:

\begin{itemize}
    \item the potentially pinched alternating cycle of $C$ has $N$ points of sort $P$, and
    \item the number of instances of $\mathrm{Conn}$ on $C$ is maximal among potentially pinched alternating connectivity patterns with the same underlying presidium as $C$ which have a realization in $A$
\end{itemize}

    Then there is a regular, flat $3$-helix map $\flat: \tilde{B} \twoheadrightarrow B$ of some length $\ell$ associated with a cyclic $3$-jughandle decomposition of $B$, such that $\tilde{B} \notin \mathcal{H}$, and such that there is no realization  $\gamma': C \hookrightarrow \tilde{B}$ of the potentially pinched alternating connectivity pattern $C$ in $\tilde{B}$ with $\gamma = g\circ \flat\circ \gamma'$.

\end{lemma}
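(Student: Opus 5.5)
The plan follows the template of Lemma \ref{split cycle removal} and Lemma \ref{consanguineous pair removal}. First build a cyclic $3$-jughandle decomposition of $A$ adapted to $\gamma$, then pull it back along $g$ to $B$. Fact \ref{closure under jughandle helix maps} then gives a length $\ell$ with $\tilde{B} := \tilde{B}^{\ell} \notin \mathcal{H}$. It remains to check three things: that $\flat$ is regular, that $\flat$ is flat, and that no realization $\gamma'$ of $C$ lifts $\gamma$.

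\textbf{The decomposition.} By Lemma \ref{potentially pinched alternating cycle respects presidium structure}, $\gamma$ meets distinct convivia $\gamma(C_1), \ldots, \gamma(C_n)$, and $\gamma(\Gamma_1)$ in the pinched case. Cut the cycle at one $O$-point, say $\gamma(a_n)$ (or $\gamma(\alpha_2)$ in the pinched case), and set $A_1 := \{\gamma(a_n)\}$. Put the convivium $\gamma(C_1)$ containing $\gamma(b_1)$ into $A_0$ and the convivium containing $\gamma(b_{2n})$ (resp.\ $\gamma(\beta_1)$) into $A_2$. All remaining points, in particular the rest of the cycle, go into a jughandle piece such as $A_{20}$ or $A_{12}$.

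This arrangement should give three properties:
\begin{itemize}
\item every $R$-edge lies in some $A_iA_{i+1}A_{i(i+1)}$;
\item whole convivia stay inside a single piece, which gives regularity;
\item no two $P$-points with a common $R$-neighbor land in the same $A_i$ for $i=0,2$ unless they are already identified.
\end{itemize}
The third property is where the hypotheses enter: $A$ has no consanguineous pairs, no smaller potentially pinched alternating cycles, and $\mathrm{Conn}$ is maximal. In particular, any point with $R$-neighbors in both $A_0$ and $A_2$ would produce either a shorter potentially pinched alternating cycle or an extra $\mathrm{Conn}$ instance realizable in $A$. I expect this to need a case split on whether $\gamma(a_n)$'s convivium neighbors are adjacent to other $O$-points.

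\textbf{Pulling back.} Set $B_i := g^{-1}(A_i)$ and $B_{ij} := g^{-1}(A_{ij})$. Since $g$ is regular, this is again a cyclic $3$-jughandle decomposition respecting convivia.

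\textbf{Regularity and flatness of $\flat$.} Regularity is immediate, as in step two. For flatness, two $P$-points of $\tilde{B}$ with a common $R$-neighbor that are identified by $\flat$ must lie in distinct copies $B_i^j, B_i^{j'}$. Their common neighbor must then be in some $B_1^{j''}$, since jughandle points see only one copy of each side. This reduces to showing that a single $O$-point of $B_1$ cannot be $R$-adjacent to two distinct points of the same convivium, which is soundness, or to two points of $B_i$ over the same image. That in turn follows from the non-consanguinity of $A$ pulled back through $g$, as in the consanguineous argument of step two.

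\textbf{No lift.} Suppose $\gamma'$ realizes $C$ with $g\circ\flat\circ\gamma' = \gamma$. Then $\gamma'(a_n) \in B_1^{j}$, so $\gamma'(b_1) \in B_0^{j'}$ with $j' < j$ and $\gamma'(b_{2n}) \in B_2^{j''}$ with $j'' > j$. Follow the path $b_1, \ldots, b_{2n}$ through $\tilde{B}$. The $I^*$-steps stay within a convivium, hence within a single copy. The $R$-steps through the middle of the cycle must pass through jughandle copies $B_{20}^{kk'}$, which connect $B_2^{k}$ to $B_0^{k'}$ only with $k<k'$. Tracking indices then forces the return to $B_2$ to happen at an index below $j'$, contradicting $j'<j<j''$, exactly as in step two. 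The $\mathrm{Conn}$ edges of $C$ must also be realized in $\tilde{B}$, and maximality of $\mathrm{Conn}$ rules out a realization that shortcuts through $B_1$ copies.

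I expect the main obstacle to be choosing the decomposition so that the middle of a long cycle genuinely cannot wrap through the helix. Extra $R$-adjacencies among the convivia $\gamma(C_i)$, recorded by $\mathrm{Conn}$, could allow such a detour. My first attempt would be to place all $O$-points adjacent to two of the $\gamma(C_i)$ or $\gamma(\Gamma_1)$ into $A_{20}$, except the cut point and those dictated by $\mathrm{Conn}$, and then use maximality to show that any detour yields a realization with more $\mathrm{Conn}$ edges.
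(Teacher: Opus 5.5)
\textbf{There is a genuine gap: with your decomposition, $\flat$ is not flat.}

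You place (the $g$-preimages of) the cut $O$-point $\gamma(a_n)$ in $A_1$. By the construction of the $3$-helix map, for every $m<j$ the set $D\sqcup B_0^m\sqcup B_1^j\sqcup B_{01}^{mj}$ is a copy of $D\sqcup B_0\sqcup B_1\sqcup B_{01}$. So the copy in $B_1^j$ of an $O$-point $x\in B_1$ with an $R$-neighbor $y\in B_0$ is $R$-adjacent to the copy of $y$ in \emph{every} $B_0^m$ with $m<j$. Symmetrically, it is $R$-adjacent to the copies of its $B_2$-neighbor in every $B_2^m$ with $m>j$. For $j\geq 2$ these are distinct points of $P(\tilde B)$ with the same $\flat$-image and a common $R$-neighbor. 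You cannot keep $\ell\leq 2$, because $\ell$ is forced on you by Fact \ref{closure under jughandle helix maps}.

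Your flatness paragraph reaches exactly this case (``two points of $B_i$ over the same image'') and appeals to non-consanguinity. Non-consanguinity only rules out two \emph{different} points of one convivium having common neighbors into another convivium. It does nothing against one $O$-point seeing many copies of the same point. This is precisely why the step-two map is consanguineous but not flat, so the step-two template cannot be reused here. In step three, flatness is essential: it is what lets Lemma \ref{potentially pinched alternating cycle preserved under flat map} prevent new short cycles during the iteration. Separately, $\gamma(a_n)$ may have $R$-neighbors among the ``remaining'' points you put in a jughandle piece, which would break the jughandle condition.

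\textbf{The missing idea.} Put only whole convivia into $A_0,A_1,A_2$ and every $O$-point into a jughandle piece. Then an $O$-point of $B^{jj'}_{ii'}$ sees only $D$, $B^j_i$ and $B^{j'}_{i'}$, and flatness is immediate. For such a decomposition to be valid, and for a lift to be forced to wind around the helix, you need a cycle of convivia in which every $O$-point touches at most two cyclically consecutive members (plus untouched convivia). This is where the hypotheses actually enter.
\begin{itemize}
\item Take a $\mathrm{Conn}$-path of distinct convivia $D^1,\dots,D^k$ of $C$ that leaves and returns to $D^1$ at two distinct points, with $k$ minimal.
\item Non-consanguinity of $A$ gives $k\geq 3$. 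Together with minimality of $k$, it gives exactly one $\mathrm{Conn}$-edge between cyclically consecutive $D^i$ and none between non-consecutive ones.
\item Maximality of $\mathrm{Conn}$ then shows that in $A$ the only common-$R$-neighbor pairs inside $\bigsqcup_i\gamma(D^{i+1})$ are the images of these edges. This is its real role, not ruling out shortcuts in $\tilde B$.
\end{itemize}
Now set $C^i=\gamma(D^{i+1})$, $D_0=P(A)\setminus\bigsqcup_i C^i$, $A_0=C^0$, $A_1=C^1\sqcup\dots\sqcup C^{k-2}$ and $A_2=C^{k-1}$, and sort the $O$-points by which consecutive pair they touch. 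A lift of $C$ would then have to run $B_0^j\to B_1^{j'}\to\cdots\to B_1^{j'}\to B_2^{j''}\to B_0^j$ with $j<j'<j''<j$, which is impossible. Note that the relevant cycle is this convivium cycle read off from $\mathrm{Conn}$, not the alternating cycle $b_1,\dots,b_{2n}$ itself.
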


In practice, $g$ will be flat. Before showing Lemma \ref{potentially pinched alternating cycle removal}, we will show that the lemma which is our goal for this step, Lemma \ref{eliminating potentially pinched alternating cycles}, follows from this cycle removal lemma. This will be analogous to Lemma \ref{arbitrarily large minimal split cycles} from step one stating that $\mathcal{H}$ omits a $\mathcal{L}^{\mathrm{std}}$-structure without small split cycles, together with the arguments from the end of step one using the fact that $\mathcal{H}$ is defined by a finite family of forbidden weakly embedded substructures to infer that $\mathcal{H}$ omits a $\mathcal{L}^{\mathrm{std}}$-structure without any split cycles (or, in turn, to Claim 3.15 of \cite{ApproxOrder} giving a directed graph without small directed cycles omitted from a hereditary class, and the argument from the proof of Theorem 3.1 of \cite{ApproxOrder} originating from \cite{LZ17} using the same assumption to get an omitted directed graph without any directed cycles.)

\begin{proof}
     (of Lemma \ref{eliminating potentially pinched alternating cycles} assuming Lemma \ref{potentially pinched alternating cycle removal})

     By Lemma \ref{eliminating consanguineous pairs}, there is a presidium $A \notin \mathcal{H}$ with no consanguineous pairs; let $M$ be the size of its convivia. As a convenience, to the end of step three, whenever we refer to a presidium (and particularly the underlying presidium of a potentially pinched alternating connectivity pattern), we mean this presidium, which we already assumed balanced, has all convivia of size $M$.  This assumption will be harmless for our arguments, because for any regular map of presidia, the convivia of its domain will have size $M$ if and only if the convivia of its codomain have size $M$. So we already know that there is some presidium not belonging to $\mathcal{H}$ without any potentially pinched alternating cycles with at most $4$ points of sort $P$, and we show that, if there is a presidium $A \notin \mathcal{H}$ with no potentially pinched alternating cycles with less then $n$ points of sort $P$ for $n \geq 5$, there is a presidium $B \notin \mathcal{H}$ with no potentially pinched alternating cycles with at most $n$ points of sort $P$. By induction, it will follow that for any $n$, there is a presidium $A \notin \mathcal{H}$ with no potentially pinched alternating cycles with at most $n$ points of sort $P$.
     
     So let $n \geq 5$, and let $A \notin \mathcal{H}$ be a presidium with no potentially pinched alternating cycles with less then $n$ points of sort $P$. Let us enumerate the connectivity patterns (up to isomorphism as $\{R, I, \mathrm{Conn}\}$-structures with sorts $O$, $P$) for which the unique potentially pinched alternating cycle has $n$ points of sort $P$ as $C^{1}, \ldots C^{K}$; additionally, choose this enumeration such that that if $i < j$, $C^{i}$ has at least as many instances of $\mathrm{Conn}$ as $C^{j}$. (The point of the assumption that all convivia have fixed size is to ensure that there are only finitely many $C^{i}$).
     
     Now if a presidium $B$ has a potentially pinched alternating cycle with $n$ points of sort $P$ and no potentially pinched alternating cycles with fewer points of sort $P$, then by Lemma \ref{potentially pinched alternating cycle respects presidium structure} there is a realization of one of these $C^i$ in $B$. To see this, take the underlying presidium $C$ of this potentially pinched alternating connectivity pattern to  to be the union of the $C_{i}$ and possibly $\Gamma_{1}$ given by Lemma \ref{potentially pinched alternating cycle respects presidium structure} for this alternating cycle in $B$, together with (the $O$-points of) the potentially pinched alternating cycle itself. Let $\mathrm{Conn}$ be the relation on $P(C)$ giving the tautological potentially pinched alternating connectivity pattern structure on $C \subset B$, where $\mathrm{Conn}(b, c)$ for $b, c \in P(C)$ if $b, c $ have a common $R$-neighbor in $B$. Then the inclusion $C \hookrightarrow B$ is a realization of $C$, so there is a realization in $B$ of whichever of the $C^{i}$  is isomorphic to $C$.

     So it suffices to show that there is some presidium $B \notin \mathcal{H}$ with no potentially pinched alternating cycles with fewer than $n$ points of sort $P$, which does not have a realization of any of the  $C^{1}, \ldots C^{K}$. By induction on $L \leq K+1$, it then suffices to show that if there is some presidium $A \notin \mathcal{H}$ with no potentially pinched alternating cycles with fewer than $n$ points of sort $P$ and with no realization of $C^{i}$ for $i < L$, then there is some presidium $B \notin \mathcal{H}$ with no potentially pinched alternating cycles with fewer than $n$ points of sort $P$ and which does not have a realization of $C^{i}$ for $i \leq L$.

      We now aim to show this last claim: let $\gamma_{1}: C^{L} \hookrightarrow A, \ldots, \gamma_{k} :{C}^{L} \hookrightarrow A$ enumerate the realizations of $C^{L}$ in $A$. We  apply the following claim, which, like the analogous Claim \ref{pulling back does not add split cycles}, is proven exactly as in Lemma 3.13 of \cite{ApproxOrder}; we do review the argument because we have changed the formal setup.

\begin{claim}
\label{pulling back does not create new realizations of a potentially pinched alternating connectivity pattern}

Let $g: A_{2} \to A_{1}$ be a regular map of presidia, and let $\gamma: C \hookrightarrow A_{2}$ be a realization of a potentially pinched alternating connectivity pattern in $A_{2}$. Suppose that there is no realization $\gamma': C \hookrightarrow A_{2}$  such that $g \circ \gamma' = \gamma$. Let $h: A_{2}' \to A_{2}$ be another regular map of presidia. Then there is no realization $\gamma'': C \hookrightarrow B$ such that $(g \circ h) \circ \gamma'' = \gamma$.

\end{claim}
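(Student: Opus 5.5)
The claim is the analogue of Claim \ref{pulling back does not add split cycles}, and I would prove it the same way, by contradiction through composition. Note first a typo in the statement: the target of $\gamma$, and of the hypothetical lifts $\gamma'$, should be $A_{1}$ rather than $A_{2}$. Likewise the target of $\gamma''$ should be $A_{2}'$ rather than $B$. So the setup is a realization $\gamma: C \hookrightarrow A_{1}$, and the hypothesis is that no realization $\gamma': C \hookrightarrow A_{2}$ satisfies $g \circ \gamma' = \gamma$. Suppose toward a contradiction that there is a realization $\gamma'': C \hookrightarrow A_{2}'$ with $(g \circ h)\circ \gamma'' = \gamma$. Set $\gamma' := h \circ \gamma''$, so that $g \circ \gamma' = \gamma$ by associativity. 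It then suffices to check that $\gamma'$ is a realization of $C$ in $A_{2}$.

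There are three things to verify.

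\emph{Injectivity.} This is immediate, since $g \circ \gamma' = \gamma$ is injective.

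\emph{$\gamma'$ is a regular map of presidia.} It is a composite of the maps $\gamma''$ and $h$ of $\mathcal{L}_{\mathrm{std}}$-structures, so it is a map of $\mathcal{L}_{\mathrm{std}}$-structures. Regularity also holds because a composite of regular maps is regular. Indeed, let $D$ be a convivium of $C$ and let $D'$ be the convivium of $A_{2}'$ containing $\gamma''(D)$. Then $\gamma''$ restricted to $D$ is a bijection onto $D'$. Next, $h$ restricted to $D'$ is a bijection onto the convivium of $A_{2}$ containing $h(D')$. Hence $\gamma'$ restricted to $D$ is a bijection onto the convivium of $A_{2}$ containing $\gamma'(D)$.

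\emph{The $\mathrm{Conn}$ condition.} Suppose $\mathrm{Conn}(b, c)$. Since $\gamma''$ is a realization, $\gamma''(b)$ and $\gamma''(c)$ have a common $R$-neighbor $e$ in $A_{2}'$. Because $h$ preserves $R$, the point $h(e)$ is a common $R$-neighbor of $\gamma'(b)$ and $\gamma'(c)$ in $A_{2}$.

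Therefore $\gamma'$ is a realization of $C$ in $A_{2}$ with $g \circ \gamma' = \gamma$, contradicting the hypothesis.

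There is no real obstacle here. The only points needing care are the corrected indexing of the codomains and the observation that realizations are stable under post-composition with regular maps. That stability rests on regular maps preserving $R$, which transports common $R$-neighbors, and on regularity being closed under composition.
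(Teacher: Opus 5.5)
Your proposal is correct and follows essentially the same argument as the paper. Both set $\gamma' := h \circ \gamma''$, get injectivity from $g \circ \gamma' = \gamma$, and observe that regularity and the $\mathrm{Conn}$ condition survive post-composition with $h$. You are also right that the codomains in the statement should read $\gamma: C \hookrightarrow A_{1}$ and $\gamma'': C \hookrightarrow A_{2}'$.
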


\begin{proof}
    Suppose otherwise, and let $\gamma''$ be as in the statement of the claim. Since $(g \circ h) \circ \gamma''= \gamma$ is injective, so is $h \circ \gamma''$.  And since $\gamma''$ is a regular map of presidia which respects the $\mathrm{Conn}$-structure as in the definition of a realization of a potentially pinched alternating connectivity pattern (Definition \ref{potentially pinched alternating connectivity pattern}.2), so is $h \circ \gamma''$. So $h \circ \gamma: C \hookrightarrow A_{2}$ is a realization of $C$ in $A_{2}$. But, for $\gamma' : = h \circ \gamma''$, $g \circ \gamma' = \gamma$, so we get a contradiction. 
\end{proof}

We note that that by our choice of enumeration, and the hypothesis that none of the $C^{i}$ have a realization in $A$ for $i < L$, $C^{L}$ has the maximum number of instances of $\mathrm{Conn}$ among potentially pinched alternating connectivity patterns realized in $A$ whose potentially pinched alternating cycle has the same size as the smallest potentially pinched alternating cycle in $A$.
So applying this claim, Lemma \ref{potentially pinched alternating cycle removal}, and the observation that a composition of flat regular maps is a flat regular map, exactly as in Lemma \ref{arbitrarily large minimal split cycles} we get a presidium $B$ and a flat map $\flat_{k+1}: B \to A$ such that there is no realization $\gamma': C^{L} \hookrightarrow B$ such that $\flat_{k+1} \circ \gamma' = \gamma_{i}$ for $i \leq k$. 

We first claim that $B$ does not have any potentially pinched alternating cycles with fewer than $n$ points of sort $P$. Otherwise, the image of such a potentially pinched alternating cycle under $\flat_{k+1}$ would be a potentially pinched alternating cycles with fewer than $n$ points of sort $P$ in $A$, by Lemma \ref{potentially pinched alternating cycle preserved under flat map}, a contradiction. 

We next claim that $B$ does not have a realization $\gamma: C^{i} \hookrightarrow B$ for $i < L$. Otherwise, we first show that $\flat_{k+1} \circ \gamma$ is injective. Suppose otherwise, and first suppose that $\flat_{k+1} \circ \gamma$ is at least injective on the potentially pinched alternating cycle of $C^{i}$. Then the image of that potentially pinched alternating cycle under $\flat_{k+1} \circ \gamma$ would have over two points of sort $P$ in the same convivium of $A$. But then, by Lemma \ref{potentially pinched alternating cycle respects presidium structure}, $A$ must have a potentially pinched alternating cycle with less than $n$ points of sort $P$, contradicting the hypothesis on $A$. Now assume the other case, that  $\flat_{k+1} \circ \gamma$ is not even injective on the potentially pinched alternating cycle of $C^{i}$. By Lemma \ref{potentially pinched alternating cycle preserved under flat map}, the image  of that potentially pinched alternating cycle under $\flat_{k+1} \circ \gamma$ will contain a potentially pinched alternating cycle in $A$, which will also have less than $n$ points of sort $P$. This also contradicts the hypothesis on $A$. So $\flat_{k+1} \circ \gamma$ is injective. So as in the proof of Claim \ref{pulling back does not create new realizations of a potentially pinched alternating connectivity pattern}, $\flat_{k+1} \circ \gamma$ must be a realization of $C^{i}$. This contradicts that $A$ has no realizations of $C^{i}$.

Similarly, there is no realization $\gamma: C^{L} \hookrightarrow B$. Otherwise, as before, $\flat_{k+1} \circ \gamma$ would be a realization of $C^{L}$ in $A$, This contradicts that $\flat_{k+1} \circ \gamma$ cannot be any of the $\gamma_{i}$.

We have shown that for any $n$, there is a presidium $A \notin \mathcal{H}$ with no potentially pinched alternating cycles with at most $n$ points of sort $P$. Exactly as in the argument at the beginning of the proof of Lemma \ref{omitting a presidium} concluding step one, we conclude from the fact that $\mathcal{H}$ is a hereditary class of $\mathcal{L}^{\mathrm{std}}$-structures defined by a finite family of forbidden weakly embedded substructures that there is $A \notin \mathcal{H}$ with no potentially pinched alternating cycles of any size. This is what we need in Lemma \ref{potentially pinched alternating cycle removal}.

\end{proof}

We now prove Lemma \ref{potentially pinched alternating cycle removal}, from which, as we have just shown, Lemma \ref{eliminating potentially pinched alternating cycles} will follow, completing step three.

\begin{proof}
     (of Lemma \ref{potentially pinched alternating cycle removal})

We first analyze potentially pinched alternating connectivity patterns $C$ with a realization $\gamma: C \hookrightarrow A$. Choose $D^{1}$ to be one of the convivia $C_{i}$ of $C$. Then there are distinct $c, d \in D^{1}$, with a sequence of distinct convivia $D^{2}, \ldots, D^{k}$ distinct from $D^{1}$, such that $c$ has a $\mathrm{Conn}$-neighbor in $D^{2}$, if $2 \leq i \leq k$ there is a point of $D^{i}$ that has a $\mathrm{Conn}$-neighbor in $D^{i+1}$, and $d$ has a $\mathrm{Conn}$-neighbor in $D^{k}$. Holding $D^{1}$ fixed, find $c, d, D^{2}, \ldots, D^{k}$ satisfying these hypotheses so that $k$ is as small as possible. Since $A$ has no consanguineous pairs and $\gamma: C \hookrightarrow A$ is a realization of $C$ in $A$, $k \geq 3$. Now first, $c$ does not have a $\mathrm{Conn}$-neighbor in $D^{i}$ for $2 < i < k$, because then $c, d, D^{i}, \ldots D^{k}$ would exhibit a contradiction to minimality of $k$. Similarly, $d$ does not have a $\mathrm{Conn}$-neighbor in $D^{i}$ for $2 < i < k$. Moreover, no point of $D^{1}$ besides $c$ will have a $\mathrm{Conn}$-neighbor in $D^{2}$, nor will $c$ have more than one $\mathrm{Conn}$-neighbor in $D^{2}$: otherwise, since $C$ has a realization in $A$, $A$ would have a consanguineous pair. Similarly, no point of $D^{1}$ besides $d$ will have a $\mathrm{Conn}$-neighbor in $D^{k}$ nor will $d$ have more than one $\mathrm{Conn}$-neighbor in $D^{k}$. Additionally, among the $D^{2}, \ldots, D^{k}$, the only two pairs $D^{i}, D^{j}$ such that a point of $D^{i}$ is a $\mathrm{Conn}$-neighbor of a point of $D^{j}$ are those of the form $D^{i}, D^{i+1}$. Otherwise, assuming $i < j$, $c, d$ together with the $D^{\ell}$ for $2 \leq \ell \leq k$ with $\ell \leq i$ or $\ell \geq j$ will exhibit a contradiction to minimality of $k$.  Finally, for $2\leq i <k$, there is only one member of $D^{i}$ with a $\mathrm{Conn}$-neighbor in $D^{i+1}$ which will be its unique $\mathrm{Conn}$-neighbor in $D^{i+1}$. This is again because $A$ has no consanguineous pairs and $C$ has a realization in $A$. By soundness of $A$ and the fact that $C$ is realized in $A$, there are of course no instances of $\mathrm{Conn}$ within any of the $D^{i}$. In conclusion, we have found convivia $D^{1}, \ldots, D^{k}$ of $C$ such that the instances of $\mathrm{Conn}$ on $\sqcup^{k}_{i =1} D^{i}$ consist of a single instance between a point of $D^{i}$ and a point of $D^{i+1}$ for $1 \leq i < k$, and a single instance between a point of $D^{k}$ and a point of $D^{1}$, and the two points of $D^{1}$ that belong to instances of $\mathrm{Conn}$ on $\sqcup^{k}_{i =1} D^{i}$ are distinct.

Let $C^{i}$ be the convivium $\gamma(D^{i+1})$ of $A$ for $0 \leq i < k$; recall that $k \geq 3$, We claim that the only (unordered) pairs of distinct points in $\sqcup^{n-1}_{i=0} C^{i}$ with a common $R$-neighbor consist of, for each of $1 \leq i \leq k-1$, a single pair with a point from $C^{i}$ and a point from $C^{i+1 \mathrm{\: mod \:} k}$, and that the two points of $C^{0}$ that belong to one of these pairs within $\sqcup^{k}_{i =1} C^{i}$ are distinct. By the previous paragraph, it suffices to show that all unordered pairs of distinct points with a common $R$-neighbor within $\sqcup^{k-1}_{i=0} C^{i}$ are images under $\gamma$ of instances of $\mathrm{Conn}$ on $\sqcup^{k}_{i =1} D^{i}$.  Suppose otherwise, so there are distinct $b_1, b_2 \in \sqcup^{k-1}_{i=0} C^{i}$ with a common $R$-neighbor such that the pair $\{b_1, b_2\}$ is not an image under $\gamma$ of an instance of $\mathrm{Conn}$ on $\sqcup^{k}_{i =1} D^{i}$. Let $C'$ be the potentially pinched alternating connectivity pattern with the same underlying presidium as $C$, where the relation $\mathrm{Conn}$ on $C'$ consists of those instances already on $C$ together with $\{\gamma^{-1}(b_1), \gamma^{-1}(b_2)\}$. Then $C'$ will have more instances of $\mathrm{Conn}$ than $C$. But $\gamma: C' \hookrightarrow C$ will still be a realization of $C'$, because $b_{1}, b_{2}$ have a common $R$-neighbor. This contradicts the maximality of the number of instances of $\mathrm{Conn}$ on $C$.

We now define a cyclic $3$-jughandle decomposition on $A$, which we will again pull back along $g$. First of all, let $D_{0} = P(A) \backslash \sqcup^{k-1}_{i=0} C^{i}$. Then let $C^{01}$ consist of those points of $O(A)$ with only $R$-neighbors in $D_{0} \sqcup C^{0} \sqcup C^{1}$, excluding those with at least one $R$-neighbor in $C^{1}$ but with only $R$-neighbors in $D_{0} \sqcup C^{1}$. In particular, $C^{01}$ will contain points with only $R$-neighbors in $D_{0}$, as well as points with an $R$-neighbor in $C^{0}$ and whose only other $R$-neighbors are in $C_{0} \sqcup D_{0}$. For $1 \leq i \leq k-1$ let $C^{i(i +1 \mathrm{\: mod \:} k)}$ consist of those points of $O(A)$ with at least one $R$-neighbor in $C^{i}$ and only $R$-neighbors in $D_{0} \sqcup C^{i} \sqcup C^{i+1 \mathrm{\:mod \: }k}$. In particular, $C^{i(i +1 \mathrm{\: mod \:} k)}$ will contain points with an $R$-neighbor in $C^{i}$ but only $R$-neighbors in $D_{0} \sqcup C^{i} $. We claim that $A=  D_{0} \sqcup \bigsqcup^{k-1}_{i=0} C^{i} \sqcup C^{i(i +1 \mathrm{\: mod \:} k)} $ gives a partition of $A$. First, by definition, the sets are disjoint; we show that equality holds.  The previous paragraph implies that a point of $O(A)$ can only have $R$-neighbors in two distinct $C^{i}$ and $C^{j}$ if $i$ differs from $j$ by $1$ modulo $k$. If $k \geq 4$, a point with $R$-neighbors in two distinct $C^{i}$ and $C^{j}$ will then, for some $i < k$, only have $R$-neighbors in $C^{i}$ and $C^{i+1  \mathrm{\: mod\: } k }$ and in no other $C^{\ell}$; this proves the equality when $k \geq 4$.  If $k = 3$, we must consider the additional possibility that a point has $R$-neighbors in all of $C^{0}, C^{1}, C^{2}$. But this is ruled out by the fact that the two points of $C^0$ that have a common $R$-neighbor with a point in $C^{1}$ or a point of $C^{2}$ are distinct, with one not having a common $R$-neighbor with any point of $C^{1}$, and the other not having a common $R$-neighbor with any point of $C^{2}$.  This proves the equality in the remaining case when $k = 3$. Now let $A_{0}:=C^{0}$, $A_{1}:= \bigsqcup^{k-2}_{i=1} C^{i}$, $A_{2}:= C^{k-1}$, $A_{01}:=\sqcup^{k-3}_{i=0} C^{i(i+1 \mathrm{\:mod \: k)}}$, $A_{12}:=C^{(k-2)(k-1)}$, $A_{20} := C^{(k-1)0}$. Then  $D_{0} \sqcup \bigsqcup^{2}_{i=0}A_{i} \sqcup  \bigsqcup^{2}_{i=0}A_{i(i +1 \mathrm{\:mod \:} 3)}$ is a cyclic $3$-jughandle decomposition of $A$, by construction.

     Now we again pull back along $g$ to get a cyclic $3$-jughandle decomposition $B= D \sqcup \bigsqcup^{2}_{i = 0} B_{i} \sqcup \bigsqcup^{2}_{i = 0} B_{i(i+1\mathrm{\:mod\:} 3)}$ of $B$: let $D:=g^{-1}(D_{0})$, $B_{i}:=g^{-1}(A_{i})$ and $B_{01}, B_{12}, B_{20} := g^{-1}(A_{01}),g^{-1}(A_{12}), g^{-1}(A_{20})$. Now let $\flat: \tilde{B}^{\ell} \twoheadrightarrow B$ be the helix map of length $\ell$ associated with this cyclic $3$-jughandle decomposition. By Fact \ref{closure under jughandle helix maps} we can find $\ell$ large enough that $\tilde{B}^{\ell} \notin \mathcal{H}$, and define $\tilde{B} : = \tilde{B}^{\ell}$. (Note as in Lemma \ref{consanguineous pair removal} that $\flat$ is again a regular map of presidia.)  Write $\tilde{B}^{\ell}= D \sqcup \bigsqcup_{j < \ell, i < 3} B^{j}_{i} \sqcup \bigsqcup_{j < j' < \ell} B^{jj'}_{01}\sqcup \bigsqcup_{j < j' < \ell} B_{12}^{jj'} \sqcup \bigsqcup_{j < j' < \ell} B^{jj'}_{20}$, again as in the proof of Fact \ref{closure under jughandle helix maps}. Note that $P(\tilde{B})=   \bigsqcup_{j < j' < \ell} B^{jj'}_{01}\sqcup \bigsqcup_{j < j' < \ell} B_{12}^{jj'} \sqcup \bigsqcup_{j < j' < \ell} B^{jj'}_{20}$; also note that, by construction of helix maps, for $j < j' < \ell$, $ii'=01, 12, 20$, $B^{jj'}_{ii'}$ only has $R$-neighbors in $D$, $B^{j}_{i}$, and $B^{j'}_{i'}$. Thus for $ii'=01, 12, 20$, there can only be a point in $B^{j}_{i}$ and a point in $B^{j'}_{i'}$ with a common $R$-neighbor if $j' > j$. Moreover, for $i = 0, 1, 2$, there can only be a point of $B^{j}_{i}$ and $B^{j'}_{i}$ with a common $R$-neighbor if $j' = j$.

     We first show that there is no realization $\gamma': C \hookrightarrow \tilde{B}$ such that $g \circ \flat \circ \gamma' = \gamma$. Suppose otherwise, and define $E^{i} := \gamma'(D^{i+1}) $ for $0 \leq i < k$ (so $g \circ \flat(E^{i})= \gamma(D^{i+1}) =  C^{i}$). Then $E^{0}$ is in some $B^{j}_{0}$.  Because some point of $D^{0}$ has a $\mathrm{Conn}$-neighbor in $D^{1}$ and $\gamma'$ is a realization of $C$, some point of $E^{1}$ must have a common $R$-neighbor with a point of $E^{0}$.  So $E^{1}$ can only be contained in $B^{j'}_{1}$ for some $j' > j$. We next show by induction on $k$ with $1 \leq k' \leq k-2$ that $E^{k'}$ is contained in $B^{j'}_{1}$. If $E^{k'-1} \subset B^{j'}_{1}$, then by similar reasoning to before, some point of $E^{k'-1}$ must have a common $R$-neighbor with some point of $E^{k'}$. So the only $B^{\ell}_{1}$ in which which $E^{k'}$ is contained is $B^{j'}_{1}$. So we have shown for $1 \leq k' \leq k-2$ that $E^{k'}$ is contained in $B^{j'}_{1}$, and particularly that $E^{k
    -2} \subset B^{j'}_{1}$. But, applying the same reasoning a third time, some point of $E^{k-2}$ has a common $R$-neighbor with some point of $E^{k-1}$. Thus $E^{k-1}$ can only be contained in $B^{j''}_{2}$ for some $j''> j'$.  So $E^{k-1} \subset B^{j''}_{2}$, $E^{0} \subset B^{j}_{0}$, and moreover, applying the same reasoning yet again, some point of $E^{k-1}$ and some point of $E^{0}$ have a common $R$-neighbor. But $j'' > j' > j$, so this is impossible, proving by contradiction that there can be no such $\gamma'$.

    It remains to show that $\flat$ is flat. Suppose $b, b' \in P(\tilde{B})$ are distinct with $\flat(b) = \flat(b')$. Then by construction of the helix map $\flat$, for some $i$ and some $j \neq j'$, $b \in B^{j}_{i}$ and $b \in B^{j'}_{i}$. So $b'$, $b$ cannot have a common $R$-neighbor in $\tilde{B}$, and $\flat$ is flat.

     \end{proof}

This completes step three. Having now shown Lemma \ref{eliminating potentially pinched alternating cycles}, we know that in particular, when $\mathcal{H}$ is a hereditary class of sound $\mathcal{L}^{\mathrm{std}}$-structures defined by a finite family of forbidden weakly embedded substructures, which is the age of a structure with $\mathrm{NSOP}_{3}$ theory, there is some presidium $A \notin \mathcal{H}$ with no alternating cycles. But by Lemma \ref{embedding into the standard tp-structure}, $A$ weakly embeds into the standard $\mathrm{TP}$-structure.  So because $\mathcal{H}$ is closed under weak embeddings, it omits an induced substructure of the standard $\mathrm{TP}$-structure.

We have shown

\begin{cor}
\label{main combinatorial lemma}
Let $\mathcal{H}$ be a hereditary class consisting of sound $\mathcal{L}_{\mathrm{std}}$-structures, defined by a finite family of forbidden weakly embedded substructures, which is the age of some structure whose theory is $\mathrm{NSOP}_{3}$. Then for $\mathbb{TP}$ the standard $\mathrm{TP}$-structure, $\mathbb{TP} \notin \mathcal{H}$.

\end{cor}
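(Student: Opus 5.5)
The corollary assembles the three steps just completed with the embedding criterion of Lemma \ref{embedding into the standard tp-structure}. Fix $\mathcal{H}$ as in the statement. First I will invoke Lemma \ref{eliminating potentially pinched alternating cycles}. Its hypotheses are exactly those of the corollary: $\mathcal{H}$ consists of sound structures, is defined by finitely many forbidden weak substructures, and is the age of a structure with $\mathrm{NSOP}_{3}$ theory. It yields a (balanced, finite) presidium $A \notin \mathcal{H}$ with no potentially pinched alternating cycles. By Definition \ref{potentially pinched alternating cycle, flat}(1), every alternating cycle is a potentially pinched alternating cycle, so $A$ in particular has no alternating cycles.

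Second, I will apply Lemma \ref{embedding into the standard tp-structure} to $A$. As a finite presidium without alternating cycles, $A$ weakly embeds into $\mathbb{TP}$. Let $A'$ be the image of $A$, with the structure induced from $\mathbb{TP}$. Then $A$ weakly embeds into $A'$.

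Third, I will use that $\mathcal{H} = \mathcal{H}(\mathcal{F})$ is closed under weak embeddings: if $A'$ were in $\mathcal{H}$, then $A$ would be too. So $A' \notin \mathcal{H}$. Since $A'$ is a finite induced substructure of $\mathbb{TP}$, the convention that an infinite structure lies in $\mathcal{H}$ iff all its finite substructures do gives $\mathbb{TP} \notin \mathcal{H}$.

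There is no real obstacle left at this point, since the substantial work sits in the three steps and in Lemma \ref{embedding into the standard tp-structure}. The one thing to check carefully is that the presidium delivered by step three is finite and satisfies the hypotheses of Lemma \ref{embedding into the standard tp-structure}: it must be sound and its convivia must be linearly ordered by $I$. Both hold because it is obtained from a forbidden structure, of bounded size, inside a presidium, as at the end of step one.
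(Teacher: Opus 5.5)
Your proposal is correct and is the paper's own argument: Lemma \ref{eliminating potentially pinched alternating cycles} gives a finite presidium $A\notin\mathcal{H}$ with no alternating cycles, Lemma \ref{embedding into the standard tp-structure} weakly embeds $A$ into $\mathbb{TP}$, and closure of $\mathcal{H}$ under weak embeddings then gives $\mathbb{TP}\notin\mathcal{H}$. One small point: the word ``balanced'' in your first step is neither needed nor guaranteed by the final extraction of a small forbidden substructure. This does no harm, since Lemma \ref{embedding into the standard tp-structure} only requires a finite presidium, and an induced substructure of a presidium is again a presidium.
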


By this corollary, $\mathcal{H}^{\mathrm{std}}$ as in Lemma \ref{standard hereditary class} cannot exist. So none of the complete theories of Cherlin, Shelah and Shi (\cite{CSS99}), complete $T^{\mathcal{H}}$ for $\mathcal{H}$ any hereditary class defined by a finite family of forbidden weakly embedded substructures, can be strictly $\mathrm{NSOP}_{3}$.  By Proposition \ref{reduction to generic structures}, which reduces the main theorem to the claim that none of these $T^{\mathcal{H}}$ can be strictly $\mathrm{NSOP}_{3}$, \textit{we have now completed the proof of the main theorem of this section, Theorem \ref{main theorem 3, restated}.}

\begin{remark}
    \label{potential simplified proof}

In Remark \ref{potential noninterference in SOP_2}, we introduced the condition that, in any theory with $\mathrm{SOP}_{2}$, there is an instance of $\mathrm{SOP}_{2}$ where certain algebraic closures do not overlap nontrivially (and not just an instance of $\mathrm{TP}$ satisfying non-overlapping conditions, Lemma \ref{noninterference of algebraic closures}.) This was condition (**) of that remark, and we do not know whether it is true. We remarked that if it is true, Theorem \ref{main theorem 3, restated} can be proven by proving the following:

(*): If $\mathcal{H}$ is a hereditary class of $\mathcal{L}^{\mathrm{std}}$-structures defined by a finite family of forbidden weakly embedded substructures, then if the structures of $\mathcal{H}$ are sound and $\mathbb{SOP}_2 \in \mathcal{H}$, every theory whose models have age $\mathcal{H}$ must have $\mathrm{SOP}_{3}$.

We also asserted that this condition (*) was much easier to prove than the version we used here, Corollary \ref{main combinatorial lemma}, where $\mathbb{TP}$ stands in the place of $\mathbb{SOP}_{2}$. In this remark we will give a sketch of this easier proof, which would hypothetically, assuming the algebraic closure condition (**), vastly simplify the proof of the overall theorem.

Suppose $\mathcal{H}$ is a hereditary class defined by finitely many forbidden weak substructures, that it is sound, and that $\mathcal{H}$ is the age of the models of some $\mathrm{NSOP}_{3}$ theory, so in particular an $\mathrm{NSOP}_{3}$ hereditary class (in the sense of Definition 2.12 of \cite{ApproxOrder}.) We want to show that $\mathbb{SOP}_{2} \notin \mathcal{H}$.

Say a finite $\mathcal{L}^{\mathrm{std}}$-structure $A$, where $I$ is antisymmetric and irreflexive on $A$, is an \textit{obstruction} if $A$ does not weakly embed into $\mathbb{SOP}_{2}$.   Observe the following two facts about obstructions:

(1) In any obstruction $A$, the undirected graph with edges between any two points of $O(A)$ or $P(A)$ with an instance of $R(x, y)$ or $I(y_{1}, y_{2})$ has an undirected cycle. (Otherwise, we can index this structure as a tree, where each node only has relations (in either direction for $I$) to its immediate successor and predecessor in the tree. We can then use this tree-indexing to inductively construct a weak embedding of $A$ into $\mathbb{TP}_{2}$.)

(2) Suppose that $A$ is any finite $\mathcal{L}^{\mathrm{std}}$-structure, and $A$ has a homomorphism into $\mathbb{SOP}_{2}$. Then $A$ has a weak embedding into $\mathbb{SOP}_{2}$: suppose there is a homomorphism $h: A \to \mathbb{SOP}_{2}$ with $k < |A|$ points $ p \in A$ such that $p$ is the unique point $p'$ with $h(p')=h(p)$. We find a homomorphism $h': A \to \mathbb{SOP}_{2}$ with $k+1$ such points. This will suffice by induction, because for $k = |A|$ we get an injective map.  First, we may assume that $h$ only maps points of sort $P$ to, say, the even levels of the tree of sort-$P$ points of $\mathbb{SOP}_{2}$. Now suppose first that some points $p, p'$ of sort $P$ are mapped to the same point of $\mathbb{SOP}_{2}$. We  obtain $h'$ by just changing the image of $p'$ so that $p'$ is now mapped to the immediate successor of $h(p)$, rather than to $h(p)$ itself. Then $p'$ is now the unique point of $A$ mapped by $h'$ to $h'(p')$. Moreover, $p'$ has instances of $R$ and $I$ with the exact same points of  $h(A \backslash \{p\})$ that $p$ does (and in the same direction, for $I$), so $h'$ remains a homomorphism. Then $h'$ is a homomorphism with one additional point that's the unique point mapped to its image, as desired. Second, suppose that some points $p, p'$ of sort $O$ are mapped to the same point $a \in \mathbb{SOP}_{2}$. Note that there are infinitely many points in $O(\mathbb{SOP}_{2})$ with the exact same instances of $R$ with points of $P(h(A))$ that $a$ has. We may choose a new such point to map $p'$ to that's not already in $h(A)$, and conclude as before. So, if a finite $\mathcal{L}^{\mathrm{std}}$-structure has a homomorphism into $\mathbb{SOP}_{2}$, it must have a weak embedding into $\mathbb{SOP}_{2}$. It follows that any homomorphic image of an obstruction is also an obstruction.

Enumerate the obstructions as $\{A_i\}_{i< \omega}$ such that, if $A_{i}$ is smaller than $A_{j}$ or is the same size as $A_{j}$ but has more edges, $i < j$. Using (1), we may remove obstructions using helix maps. Specifically, we have, just as in Lemmas \ref{split cycle removal}, \ref{consanguineous pair removal}, \ref{potentially pinched alternating cycle removal} or the original cycle-removal property Proposition 3.11 of \cite{ApproxOrder}, the following obstruction-removal property of helix maps:

Let $\mathcal{H}$ be an $\mathrm{NSOP}_{3}$ hereditary class of $\mathcal{L}_{\mathrm{std}}$-structures. Let $g: B \to A$ be a map between finite $\mathcal{L}_{\mathrm{std}}$-structures with $A, B\notin \mathcal{H}$. Suppose that, for $n < \omega$, $A$ does not weakly embed $A_{k}$ for $k < n$, and let $\gamma: A_{n} \hookrightarrow A$ be an embedding of the obstruction $A_{n}$ into $A$.  Then there is a $3$-helix map $h: \tilde{B} \twoheadrightarrow B$ of some length $\ell$ associated with a cyclic $3$-decomposition of $B$ (i.e. a cyclic $3$-jughandle decomposition such that $B_{01}=B_{12}=B_{20}=\emptyset$) such that $\tilde{B} \notin \mathcal{H}$ and such that there is no embedding $A_{n} \hookrightarrow \tilde{B}$ with $\gamma = g\circ h\circ \gamma'$.

The main point is that, by (1), $A_{n}$ has an undirected cycle in the $R, I$, which can be assumed to be an \textit{induced} cycle in $A_{n}$. By minimality of $n$, $\gamma$ maps this to a \textit{induced} cycle in $A$. (Otherwise some extension $A_{k}$ of $A_{n}$ by additional instances of $R, I$ would have a weak embedding into $A$, and $k < n$.) And just as we could find a helix map with our cycle-removal property with respect to induced split cycles in Lemma \ref{split cycle removal}, or with respect to induced directed cycles in Proposition 3.11 of \cite{ApproxOrder}, we may find a helix map exhibiting the cycle-removal property for induced cycles here as well; the proof does not use any radically different idea.

Then by (2), the image of some obstruction $A_{k}$ under \textit{any} homomorphism contains an induced obstruction $A_{j}$ for $j < k$. That is, we no longer need any extra ad hoc properties of homomorphisms to preserve obstructions. So, we can argue just as in the proof of Theorem 3.1 of \cite{ApproxOrder}: starting with the fact that $\mathcal{H}$ omits some $\mathcal{L}^{\mathrm{std}}$-structure\footnote{(speaking now only of $\mathcal{L}^{\mathrm{std}}$-structures where $I$ is antisymmetric and irreflexive)} because it is sound, we can find a $\mathcal{L}^{\mathrm{std}}$-structure $A \notin \mathcal{H}$ omitting  all $A_{k}$ for $k$ less than some arbitrarily large $N$. So  we can find an $\mathcal{L}^{\mathrm{std}}$-structure $A \notin \mathcal{H}$ omitting all obstructions of size less than some arbitrarily large $N$. Applying the assumption that $\mathcal{H}$ is defined by finitely many forbidden weak substructures, we conclude that $\mathcal{H}$ omits an $\mathcal{L}^{\mathrm{std}}$-structure with no obstructions, so one that embeds in $\mathbb{SOP}_{2}$. So $\mathbb{SOP}_{2} \notin \mathcal{H}$, as desired.

Note the relatively precise analogy with the proof of Theorem 3.1 of \cite{ApproxOrder}. For example, there we have the initial condition that our hereditary class of directed graphs there omits the $(N, k)$-cycle $G$, which has no $n$-cycles. This corresponds here to soundness, which says that our hereditary class omits some initial $\mathcal{L}^{\mathrm{std}}$-structure. In the proof of Theorem 3.1 of \cite{ApproxOrder}, we would then ultimately like to conclude that our hereditary class omits the infinite chain graph, corresponding here to $\mathbb{SOP}_{2}$. The directed graphs that do not weakly embed into the infinite chain graph are the directed cycles--this corresponds to point (1) on obstructions in this remark. And directed cycles are preserved under homomorphisms, just as obstructions are preserved under homomorphisms by point (2). Again, this contrasts with steps one through three of the proof of Theorem \ref{main theorem 3, restated} in this section, where we need to show that our helix maps satisfy special conditions, other than just being homomorphisms, to make sure that they preserve the substructures that we are trying to remove.

\end{remark}

\addcontentsline{toc}{subsection}{\hspace{1.5em}Sidebar 2: Approximate implications between classification-theoretic properties}
\begin{tcolorbox}[
    enhanced,
    breakable,
    oversize,                 
    colback=blue!5!white,     
    colframe=blue!75!black,   
    skin first=enhanced,      
    skin middle=enhanced,     
    skin last=enhanced        
]

\textsf{\textbf{Sidebar 2: Approximate implications between classification-theoretic properties}}

\:

\: The cycle-removal arguments in this section even demonstrate a more general classification-theoretic phenomenon within all $\mathrm{NSOP}_{3}$ theories, and particularly within $\mathrm{NSOP}_1$ theories. We will express this phenomenon within the language of \textit{characteristic sequences} developed by Malliaris in \cite{Mal10}. When we talk about hypergraph sequences, we mean sequences $R_{\infty}=(V,\{R_{n}\}_{n < \omega})$ of hypergraphs on a common set of vertices $V$, where $R_{n}$ is an $n$-ary edge relation consisting of $n$-tuples of distinct elements which is permutation invariant, and each edge of $R_{n}$ is an $R_{m}$-clique for $m \leq n$. Malliaris gives the following definition, expressing the pattern of consistency and inconsistency within the instances of a formula:

\begin{definition}(\cite{Mal10})
   Let $\varphi(x, y)$ be a formula. The \emph{characteristic sequence} of $\varphi(x, y)$ is the sequence of hypergraphs, on the vertices $\mathbb{M}^{|y|}$, with $R_{n}(y_{1}, \ldots y_{n})$ defined by $\bigwedge_{i=1}^{n}\varphi(x, y_{i})$ (for distinct $y_{1}, \ldots, y_{n}$.)
\end{definition}

\: We need a way to talk about patterns of consistency and inconsistency that can be found within the characteristic sequence of a formula, where not all instances of consistency and inconsistency need to be defined. We  get this from the following definitions:

\begin{definition}
(1) A \textit{partial hypergraph sequence} is a sequence $(U, \{P_{n}\}_{n < \omega})$ for some set $V$, where $P_{n}=( R^{+}_{n} ,R_{n}^{-} ,R_{n}^{\sim})$ is a partition of the set of $n$-tuples of distinct elements of $U$ into three permutation-invariant parts, such that each tuple of $R^{+}_{n} $ is a $R_{k}^{+}$-clique for $k < n$, and each $n$-tuple containing a tuple in $R_{k}^{-}$ is in $R_{n}^{-}$ for $k <n$.

(2) An \textit{embedding} $\iota: U \hookrightarrow V$ of a partial characteristic sequence $(U, \{P_{n}\}_{n < \omega})$ into a hypergraph sequence $(V,\{R_{n}\}_{n < \omega})$ is an injective map such that if $(u_{1}, \ldots u_{n}) \in R^{+}_{n} $, $(\iota(u_{1}), \ldots \iota(u_{n})) \in R_{n} $, and if $(u_{1}, \ldots u_{n}) \in R^{-}_{n} $, $(\iota(u_{1}), \ldots \iota(u_{n})) \notin R_{n} $. An \textit{embedding} $\iota: U \hookrightarrow U'$ of partial characteristic sequences $(U, \{( R^{+}_{n} ,R_{n}^{-} ,R_{n}^{\sim})\}_{n < \omega})$ and $(U', \{( (R^{+}_{n})' ,(R_{n}^{-} )',(R_{n}^{\sim})')\}_{n < \omega})$ is an injective map such that if $(u_{1}, \ldots u_{n}) \in R^{+}_{n} $, $(\iota(u_{1}), \ldots \iota(u_{n})) \in (R^{+}_{n})' $, and if $(u_{1}, \ldots u_{n}) \in R^{-}_{n} $, $(\iota(u_{1}), \ldots \iota(u_{n})) \in (R^{-}_{n})' $

\end{definition}

\: As first observed by \cite{Mal10}, many classification-theoretic properties are associated with hypergraph sequences. For example, let an $(\omega, \omega, 1)$-array, as in \cite{Mal10}, be the hypergraph sequence consisting of an array $\{b_{ij}\}_{i, j \in \omega}$ such that, for all $i$, $\{b_{ij}\}_{j \in \omega}$ is an $R_{2}$-anticlique, and for any $\sigma \in \omega^{\omega}$, $\{ b_{i\sigma(i)}\}_{i \in \omega}$ is a clique for each $R_{n}$. Then the hypergraph sequence we associate with $\mathrm{TP}_{2}$ will be the $(\omega, \omega, 1)$-array: as observed in \cite{Mal10}, a formula has $\mathrm{TP}_{2}$ if and only if its characteristic sequence embeds the $(\omega, \omega, 1)$-array (as a sequence of induced sub-hypergraphs). Similarly, let the hypergraph sequence for $\mathrm{MSOP}_{2}$, defined in \cite{INDNSOP3}, consist of vertices $\{b_{\eta}\}_{\eta \in 2^{<\omega}}$ such that, for every $\sigma \in 2^{\omega}$, $\{ b_{\sigma \upharpoonleft n}\}_{n < \omega}$ is a clique for each $R_{n}$, but for any incomparable $\eta_{1}, \eta_{2}$, $\{b_{\eta_{1}},  b_{\eta_{2} }\}$ is an anticlique for $R_{2}$. Then we can associate this characteristic sequence with $\mathrm{SOP}_{2}$: a formula has $\mathrm{SOP}_{2}$ if and only if its characteristic sequence embeds the hypergraph sequence for $\mathrm{MSOP}_{2}$.

\: Within some classification-theoretic properties, such as $\mathrm{TP}$, not all tuples of elements within the configuration are required to satisfy a consistency condition or an inconsistency condition, so we associate them with a partial hypergraph sequence.  We associate the following partial hypergraph sequence with the property $\mathrm{TP}$: the vertex set consists of distinct $\{b_{\eta}\}_{\eta< \omega^{< \omega}}$, the tuples of $R^{+}_{n}$ comprise all of the tuples of $n$ distinct elements of sets of the form $\{b_{\sigma|_{\ell}}\}_{\ell < \omega}$ for $\sigma \in \omega^{\omega}$, the tuples of $R^{-}_{n}$ comprise all of the tuples of $n$ distinct elements containing a pair of the form $\{R_{\eta \smallfrown \langle i\rangle}, R_{\eta \smallfrown \langle j\rangle} \}$ with $i \neq j$, and the tuples of $R^{\sim}_{n}$ are all of the other tuples. A formula will have $\mathrm{TP}$ if and only if its characteristic sequence embeds this partial hypergraph sequence.

\: These observations motivate the following ``approximate implication" relation between classification-theoretic properties. Let $\mathrm{NP}_{1}$ be a classification-theoretic property defined as the negation of some classification-theoretic property $\mathrm{P}_{1}$. Let $\mathrm{NP}_{2}$ be a classification-theoretic property defined as the negation of some classification-theoretic property $\mathrm{P}_{2}$ associated with a hypergraph sequence or partial hypergraph sequence. Then $\mathrm{NP}_{1} \leadsto \mathrm{NP}_{2}$ if, for every formula $\varphi(x, y)$ in a theory satisfying $\mathrm{NP}_{1}$, for arbitrarily large $N$ there is a finite partial hypergraph sequence $(U, \{P_{n}\}_{n < \omega})$, which does not embed into the characteristic sequence of $\varphi(x,y)$, but such that, for every $U_{0} \subset U$ of size at most $N$, the induced sequence $(U_{0}, \{P_{n}|U_{0}\}_{n < \omega})$ embeds into the hypergraph sequence\footnote{again, as a sequence of induced sub-hypergraphs} or partial hypergraph sequence associated with $\mathrm{P}_{2}$.

\: Note that, if $\varphi(x, y)$ is a formula whose characteristic sequence does not embed some finite partial hypergraph sequence $(U, \{P_{n}\}_{n < \omega})$, but $(U, \{P_{n}\}_{n < \omega})$ embeds into the hypergraph sequence or partial hypergraph sequence associated with $\mathrm{P}_{2}$, then $(U, \{P_{n}\}_{n < \omega})$ will be an obstruction to $\varphi(x, y)$ exhibiting $\mathrm{P}_{2}$. So informally speaking, $\mathrm{NP}_{1} \leadsto \mathrm{NP}_{2}$ says that, if a theory $T$ satisfies $\mathrm{NP}_{1}$, then for every formula $\varphi(x, y)$ there is a constraint on the characteristic sequence of $\varphi(x,y)$ that, to an arbitrarily close finite approximation, looks like an obstruction to $\varphi(x, y)$ exhibiting $\mathrm{P}_{2}$.

Using the work of this section, we get the following approximate implications between classification-theoretic properties:

\begin{theorem}\label{the approximate implications}
     $\mathrm{NSOP}_{3} \leadsto \mathrm{NTP}_{2}$, and $\mathrm{NSOP}_{3} \leadsto \mathrm{NSOP}_{2}$.
\end{theorem}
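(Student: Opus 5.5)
Fix an $\mathrm{NSOP}_{3}$ theory $T$ and a formula $\varphi(x,y)$. The plan is to turn the characteristic sequence of $\varphi$ into a hereditary class in the language $\mathcal{L}_{\mathrm{std}}$ and then reuse the cycle-removal machinery of this section. Let $\mathcal{H}_{\varphi}$ be the class of finite $\mathcal{L}_{\mathrm{std}}$-structures $A$ that weakly embed into the structure on $\mathbb{M}^{|x|}\sqcup\mathbb{M}^{|y|}$ where $R(a,b)$ is interpreted as $\varphi(a,b)$ and $I(b_{1},b_{2})$ as the inconsistency of $\{\varphi(x,b_{1}),\varphi(x,b_{2})\}$. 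The class $\mathcal{H}_{\varphi}$ consists of sound structures and is closed under weak embeddings. It is also the age of a structure whose theory, being a reduct of an expansion of $T$ by definable predicates, is $\mathrm{NSOP}_{3}$. Hence Lemma \ref{non-overlapping pairs} and Fact \ref{closure under jughandle helix maps} apply to it verbatim.

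Next we rerun steps one through three: Lemma \ref{arbitrarily large minimal split cycles}, then Lemma \ref{eliminating consanguineous pairs}, then the inductive part of the proof of Lemma \ref{eliminating potentially pinched alternating cycles}. None of these uses finiteness of the forbidden family until the very last compactness-style move. Dropping that final move, we get the following. For every $N$ there is a finite presidium $A_{N}\notin\mathcal{H}_{\varphi}$ with no potentially pinched alternating cycles having at most $N$ points of sort $P$. For $\mathrm{NTP}_{2}$ it is enough to stop after step one: $A_{N}$ is a presidium, hence (Remark \ref{standard tp2-structure}) it embeds into $\mathbb{TP}_{2}$. For $\mathrm{NSOP}_{2}$, we use Remark \ref{potential simplified proof} instead: its obstruction-removal argument, run without the finite-family assumption, yields $A_{N}\notin\mathcal{H}_{\varphi}$ containing no obstruction to weakly embedding into $\mathbb{SOP}_{2}$ of size at most $N$.

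The remaining step is translation. To $A_{N}$ we associate the partial hypergraph sequence $U=P(A_{N})$, defined as follows. $R^{+}_{n}$ is the set of $n$-tuples of distinct points lying inside the $R$-neighbourhood of a single point of $O(A_{N})$. $R^{-}_{n}$ is the set of tuples containing an $I^{*}$-pair. Everything else goes into $R^{\sim}_{n}$. Soundness of $A_{N}$, which holds because it is a presidium, ensures these two sets are disjoint. This $U$ does not embed into the characteristic sequence of $\varphi$. Otherwise, realizing each $R^{+}$-clique of an $O$-point by some $a$ would give a weak embedding of $A_{N}$ into the $\varphi$-structure. That needs care, since $O$-points must be sent to distinct realizations; we handle it by duplicating realizations, since any consistent set has infinitely many realizations, or else by adjusting $A_{N}$ slightly.

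The main obstacle is the local condition: every $U_{0}\subset U$ with $|U_{0}|\le N$ must embed into the $(\omega,\omega,1)$-array, respectively the $\mathrm{MSOP}_{2}$ tree. For the $\mathrm{TP}_{2}$ case, $U_{0}$ sits inside a presidium, so it embeds into $\mathbb{TP}_{2}$. The delicate point is inducedness: pairs in $R^{\sim}$ carry no requirement, but $R^{+}$-cliques must land inside a single path $\sigma$. For this we would pass to the substructure of $A_{N}$ spanned by $U_{0}$ together with one witnessing $O$-point per clique. Its size is bounded in terms of $N$, so after reindexing $N$ the absence of small forbidden configurations applies to it. For the $\mathrm{SOP}_{2}$ case, small substructures free of obstructions embed into $\mathbb{SOP}_{2}$, and hence into the $\mathrm{MSOP}_{2}$ configuration. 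Higher-arity cliques are the remaining issue. Since $R^{+}$ was defined through common $O$-neighbours, each clique should map inside a branch. We expect this bookkeeping of $n$-ary cliques versus binary obstructions to be the hardest part to get right.
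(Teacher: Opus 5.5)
Your architecture matches the paper's: encode $\varphi$ as a class of $\mathcal{L}_{\mathrm{std}}$-structures, run the helix-map cycle removal without the finite-family step, and read $R^{+}$ and $R^{-}$ on $P(A_N)$ off common $O$-neighbours and $I^{*}$-pairs. Your $\mathrm{NSOP}_{2}$ half, run locally through Remark~\ref{potential simplified proof}, is also the paper's route. But two steps fail as written.

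\textbf{First: $A_N$ cannot be taken to be a presidium.} The finite forbidden family is not used only at the very end. Lemma~\ref{arbitrarily large minimal split cycles}, the finiteness-free part of step one, yields only some $A\notin\mathcal{H}$ with no $k$-split cycles for $k\le n$ and no directed $I$-cycles. The passage to an omitted structure with no split cycles at all, and hence to an omitted presidium, is exactly where the proof of Lemma~\ref{omitting a presidium} uses finiteness. Steps two and three take that presidium as input. Without finiteness no such presidium need exist. If $\varphi$ has $\mathrm{TP}_{2}$ (possible in $\mathrm{NSOP}_{3}$ theories, as the paper notes), a $\mathrm{TP}_{2}$-array with path realizations weakly embeds every finite part of $\mathbb{TP}_{2}$ into your $\varphi$-structure; realizations of distinct paths are automatically distinct. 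Every finite presidium embeds into $\mathbb{TP}_{2}$, so every finite presidium lies in $\mathcal{H}_{\varphi}$. Your claim would therefore prove $\mathrm{NSOP}_{3}\Rightarrow\mathrm{NTP}_{2}$.

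The argument must be local. Take $A_{N}\notin\mathcal{H}_{\varphi}$ from Lemma~\ref{arbitrarily large minimal split cycles} with no $k$-split cycles for $k\le N'$ and no directed $I$-cycles. An induced substructure with at most $N'$ points then has no split cycles at all. By the argument closing the proof of Lemma~\ref{omitting a presidium}, extending $I$ to a linear order on each $I^{*}$-component makes it a presidium, which embeds into $\mathbb{TP}_{2}$. Your device of adjoining one witnessing $O$-point per clique of $U_{0}$ (at most $2^{N}$ of them, so $N'\ge N+2^{N}$ suffices) is exactly what makes this local check send cliques into paths; the paper's sketch passes over that point. Steps two and three are irrelevant here. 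Their dependence on an omitted presidium is why the paper does not obtain $\mathrm{NSOP}_{3}\leadsto$ simplicity.

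\textbf{Second: the non-embedding step.} The claim that any consistent set has infinitely many realizations is false; take $\varphi(x,y)$ to be $x=y$. So when several $O$-points of $A_{N}$ share a neighbourhood, an embedding of $U$ into the characteristic sequence need not give an injective realization of $A_{N}$. Then $A_{N}\notin\mathcal{H}_{\varphi}$ yields no contradiction. The paper repairs this by padding:
\begin{itemize}
\item adjoin new vertices $S$ with $\binom{|S|}{N}\ge|O(A_{N})|$;
\item make every $N$-subset of $S$ an $R^{+}$-edge and every $(N+1)$-subset an $R^{-}$-edge;
\item make the union of any $R^{+}$-edge of $P(A_{N})$ with any $N$-subset of $S$ an $R^{+}$-edge.
\end{itemize}
Under an embedding, realizations of a consistency condition are separated by which $N$-subset of $S$ they satisfy. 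So there are at least $|O(A_{N})|$ of them, and distinct $O$-images can be chosen. Sets of size at most $N$ still embed into the array by sending their $S$-points to fresh rows. Alternatively, following the paper's footnote, require realizations to be injective only on sort $P$ and check that the helix machinery still runs. ``Adjusting $A_{N}$ slightly'' is not an argument until one of these is specified.

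A smaller point: $\mathcal{H}_{\varphi}$ is the weak-embedding closure of the age of the $\varphi$-structure, not its age, since $I$ is symmetric there. It also contains non-sound structures if some $\varphi(x,b)$ is inconsistent. Restrict sort $P$ to parameters with $\varphi(x,b)$ consistent. Lemma~\ref{non-overlapping pairs} and Fact~\ref{closure under jughandle helix maps} still go through, because the existence of a weak embedding is type-definable, but this has to be checked rather than applied ``verbatim''.
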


\begin{proof}
    (sketch) 
    
   \: Let $T$ be $\mathrm{NSOP}_{3}$, and $\varphi(x, y)$ be any formula. Let $A$ be an $\mathcal{L}^{\mathrm{std}}$-structure where $I$ forms an undirected graph relation on $P(A)$; say $A$ is \textit{realized} in $T$ if there is an injective map $\iota$, taking points of $O(A)$ to $|x|$-tuples and points of $P(A)$ to $|y|$-tuples in the ambient model $\mathbb{M} \models T$, with $R(a, b) \Rightarrow \varphi(\iota(a), \iota(b))$ and $I(b_1, b_2) \Rightarrow \neg \exists x \varphi(x, b_{1}) \wedge \varphi(x, b_{2})$. Then for arbitrarily large $N < \omega$, the proof of Lemma \ref{omitting a presidium} in step one of this section\footnote{though this is for $I$ forming a directed graph relation} gives us a finite $\mathcal{L}^{\mathrm{std}}$-structure $A$ not realized in $T$ such that every induced substructure of $A$ of size at most $N$ weakly embeds into the standard $\mathrm{TP}_{2}$-structure (Remark \ref{standard tp2-structure}). Let $( \check{U}, \{P_{n}\}_{n < \omega})$ consist of the vertex set $\check{U}=P(A)$, with the $n$-tuples of distinct elements partitioned into $R^{+}_{n}$ the tuples of points with a common $R$-neighbor, $R^{-}_{n}$ the tuples of points containing an instance of $I$, and $R^{\sim}_{n}$ the other tuples.  
   
   \: Then every induced sequence of sub-hypergraphs in  $( \check{U}, \{P_{n}\}_{n < \omega})$ of size at most $N$ embeds into the $(\omega, \omega, 1)$-array, the hypergraph sequence associated with $\mathrm{TP}_{2}$. Now it is not yet the case that $( \check{U}, \{P_{n}\}_{n < \omega})$  does not embed into the characteristic sequence of $\varphi(x, y)$. It is only the case that there is no embedding into $\mathbb{M}^{|y|}$ such that, \textit{for $K= O(A)$}, and for $(b_{1}, \ldots b_{n})$ the image of a tuple of $R^{+}_{n}$, $\models \exists^{\geq K} x \bigwedge^{n}_{i = 1} \varphi(x,b_{i})$, while for $(b_{1}, \ldots b_{n})$ the image of a tuple of $R^{-}_{n}$, $\models \neg \exists x \bigwedge^{n}_{i = 1} \varphi(x,b_{i})$. So we slightly modify $( \check{U}, \{P_{n}\}_{n < \omega})$ to correct this.\footnote{Another solution is to not require that $\iota$ be injective in the above definition of $A$ being realized in $T$. Under this version of the defintion of $A$ being realized in $T$, the proof of Lemma \ref{omitting a presidium} still yields a finite $\mathcal{L}^{\mathrm{std}}$-structure $A$ not realized in $T$ such that every induced substructure of $A$ of size at most $N$ weakly embeds into the standard $\mathrm{TP}_{2}$-structure. Then $( \check{U}, \{P_{n}\}_{n < \omega})$ already does not embed in the characteristic sequence of $\varphi(x, y)$.} 
   
   \: Let $U =: \check{U} \sqcup S$, where $S$ is a set such that ${|S|\choose N} \geq K$, and extend the $P_{n}$ to form a partial hypergraph sequence on $U$ such that every subset of $S$ of size $N$ is an $R^{+}_{N}$-edge, every subset of $S$ of size $N+1$ is an $R^{-}_{N+1}$-edge, and every union of an $R^{+}_{n}$-edge of $\check{U}$ and an $R^{+}_{N}$-edge of $S$ is an $R^{+}_{n+N}$-edge of $U$.  Then every induced sequence of sub-hypergraphs in  $( U, \{P_{n}\}_{n < \omega})$ of size at most $N$ still embeds into the $(\omega, \omega, 1)$-array: this will consist of a subset of $\check{U}$ of size of most $N$, which will embed into the array, where we then extend each $R^{+}_{n}$-edge in $U$ by some common subset $S_{0}  \subset S$ of size at most $N$. We can then extend the embedding to $S_{0}$ by mapping the points of $S_{0}$ to distinct new rows of the array. Moreover, $( U, \{P_{n}\}_{n < \omega})$ cannot embed into the characteristic sequence of $\varphi(x, y)$: if there is an embedding, we can distinguish at least $K$ realizations of the consistency conditions for $\varphi(x, y)$ on the image of $\check{U}$ by which instances of $\varphi(x, y)$ they satisfy on the image of $S$. So the embedding restricts on $\check{U}$ to one such that, for $(b_{1}, \ldots b_{n})$ the image of a tuple of $R^{+}_{n}$, $\models \exists^{\geq K} x \bigwedge^{n}_{i = 1} \varphi(x,b_{i})$, while for $(b_{1}, \ldots b_{n})$ the image of a tuple of $R^{-}_{n}$, $\models \neg \exists x \bigwedge^{n}_{i = 1} \varphi(x,b_{i})$, a contradiction.

  \:  The proof for $\mathrm{SOP}_{2}$ is similar, but with Remark \ref{potential simplified proof} instead of Lemma \ref{omitting a presidium}.
\end{proof}

\: Note that the relation $\mathrm{NP}_{1}\leadsto \mathrm{NP}_{2}$ should not be taken as evidence that $\mathrm{NP}_{1}$ implies $\mathrm{NP}_{2}$: for example, despite our result that $\mathrm{NSOP}_{3} \leadsto \mathrm{NTP}_{2}$, $\mathrm{NSOP}_{3}$ does not imply $\mathrm{NTP}_{2}$. We may even suspect that, just as our proof of Lemma \ref{omitting a presidium} on the standard $\mathrm{TP}_{2}$-structure gives us $\mathrm{NSOP}_{3} \leadsto \mathrm{NTP}_{2}$, our full proof of our main combinatorial lemma, Corollary \ref{main combinatorial lemma} on the standard $\mathrm{TP}$-structure, might give us $\mathrm{NSOP}_{3} \leadsto \mathrm{simple}$. While our arguments do not appear to directly generalize to this case without additional work, we would not be surprised if some version of these arguments could be used to show $\mathrm{NSOP}_{3} \leadsto \mathrm{simple}$.

\: We conclude by anticipating one possible objection to many general results on $\mathrm{NSOP}_{3}$ theories: we don't have a concrete example of $\mathrm{NSOP}_{3}$ being its own distinct classification-theoretic property, because the equality of $\mathrm{NSOP}_{3}$ and $\mathrm{NSOP}_{1}$ remains open. However, we submit that this objection does not apply in the case of $\mathrm{NSOP}_{3} \leadsto \mathrm{NTP}_{2}$ . Even if $\mathrm{NSOP}_{3}$ is equal to $\mathrm{NSOP}_{1}$, $\mathrm{NSOP}_{1} \leadsto \mathrm{NTP}_{2}$ appears to be a new and nontrivial result on $\mathrm{NSOP}_{1}$ theories, and it is not obvious that it follows, say, from properties of Kim-independence.

\end{tcolorbox}

\textbf{Acknowledgements:} The author would like to thank Alex Kruckman for some helpful bibliographic corrections to an earlier version of this paper, as well as James Hanson for drawing attention to the appendix of Hanson's thesis, from before the introduction of the properties $\mathrm{SOP}_{r}$ in \cite{ApproxOrder}, where he defines a real-valued family of quasimetric properties and gives an equivalence argument with Shelah's original integer-valued $\mathrm{SOP}_{n}$ hierarchy for $n \geq 3$, as discussed in the appendix.

\appendix

\section{A historical note on Hanson-$\mathrm{SOP}_{r}$}

Some prior interest in the prospect of extending the original $\mathrm{NSOP}_{n}$ hierarchy to non-integer values of $n$ is demonstrated in a thesis appendix of Hanson (\cite{Hanson2020}). Hanson introduces properties that he describes as rough analogues of ``$\mathrm{SOP}_{\frac{1}{r}}$" for real $r$ within the context of continuous logic. Unlike the real-valued properties $\mathrm{SOP}_{r}$ we discuss in this paper, whose equivalence with the integer-valued properties $\mathrm{SOP}_{n}$ remains an apparently difficult open question, it is known due to Hanson that his properties coincide with the original integer-valued properties $\mathrm{SOP}_{n}$ (at least in classical two-valued logic, rather than continuous logic). In this appendix, we will give a short exposition of Hanson's properties, as well as Hanson's proof of their equivalence with Shelah's original properties $\mathrm{SOP}_{n}$ for integers $n \geq 3$.

Hanson's properties are defined in terms of \textit{quasimetrics}: functions $\rho: X^{2} \to \mathbb{R}_{\geq 0}$ for some set $X$ such that for all $x, y, z \in X$, $\rho(x, x) = 0$ and $\rho(x, z) \leq \rho(x, y) + \rho (y, z)$. For purposes of this exposition, we will say a quasimetric on a definable set $X$ within a first-order theory is \textit{definable} if the conditions $\rho(x, y) \geq n$, $\rho(x, y) > n$, $\rho(x, y) \leq n$, $\rho(x, y) < n$, $\rho(x, y) = n$ are defined by formulas $\varphi(x, y)$.

To motivate Hanson's properties, consider for fixed $n$ the model companion of the theory of $\{0, \ldots, n\}$-valued metric spaces, introduced in \cite{CW04}. This is essentially equivalent to considering the model companion of the theory of $\{0, \frac{1}{n}, \ldots, \frac{k}{n} , \ldots, 1\}$-valued metric spaces. As shown in \cite{CW04}, this theory has $\mathrm{SOP}_{n}$. This motivates the following definition:

\begin{definition}\label{hanson-sopr}(Hanson, Definition B.5.2 of \cite{Hanson2020})

Let $r > 2$ be a real value. A theory has \textit{Hanson-$\mathrm{SOP}_{r}$} if there is a definable $[0, 1]$-valued quasimetric $\rho(x, y)$ on a definable set $X$, together with a sequence $\{a_{i}\}_{i < \omega}$ of elements of $X$, such that $\rho(a_{i}, a_{j})\leq \frac{1}{r}$ for $i \leq j$, but $\rho(a_{i}, a_{j})= 1$ for $i > j$.
    
\end{definition}

Note that for $1 \geq \alpha' >  \alpha > 0 $, the existence of a definable $[0, 1]$-valued quasimetric $\rho(x, y)$ such that $\rho(a_{i}, a_{j}) = \alpha $ for $i \leq j$ and $\rho(a_{i}, a_{j})= 1$ for $i > j$ implies the existence of a definable $[0, 1]$-valued quasimetric $\rho'(x, y)$ such that $\rho'(a_{i}, a_{j}) = \alpha' $ for $i \leq j$ and $\rho'(a_{i}, a_{j})= 1$ for $i > j$. Just take $\rho'(x, y) = : \mathrm{min}(\{1, \frac{\alpha'}{\alpha}\rho(x, y)\})$. So we may replace $\rho(a_{i}, a_{j})\leq \frac{1}{r}$ with $\rho(a_{i}, a_{j}) = \frac{1}{r}$ within the definition of Hanson-$\mathrm{SOP}_{r}$.

In classical logic, Hanson showed that Hanson-$\mathrm{SOP}_{r}$ for reals $r > 2$ does not introduce new properties: for $r > 2$ real and $n=\lceil r \rceil$ the next integer, Hanson-$\mathrm{SOP}_{r}$ is equivalent to $\mathrm{SOP}_{n}$.

One direction of the equivalence is proven explicitly by Hanson within classical logic. (Hanson treats this case separately from his continuous logic arguments, because the equivalence is only true up to an infinitesimal within continuous logic. Namely, in continuous logic, $\mathrm{SOP}_{n}$ implies Hanson-$\mathrm{SOP}_{r}$ for any real $r < n$, but is not known to imply Hanson-$\mathrm{SOP}_{n}$.) We will simply review Hanson's proof.

\begin{fact}(Hanson (\cite{Hanson2020}), Proposition B.5.9)

 Let $n \geq 3$ be an integer. Then any theory with $\mathrm{SOP}_{n}$ has Hanson-$\mathrm{SOP}_{n}$ (so has Hanson-$\mathrm{SOP}_{r}$ for any real value $r < n$, $r > 2$).
    
\end{fact}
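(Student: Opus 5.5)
Let $R(x,y)$ be a definable relation witnessing $\mathrm{SOP}_{n}$: there is an infinite chain $\{a_i\}_{i<\omega}$ with $\models R(a_i,a_j)$ for $i<j$, and $R$ has no directed $n$-cycles. The plan is to build a definable $\{0,\frac{1}{n},\ldots,\frac{n-1}{n},1\}$-valued quasimetric from $R$, following the model of the $\{0,\ldots,n\}$-valued metric spaces of \cite{CW04}. The idea is to measure distance by the length of the shortest $R$-path, truncated at $n$. For each $k\le n-1$, let $\psi_k(x,y)$ be the formula asserting that either $x=y$ or there is an $R$-path of length at most $k$ from $x$ to $y$; this is a first-order formula in the free variables. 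Then set
\[
\rho(x,y) := \min\Bigl(\bigl\{\tfrac{k}{n} : k\le n-1,\ \models\psi_k(x,y)\bigr\}\cup\{1\}\Bigr).
\]
Each of the conditions $\rho(x,y)\ge q$, $\rho(x,y)>q$, $\rho(x,y)\le q$, $\rho(x,y)<q$, $\rho(x,y)=q$ is a finite Boolean combination of the $\psi_k$, so $\rho$ is definable in the sense of the appendix. We also have $\rho(x,x)=0$ by construction.

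Next comes the triangle inequality. Suppose $\rho(x,y)=\frac{k}{n}$ and $\rho(y,z)=\frac{m}{n}$. If $k+m\ge n$, then $\rho(x,z)\le 1\le\frac{k+m}{n}$ and there is nothing to prove. Otherwise, concatenating the two paths gives an $R$-walk of length $k+m\le n-1$ from $x$ to $z$, so $\psi_{k+m}(x,z)$ holds. Here one should take $\psi_k$ to quantify over walks rather than paths, so that concatenation stays within the same family of formulas. The degenerate cases where $x=y$ or $y=z$ are trivial. The same argument handles the cases where one of the distances equals $1$.

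Finally, we check the values on the chain. For $i<j$ we have $R(a_i,a_j)$, so $\rho(a_i,a_j)\le\frac{1}{n}$; and $\rho(a_i,a_i)=0$. Now suppose $i>j$ and $\rho(a_i,a_j)<1$. Then there is an $R$-walk of length $k\le n-1$ from $a_i$ to $a_j$, and appending the edge $R(a_j,a_i)$ closes it into a closed $R$-walk of length $k+1\le n$.

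The main obstacle is that a closed walk of length at most $n$ is not directly an $n$-cycle, which is what the $\mathrm{SOP}_{n}$ hypothesis forbids. I would fix this with the standard remark that, for a relation with an infinite chain, one may replace $R$ by a relation with no directed $\le n$-cycles at all. Concretely, one can first pass to an indiscernible chain and use the usual equivalent formulation of $\mathrm{SOP}_{n}$ in which the formula has no cycles of length $\le n$ (\cite{She95}). Alternatively, one can pad a shorter closed walk out to length exactly $n$, by replacing $R$ with $R(x,y)\wedge\bigwedge_{m<n}\neg\exists$(closed walks of length $m$ through $x,y$), which still holds along the chain. Once short cycles are excluded, a closed walk of length $\le n$ contradicts acyclicity, so $\rho(a_i,a_j)=1$ for $i>j$, as required. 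This gives Hanson-$\mathrm{SOP}_{n}$. Then, by the rescaling remark following Definition \ref{hanson-sopr}, we obtain Hanson-$\mathrm{SOP}_{r}$ for every $2<r<n$.
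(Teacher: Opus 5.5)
Your proposal is correct and follows the paper's proof: both take $\rho(x,y)=\min\bigl(1,\tfrac{d_R(x,y)}{n}\bigr)$, with $d_R$ the length of the shortest directed $R$-path (equivalently, walk), and both use $R(a_j,a_i)$ for $j<i$ to rule out short return paths. Your extra care that a closed walk of length $<n$ is not literally an $n$-cycle addresses a step the paper passes over quickly; besides your two fixes, it can also be handled without changing $R$, by passing to the subsequence $\{a_{ni}\}_{i<\omega}$ and padding the return walk with chain edges to length exactly $n$.
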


\begin{proof}

Let $R(x, y)$ exhibit $\mathrm{SOP}_{n}$, so there is $\{a_{i}\}_{i < \omega}$ with $\models R(a_{i}, a_{j})$ for $i <j$, but $R(x, y)$ has no directed $n$-cycles. Construct the definable quasimetric $\rho(x, y)$ on $\mathbb{M}^{|x|}$ as follows: $\rho(x, y) =: \mathrm{min}(\{1, \frac{d_{R}(x, y)}{n}
\})$, where $d_{R}(x, y)$ is the length (in number of edges) of the shortest directed $R(x, y)$-path from $x$ to $y$. Then $R(a_{i}, a_{j})$ for $i <j$ will imply $\rho(a_{i}, a_{j}) \leq \frac{1}{n}$ for $i \leq j$. But for $i > j$, $\models R(a_{j}, a_{i})$, so there can be no directed $R(x, y)$-path from $a_i$ to $a_j$ of length less than $n$: otherwise $R(x, y)$ would have an $n$-cycle. So $d_{R}(a_{i}, a_{j}) \geq n$ and $\rho(a_{i}, a_{j}) = 1$, giving Hanson-$\mathrm{SOP}_{n}$.

\end{proof}

The other direction was proven by Hanson within the continuous setting, so we rephrase Hanson's argument within the language of classical logic.

\begin{fact}(Hanson (\cite{Hanson2020}), Proposition B.5.5)

Let $r > 2$ be real, and let $n = \lceil r \rceil $ be the next integer. Suppose the theory $T$ has Hanson-$\mathrm{SOP}_{r}$. Then $T$ has $\mathrm{SOP}_{n}$.
    
\end{fact}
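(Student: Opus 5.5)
The plan is to turn the quasimetric into a directed graph relation with no directed $n$-cycles that still has an infinite chain. Suppose $\rho$ is a definable $[0,1]$-valued quasimetric on a definable set $X$, witnessed by $\{a_i\}_{i<\omega}$ with $\rho(a_i,a_j)\leq \frac{1}{r}$ for $i\leq j$ and $\rho(a_i,a_j)=1$ for $i>j$. By the rescaling remark after Definition \ref{hanson-sopr}, I may replace $\frac1r$ by any $\alpha$ with $\frac1r\le \alpha<1$. I will take $\alpha$ so that $n\alpha<1$ would be ideal, but that fails, since $n\ge r$ gives $n\cdot\frac1r\ge 1$. So the threshold must be handled more carefully: the cycle bound has to come from strict inequality $\rho\ge 1$ on backward pairs, with forward pairs bounded by $\frac1r$.

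Define $R(x,y)$ to be ``$x,y\in X$, $\rho(x,y)\le \frac1r$, and $\rho(y,x)=1$.'' This is definable by the definability convention for $\rho$, with the clause $\rho\le\frac1r$ possibly first rescaled to a rational threshold. The chain $\{a_i\}$ is then an infinite $R$-chain, because for $i<j$ we have $\rho(a_i,a_j)\le\frac1r$ and $\rho(a_j,a_i)=1$.

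Next I rule out short cycles. Suppose $b_0,\dots,b_{m-1}$ is a directed $R$-cycle of length $m\le n-1$, so that $m<r$, since $n-1<r$ because $n=\lceil r\rceil$. By the triangle inequality along the cycle from $b_1$ back around to $b_0$,
$$\rho(b_1,b_0)\le \sum_{i=1}^{m-1}\rho(b_i,b_{i+1 \bmod m})\le \frac{m-1}{r}<1,$$
which contradicts $\rho(b_1,b_0)=1$, required by $R(b_0,b_1)$. So $R$ has no directed cycles of length less than $n$, and in particular none of length at most $n-1$.

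The main obstacle is that $\mathrm{SOP}_n$ requires no directed $n$-cycle, and the computation above only excludes cycles of length at most $n-1$. An $n$-cycle gives $\rho(b_1,b_0)\le\frac{n-1}{r}$, which is $<1$ exactly when $n-1<r$, and this always holds. So the same computation in fact excludes $n$-cycles too: the sum runs over the $n-1$ edges other than $(b_0,b_1)$, giving $\frac{n-1}{r}<1$. Hence the only thing to check carefully is that edge count, and for a cycle of length $m\le n$ the bound $\frac{m-1}{r}\le\frac{n-1}{r}<1$ suffices. This yields $\mathrm{SOP}_n$ by Definition \ref{integer-valued hierarchy}. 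If the definability conventions do not directly allow the threshold $\frac1r$, I would first apply the rescaling remark so that the forward bound is a convenient value $\alpha$ with $(n-1)\alpha<1$, which is possible since $\frac{n-1}{r}<1$.
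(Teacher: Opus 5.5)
Your argument is correct and is essentially the paper's proof: both define the relation ``$\rho(x,y)$ small and $\rho(y,x)=1$'' and apply the triangle inequality along the $n-1$ edges of a putative $n$-cycle other than one fixed edge, using $n-1<r$. The differences are cosmetic. The paper uses the strict threshold $\rho(x,y)<\frac{1}{n-1}$, which also makes your rescaling fallback unnecessary, since any rational threshold in $[\frac1r,\frac1{n-1})$ can be applied directly to $\rho$; and your write-up would read better without the false starts about $n\alpha<1$ and about only excluding cycles of length at most $n-1$.
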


\begin{proof}
    Let $\rho(x, y)$ be an $[0, 1]$-valued quasimetric exhibiting Hanson-$\mathrm{SOP}_{r}$, so there is a sequence $\{a_{i}\}_{i < \omega}$ of elements of $X$, such that $\rho(a_{i}, a_{j})\leq \frac{1}{r}$ for $i \leq j$, but $\rho(a_{i}, a_{j})= 1$ for $i > j$. Particularly, $\rho(a_{i}, a_{j}) < \frac{1}{n-1}$ for $i \leq j$. By definability of the quasimetric, we can find some formula $\varphi(x, y)$ with $\models \varphi(a_{i}, a_{j})$ for $i < j$, such that $\varphi(x, y)$ guarantees $\rho(x, y) < \frac{1}{n-1}$, $\rho (y, x) = 1$. It suffices to show that the relation defined by $\varphi(x, y)$ has no directed $n$-cycles. Suppose for a contradiction that $\varphi(x, y)$ does have a directed $n$-cycle. Then there are $b_{0}, \ldots b_{n-1}$ such that $\models \varphi(b_{i}, b_{i+1 \mathrm{\: mod \:} n})$ for all $i \in \{0, \ldots n-1\}$. Then $\rho(b_{0}, b_{1}) < \frac{1}{n-1}, \ldots , \rho(b_{k}, b_{k +1}) < \frac{1}{n-1}, \ldots, \rho(b_{n-2}, b_{n-1}) < \frac{1}{n-1}$ will imply $\rho(b_{0}, b_{n-1}) \leq \sum^{n-2}_{i =0} \rho(b_{i}, b_{i +1})< 1$. However, because  $\varphi(x, y)$ implies $\rho (y, x) = 1$, and $\models \varphi(b_{n-1}, b_{0})$, $\rho(b_{0}, b_{n-1}) = 1$, a contradiction.
\end{proof}

This proves that properties Hanson-$\mathrm{SOP}_{r}$ for real values $r > 2$ coincide (in classical logic) with the original $\mathrm{SOP}_{n}$ hierarchy for $n \geq 3$ an integer.

However, in continuous logic, the properties Hanson-$\mathrm{SOP}_{r}$ may not reduce  to Shelah's original integer-valued $\mathrm{SOP}_{n}$ hierarchy for $n \geq 3$. Hanson uses continuous versions of the two previous facts to show that the collection of reals $r > 2$ for which a theory has Hanson-$\mathrm{SOP}_{r}$ satisfies one of four possibilities. It is empty, equal to $(2, \infty)$, equal to $(2, n]$ for some integer $n > 2$, or equal to $(2, n)$ for some integer $n > 2$. It is open whether the fourth possibility can be ruled out. So in continuous logic, the properties Hanson-$\mathrm{SOP}_{r}$ for real values of $r$ may differ from the integer-valued $\mathrm{NSOP}_{n}$ hierarchy, but only infinitesimally.

Note that the possibility that the properties Hanson-$\mathrm{SOP}_{r}$ for real values of $r$ differ infinitesimally from $\mathrm{SOP}_{n}$ for integer $n$ in continuous logic is not exactly analogous to the possibility, discussed after Question 2.14 of \cite{ApproxOrder}, that the real-valued $\mathrm{SOP}_{r}$ hierarchy differs only infinitesimally from the integer-valued $\mathrm{SOP}_{n}$ hierarchy in discrete logic. In the scenario where the properties Hanson-$\mathrm{SOP}_{r}$ differ infinitesimally from integer-valued $\mathrm{SOP}_{n}$ in continuous logic, $\mathrm{SOP}_{n}$ for integers $n$ will be equivalent to Hanson-$\mathrm{SOP}_{r}$ for all reals $r < n$, as is already known. But for some integer $n$ there will be theories with $\mathrm{SOP}_{n}$ and Hanson-$\mathrm{SOP}_{n}$, as well as theories with $\mathrm{SOP}_{n}$ but without Hanson-$\mathrm{SOP}_{n}$. So the properties Hanson-$\mathrm{SOP}_{r}$ will not actually be an extension of the integer-valued $\mathrm{SOP}_{n}$ hierarchy.\footnote{The properties will be a true extension if, in the definition of Hanson-$\mathrm{SOP}_{r}$ in Definition \ref{hanson-sopr} above,  the inequality $\rho(a_{i}, a_{j})\leq \frac{1}{r}$ is replaced with the strict inequality $\rho(a_{i}, a_{j}) < \frac{1}{r}$. Then, under this modified definition of ``Hanson-$\mathrm{SOP}_{r}$," Hanson's arguments will show that for any real value $r > 2$, ``Hanson-$\mathrm{SOP}_{r}$," will coincide with $\mathrm{SOP}_{n}$ for $n=\lceil r \rceil$ the next integer. This will be the case even in continuous logic. But then, it will be settled that the properties ``Hanson-$\mathrm{SOP}_{r}$" are not new properties, even in continuous logic.} Let us now consider the claim that the real-valued $\mathrm{SOP}_{r}$ hierarchy differs, but only infinitesimally, from integer-valued $\mathrm{SOP}_{n}$ hierarchy in classical logic. If this is true, it will be the case as always that the real-valued $\mathrm{SOP}_{r}$ hierarchy coincides exactly with the original definition of $\mathrm{SOP}_{n}$ when $r = n$ happens to be an integer. The real-valued and integer-valued hierarchies differing only infinitesimally will refer to the statement that the hierarchies are not distinct, but this non-distinctness is only exhibited by theories that are $\mathrm{NSOP}_{n}$ for some integer $n \geq 3$, but have $\mathrm{SOP}_{r}$ for all real values $r < n$.

\bibliographystyle{plain}
\bibliography{refs}

\end{document}